\documentclass[10pt]{amsart}

\pdfoutput=1

\usepackage[text={420pt,660pt},centering]{geometry}
 \usepackage{cancel}
\usepackage{graphicx}
\usepackage{MnSymbol}
\usepackage{mathtools} 
\mathtoolsset{showonlyrefs} \usepackage[colorlinks=true, pdfstartview=FitV, linkcolor=blue, citecolor=blue, urlcolor=blue,pagebackref=false]{hyperref}
\usepackage{orcidlink}
\usepackage[expansion=false]{microtype}

\usepackage{bm}
\usepackage{dsfont}
\usepackage{mathrsfs}
\usepackage{xcolor}
\usepackage{accents} \usepackage{tikz}

\usepackage[normalem]{ulem}

\definecolor{darkgreen}{rgb}{0,0.5,0}
\definecolor{darkblue}{rgb}{0,0,0.7}
\definecolor{darkred}{rgb}{0.9,0.1,0.1}

\newtheorem{proposition}{Proposition}
\newtheorem{theorem}[proposition]{Theorem}
\newtheorem{lemma}[proposition]{Lemma}
\newtheorem{corollary}[proposition]{Corollary}

\theoremstyle{remark}
\newtheorem{remark}[proposition]{Remark}

\theoremstyle{definition}
\newtheorem{definition}[proposition]{Definition}

\numberwithin{equation}{section}
\numberwithin{proposition}{section}
\numberwithin{figure}{section}
\numberwithin{table}{section}

\newcommand{\N}{\mathbb{N}}
\newcommand{\Q}{\mathbb{Q}}
\newcommand{\R}{\mathbb{R}}

\newcommand{\E}{\mathbb{E}}
\renewcommand{\P}{\mathbb{P}}

\renewcommand{\S}{\mathbf{S}}

\newcommand{\eps}{\varepsilon}

\renewcommand{\le}{\leqslant}
\renewcommand{\ge}{\geqslant}
\renewcommand{\leq}{\leqslant}
\renewcommand{\geq}{\geqslant}

\renewcommand{\subset}{\subseteq}
\renewcommand{\bar}{\overline}
\renewcommand{\tilde}{\widetilde}

\renewcommand{\hat}{\widehat}

\newcommand{\Ll}{\left}
\newcommand{\Rr}{\right}
\renewcommand{\d}{\mathrm{d}}
\newcommand{\dr}{\partial}
\newcommand{\1}{\mathds{1}}

\newcommand{\mcl}{\mathcal}
\newcommand{\msf}{\mathsf}
\newcommand{\mfk}{\mathfrak}
\newcommand{\al}{\alpha}

\newcommand{\de}{\delta}
\newcommand{\si}{\sigma}

\DeclareMathOperator{\supp}{supp}

\newcommand{\sP}{\mathscr{P}}

\newcommand{\la}{\left\langle}
\newcommand{\ra}{\right\rangle}

\renewcommand{\H}{\mathsf{H}}
\newcommand{\cH}{{L^2}}

\newcommand{\id}{\mathrm{Id}}
\newcommand{\D}{{D}}

\newcommand{\diag}{\mathsf{diag}}

\renewcommand*{\dot}[1]{\accentset{\mbox{\large\bfseries .}}{#1}}
\newcommand{\one}{\mathds{1}}

\newcommand{\mcC}{\mathcal{C}}
\newcommand{\mcCp}{\mathcal{C}'}
\newcommand{\fR}{\mathfrak{R}}

\newcommand{\ellipt}{\mathsf{Ellipt}}
\newcommand{\upa}{\uparrow}
\newcommand{\s}{s}
\newcommand{\sS}{\mathscr{S}}
 \newcommand{\vecone}{\vec{\mathbf{1}}}
\newcommand{\M}{\D}
\newcommand{\bsigma}{\sigma}
\newcommand{\sfM}{\mathsf{D}}
\renewcommand{\Vec}{{\mathrm{vec}}}
\newcommand{\avg}{{\mathrm{avg}}}
\newcommand{\summ}{{\mathrm{sum}}}
\newcommand{\bxi}{{\xi}^\Vec}
\newcommand{\bq}{{{q}}}

\newcommand{\bw}{{w}}
\newcommand{\cav}{\circ}

\newcommand{\orig}{{}} \newcommand{\bff}{\mathbf{f}}
\newcommand{\up}{\underline p}
\newcommand{\uq}{\underline q}

\begin{document}

\author[H.-B.\ Chen]{Hong-Bin Chen\,\orcidlink{0000-0001-6412-0800}}
\address[Hong-Bin Chen]{NYU-ECNU Institute of Mathematical Sciences, NYU Shanghai, China}
\email{\href{mailto:hongbin.chen@nyu.edu}{hongbin.chen@nyu.edu}}

\author[V.\ Issa]{Victor Issa\,\orcidlink{0009-0009-1304-046X}}
\address[Victor Issa]{Department of Mathematics, ENS Lyon, Lyon, France}
\email{\href{mailto:victor.issa@ens-lyon.fr}{victor.issa@ens-lyon.fr}}

\author[J.-C.\ Mourrat]{Jean-Christophe Mourrat\,\orcidlink{0000-0002-2980-725X}}
\address[Jean-Christophe Mourrat]{Department of Mathematics, ENS Lyon and CNRS, Lyon, France}
\email{\href{mailto:jean-christophe.mourrat@ens-lyon.fr}{jean-christophe.mourrat@ens-lyon.fr}}

\title[Free energy of non-convex spin glasses with centered Ising spins]{Free energy of non-convex multi-species spin glasses with centered Ising spins}

\begin{abstract}
We identify the limit free energy of all multi-species spin glasses with centered $\pm 1$ spins. The result was previously known only under a convexity assumption on the covariance function of the Hamiltonian. We also obtain a one-species reduction of the formula for balanced multi-species models.
\end{abstract}

\maketitle

\section{Introduction}

The main goal of this paper is to identify the limit free energy of multi-species models with centered $\pm 1$ spins. We start by defining the class of models we consider precisely. 
Let $\sS$ be the finite set of species labels. For each $N\in\N$, let $(I_{N,\s})_{\s\in\sS}$ be a partition of $\{1,\dots,N\}$, where $I_{N,\s}$ denotes the set of indices belonging to the $\s$-th species. For two configurations $\si,\si' \in \{-1,1\}^N$ and $s \in \sS$, the overlap associated with the $\s$-th species is defined by
\begin{equation} \label{e.R_N,s=}
R_{N,\s}(\sigma,\sigma') = \frac 1 N \sum_{n \in I_{N,\s}} \si_n \si'_{n}.
\end{equation}
We also set
\begin{equation}  \label{e.sigma_bullet,I}
R_N(\sigma,\sigma') = (R_{N,\s}(\sigma,\sigma'))_{\s \in \sS}.
\end{equation}
Let $\xi : \R^\sS \to \R$ be a function that admits an absolutely convergent power-series expansion, and let $(H_N(\sigma))_{\sigma\in \{-1,1\}^N}$ be a centered Gaussian field with covariance
\begin{align}\label{e.def H_N}
    \E\Ll[ H_N(\sigma)H_N(\sigma')\Rr] = N\xi\Ll(R_N(\sigma,\sigma')\Rr).
\end{align}
We stress that we do not assume any convexity property of $\xi$. The proportion of spins in the $s$-th species is denoted by
\begin{equation}
\label{e.def.lambda_Nd}
\lambda_{N,\s} = |I_{N,\s}|/N, \quad \text{ and we set }  \lambda_N=\Ll(\lambda_{N,\s}\Rr)_{\s\in\sS}.
\end{equation}
We assume that these proportions converge: for some $\lambda_\infty = (\lambda_{\infty,\s})_{\s \in \sS} \in (0,1)^\sS$,
\begin{equation}
\label{e.lambda_infty}
    \lim_{N\to\infty} \lambda_N =\lambda_\infty.
\end{equation}
The main object considered here is the limiting free energy as $N\to+\infty$. For every $t\geq 0$, we define
\begin{equation}
\label{e.def.FN.delta0}
   \bar F_N(t,0) = - \frac{1}{N} \E \log \sum_{\sigma\in\{-1,1\}^N} 2^{-N}\exp \left( \sqrt{2t} H_N(\sigma) - Nt\xi \left( \lambda_N \right) \right)
\end{equation}
where $- Nt\xi \left( \lambda_N \right)$ is included for convenience, as it simplifies the expression when differentiating the free energy in $t$. We let $\mcl Q$ denote the set of right-continuous increasing paths $q:[0,1)\to\R_+$ (here and throughout, we say that a path $q$ is increasing provided that $q(r')\geq q(r)$ for every $0\leq r\leq r'<1$). For $p\in [1,\infty)$, we write $\mcl Q_p = \mcl Q \cap L^p[0,1)$. Below we will extend $\bar F_N(t,\cdot)$ so that its second argument is any element of the set $\mcl Q_2^\sS$; the $0$ appearing as the second argument of $\bar F_N(t,0)$ on the left side of \eqref{e.def.FN.delta0} is the collection of paths in $\mcl Q^\sS$ that are constant equal to zero. We denote by $\psi_\circ : \mcl Q_1 \to \R$ the cascade transform of the measure $\frac 1 2 \de_1 + \frac 1 2 \de_{-1}$ (see the beginning of Section~\ref{s.psi.convexity} for a precise definition), and for every $q = (q_s)_{s \in \sS} \in \mcl Q_1^\sS$, we set
\begin{equation}  
\label{e.decomp.psi}
\psi(q) = \sum_{s \in \sS} \lambda_{\infty,s} \, \psi_\circ(q_s).
\end{equation}
For every $t \ge 0$ and $p,q,q' \in \mcl Q_2^\sS$, we write
\begin{equation*}  \mcl J_{t,q}(q',p) = \psi(q') + \la q - q', p\ra_\cH +t\int_0^1\xi(p(s))\d s.
\end{equation*}
One may call this quantity the Hamilton--Jacobi functional, as it is closely related to the Hamilton--Jacobi equation appearing below in \eqref{e.hj}.
When the covariance function $\xi$ is convex over $\R_+^\sS$, the free energy is known to converge, with a limit given by the Parisi formula \cite{barra2015multi, chen2024free, chen2025free, gue03,pan, pan.multi, Tpaper}. This formula can be written as 
\begin{equation}  
\label{e.parisi}
\lim_{N \to +\infty} \bar F_N(t,q) = \sup_{q' \in \mcl Q_\infty^\sS} \inf_{p \in \mcl Q_\infty^\sS} \mcl J_{t,q}(q',p).
\end{equation}
As was shown in \cite[Section~6]{mourrat2021nonconvex}, this formula does \emph{not} hold in general if one does not assume the function $\xi$ to be convex over $\R_+^\sS$. Yet, even when $\xi$ is not convex, we know from \cite{chen2025free} that if the limit free energy exists, then it can be represented as $\mcl J_{t,q}(q',p)$ for some $(q',p) \in (\mcl Q_\infty^\sS)^2$ that is a critical point of $\mcl J_{t,q}$. A pair $(q',p)$ is said to be a \textbf{critical point} of $\mcl J_{t,q}$ if it is such that 
\begin{equation}
\label{e.def.crit.point}
p = \dr_q \psi(q') \quad \text{ and } \quad q' = q + t \nabla \xi(p).
\end{equation}
In this paper, we prove that the limit free energy indeed exists for all $\xi$, and we identify the limit free energy unambiguously as a modified variational formula.

\begin{theorem}\label{t.main}
For every $t \ge 0$ and $q \in \mcl Q_2^\sS$, we have
\begin{equation}  
\label{e.main.1}
\lim_{N \to +\infty} \bar F_N(t,q) = \sup_{p \in \mcl Q_\infty^\sS}\inf_{q' \in \mcl Q_\infty^\sS} \mcl J_{t,q}(q',p).
\end{equation}
Moreover, denoting this limit by $f(t,q)$, the function $f$ is the Lipschitz viscosity solution to
\begin{align}\label{e.main.hj}
\begin{cases}
    \partial_t f - \int_0^1 \xi(\partial_q f)=0 ,\qquad &\text{on }\R_+\times\mcl Q^\sS_2,
    \\
    f(0,\cdot) = \psi ,\qquad &\text{on }\mcl Q^\sS_2.
\end{cases}
\end{align}
\end{theorem}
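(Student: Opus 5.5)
The proof will go through the Hamilton--Jacobi approach developed by the third author and coauthors, with two steps: show that the limit of $\bar F_N$ is the viscosity solution of \eqref{e.main.hj}, and show that this viscosity solution has the variational representation \eqref{e.main.1}.

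\emph{Step 1: convergence to the viscosity solution.} The earlier results give that $\bar F_N$ is uniformly Lipschitz in $(t,q)$, that $\bar F_N(0,\cdot)\to\psi$, and that $\bar F_N$ is an approximate viscosity solution of \eqref{e.main.hj}. The last fact comes from the Gaussian integration by parts identity
\begin{equation}
\partial_t \bar F_N - \int_0^1 \xi(\partial_q \bar F_N) = \text{error},
\end{equation}
where the error is controlled by the concentration of the overlap array. Any subsequential limit is then a viscosity supersolution. It becomes a full solution once we have the synchronization and ultrametricity of overlaps under small perturbations; in the multi-species setting this is Panchenko's synchronization. Convexity of $\xi$ is not used here. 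Combined with well-posedness (comparison principle) of \eqref{e.main.hj} on $\mcl Q_2^\sS$, this yields $\lim \bar F_N = f$, with $f$ the unique Lipschitz viscosity solution. If the earlier work \cite{chen2025free} only gave the supersolution inequality for non-convex $\xi$, I would close the gap by proving the matching subsolution inequality through the cavity/Aizenman--Sims--Starr lower bound for the free energy. The viscosity property survives without convexity of $\xi$ provided the nonlinearity $p\mapsto\int\xi(p)$ is only evaluated on increasing paths.

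\emph{Step 2: Hopf-type representation.} The initial condition $\psi$ should be convex on $\mcl Q_2^\sS$, which is what Section~\ref{s.psi.convexity} appears to address, while the Hamiltonian $\int\xi$ is not convex. The Hopf formula for convex initial data, $f(t,q)=\sup_p\{\la q,p\ra - \psi^*(p)+t\int\xi(p)\}$, needs no convexity of the Hamiltonian. Writing $\psi^*(p)=\sup_{q'}(\la q',p\ra-\psi(q'))$ gives
\begin{equation}
\sup_p \inf_{q'} \big(\psi(q')+\la q-q',p\ra + t\int_0^1\xi(p)\big),
\end{equation}
which is exactly \eqref{e.main.1}, provided the Legendre transform is computed over the cone $\mcl Q^\sS$ with the appropriate dual cone. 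The plan is to verify directly that
\begin{equation}
g(t,q):=\sup_p\inf_{q'}\mcl J_{t,q}(q',p)
\end{equation}
is a viscosity solution. This needs three facts: $g$ is Lipschitz; the initial condition $g(0,\cdot)=\psi$ holds, by Fenchel--Moreau biduality on the cone, which uses convexity and lower semicontinuity of $\psi$ and that $\psi$ is nondecreasing along the cone; and the sub- and supersolution tests. For the tests, at a point where a smooth $\phi$ touches $g$, one selects a near-optimal $p$. Semiconcavity and convexity in $q$ then show that $\partial_q\phi=p$ and $\partial_t\phi=\int\xi(p)$. Uniqueness from Step 1 then gives $g=f$. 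Finally, one reduces from $\mcl Q_2$ to $\mcl Q_\infty$ by density and the Lipschitz continuity of $\psi$ in $L^1$.

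\emph{Main obstacle.} The delicate point is the cone structure. Hopf's formula on a cone with a non-monotone Hamiltonian can fail, because the viscosity solution may not be given by the unconstrained formula; this is related to why the sup--inf order differs from \eqref{e.parisi}. To handle it, I would first show that $\psi$ is convex and increasing in the order induced by the dual cone of $\mcl Q^\sS$. Then I would use that the optimizing $p$ can be restricted to $\mcl Q_\infty^\sS$, where $\int\xi(p)$ sees only increasing nonnegative paths. This restores the structure for which the Hopf formula yields a viscosity solution, and it would be checked through the envelope argument above.
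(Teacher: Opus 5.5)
\paragraph{The gap is in Step~1, and it is the heart of the matter.} You assert two things: that $\bar F_N$ is an approximate viscosity solution, and that, once synchronization and ultrametricity of the perturbed Gibbs measure are available, subsequential limits are viscosity \emph{sub}solutions as well as supersolutions, without convexity of $\xi$.

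That is exactly what is not known for non-convex $\xi$. The Hamilton--Jacobi input that exists is only the supersolution side, $f\le\liminf_N\bar F_N$: Theorem~\ref{t.HJ_bd}, transferred to the multi-species model in Proposition~\ref{p.HJ_bdd_ms}. Roughly, the error $\E\la\xi(R_N)\ra-\int_0^1\xi(\partial_q\bar F_N)$ can only be killed where $\partial_q\bar F_N$ converges to the derivative of the test function. Uniform semiconcavity (Proposition~\ref{p.semi-concave}) gives this where $\phi$ touches from below. Where $\phi$ touches from above, however, the limit may have a concave kink. Then $\partial_q\phi$ is only a convex combination of limiting gradients, and the non-convex nonlinearity $\int_0^1\xi$ gives no control.

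Synchronization does not rescue this. It tells you that the limiting overlaps of a perturbed system have the form $p(\alpha\wedge\alpha')$. It does not tell you that this $p$ matches $\partial_q\phi$ at a point where $\phi$ touches the limit from above.

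Your fallback, proving the subsolution inequality through the Aizenman--Sims--Starr bound, names the right tool, but that bound is not a viscosity inequality. It only yields $\limsup_N\bar F_N(t,q)\le\sP_{t,q}(p)$ for \emph{some} limiting path $p$. For non-convex $\xi$, the value $\sP_{t,q}(p)$ at an arbitrary $p$ need not lie below $f(t,q)$; this is the same phenomenon that makes \eqref{e.parisi} fail.

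\paragraph{What the paper does instead.} The paper never proves a subsolution property. It closes the upper bound with two ingredients that are absent from your plan.

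First, the $p$ produced by the cavity computation is shown to be a critical point, $p=\partial_q\psi(q+t\nabla\xi(p))$, \emph{without} assuming that $\lim_N\bar F_N$ exists (Theorem~\ref{t.crit.up.low2} and Proposition~\ref{p.crit_pt_bdd_ms}). The argument goes as follows. One adds to $q$ a small random increasing path $q_N(y)$. Semiconcavity and Lemma~\ref{l.semi-concav_L1} then show that $\partial_q\tilde F^x_N$ and $\partial_q\bar F^x_{N+1}$ are close on average (Lemma~\ref{l.E_yD_N,i(x,y)<}). One then picks $(x_N,y_N)$ that at once are typical for the averaged cavity increments, satisfy the Ghirlanda--Guerra identities, and make these discrepancies vanish. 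In the limit, the first derivative becomes $p$ and the second becomes $\partial_q\psi(q+t\nabla\xi(p))$.

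Second, convexity of $\psi$ (Corollary~\ref{c.convex.psi}) is used not only for the Hopf formula, which is your Step~2 and which the paper simply imports as Theorem~\ref{t.vis_sol}. It is also the mechanism of the upper bound. At a critical $p^+$, convexity gives $\mcl J_{t,q}(q',p^+)\ge\sP_{t,q}(p^+)$ for every $q'$, hence
$\limsup_N\bar F_N(t,q)\le\sP_{t,q}(p^+)=\inf_{q'}\mcl J_{t,q}(q',p^+)\le f(t,q)$.
The viscosity-solution statement in the theorem is then a consequence of the Hopf representation, not the route to the limit.

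Convexity of $\psi_\circ$ is also needed, via Jensen, just to transfer the vector-model lower bound to the multi-species model. That transfer further relies on Proposition~\ref{p.avg_solution} and on approximating irrational $\lambda_\infty$, neither of which your plan addresses.
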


We refer to Definition~\ref{d.vs} for a precise definition of the notion of viscosity solution to \eqref{e.main.hj}.

\begin{remark}[Balanced models]
\label{r.intro.balanced}
Theorem~\ref{t.main} also gives a simple reduction for balanced models. In Section~\ref{s.balanced}, we introduce a comparison structure that includes the balanced multi-species models in~\cite{bates2025balanced} and is closely related to the permutation-invariant reductions and Hamilton--Jacobi comparisons in~\cite{issa2024existence}. Proposition~\ref{p.balanced_reduction} shows that, for such models, the multi-species formula is squeezed between one-species formulas, and in particular its value at $q=0$ agrees with the free energy of an associated one-species model; see also Corollary~\ref{c.balanced_one_species_formula} and Remark~\ref{r.balanced_single_species_interpretation}.
In Appendix~\ref{a.balanced_hj_comparison}, we also sketch an alternative Hamilton--Jacobi comparison proof of this balanced reduction, which does not rely on Theorem~\ref{t.main}.
\end{remark}

To the best of our knowledge, Theorem~\ref{t.main} is the first identification of the limiting free energy for a model with $\pm 1$ spins and non-convex $\xi$, and also the first such identification for a model with non-convex $\xi$ and potentially more than one level of replica symmetry breaking. We stress however that our results are restricted to the case of \emph{centered} spins; in other words, we have \emph{not} allowed for the presence of a deterministic external field. Indeed, as was explained in \cite[Section~6]{mourrat2021nonconvex}, the statement \eqref{e.main.1} implies that the function $\psi$ is convex; however, if the reference measure for one of the species were $p \de_1 + (1-p) \de_{-1}$ with $\max(p,1-p) > (3+\sqrt 3)/6 \simeq 0.79$ (in place of $p = 1/2$), then this property would be demonstrably false (see also \cite[Exercise~6.7 and solution]{HJbook}).

Let us also mention that the restriction to centered Ising spins is not inherent to the Hamilton--Jacobi approach. In the forthcoming work \cite{chen2026spherical}, we extend the results of the present paper, through similar arguments, to non-convex multi-species spherical spin glasses. The spherical setting requires additional technical inputs, in particular to handle the geometry of the sphere and the corresponding form of the cascade transform.

Roughly speaking, we build the proof of Theorem~\ref{t.main} along the following lines.

(1) We borrow from \cite{mourrat2021nonconvex,mourrat2023free} the fact that $\liminf_{N \to +\infty} \bar F_N(t,q)$ is bounded from below by the solution~$f$ to \eqref{e.main.hj}.

(2) We show that $\psi$ is convex, and use \cite{chen2022hamilton} to deduce that the solution $f$ to \eqref{e.main.hj} can be written as the variational formula on the right side of \eqref{e.main.1}. 

(3) At this stage, if we were to assume the existence of the limit free energy (the left side of \eqref{e.main.1}), then we could appeal to the result of \cite{chen2025free} that ensures the existence of some $(q'_*,p_*)$ that is a critical point of $\mcl J_{t,q}$ and is such that 
\begin{equation*}  \lim_{N \to +\infty} \bar F_N(t,q) = \mcl J_{t,q}(q_*', p_*).
\end{equation*}
Using the first identity in \eqref{e.def.crit.point} and the convexity of $\psi$, we have that 
\begin{equation*}  \mcl J_{t,q}(q',p_*) - \mcl J_{t,q}(q'_*,p_*)  = \psi(q') - \psi(q'_*) - \la \partial_q\psi(q'_*), q'-q'_* \ra_\cH \ge 0,
\end{equation*}
and we would thus have
\begin{equation*}  \lim_{N \to +\infty} \bar F_N(t,q) =\mcl J_{t,q}(q'_*,p_*) = \inf_{q' \in \mcl Q_\infty^\sS} \mcl J_{t,q}(q',p_*) \le \sup_{p \in \mcl Q_\infty^\sS} \inf_{q' \in \mcl Q_\infty^\sS} \mcl J_{t,q}(q',p),
\end{equation*}
thereby completing the proof. 

The main problem with this sketch of proof is that we do not know in advance that the limit free energy exists. This assumption was used in \cite{chen2025free} in order to assert that, in a suitably weak sense, $\dr_q \bar F_N$ stabilizes to some fixed quantity as $N$ tends to infinity, since we represent $\bar F_N$ itself as a sum of contributions involving $\dr_q \bar F_k$ for all $k \le N$. Here we revisit this argument, and show that one can always find critical points $(q'_-, p_-)$ and $(q'_+, p_+)$ of $\mcl J_{t,q}$ such that 
\begin{equation}  
\label{e.crit.up.low.intro}
\mcl J_{t,q}(q'_-, p_-) \le \liminf_{N \to +\infty} \bar F_N(t,q) \le \limsup_{N \to +\infty} \bar F_N(t,q) \le \mcl J_{t,q}(q'_+, p_+);
\end{equation}
see Theorem~\ref{t.crit.up.low2} for a precise statement. This statement is interesting on its own, and usefully complements the conditional results of \cite{chen2025free}. It can be stated in greater generality than our present assumptions (see Theorem~\ref{t.crit.up.low.vect}), in particular allowing for a bias in the reference measure (or equivalently, for the presence of an external field).

Another technical difficulty that we face is that in order to show \eqref{e.crit.up.low.intro}, it is more convenient to encode models in the form of vector spin glasses, as opposed to multi-species, as one can then perform cavity calculations one vector spin at a time. Under the assumption that all the entries of $\lambda_\infty$ are rational, we can go back and forth between the two settings, which we shall do. We then obtain the final result, for $\lambda_\infty$ that may have irrational coordinates, by an approximation argument.

\medskip

\noindent \textbf{Related works.} We now give a brief overview of related works.

The Parisi formula was first proposed in the physics literature \cite{parisi79,parisi80}. Its rigorous proof, through Guerra's interpolation bound and then through the matching lower bound, was obtained in \cite{gue03,Tpaper}. The argument was later revisited and extended in \cite{pan.aom, pan}, with ultrametricity and the cavity computation of \cite{aizenman2003extended} playing central roles. The multi-species version of the problem was introduced and studied in \cite{barra2015multi}, and the limit free energy for these models was proved in \cite{pan.multi}; related developments include \cite{chen2024free,chen2025free, pan.potts, pan.vec}. Comparable results were obtained for spherical models in \cite{bates2022free, chen2013aizenman, tal.sph}. 

A common feature of the Parisi-formula results mentioned above is the convexity of the covariance function $\xi$ on the relevant overlap domain, here $\R_+^\sS$. In the multi-species setting, this condition is restrictive, and when it is dropped the usual Parisi formula \eqref{e.parisi} is no longer valid in general; see \cite[Section~6]{mourrat2021nonconvex}. Models with non-convex $\xi$ are in general less well understood. Yet, under the assumption that the limit free energy exists, its value has been identified in \cite{subag2023tap2, subag2025tap1} for all spherical models in the case when $\xi$ is a monomial. Still for spherical models, the case of $|\sS| = 2$ and $\xi(x,y) = xy$ has been obtained unconditionally in \cite{aufchebi, baik2020free}, and the cases of $\xi(x,y) = x^p y^q$ and $\xi(x_1,\ldots, x_D) = x_1 \cdots  \, x_D$ have also been obtained unconditionally for special choices of the parameter $\lambda_\infty$ in \cite{bates2025balanced, dartois2024injective}. In the latter works, the parameter $\lambda_\infty$ needs to be specific in order to enforce additional symmetries, in which case the model is said to be \emph{balanced}. In this theme, a Parisi formula for the constrained balanced Potts spin glass was proved in \cite{bates2023parisi}, by relating the free energy of the full model to that of a single-species model. Related reductions for vector spin glasses were obtained in \cite{issa2024existence}.

Outside of these cases, even the formulation of a conjecture for the limit free energy of spin-glass models with non-convex $\xi$ is a non-trivial task. Physicists usually only state that the limit free energy can be written in the form of $\mcl J_{t,0}(q',p)$ for some $(q',p)$ that is a critical point of~$\mcl J_{t,0}$, but do not specify how to choose the critical point if it turns out that there are several (see for instance \cite{fyo1, fyo2, hartnett2018replica, korenblit1985spin} in the case of the bipartite model). A precise version of the physicists' statement was proved rigorously in \cite{chen2025free} (see also \cite{chen2024ms} for corresponding results in the multi-species setting), but because of the ambiguity in the choice of the critical point, this result does not completely settle the question. A precise conjecture for the limit free energy was formulated in \cite{chen2022hamilton, mourrat2020extending, mourrat2021nonconvex, mourrat2022parisi, mourrat2023free} in terms of the solution to a Hamilton--Jacobi equation; see also \cite{HJbook}. 

We stress again that the variational formula in \eqref{e.main.1} is invalid in general if we allow for a sufficiently strong external field, as was explained in \cite[Section~6]{mourrat2021nonconvex}. Another conjecture, based on a very different ``un-inverted'' formula, has been explored in \cite{chen2025convex, issa2024hopflike, mourrat2025spin, mourrat2025uninverting}, but has so far only been verified for models with convex covariance function $\xi$.

Let us stress that the convexity of $\psi$ proved in Section~\ref{s.psi.convexity} should be distinguished from another convexity property of the Parisi functional due to Auffinger and Chen~\cite{auffinger2015parisi}. The convexity needed in the present paper is a convexity of the cascade transform as a function of the path variable: for $q_0,q_1\in\mcl Q_1$ and $\theta\in[0,1]$, it concerns the interpolation $(1-\theta)q_0+\theta q_1$. If one identifies a path $q$ with the law of $q(U)$, where $U$ is a uniform random variable on $[0,1]$, then this interpolation corresponds to the optimal-transport interpolation between the two laws. In this sense, the result proved here is a transport (or displacement) convexity property of the centered Ising cascade transform. This is the convexity that is compatible with the Hamilton--Jacobi formulation: it allows the solution of \eqref{e.main.hj} to be represented by the Hopf formula, and it is also what turns critical points of $\mcl J_{t,q}$ into minimizers in the $q'$ variable.
This transport convexity is different from the affine concavity shown in \cite{auffinger2015parisi} and used to prove the uniqueness of the Parisi measure. There, the Parisi functional is viewed as a function of the probability measure itself, and the interpolation is the usual affine interpolation of measures. With our sign convention with a minus sign in the definition of the free energy, the result of \cite{auffinger2015parisi} indeed states that the Parisi functional is affine concave; and unlike our transport convexity result, their affine concavity result is robust to the presence of an external field.
\medskip

\noindent \textbf{Organization of the paper.} 
The rest of the paper is organized as follows. 
In Section~\ref{s.psi.convexity}, we define the function $\psi_\circ$ appearing in \eqref{e.decomp.psi} and show its convexity. In Section~\ref{s.enriched_properties}, we define the enriched free energy $\bar F_N(t,q)$ for every $q \in \mcl Q_1^\sS$ and present some regularity estimates for this function. In Section~\ref{s.rel_to_vector_spin}, when all the coordinates of $\lambda_\infty$ are rational, we identify a vector spin-glass model whose free energy asymptotically coincides with that of the multi-species model that is our focus. In Section~\ref{s.cavity}, we perform a number of cavity calculations on this vector model, which we then leverage in Section~\ref{s.crit_pt_bdd} in order to show Theorem~\ref{t.crit.up.low2}, as announced around \eqref{e.crit.up.low.intro}. In Section~\ref{s.hj}, we use Hamilton--Jacobi equations in order to obtain a lower bound on the limit free energy. This is in the spirit of steps (1)-(2) of the sketch of proof above, but we first obtain such statements in the case of vector spin glasses, and we then prepare the ground for going back to the multi-species setting. In Section~\ref{s.conclusion}, we complete the derivation of the multi-species versions of the main results of Sections~\ref{s.crit_pt_bdd} and \ref{s.hj}, and thereby also of our main result Theorem~\ref{t.main}. In Section~\ref{s.balanced}, we apply Theorem~\ref{t.main} to balanced models and show that, under a suitable comparison structure including the balanced multi-species models of~\cite{bates2025balanced}, the value at $q=0$ reduces to the free energy of an associated single-species model. In Appendix~\ref{a.vector}, we explain how to adapt the arguments and obtain the result discussed around \eqref{e.crit.up.low.intro} in a more general setting. 
Finally, Appendix~\ref{a.balanced_hj_comparison} gives a brief alternative proof of the balanced-model reduction, based directly on Hamilton--Jacobi comparison rather than on the Hopf formula or Theorem~\ref{t.main}.

\section{Convexity of \texorpdfstring{$\psi$}{ψ}}
\label{s.psi.convexity}

The goal of this section is to show the convexity of the function $\psi$. Here we simply define~$\psi$ according to the formula \eqref{e.decomp.psi}; in the next section we will see that $\psi$ is in fact the limit of $\bar F_N(0,q)$ as $N$ tends to infinity. So our goal here reduces to that of proving that $\psi_\circ$ is convex. We will see a definition of $\psi_\circ$ in terms of probability cascades in the next section, in Eq. \eqref{e.psi_single=}. Here we focus most of our attention on the case of paths $q$ that take a finite number of distinct values. The advantage of doing so is that it gives us a clear algorithmic procedure for calculating $\psi_\circ(q)$ in this case, which we can take as a definition of $\psi_\circ$, and which is as follows. To start with, we give ourselves a Brownian motion $(B_t)_{t \ge 0}$, and for every $t \ge 0$ and $m \ge 0$, we define
\begin{equation}\label{e.T_m,t=}
    T_{m,t}f(x)=
    \begin{cases}
    \displaystyle
    \frac1m\log \E e^{m f(x+B_{2t})}, & m>0,\\[2mm]
    \displaystyle
    \E f(x+B_{2t}), & m=0.
    \end{cases}
\end{equation}
Fixing $k \in \N$, we consider a path $q$ of the form
\begin{equation}  
\label{e.def.q}
q = \sum_{i = 0}^k q_i \1_{[m_i, m_{i+1})},
\end{equation}
where
\begin{equation}\label{e.m_i=}
    0=m_0<m_1<\cdots<m_k<m_{k+1}=1,
\end{equation}
and
\begin{equation}
\label{e.q_cone=}
    0=q_{-1}\le q_0<q_1<\cdots<q_k.
\end{equation}
For each such choice of $(m_i)$ and $(q_i)$, we define
\begin{equation}\label{e.f_k+1=}
    f_{k+1}(x)=\phi(x):=\log \sum_{\sigma \in \{-1,1\}} 2^{-1} e^{\sigma x} = \log \cosh x,
\end{equation}
and then recursively
\begin{equation}\label{e.f_i=}
    f_i:=T_{m_i,q_i - q_{i-1}}f_{i+1},
    \qquad \text{for}\quad i\in\{0,\dots,k\}.
\end{equation}
By definition, we set $\psi_\circ(q)$ to be given by
\begin{equation}
\label{eq:F-increment-form}
    \psi_\circ(q)=q_k-f_0(0).
\end{equation}
Naturally, the functions $(f_i)$ depend on the choice of $(m_i)$ and $(q_i)$, although we suppress it from the notation. This defines the function $\psi_\circ$ on any element of $\mcl Q$ that takes a finite number of values. If we allow for repetitions in \eqref{e.q_cone=}, then there are multiple possible choices of $(m_i)$ and $(q_i)$ that yield the exact same path $q$ through \eqref{e.def.q}. Yet, one can check that the procedure outlined above for defining $\psi_\circ$ yields the same value regardless of the representation of the path we choose, since $T_{m,0}$ is the identity map. This remark gives us a convenient way to compare the values of $\psi_\circ$ at two different paths that both take finite values, since we can choose a single set of discretization points $(m_i)$ for both paths simultaneously. Using this observation, one can show the following classical result; we will give a self-contained proof below for the reader's convenience.
\begin{proposition}[Lipschitz continuity of $\psi_\circ$]
\label{p.lip.psi.circ}
For every $q, q' \in \mcl Q$ taking a finite number of values, we have
\begin{equation*}  |\psi_\circ(q) - \psi_\circ(q')| \le |q-q'|_{L^1}.
\end{equation*}
As a consequence, the mapping $\psi_\circ$ can be extended to $\mcl Q_1$ by continuity.
\end{proposition}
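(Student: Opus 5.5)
The plan is to reduce to a derivative estimate along a linear interpolation between the two paths, after putting them on a common grid. Given $q,q'\in\mcl Q$ with finitely many values, we take the union of their jump locations as one set of points $0=m_0<\cdots<m_{k+1}=1$. Then both paths are of the form \eqref{e.def.q}, with values $(q_i)$ and $(q'_i)$, nondecreasing in $i$, where repetitions are allowed. As remarked before the statement, repetitions do not change $\psi_\circ$, since $T_{m,0}$ is the identity. For $\theta\in[0,1]$ we set $q^\theta=(1-\theta)q+\theta q'$. This is again nondecreasing in $i$ on the same grid, and
\begin{equation*}
|q-q'|_{L^1}=\sum_{i=0}^k (m_{i+1}-m_i)|q_i-q'_i|.
\end{equation*}
It therefore suffices to prove that the map $(q_0,\dots,q_k)\mapsto\psi_\circ$, on the closed cone $\{0\le q_0\le\cdots\le q_k\}$, is $C^1$ (one-sided derivatives at the boundary are enough) and satisfies
\begin{equation*}
0\le \partial_{q_j}\psi_\circ\le m_{j+1}-m_j \qquad\text{for every } j.
\end{equation*}
Integrating $\frac{\d}{\d\theta}\psi_\circ(q^\theta)$ over $\theta\in[0,1]$ then gives the claim.

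To compute these derivatives, we first record two facts.
\begin{itemize}
\item Each $f_i$ is smooth, and by induction $|f_i'|\le1$ for all $i$. This holds for $f_{k+1}=\log\cosh$, and it is preserved by $T_{m,t}$: differentiating under the expectation gives $(T_{m,t}f)'=\E[W f'(x+B_{2t})]$, with the tilted weight $W=e^{mf(x+B_{2t})}/\E e^{mf(x+B_{2t})}$.
\item The map $u(t,x)=T_{m,t}f(x)$ solves $\partial_t u=\partial_x^2 u+m(\partial_x u)^2$, which is the Hopf--Cole transform of the heat equation.
\end{itemize}
We write $\psi_\circ=\sum_{i\le k}t_i-f_0(0)$ in terms of the increments $t_i=q_i-q_{i-1}$. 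We then differentiate $f_0(0)$ in $t_i$ by propagating through the recursion \eqref{e.f_i=}, using $\partial_x$ of the tilted expectations and Gaussian integration by parts. This should give
\begin{equation*}
\partial_{t_i} f_0(0)=\E\Big[\mcl W_i\big(f_{i+1}''+m_i(f_{i+1}')^2\big)(X_i)\Big],
\end{equation*}
where $X_i$ is the Brownian path evaluated at time $2q_i$ and $\mcl W_i$ is the product of the cascade weights of levels $0,\dots,i$.

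Next, we use $\partial_{q_j}=\partial_{t_j}-\partial_{t_{j+1}}$ for $j<k$, and $\partial_{q_k}=\partial_{t_k}$. We apply Itô's formula to $f_{j+1}'$ along the Brownian path between levels $j$ and $j+1$; the process $f_{j+1}'(X_t)$ is a martingale under the tilted measure. The second-derivative terms then cancel, and the expected result is the classical formula
\begin{equation*}
\partial_{q_j}\psi_\circ=(m_{j+1}-m_j)\,\E\Big[\mcl W_j\,\big(f'_{j+1}(X_j)\big)^2\Big],
\end{equation*}
with the convention $m_{k+1}=1$. The top level is consistent: the term $q_k$ in \eqref{eq:F-increment-form} contributes $1$, which combines with $-f_k''$ using $\log\cosh''=1-\tanh^2$. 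Since $\mcl W_j$ is a probability density and $|f_{j+1}'|\le1$, this gives $0\le\partial_{q_j}\psi_\circ\le m_{j+1}-m_j$, as required.

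The main obstacle is the bookkeeping in this derivative computation: the weights across levels, and the cancellation between $f''$ and $m(f')^2$ terms from adjacent levels. We will handle it by induction on the level, working with the derivative of $f_i$ with respect to the increment at level $j\ge i$. Continuity and differentiability of the derivatives up to the boundary of the cone, where increments vanish, follow from the smoothness of $t\mapsto T_{m,t}f$ on $t\ge0$ for smooth $f$ with bounded derivatives. Finally, paths with finitely many values are dense in $\mcl Q_1$ for the $L^1$ norm, for instance via dyadic step approximations of $q$ that stay increasing. The Lipschitz bound then gives the unique continuous extension of $\psi_\circ$ to $\mcl Q_1$.
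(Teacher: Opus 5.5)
Your proposal is correct and follows essentially the paper's proof: a common grid, the formula $\partial_{q_j}\psi_\circ=(m_{j+1}-m_j)\,\mcl L_{0:j}[(f'_{j+1})^2](0)\in[0,m_{j+1}-m_j]$ derived from the $t_i$-derivatives, integration along the segment, and density in $\mcl Q_1$. Your It\^o/martingale cancellation is a probabilistic rephrasing of the paper's Cole--Hopf identity $\mcl L_{j+1}\bigl[f''_{j+2}+m_{j+1}(f'_{j+2})^2\bigr]=f''_{j+1}+m_{j+1}(f'_{j+1})^2$. The remaining differences are cosmetic: the paper proves the bound on the open cone and extends it to the closed cone by continuity instead of using one-sided derivatives, and at the top level the relevant term is $f''_{k+1}=\log\cosh''$, not $f''_k$.
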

The main result of this section is the following.

\begin{proposition}[Convexity of $\psi_\circ$]
\label{p.convex.psi.circ}
For every $q_0,q_1 \in \mcl Q_1$ and \(\lambda\in[0,1]\), we have
\begin{equation*}
\psi_\circ((1-\lambda)q_0+\lambda q_1)
\le
(1-\lambda)\psi_\circ(q_0)+\lambda\psi_\circ(q_1).
\end{equation*}
\end{proposition}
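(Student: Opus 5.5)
The plan is to reduce to paths taking finitely many values, all sharing the same grid $(m_i)$, and then prove concavity of the map $(q_0,\dots,q_k)\mapsto f_0(0)$ on the cone \eqref{e.q_cone=}; convexity of $\psi_\circ(q)=q_k-f_0(0)$ follows since $q_k$ is linear.

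\emph{Reduction.} By Proposition~\ref{p.lip.psi.circ}, $\psi_\circ$ is $1$-Lipschitz in $L^1$. Paths taking finitely many values are dense in $\mcl Q_1$. Given $q_0,q_1\in\mcl Q_1$, I approximate both by step paths on a common grid $0=m_0<\dots<m_{k+1}=1$. For each $r$ the convex combination $(1-\lambda)q_0+\lambda q_1$ is then again a step path on the same grid, and its values are the convex combinations of the values. Passing to the limit therefore reduces the claim to the following: for fixed $(m_i)$, the function $\Phi(q_0,\dots,q_k):=f_0(0)$ is concave on the closed cone $\{0\le q_0\le \dots\le q_k\}$. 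Repeated values are allowed, since $T_{m,0}=\id$. By continuity, it suffices to check that the Hessian of $\Phi$ is negative semidefinite in the interior.

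\emph{First derivatives.} I would express $\Phi$ through the increments $t_i=q_i-q_{i-1}$ and differentiate the recursion \eqref{e.f_i=}. Each $f_i$ solves $\dr_t u=\Delta u+m_i|\nabla u|^2$ on a time interval of length $t_i$, with Ruelle-cascade (Gibbs) reweighting. The standard computation gives $\dr_{q_j}\Phi$ as a weighted combination of $m_{j+1}-m_j$ and $\E\langle \tanh\rangle$-type overlap quantities at level $j$. In cascade language this is
\[
\dr_{q_j}\psi_\circ=(m_{j+1}-m_j)\,\E\langle \1\{\text{overlap level}=j\}\,\si\si'\rangle .
\]
More concretely, this is $(m_{j+1}-m_j)\E_j[(\dr_x f_{j+1})^2]$ in the usual tilted measures, where $\dr_x f_{k+1}=\tanh$.

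\emph{Second derivatives (main obstacle).} I then differentiate once more. This produces terms of the form $\E[(\dr_x f_{j+1})^2\,\cdot\,(\text{derivative of the tilting})]$ and terms involving $\dr_x^2 f_{j+1}$. The sign of the resulting quadratic form is the crux, and it must use that the spins are centered, since for biased measures convexity fails. Evenness of $\phi=\log\cosh$ and the resulting oddness of all $\dr_x f_i$ will kill cross terms with odd moments.

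For the remaining terms, I would try to show that each $u_i:=\dr_x f_i$ is odd, increasing, and concave on $[0,\infty)$. This is a GHS-type property. It holds for $\tanh$, and I would attempt to propagate it through the operators $T_{m,t}$ by induction on $i$ from $k+1$ down to $0$. The tool for the propagation would be a Griffiths/GHS-style argument applied to the tilted Gaussian convolution.

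With this structural input, I expect the quadratic form to become a sum of terms with the shape $-\mathrm{Var}(\cdot)$ minus nonnegative corrections, hence to be nonpositive. If the direct Hessian bookkeeping becomes unwieldy, I would fall back on the stochastic-control (Auffinger--Chen) representation of $f_0(0)$ after the time change $s=q(r)$:
\[
f_0(0)=\sup_u \E\Big[\phi\Big(\int_0^1\sqrt{2q'}\,\d W+\int_0^1 2r q' u\,\d r\Big)-\int_0^1 r q' u^2\,\d r\Big].
\]
Substituting $v=q'u$ turns the penalty into the perspective $-\int_0^1 r v^2/q'$, which is jointly concave in $(v,q')$. The remaining non-linear term $\sqrt{q'}$ in the noise would then have to be handled using the symmetry of $\phi$, again via the GHS-type concavity of $\tanh$ on $\R_+$.
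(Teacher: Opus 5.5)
Your reduction to step paths on a common grid and your first-derivative formula $\partial_{q_j}\psi_\circ=(m_{j+1}-m_j)\,\mathcal L_{0:j}[(f'_{j+1})^2](0)$ agree with the paper. The gap is the step you yourself call the crux: nothing in the proposal determines the sign of the Hessian.

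\emph{The proposed mechanism is not there.} ``A sum of $-\mathrm{Var}(\cdot)$ minus nonnegative corrections'' is an expectation, not an argument, and the tools you name do not produce it. Parity does not cancel any terms here; it only serves to give terms a sign. The Hessian $H=(\partial_{q_i}\partial_{q_j}\psi_\circ)$ has nonnegative diagonal and nonpositive off-diagonal entries, so positivity cannot come from a termwise sign count. Concavity of $f_i'$ on $[0,\infty)$, even if granted, is not the input that is needed.

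\emph{The missing idea} is the diagonal-dominance identity
\[
a^\top H a=\sum_i\Big(\sum_j H_{ij}\Big)a_i^2+\sum_{i<j}(-H_{ij})(a_i-a_j)^2 .
\]
It reduces convexity to two separate sign statements about $U_i:=\mathcal L_{0:i}[(f'_{i+1})^2](0)$:
(i) $\partial_{q_j}U_i\le 0$ for $i\ne j$;
(ii) $\frac{d}{d\varepsilon}U_i(q+\varepsilon(1,\dots,1))\ge 0$.

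For (i), the paper observes that, on a common refinement, moving $q_j$ to $q_j+\varepsilon$ only lowers the mass on $[q_j,q_j+\varepsilon]$ from $m_{j+1}$ to $m_j$. It then invokes Panchenko's monotonicity of $U_i$ in the mass coordinates. One can also differentiate directly, using $\partial_{q_j}f_r=-(m_{j+1}-m_j)\,\mathcal L_{r:j}[(f'_{j+1})^2]$ for $r\le j$. Every resulting term is then nonpositive, for one of two reasons:
\begin{itemize}
\item it is $-f'_{i+1}\Gamma'_{i+1}\le 0$, where $\Gamma_{i+1}:=\mathcal L_{i+1:j}[(f'_{j+1})^2]$ is even and nondecreasing in $|x|$;
\item it is minus a covariance, under a tilted Gaussian, of two functions nondecreasing in $|y|$, which is nonnegative by Chebyshev's inequality.
\end{itemize}

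For (ii), adding $\varepsilon$ to every $q_i$ only lengthens the first step, which is an untilted heat step since $m_0=0$. Hence $U_i(q+\varepsilon(1,\dots,1))=\E\,G_i(\sqrt{2\varepsilon}\,|Z|)$ with $G_i$ even and nondecreasing in $|x|$. This is exactly where centering enters: evenness of $\phi$ and evaluation at $x=0$.

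So what must be propagated through the recursion is that $f_i'$ is odd and nondecreasing, and that each $\mathcal L_i$ preserves the class of even functions nondecreasing in $|x|$. A GHS-type concavity is not what is needed.

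\emph{The stochastic-control fallback does not close the gap either.} Since $\phi=\log\cosh$ is convex, the terminal term $\E\,\phi(\cdot)$ is convex, not concave, in the drift $\int 2rv$. So the objective is not jointly concave in $(v,q')$, even before the $\sqrt{q'}$ in the noise is dealt with, and ``a supremum of jointly concave functions is concave'' cannot be invoked. In addition, for the step paths you reduced to, $q'$ in the $r$-variable is a sum of Dirac masses. The perspective term $\int r v^2/q'$ would first have to be reinterpreted.
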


Since we simply take \eqref{e.decomp.psi} as the definition of $\psi$ here, an immediate consequence of the previous proposition is the following. 
\begin{corollary}[Convexity of $\psi$]
\label{c.convex.psi}
For every $q_0,q_1 \in \mcl Q_1^\sS$ and \(\lambda\in[0,1]\), we have
\begin{equation*}
\psi((1-\lambda)q_0+\lambda q_1)
\le
(1-\lambda)\psi(q_0)+\lambda\psi(q_1).
\end{equation*}
\end{corollary}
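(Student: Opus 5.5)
The corollary follows directly from Proposition~\ref{p.convex.psi.circ} together with the definition \eqref{e.decomp.psi} of $\psi$.

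Fix $q_0=(q_{0,s})_{s\in\sS}$ and $q_1=(q_{1,s})_{s\in\sS}$ in $\mcl Q_1^\sS$, and fix $\lambda\in[0,1]$.

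First, we check that the interpolated path is admissible. For each species $s$, the coordinate of $(1-\lambda)q_0+\lambda q_1$ is the path $(1-\lambda)q_{0,s}+\lambda q_{1,s}$. This path is nonnegative, right-continuous and increasing, and it lies in $L^1[0,1)$. So it belongs to $\mcl Q_1$, and $\psi_\circ$ is defined at it.

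Second, we apply Proposition~\ref{p.convex.psi.circ} coordinatewise. For every $s\in\sS$,
\begin{equation*}
\psi_\circ\bigl((1-\lambda)q_{0,s}+\lambda q_{1,s}\bigr)\le(1-\lambda)\psi_\circ(q_{0,s})+\lambda\psi_\circ(q_{1,s}).
\end{equation*}

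Third, we multiply each of these inequalities by $\lambda_{\infty,s}$. Since $\lambda_{\infty,s}\in(0,1)$ is nonnegative, the direction of each inequality is preserved. Summing over $s\in\sS$ and using \eqref{e.decomp.psi} gives
\begin{equation*}
\psi\bigl((1-\lambda)q_0+\lambda q_1\bigr)\le(1-\lambda)\psi(q_0)+\lambda\psi(q_1).
\end{equation*}
This is the claim.

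There is no real obstacle here. All the substance lies in Proposition~\ref{p.convex.psi.circ}, which may be assumed. The only points to verify are that $\mcl Q_1$ is a convex cone, so the interpolated paths are admissible, and that the weights $\lambda_{\infty,s}$ are nonnegative.
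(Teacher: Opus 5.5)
Your proposal is correct and follows the same approach as the paper: the paper's proof also observes that $\psi$ is, by \eqref{e.decomp.psi}, a nonnegative linear combination of the maps $q\mapsto\psi_\circ(q_s)$, each convex on $\mcl Q_1$ by Proposition~\ref{p.convex.psi.circ}. Your extra check that the interpolated path stays in $\mcl Q_1^\sS$ is a harmless detail that the paper leaves implicit.
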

The bulk of the work in this section is geared towards proving the following weaker version of Proposition~\ref{p.convex.psi.circ}.
\begin{proposition}
\label{p.convex.weak}
With $(m_i)$'s fixed as in \eqref{e.m_i=}, the map 
\begin{equation}  
\label{e.convex.weak}
\Ll\{
\begin{array}{rcl}  \{(q_i) \,:\, 0<q_0<q_1<\cdots<q_k\} & \to  & \R \\
(q_i) & \mapsto & \psi_\circ(q) = \psi_\circ \Ll( \sum_{i = 0}^k q_i \1_{[m_i, m_{i+1})} \Rr)
\end{array}
\Rr.
\end{equation}
is convex.
\end{proposition}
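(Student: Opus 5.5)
Since $q\mapsto q_k$ is linear, it suffices to show that $(q_i)\mapsto f_0(0)$ is concave on the open cone $\{0<q_0<\cdots<q_k\}$. The plan is to work in the increment variables $t_i:=q_i-q_{i-1}>0$, $i\in\{0,\dots,k\}$. This is a linear bijection onto $(0,\infty)^{k+1}$, so it preserves convexity, and it is enough to prove that $F(t_0,\dots,t_k):=f_0(0)$ is concave in $(t_i)$.

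\textbf{Step 1: derivatives via the Parisi PDE.} The function $u_i(s,x):=T_{m_i,s}f_{i+1}(x)$ solves
\begin{equation*}
\dr_s u=\dr_x^2u+m_i(\dr_xu)^2 ,
\end{equation*}
so $t_i$ enters $F$ only as the running time of the $i$-th block. I would compute $\dr_{t_i}F$ and $\dr_{t_i}\dr_{t_j}F$ as expectations along the nested tilted Gaussian measures that define the cascade. Concretely, I would write $x_i=\sum_{l<i}B^{(l)}_{2t_l}$ with independent Brownian motions and tilt the law of $B^{(l)}$ by $e^{m_l(f_{l+1}(x_{l+1})-f_l(x_l))}$. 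With this representation, $\dr_{t_i}F=\langle \dr_x^2 f_{i}+m_i(\dr_x f_{i})^2\rangle$ evaluated at the end of block $i$, which is the usual ``overlap'' expression $\langle\sigma\sigma'\rangle$-type formula. The second derivatives are then covariances of such quantities across blocks, plus a diagonal term coming from the $s$-dependence of the PDE. Equivalently, a cleaner route is Gaussian interpolation: fix directions $(\dot t_i)$ and consider $\theta\mapsto F(t+\theta\dot t)$. Since the variances scale linearly, I would write $\frac{\d^2}{\d\theta^2}$ by applying Gaussian integration by parts twice along the cascade.

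\textbf{Step 2: structural properties of the $f_i$, proved by backward induction.} Starting from $\phi=\log\cosh$, I would show that each $f_i$ is even and convex, that $0\le f_i''\le 1$, and, crucially, that $f_i''$ is nonincreasing in $|x|$. The last property is a GHS-type property: $\phi''=1-\tanh^2$ is even and decreasing in $|x|$, and $\phi''''\le 2(\phi'')^2$-type inequalities hold. I need to check that $T_{m,t}$ preserves these properties. For $m=0$ this is convolution with a symmetric Gaussian, which preserves evenness and the monotonicity of symmetric unimodal functions. For $m>0$ one differentiates $\frac1m\log\E e^{mf(x+B)}$: the second derivative equals a tilted expectation of $f''+m(f')^2$ minus $m$ times the squared tilted mean of $f'$. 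Here I would use the invariant inequality ``$f''+m (f')^2\le$ something'' together with the symmetry. The centeredness of the spins, that is the evenness of $\phi$, enters exactly here, consistently with the failure of convexity under a strong external field noted in the introduction.

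\textbf{Step 3: sign of the Hessian.} I would insert the invariants of Step 2 into the expression of Step 1 and show that the Hessian quadratic form $\sum_{i,j}\dot t_i\dot t_j\,\dr_{t_i}\dr_{t_j}F$ is nonpositive. The cross terms $i\neq j$ appear as covariances under the cascade measure. My hope is that they combine with the diagonal terms into a negative sum of squares plus terms controlled by the one-dimensional inequality for $f_i''$, by analogy with the proof of $\psi_\circ$'s convexity in the one-step (replica-symmetric) case $k=0$. In that case the claim reduces to concavity of $t\mapsto\E\log\cosh(\sqrt{2t}Z)$ minus the linear term, which follows from $\E[\phi''''(\sqrt{2t}Z)]$-type sign computations. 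I expect this step to be the main obstacle. I would first try the case $k=1$ by hand to identify the right invariant inequality, and then set up the induction so that the Hessian at level $i$ reduces to the Hessian at level $i+1$ plus an explicitly nonpositive term.
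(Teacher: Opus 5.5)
Your reduction to concavity of $f_0(0)$ in the increment variables is fine. Your first-derivative formula also matches the paper's $\partial_{t_i}f_0(0)=(\mathcal L_{0:i-1}A_i)(0)$ with $A_i=f_i''+m_i(f_i')^2$. However, Step~3, which you yourself flag as the main obstacle, is the whole content of the proposition, and Steps~1--2 supply no mechanism for it.

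The mixed second derivatives require differentiating the tilts $m_lf_{l+1}$ of the intermediate operators $\mathcal L_l$ with respect to a later time parameter. This produces covariances under the nested tilted measures. One-variable facts such as $0\le f_i''\le 1$, or a GHS-type monotonicity of $f_i''$ in $|x|$, do not determine the sign of these covariances, let alone how they would combine with the diagonal terms into a ``negative sum of squares''. You also have not shown that this monotonicity is preserved by $T_{m,t}$ for $0<m<1$, and the inequality $\phi''''\le 2(\phi'')^2$ you cite is false as soon as $\cosh^2x>2$.

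The choice of coordinates also works against you. The Hessian of $f_0(0)$ in the increments is $-L^{\top}HL$, where $L$ is the lower-triangular matrix of ones and $H$ is the Hessian of $\psi_\circ$ in the $q_i$. This congruence hides the sign pattern of $H$ on which the proof rests.

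What is missing is that structure, which the paper exploits as follows.
\begin{itemize}
\item Working in the $q_i$ themselves, the identity $A_{k+1}=\phi''+(\phi')^2=1$ and the recursion $A_i=\mathcal L_iA_{i+1}-(m_{i+1}-m_i)\mathcal L_i[(f_{i+1}')^2]$ make the gradient telescope to $\partial_{q_i}\psi_\circ=(m_{i+1}-m_i)U_i$, with $U_i=\mathcal L_{0:i}[(f_{i+1}')^2](0)$.
\item One then shows that $H$ has nonpositive off-diagonal entries and nonnegative row sums. Positive semidefiniteness follows from $a^{\top}Ha=\sum_i(\sum_jH_{ij})a_i^2+\sum_{i<j}(-H_{ij})(a_i-a_j)^2$.
\item The row sums are the easy part. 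The shift $q\mapsto q+\eps\mathbf 1$ only changes $t_0$, whose block is the untilted heat semigroup since $m_0=0$. Hence $U_i(q+\eps\mathbf 1)=\E\,G_i(\sqrt{2\eps}\,|Z|)$ with $Z$ standard Gaussian and $G_i$ even and nondecreasing in $|x|$, which is nondecreasing in $\eps$.
\item The off-diagonal signs $\partial_{q_j}U_i\le 0$ for $i<j$ are the real input. Using $T_{m,a}T_{m,b}=T_{m,a+b}$ on a common refinement of the breakpoints, moving $q_j$ to $q_j+\eps$ is the same as lowering the single mass on $[q_j,q_j+\eps]$ from $m_{j+1}$ to $m_j$. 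The paper then invokes Panchenko's theorem that $U_i$ is nondecreasing in each mass coordinate.
\end{itemize}

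So the invariant that matters is not a GHS-type property of $f_i''$. It is that $f_i'$ is odd and nondecreasing, and that each $\mathcal L_l$ preserves the class $\mathcal C$ of nonnegative even functions nondecreasing in $|x|$. With this in hand, your covariance idea does go through, but in the $q_i$ variables rather than the increments:
\begin{itemize}
\item For $l+1\le j$ one has $\partial_{q_j}f_{l+1}=-(m_{j+1}-m_j)\,\mathcal L_{l+1:j}[(f_{j+1}')^2]$, plus the constant $1$ when $j=k$. This is nonincreasing in $|x|$.
\item By contrast, $\mathcal L_{l+1:i}[(f_{i+1}')^2]$ is nondecreasing in $|x|$.
\item The terms of $\partial_{q_j}U_i$ coming from the moving tilts are therefore covariances, under the tilted law of $y$, of oppositely monotone functions of $|y|$. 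They are $\le 0$ by Chebyshev's covariance inequality.
\item The remaining term $2\,\mathcal L_{0:i}[f_{i+1}'\,\partial_x\partial_{q_j}f_{i+1}](0)$ is $\le 0$ by parity.
\end{itemize}
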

For the most part, the discretization points $(m_i)$ are kept fixed, and thus we may at times think of $q$ as being simply the vector $(q_i)$. Likewise, we at times identify the mapping $\psi_\circ$ with that displayed in \eqref{e.convex.weak}; for instance, an expression of the form $\dr_{q_i} \psi_{\circ}$ refers to the derivative of the mapping in \eqref{e.convex.weak} with respect to $q_i$. 

Our strategy for proving Proposition~\ref{p.convex.weak} starts with the derivation of a relatively explicit expression for $\dr_{q_i} \psi_\circ$. As will be shown, bounds on the resulting expression yield a proof of Proposition~\ref{p.lip.psi.circ}. Using also a monotonicity property that we borrow from \cite{panchenko2005question}, we will then be able to assert that the non-diagonal entries of the Hessian of $\psi_\circ$ are nonpositive. We next show that all the row sums of the Hessian of $\psi_\circ$ are nonnegative. These two properties imply that the Hessian of $\psi_\circ$ is positive semidefinite, and thus yield the validity of Proposition~\ref{p.convex.weak}. A density argument then completes the proof of Proposition~\ref{p.convex.psi.circ}, and thus also of Corollary~\ref{c.convex.psi}.

\subsection{Differentiating the recursion}
In this subsection, we derive a convenient expression for the derivative of $\psi_\circ$ or, more precisely, of the mapping in~\eqref{e.convex.weak} with respect to $q_i$. In order to state this result, we introduce some notation. Given $t\ge0$, a measurable function $g : \R \to \R$ with at most linear growth at infinity, and a measurable function $h : \R \to \R$ with at most exponential growth at infinity, we set
\begin{equation}\label{e.L_mtg=}
    \mathcal L_{g,t}h(x)=
    \frac{
    \E\left[h(x+B_{2t})e^{g(x+B_{2t})}\right]
    }{
    \E e^{g(x+B_{2t})}
    }.
\end{equation}
The operator $\mathcal L_{g,t}$ is linear, it preserves the ordering of functions, and
\begin{equation}
\label{eq:L-preserves-one}
    \mathcal L_{g,t}1=1.
\end{equation}
We introduce the shorthand
\begin{equation}\label{e.L_i=}
    \mathcal L_i:=\mathcal L_{m_i f_{i+1},\,q_i - q_{i-1}},
    \qquad i\in\{0,\dots,k\},
\end{equation}
and use the following convention for compositions:
\begin{equation}
\label{eq:L-composition-convention}
    \mathcal L_{a:b}h
    :=
    \mathcal L_a\bigl(\mathcal L_{a+1}(\cdots(\mathcal L_bh)\cdots)\bigr), \qquad\text{if $a\le b$,}
\end{equation}
and if $a>b$, then $\mathcal L_{a:b}$ is the identity operator. Finally, making the dependence on $q$ explicit again, we define
\begin{equation}  
\label{eq:Ui-def-Lab}
U_i(q) := \mathcal L_{0:i}[(f'_{i+1})^2](0),    \qquad i\in\{0,\dots,k\}.
\end{equation}

The main goal of this subsection is to show the following.
\begin{proposition}[Expression for $\dr_{q_i} \psi_\circ$]
\label{p.drq.psi}
For every $i \in \{0, \ldots, k\}$, we have 
\begin{equation*}  \dr_{q_i} \psi_\circ(q) = (m_{i+1} - m_i) U_i(q).
\end{equation*}

\end{proposition}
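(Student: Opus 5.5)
The plan is to compute $\dr_{q_i} f_0(0)$ by the chain rule through the recursion \eqref{e.f_i=}, and then telescope the resulting terms. Set $t_j := q_j - q_{j-1}$. The variable $q_i$ enters through $t_i$ (with coefficient $+1$), through $t_{i+1}$ when $i<k$ (with coefficient $-1$), and, for $i=k$, through the explicit term $q_k$ in \eqref{eq:F-increment-form}.

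\textbf{Step 1 (two derivative formulas).} First, for fixed $t$, the map $f\mapsto T_{m,t}f(x)$ has directional derivative $h\mapsto \mathcal L_{mf,t}h(x)$. For $m>0$ this comes from differentiating $\frac1m\log\E e^{mf(x+B_{2t})}$; for $m=0$ it is immediate by linearity.

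Second, writing $u(t,x)=T_{m,t}f(x)$, Gaussian integration by parts (equivalently Itô's formula, the generator of $B_{2t}$ being $\dr_x^2$) gives
\begin{equation*}
\dr_t T_{m,t}f(x) = \mathcal L_{mf,t}\bigl[f''+m(f')^2\bigr](x).
\end{equation*}
Since $f_{j+1}$ depends on $t_{j+1},\dots,t_k$ only, combining the two facts yields
\begin{equation*}
\dr_{t_j} f_0(0) = -A_j, \qquad A_j := -\mathcal L_{0:j}\bigl[f''_{j+1}+m_j(f'_{j+1})^2\bigr](0).
\end{equation*}
To avoid sign clutter, I will instead work with $\tilde A_j:=\mathcal L_{0:j}[f''_{j+1}+m_j(f'_{j+1})^2](0)$, so that $\dr_{t_j}f_0(0)=\tilde A_j$.

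\textbf{Step 2 (telescoping identity).} Applying the same Gaussian integration by parts to $f_j=T_{m_j,t_j}f_{j+1}$ gives $f_j'=\mathcal L_j f'_{j+1}$ and
\begin{equation*}
f_j''+m_j(f_j')^2=\mathcal L_j\bigl[f''_{j+1}+m_j(f'_{j+1})^2\bigr].
\end{equation*}
Hence $\tilde A_j=\mathcal L_{0:j-1}[f_j''+m_j(f_j')^2](0)$. Comparing with $\tilde A_{j-1}=\mathcal L_{0:j-1}[f_j''+m_{j-1}(f_j')^2](0)$ and using linearity, we get
\begin{equation*}
\tilde A_j-\tilde A_{j-1}=(m_j-m_{j-1})U_{j-1}(q).
\end{equation*}

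We extend this one step further by setting
\begin{equation*}
\tilde A_{k+1}:=\mathcal L_{0:k}\bigl[\phi''+m_{k+1}(\phi')^2\bigr](0).
\end{equation*}
Since $m_{k+1}=1$ and $\phi''+(\phi')^2=(1-\tanh^2)+\tanh^2=1$, the identity \eqref{eq:L-preserves-one} gives $\tilde A_{k+1}=1$. The relation $\tilde A_{k+1}-\tilde A_k=(m_{k+1}-m_k)U_k$ holds by the same linearity computation.

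\textbf{Step 3 (conclusion).} For $i<k$,
\begin{equation*}
\dr_{q_i}\psi_\circ=-\dr_{q_i}f_0(0)=-(\tilde A_i-\tilde A_{i+1})=(m_{i+1}-m_i)U_i(q).
\end{equation*}
For $i=k$,
\begin{equation*}
\dr_{q_k}\psi_\circ=1-\tilde A_k=\tilde A_{k+1}-\tilde A_k=(m_{k+1}-m_k)U_k(q).
\end{equation*}
The case $m_0=0$ requires no separate treatment, since all the formulas above are valid for $m=0$.

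\textbf{Main obstacle.} The algebra is short; the real work is justifying the differentiations. I will show by backward induction on $j$ that each $f_j$ is smooth, that $|f_j'|\le1$ (since $f_j'=\mathcal L_j f'_{j+1}$ and $|\phi'|\le1$), and that $f_j''$ is bounded. With these bounds, $f_j$ grows at most linearly, all the Gaussian moments involved are finite, and differentiation under $\E$ as well as Gaussian integration by parts are justified by dominated convergence. I also need differentiability in $t$ at small $t>0$, which the constraint $q_0>0$ together with the strict increments ensures.
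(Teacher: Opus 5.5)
Your proposal is correct and follows essentially the same route as the paper. You compute $\partial_{t_j} f_0(0)=\mathcal L_{0:j-1}\bigl[f_j''+m_j(f_j')^2\bigr](0)$ from the Cole--Hopf time derivative and the variation formula, and you close the recursion using $\phi''+(\phi')^2=1$ with $m_{k+1}=1$. The only cosmetic difference is that you take differences of consecutive $\tilde A_j$'s directly, whereas the paper first expands into the full sum $\partial_{t_i}\psi_\circ=\sum_{r\ge i}(m_{r+1}-m_r)U_r$ and then takes differences.
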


We start with the following lemma. 
\begin{lemma}[Cole--Hopf differentiation]
\label{lem:hopf-cole}
Let $g : \R \to \R$ be a twice differentiable function with $g$ of at most linear growth and $g'$, $g''$ of at most exponential growth, and let $m \ge 0$. For every $t \ge 0$ and $x \in \R$, let $u(t,x)=T_{m,t}g(x)$.
We have
\begin{equation}
\label{eq:HC-space-derivative}
    \dr_x u=\mathcal L_{mg,t}g',
\end{equation}
\begin{equation}
\label{eq:HC-time-derivative}
    \dr_t u
    =\dr_x^2 u+m (\dr_x u)^2
    =\mathcal L_{mg,t}\bigl(g''+m(g')^2\bigr),
\end{equation}
and
\begin{equation}
\label{eq:HC-variation}
    \left.
    \frac{d}{d\varepsilon}
    T_{m,t}(g+\varepsilon h)(x)
    \right|_{\varepsilon=0}
    =
    \mathcal L_{mg,t}h(x).
\end{equation}
\end{lemma}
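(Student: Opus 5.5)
We treat the two cases $m>0$ and $m=0$ separately; the case $m=0$ is the simpler one, since then $\mathcal L_{0,t}h(x)=\E h(x+B_{2t})$. In both cases we write $u$ through the Cole--Hopf transform. For $m>0$ we set $Z(t,x):=\E e^{m g(x+B_{2t})}$, so that $u=\frac1m\log Z$. The growth assumptions guarantee that all expectations below are finite, and they give locally uniform integrable domination. Since $g$ has at most linear growth, $e^{mg(x+B_{2t})}$ has Gaussian-integrable moments of every order, and $g',g''$ grow at most exponentially. This lets us differentiate under the expectation in $x$ and in $\eps$ by dominated convergence.

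\textbf{Space derivative and variation.} Differentiating $Z$ in $x$ under the expectation gives $\dr_x Z=\E[m g'(x+B_{2t})e^{mg(x+B_{2t})}]$. Hence
\begin{equation*}
\dr_x u=\frac{\dr_x Z}{mZ}=\mathcal L_{mg,t}g',
\end{equation*}
which is \eqref{eq:HC-space-derivative}. The variation \eqref{eq:HC-variation} follows in the same way. Differentiating $\frac1m\log\E e^{m(g+\eps h)(x+B_{2t})}$ at $\eps=0$ gives $\E[h e^{mg}]/\E[e^{mg}]$. The domination comes from $|h|e^{m|\eps||h|}$ with small $|\eps|$; this needs $h$ of at most exponential growth, and the statement of \eqref{e.L_mtg=} implicitly assumes this. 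For $m=0$ both identities are immediate by linearity.

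\textbf{Time derivative.} We use the heat equation. Since $x\mapsto x+B_{2t}$ has generator $\dr_x^2$, the function $Z$ solves $\dr_t Z=\dr_x^2 Z$. This can be seen via Gaussian integration by parts: $\dr_t \E F(x+B_{2t})=\E F''(x+B_{2t})$ for $F$ of class $C^2$ with exponentially bounded derivatives, and here we take $F=e^{mg}$. We then compute the derivatives of $u=\frac1m\log Z$:
\begin{equation*}
\dr_t u=\frac{\dr_x^2 Z}{mZ},\qquad \dr_x^2 u=\frac{\dr_x^2Z}{mZ}-\frac{(\dr_xZ)^2}{mZ^2},\qquad m(\dr_xu)^2=\frac{(\dr_xZ)^2}{mZ^2}.
\end{equation*}
Adding the last two gives $\dr_tu=\dr_x^2u+m(\dr_xu)^2$. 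For the second expression in \eqref{eq:HC-time-derivative}, we compute
\begin{equation*}
\dr_x^2Z=\E\bigl[(mg''+m^2(g')^2)e^{mg}\bigr](x+B_{2t}).
\end{equation*}
Dividing by $mZ$ yields $\mathcal L_{mg,t}(g''+m(g')^2)$. For $m=0$, the identity reduces to $\dr_t\E g(x+B_{2t})=\E g''(x+B_{2t})=\dr_x^2u$, which is consistent with the formula.

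\textbf{Main obstacle.} The only delicate point is justifying the interchange of derivative and expectation, and the heat equation at $t=0$ (where we take one-sided derivatives). We expect to handle this by bounding $|g(y)|\le C(1+|y|)$ and $|g'(y)|+|g''(y)|\le Ce^{C|y|}$. These bounds show that all integrands are dominated by $Ce^{C|x|+C|B_{2t}|}$ uniformly for $(t,x)$ in compact sets, and this dominating function is integrable. Everything else is routine algebra.
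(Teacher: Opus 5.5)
Your argument is essentially the paper's proof: the Cole--Hopf substitution $Z=\E e^{mg(x+B_{2t})}$, the heat equation $\partial_t Z=\partial_x^2 Z$, differentiation under the expectation, and a separate treatment of the trivial case $m=0$. The only difference is cosmetic: the paper gets $\partial_x^2 u$ by differentiating $\partial_x Z=mZ\,\partial_x u$ rather than by the quotient rule.

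One small correction to your side remark on the variation formula. Domination by $|h|e^{m|\eps||h|}e^{mg}$ requires $h$ of at most linear (or subquadratic) growth, not merely exponential growth: for $h(y)=e^{y}$, $t>0$ and $\eps>0$ one has $T_{m,t}(g+\eps h)(x)=+\infty$. This costs nothing here, since $h$ is bounded wherever the paper applies the formula.
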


\begin{proof}
Using the growth assumptions on $g$, $g'$ and $g''$, we see that the weight $e^{mg}$ and its products with $g'$, $g''$ and $(g')^2$ grow at most exponentially and are thus integrable against the Gaussian law of $B_{2t}$, locally uniformly in $x$. We may therefore differentiate all the expectations below under the integral sign without further comment.

\medskip

We first treat the case $m>0$, and abbreviate $\mathcal L=\mathcal L_{mg,t}$. Set
\begin{equation*}
    H(t,x)=\E\,e^{mg(x+B_{2t})},
    \qquad\text{so that}\qquad
    u=\tfrac1m\log H,
    \quad\text{i.e.}\quad H=e^{mu}.
\end{equation*}
Since $B_{2t}$ has variance $2t$, the function $H$ is the convolution of $e^{mg}$ with the Gaussian heat kernel, and therefore solves the heat equation $\dr_t H=\dr_x^2 H$.

\medskip

\noindent \emph{Space derivative.}
Differentiating $H$ in $x$ and recalling the definition~\eqref{e.L_mtg=} of $\mathcal L$,
\begin{equation*}
    \dr_x H=m\,\E\bigl[g'(x+B_{2t})\,e^{mg(x+B_{2t})}\bigr]=mH\,\mathcal L g'.
\end{equation*}
On the other hand, $H=e^{mu}$ gives $\dr_x H=mH\,\dr_x u$. Comparing the two expressions yields $\dr_x u=\mathcal L g'$, which is~\eqref{eq:HC-space-derivative}.

\medskip

\noindent \emph{Time derivative.}
By the heat equation and $u=\tfrac1m\log H$, we have
\begin{equation*}
    \dr_t u=\frac{\dr_t H}{mH}=\frac{\dr_x^2 H}{mH},
\end{equation*}
so it remains to compute $\dr_x^2 H$. Differentiating the relation $\dr_x H=mH\,\dr_x u$ once more in $x$ gives
\begin{equation*}
    \dr_x^2 H=mH\bigl(\dr_x^2 u+m(\dr_x u)^2\bigr),
\end{equation*}
whereas differentiating the expression $\dr_x H=m\,\E[g'(x+B_{2t})\,e^{mg(x+B_{2t})}]$ gives
\begin{equation*}
    \dr_x^2 H=m\,\E\bigl[\bigl(g''+m(g')^2\bigr)(x+B_{2t})\,e^{mg(x+B_{2t})}\bigr]=mH\,\mathcal L\bigl(g''+m(g')^2\bigr).
\end{equation*}
Dividing each of these by $mH$ and recalling that $\dr_t u=\dr_x^2 H/(mH)$, we obtain
\begin{equation*}
    \dr_t u=\dr_x^2 u+m(\dr_x u)^2=\mathcal L\bigl(g''+m(g')^2\bigr),
\end{equation*}
which is~\eqref{eq:HC-time-derivative}.

\medskip

\noindent \emph{Variation formula.}
Differentiating $\varepsilon\mapsto\tfrac1m\log\E\,e^{m(g+\varepsilon h)(x+B_{2t})}$ at $\varepsilon=0$ gives
\begin{equation*}
    \left.\frac{d}{d\varepsilon}T_{m,t}(g+\varepsilon h)(x)\right|_{\varepsilon=0}
    =\frac{\E\bigl[h(x+B_{2t})\,e^{mg(x+B_{2t})}\bigr]}{\E\,e^{mg(x+B_{2t})}}
    =\mathcal L h(x),
\end{equation*}
which is~\eqref{eq:HC-variation}.

\medskip

\noindent \emph{The case $m=0$.}
Here $u(t,x)=\E\,g(x+B_{2t})$, and $\mathcal L_{0,t}$ is the plain Gaussian average $h\mapsto\E\,h(\,\cdot\,+B_{2t})$. Differentiating under the expectation gives $\dr_x u=\E\,g'(x+B_{2t})=\mathcal L_{0,t}g'$, which is~\eqref{eq:HC-space-derivative}. Moreover, the function $u$ is now the convolution of $g$ with the heat kernel, so it solves the heat equation; this gives $\dr_t u=\dr_x^2 u=\E\,g''(x+B_{2t})=\mathcal L_{0,t}g''$, which is~\eqref{eq:HC-time-derivative}. Finally, $\frac{d}{d\varepsilon}\big|_{\varepsilon=0}\E\,(g+\varepsilon h)(x+B_{2t})=\E\,h(x+B_{2t})=\mathcal L_{0,t}h(x)$, which is~\eqref{eq:HC-variation}.
\end{proof}

Applying the formula \eqref{eq:HC-space-derivative} to the recursion~\eqref{e.f_i=} yields the following identity, which we will use to prove Proposition~\ref{p.lip.psi.circ}. We recall that the operator $\mcl L_i$ is defined in \eqref{e.L_i=}.

\begin{lemma}[Derivative recursion]
\label{lem:derivative-recursion}
For each $j\in\{0,\dots,k\}$, the derivatives of the recursion~\eqref{e.f_i=} satisfy
\begin{equation}
\label{eq:derivative-recursion}
    f'_j=\mathcal L_jf'_{j+1}.
\end{equation}
Consequently, $f'_j=\mathcal L_{j:k}f'_{k+1}=\mathcal L_{j:k}\phi'$, and $|f'_j|\le1$ for all $j\in\{0,\dots,k+1\}$.
\end{lemma}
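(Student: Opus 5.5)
The identity \eqref{eq:derivative-recursion} follows by applying the space-derivative formula \eqref{eq:HC-space-derivative} of Lemma~\ref{lem:hopf-cole} to each step of the recursion \eqref{e.f_i=}. The bounds $|f'_j|\le 1$ are then obtained by a backward induction on $j$, running from $j=k+1$ down to $j=0$, together with the positivity and unit-mass properties of $\mathcal L_j$. The only real care needed is that the regularity and growth hypotheses of Lemma~\ref{lem:hopf-cole} hold at each stage, and these should be propagated in the same induction.

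First we set up the inductive hypothesis at stage $j+1$. It is that $f_{j+1}$ is twice differentiable, $f_{j+1}$ has at most linear growth, $f'_{j+1}$ and $f''_{j+1}$ have at most exponential growth (in fact bounded), and $|f'_{j+1}|\le 1$.

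The base case $j+1=k+1$ holds because $f_{k+1}=\log\cosh$. This function is smooth, satisfies $0\le \log\cosh x\le |x|$, and has $f'_{k+1}=\tanh\in[-1,1]$ and $f''_{k+1}=1-\tanh^2\in[0,1]$.

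For the inductive step, we apply Lemma~\ref{lem:hopf-cole} with $g=f_{j+1}$, $m=m_j$ and $t=q_j-q_{j-1}$. By \eqref{e.f_i=} and \eqref{e.L_i=} this gives
\begin{equation*}
f'_j=\mathcal L_{m_jf_{j+1},\,q_j-q_{j-1}}f'_{j+1}=\mathcal L_jf'_{j+1},
\end{equation*}
which is \eqref{eq:derivative-recursion}. Since $\mathcal L_j$ preserves order and satisfies $\mathcal L_j1=1$ by \eqref{eq:L-preserves-one}, the bound $-1\le f'_{j+1}\le 1$ implies $-1\le f'_j\le 1$.

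It remains to propagate the regularity hypotheses to $f_j$.
\begin{itemize}
\item \emph{Linear growth.} If $|g(x)|\le C(1+|x|)$, then $T_{m,t}g(x)$ lies between $\E g(x+B_{2t})$ (by Jensen) and $\sup$-type bounds that are still $O(1+|x|)$, since $\E|B_{2t}|$ and $\frac1m\log\E e^{mC|B_{2t}|}$ are finite.
\item \emph{Second derivative.} By \eqref{eq:HC-time-derivative}-type computations, $f''_j=\partial_x\mathcal L_jf'_{j+1}=\mathcal L_j(f''_{j+1}+m_j(f'_{j+1})^2)-m_j(\mathcal L_jf'_{j+1})^2$. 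This is bounded by $\sup|f''_{j+1}|+2m_j$, so $f''_j$ is bounded.
\end{itemize}
Alternatively, one can simply note that convolution with a Gaussian kernel makes everything smooth with derivatives computable under the integral.

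Unrolling the recursion \eqref{eq:derivative-recursion} with the composition convention \eqref{eq:L-composition-convention} gives $f'_j=\mathcal L_{j:k}f'_{k+1}=\mathcal L_{j:k}\phi'$.

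The only mildly delicate point is this regularity bookkeeping, namely checking that the hypotheses of Lemma~\ref{lem:hopf-cole} survive each application of $T_{m,t}$. The bounded-second-derivative formula above handles it cleanly. I expect no genuine obstacle.
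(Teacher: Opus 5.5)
Your proof is correct and follows the paper's argument. You apply the space-derivative formula \eqref{eq:HC-space-derivative} of Lemma~\ref{lem:hopf-cole} at each step of the recursion, unroll it, and get $|f'_j|\le 1$ by downward induction from $\phi'=\tanh$ using positivity of $\mathcal L_j$ and $\mathcal L_j1=1$. Beyond the paper, you also check explicitly that the growth and differentiability hypotheses of Lemma~\ref{lem:hopf-cole} pass from $f_{j+1}$ to $f_j$, which the paper leaves implicit, and your identity $f''_j=\mathcal L_j\bigl(f''_{j+1}+m_j(f'_{j+1})^2\bigr)-m_j(f'_j)^2$ handles that correctly.
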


\begin{proof}
We fix $j\in\{0,\dots,k\}$ and apply Lemma~\ref{lem:hopf-cole} with $u=f_j=T_{m_j,t_j}f_{j+1}$, $g=f_{j+1}$, $m=m_j$, $t=t_j$. The spatial-derivative formula \eqref{eq:HC-space-derivative} gives
\[
    f'_j=\dr_x u=\mathcal L_{m_j f_{j+1},\,t_j}f'_{j+1}\stackrel{\eqref{e.L_i=}}{=}\mathcal L_jf'_{j+1},
\]
which is \eqref{eq:derivative-recursion}. Iterating \eqref{eq:derivative-recursion} and using the composition convention~\eqref{eq:L-composition-convention} gives $f'_j=\mathcal L_{j:k}f'_{k+1}$, with $f'_{k+1}=\phi'$ by~\eqref{e.f_k+1=}.

For the bound, recall from~\eqref{e.f_k+1=} that $f'_{k+1}=\phi'=\tanh$, so $|f'_{k+1}|\le1$. Each $\mathcal L_j$ is a positive linear operator with $\mathcal L_j1=1$ by~\eqref{eq:L-preserves-one}; hence it is order-preserving and $|\mathcal L_jh|\le\mathcal L_j|h|\le\sup|h|$ for any bounded $h$. Therefore $|f'_{j+1}|\le1$ implies $|f'_j|=|\mathcal L_jf'_{j+1}|\le1$, and downward induction gives $|f'_j|\le1$ for all $j$.
\end{proof}

\begin{proof}[Proof of Proposition~\ref{p.drq.psi}]
We decompose the proof into two steps.

\medskip

\noindent \emph{Step 1.} For convenience, we change variables and use
\begin{equation}  
\label{e.def.ti}
t_i=q_i-q_{i-1},
    \qquad i\in\{0,\dots,k\}.
\end{equation}
Again abusing notation, we write $\dr_{t_i} \psi_\circ$ to denote the derivative of the mapping in \eqref{e.convex.weak}, but seen as a function of the $(t_j)$ rather than of the $(q_j)$. We also set
\begin{equation}\label{e.A_i=}
    A_i=f_i''+m_i(f_i')^2,
    \qquad i\in\{0,\dots,k+1\}.
\end{equation}
In this first step, we show that for every $i \in \{0, \ldots, k\}$, we have
\begin{equation}
\label{e.drt.psi}
\frac{\partial f_0(0)}{\partial t_i}=\bigl(\mathcal L_{0:i-1}A_i\bigr)(0).
\end{equation}
In order to compute this derivative, we denote, for any sufficiently small $\varepsilon$, 
\[
    t_i^\varepsilon=t_i+\varepsilon \qquad\text{and}\qquad
    t_r^\varepsilon=t_r\quad \text{for $r\neq i$},
\]
and we let $f_r^\varepsilon$ be the recursion obtained from the perturbed increments:
\[
    f_{k+1}^\varepsilon=\phi\qquad\text{and}\qquad
    f_r^\varepsilon
    =
    T_{m_r,t_r^\varepsilon}f_{r+1}^\varepsilon
    \quad\text{for $r\in\{0,\dots,k\}$}.
\]
Define
\begin{align}\label{e.D_r=}
    D_r(x)
    =
    \left.
    \frac{d}{d\varepsilon}f_r^\varepsilon(x)
    \right|_{\varepsilon=0}\qquad\text{for $r\in\{0,\dots,k\}$}.
\end{align}

First, for $r\ge i+1$, the function $f_r^\varepsilon$ is unchanged. Indeed, the recursion defining $f_r$ uses only the interval lengths $t_r,t_{r+1},\ldots,t_k$, and none of these equals $t_i$. Hence, we have $f_r^\varepsilon=f_r$ for $r\ge i+1$.
At level $i$, the input function $f_{i+1}$ is therefore fixed, so $f_i^\varepsilon =T_{m_i,t_i+\varepsilon}f_{i+1}$, and thus
\begin{align}\label{e.D_i=A_i}
    D_i
    =
    \left.
    \frac{d}{d\varepsilon}
    T_{m_i,t_i+\varepsilon}f_{i+1}
    \right|_{\varepsilon=0}
    \stackrel{\eqref{eq:HC-time-derivative}}{=}
    f_i''+m_i(f_i')^2
    \stackrel{\eqref{e.A_i=}}{=}
    A_i.
\end{align}

Now take $r<i$. At this level, the time parameter $t_r$ is fixed. The only dependence on $\varepsilon$ comes through the input function $f_{r+1}^\varepsilon$:
\begin{equation*}
    f_r^\varepsilon
    =
    T_{m_r,t_r}f_{r+1}^\varepsilon.
\end{equation*}
Since $f_{r+1}^\varepsilon=f_{r+1}+\varepsilon D_{r+1}+o(\varepsilon)$,
we may apply the variation formula \eqref{eq:HC-variation} with $g=f_{r+1}$, $h=D_{r+1}$, $m=m_r$, $t=t_r$ to obtain
\[
    D_r
    =
    \mathcal L_{m_r f_{r+1},\,t_r}D_{r+1}
    =
    \mathcal L_rD_{r+1},
    \qquad r=i-1,i-2,\ldots,0.
\]
Iterating this and using~\eqref{e.D_i=A_i}, we get $D_0=\mathcal L_0\mathcal L_1\cdots\mathcal L_{i-1}A_i\stackrel{\eqref{eq:L-composition-convention}}{=}\mathcal L_{0:i-1}A_i$. Since $\frac{\partial f_0(0)}{\partial t_i}=D_0(0)$ due to~\eqref{e.D_r=}, we can conclude $\frac{\partial f_0(0)}{\partial t_i}=\bigl(\mathcal L_{0:i-1}A_i\bigr)(0)$ as desired.

\medskip

\noindent \emph{Step 2.} We now proceed to complete the proof of the proposition. To lighten notation, we set
\begin{equation*}  \delta_i=m_{i+1}-m_i,     \qquad i\in\{0,\dots,k\}.
\end{equation*}
By \eqref{e.A_i=} and the second relation in~\eqref{eq:HC-time-derivative}, we have
\begin{equation*}
    A_i
    =
    \mathcal L_i\bigl(f_{i+1}''+m_i(f_{i+1}')^2\bigr),
\end{equation*}
and thus
\begin{equation*}
    f_{i+1}''+m_i(f_{i+1}')^2
    =
    A_{i+1}-\delta_i(f_{i+1}')^2.
\end{equation*}
Combining the above two displays, we get
\begin{equation}
\label{eq:A-recursion}
    A_i
    =
    \mathcal L_iA_{i+1}
    -
    \delta_i\mathcal L_i[(f_{i+1}')^2].
\end{equation}

The terminal condition $\phi(x)=\log\cosh x$ satisfies $\phi'(x)=\tanh x$ and $\phi''(x)=\cosh^{-2} (x)$. Since $m_{k+1}=1$ as in~\eqref{e.m_i=}, we have
\begin{equation}\label{e.A_k+1=1}
    A_{k+1}
    =
    \phi''+(\phi')^2
    =
    \cosh^{-2}(x) +\tanh^2(x)
    =1.
\end{equation}
Since every $\mathcal L_i$ preserves constants by \eqref{eq:L-preserves-one}, iteration of \eqref{eq:A-recursion} gives
\begin{equation}
\label{eq:A-expansion}
    A_i
    =
    1-
    \sum_{r=i}^k
    \delta_r
    \bigl(\mathcal L_{i:r}[(f_{r+1}')^2]\bigr).
\end{equation}
Using the result \eqref{e.drt.psi} from the previous step, we write
\begin{equation*}
    \frac{\partial f_0(0)}{\partial t_i}
     =
    \bigl(\mathcal L_{0:i-1}A_i\bigr)(0)\stackrel{\eqref{eq:A-expansion}}{=} 
    1-
    \sum_{r=i}^k\delta_r
    \bigl(\mathcal L_{0:i-1}\mathcal L_{i:r}[(f_{r+1}')^2]\bigr)(0) \stackrel{\eqref{eq:Ui-def-Lab}}{=} 1-
    \sum_{r=i}^k\delta_r U_r(q).
\end{equation*}
Using this and \eqref{eq:F-increment-form}, we get
\begin{equation}
\label{eq:t-gradient-formula}
    \frac{\partial \psi_\circ}{\partial t_i}
    =
    \sum_{r=i}^k \delta_r U_r(q).
\end{equation}
Since $t_i=q_i-q_{i-1}$, we have $\frac{\partial}{\partial q_k}=\frac{\partial}{\partial t_k}$ and $\frac{\partial}{\partial q_i}=\frac{\partial}{\partial t_i}-\frac{\partial}{\partial t_{i+1}}$ for $i\in\{0,\dots,k-1\}$.
By \eqref{eq:t-gradient-formula}, we get
\begin{equation*}
    \frac{\partial \psi_\circ}{\partial q_k}=\frac{\partial \psi_\circ}{\partial t_k}=\delta_kU_k 
    \qquad \text{and}\qquad 
    \frac{\partial \psi_\circ}{\partial q_i}=\sum_{r=i}^k \delta_rU_r-\sum_{r=i+1}^k \delta_rU_r=\delta_iU_i
    \quad\text{for $i\in\{0,\dots,k-1\}$}
\end{equation*}
as announced.
\end{proof}
\begin{proof}[Proof of Proposition~\ref{p.lip.psi.circ}]
Let $q,q'\in\mcl Q$ take finitely many values. As explained  in the paragraph below \eqref{eq:F-increment-form}, we may choose a common set of discretization points $0=m_0<m_1<\cdots<m_{k+1}=1$ such that
\begin{equation*}
    q=\sum_{i=0}^kq_i\1_{[m_i,m_{i+1})},
    \qquad
    q'=\sum_{i=0}^kq'_i\1_{[m_i,m_{i+1})},
\end{equation*}
with $0\le q_0\le\cdots\le q_k$ and $0\le q'_0\le\cdots\le q'_k$. We have
\begin{equation}\label{e.lip.psi.L1}
    |q-q'|_{L^1}=\sum_{i=0}^k(m_{i+1} - m_i)\,|q_i-q'_i|.
\end{equation}
Identifying a finite-valued path with the vector of its values, we regard $\psi_\circ$ as a function on the cone $\{0\le q_0\le\cdots\le q_k\}$ via the recursion~\eqref{e.f_i=}--\eqref{eq:F-increment-form}.

We first prove the bound when $q$ and $q'$ both lie in the open cone $\{0<q_0<\cdots<q_k\}$. There, by Proposition~\ref{p.drq.psi}, the function $\psi_\circ$ is differentiable with
\begin{equation*}
\dr_{q_i} \psi_\circ=(m_{i+1} - m_i)\,U_i(q),
    \qquad i\in\{0,\dots,k\},
\end{equation*}
where $U_i(q)=\bigl(\mathcal L_{0:i}[(f'_{i+1})^2]\bigr)(0)$ as in~\eqref{eq:Ui-def-Lab}. By Lemma~\ref{lem:derivative-recursion}, we have $|f'_j|\le1$ for all $j$, so $0\le(f'_{i+1})^2\le1$. Since each $\mathcal L_j$ is a positive operator with $\mathcal L_j1=1$, applying $\mathcal L_{0:i}$ and evaluating at~$0$ gives
\begin{equation*}
    0\le U_i(q)\le1,
    \qquad\text{so that}\qquad
    0\le \dr_{q_i} \psi_\circ \le m_{i+1}-m_i.
\end{equation*}
The open cone is convex, so the segment $q_\theta:=(1-\theta)q'+\theta q$, $\theta\in[0,1]$, remains in it. By the fundamental theorem of calculus,
\begin{equation*}
    \psi_\circ(q)-\psi_\circ(q')
    =\int_0^1\sum_{i=0}^k\dr_{q_i} \psi_\circ(q_\theta)\,(q_i-q'_i)\,\d\theta,
\end{equation*}
and therefore, using $0\le \dr_{q_i} \psi_\circ \le m_{i+1}-m_i$ together with~\eqref{e.lip.psi.L1}, we obtain that
\begin{equation*}
    |\psi_\circ(q)-\psi_\circ(q')|
    \le\sum_{i=0}^k(m_{i+1} - m_i)\,|q_i-q'_i|
    =|q-q'|_{L^1}.
\end{equation*}
Both sides above are continuous in $(q_0,\dots,q_k)$ on the closed cone; for the left-hand side this is clear from~\eqref{eq:F-increment-form} and the continuity of $(q_i)\mapsto f_0(0)$, recalling that $T_{m,0}$ is the identity map. Since the open cone is dense in the closed cone, the inequality extends to all $q,q'$ taking finitely many values. Finally, paths taking finitely many values are dense in $\mcl Q_1$ for the $L^1$ norm, so $\psi_\circ$ admits a (unique, $1$-Lipschitz) extension to $\mcl Q_1$ satisfying the same bound.
\end{proof}
\subsection{The monotonicity input}

We will next appeal to some results from the finite-step analysis of the Parisi functional in \cite{panchenko2005question}. We first isolate the consequence that will be used in the sequel, and then restate the result from \cite{panchenko2005question} from which it follows. We define
\begin{align*}
    \mcC&=\left\{g:\R\to[0,\infty):g(-x)=g(x),\ g'(x)\ge0\text{ for }x\ge0\right\},
    \\
    \mcCp&=\left\{g:\R\to\R:g(-x)=-g(x),\ g'(x)\ge0\text{ for }x\ge0\right\}.
\end{align*}

\begin{lemma}
\label{lem:panchenko}
The following holds.
\begin{enumerate}
    \item\label{lem:panchenko(1)} For every $i \in \{0, \ldots, k+1\}$, the function $f_i$ is convex and even, and $f_i'\in\mcCp$.
    \item\label{lem:panchenko(2)} For every $i\in\{0,\ldots,k\}$, the operator $\mathcal L_i$ sends $\mcC$ into $\mcC$.
    \item\label{lem:panchenko(3)} For every $i,j \in \{0, \ldots, k\}$,  the quantity $U_i(q)$ (seen as a function of the $(q_l)$ and $(m_l)$) is nondecreasing in the mass coordinate $m_j$.
\end{enumerate}
\end{lemma}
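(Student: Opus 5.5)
Items (1) and (2) will be proved together by downward induction on $i$, and item (3) will be read off from the monotonicity result of \cite{panchenko2005question}.

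\emph{Base case.} For $i=k+1$ we have $f_{k+1}=\log\cosh$. This function is even and convex, and $f_{k+1}'=\tanh$ is odd with $\tanh'=\cosh^{-2}\ge0$, so $f_{k+1}'\in\mcCp$.

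\emph{Evenness and convexity.} Assume $f_{i+1}$ is even and convex. Since $B_{2t}$ is symmetric, $f_i=T_{m_i,t_i}f_{i+1}$ is even. For $m_i>0$, $e^{m_i f_i}$ is the heat semigroup applied to $e^{m_i f_{i+1}}$, and convexity of $f_i$ follows from H\"older's inequality. Indeed, $x\mapsto \log\E e^{m_i f_{i+1}(x+B_{2t})}$ is convex because $m_i f_{i+1}(\cdot+B)$ is convex in $x$ for each realization and log-sum-exp preserves convexity. For $m_i=0$ convexity is immediate. Convexity gives $f_i''\ge0$, and an even $C^1$ function has odd derivative, so $f_i'\in\mcCp$. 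This proves (1).

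\emph{Item (2).} Fix $g\in\mcC$ and write $w=m_if_{i+1}$, which is even. Then
\[
\mathcal L_ig(x)=\frac{\int g(y)e^{w(y)}\gamma(y-x)\,\d y}{\int e^{w(y)}\gamma(y-x)\,\d y},
\]
with $\gamma$ the centered Gaussian density of variance $2t_i$. Nonnegativity and evenness follow from $g\ge0$ and the symmetry of $w$ and $\gamma$. For monotonicity on $x\ge0$, I will differentiate in $x$. Since $\partial_x\gamma(y-x)=\frac{y-x}{2t_i}\gamma(y-x)$, we get
\[
\partial_x\mathcal L_ig=\frac{1}{2t_i}\operatorname{Cov}_x(g(Y),Y),
\]
where $Y$ has density proportional to $e^{w(y)}\gamma(y-x)$. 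The task is to show this covariance is nonnegative for $x\ge0$.

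Fold $Y$ onto $|Y|$, using that $g$ is even and increasing in $|y|$. Conditioned on $|Y|=r$, the sign of $Y$ is $+$ with probability $\frac{e^{rx/2t}}{e^{rx/2t}+e^{-rx/2t}}$, because $w$ is even. Hence
\[
\E[Y\mid |Y|=r]=r\tanh\!\Big(\frac{rx}{2t_i}\Big),
\]
which is nondecreasing in $r$ when $x\ge0$. Then
\[
\operatorname{Cov}(g(Y),Y)=\operatorname{Cov}\big(g(|Y|),\,|Y|\tanh(|Y|x/2t_i)\big)\ge0
\]
by Chebyshev's association inequality, since both are nondecreasing functions of the single variable $|Y|$. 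This is the step I expect to need the most care, though the folding trick seems to handle it cleanly. The case $t_i=0$ is trivial because $\mathcal L_i$ is then the identity.

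\emph{Item (3).} This is the monotonicity of finite-step Parisi-type quantities in the masses established in \cite{panchenko2005question}. I will restate Panchenko's theorem for this recursion, namely that $\mathcal L_{0:i}$ applied to functions in $\mcC$ is nondecreasing in each $m_j$. The argument there differentiates in $m_j$ and uses (1)--(2) to see that the derivative is a covariance of two functions in $\mcC$ under the Gibbs-type measures, which is nonnegative by the same association reasoning. Applying this with the input $(f'_{i+1})^2$, which lies in $\mcC$ by (1), gives (3). The only subtlety is that $f_{i+1}$ itself depends on $m_j$ for $j>i$, so the dependence through $f_{i+1}$ must also be covered; Panchenko's result handles this, and I will simply check that our setting matches his hypotheses.
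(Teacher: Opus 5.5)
Your proofs of items (1) and (2) are correct, and they take a different route from the paper. The paper obtains both by embedding the recursion into the normalized setting of Theorem~\ref{thm:panchenko-restated} and reading them off Panchenko's Lemma~2. You prove them directly: convexity by H\"older's inequality, and item (2) by the folding argument, where $\E[Y\mid |Y|=r]=r\tanh(rx/(2t_i))$ turns $\partial_x\mathcal L_ig$ into a covariance of two nondecreasing functions of $|Y|$. Your route is self-contained, and it shows that only the evenness of the tilt $m_if_{i+1}$ matters, not its convexity. The paper's route buys economy: the same embedding is needed for item (3) anyway, so all three items come from one dictionary.

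The gap is in item (3). The result you attribute to Panchenko is monotonicity in each $m_j$ of $\mathcal L_{0:i}$ applied to a fixed function of $\mcC$. That is not what Theorem~\ref{thm:panchenko-restated} (Panchenko's Theorem~2) asserts. Moreover, neither that statement nor the mechanism you sketch covers the full $m_j$-dependence of $U_i$. For $1\le l<j$, the operator $\mathcal L_l$ depends on $m_j$ through $f_{l+1}$, and for $j>i$ so does the input $(f'_{i+1})^2$. Hence the $m_j$-derivative contains covariances with $\partial_{m_j}f_{l+1}$, and (1)--(2) say nothing about how these behave in $|x|$.

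What Panchenko proves is the monotonicity of the specific quantities $U^{\mathrm P}_l$. These are defined for a recursion normalized so that the last mass is $m_K=1$ and the last overlap is $\widehat q_{K+1}=1$. Here the recursion stops at $q_k$ with last mass $m_k<1$, so matching the hypotheses requires a small construction, and that construction is the entire content of the paper's proof:
\begin{itemize}
\item Take $Q>q_k$, $K=k+1$, $\widehat q_{r+1}=q_r/Q$, $\widehat q_{k+2}=1$, and $\xi(s)=Qs^2$. Then $z_r$ has the law of $B_{2(q_r-q_{r-1})}$ for $r\le k$.
\item Append the step $[q_k,Q]$ with mass $1$. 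Since $T_{1,s}\log\cosh=\log\cosh+s$, this shifts every $f_i$ by the constant $Q-q_k$, so the $f_i'$ and the tilted operators are unchanged.
\item Iterate $\Phi_r'(x)=\E_{z_r}[V_r(x,z_r)\Phi_{r+1}'(x+z_r)]$ and use $W_0=1$ (because $m_0=0$). This gives exactly $U_i(q)=U^{\mathrm P}_{i+1}$, from which (3) follows.
\end{itemize}
You need to carry out this identification explicitly rather than defer it as a routine check.
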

Lemma~\ref{lem:panchenko} is essentially a restatement of the following results from \cite{panchenko2005question}.
\begin{theorem}[\cite{panchenko2005question}]
\label{thm:panchenko-restated}
Let
\begin{equation*}
    0=m_0\le m_1\le\cdots\le m_K=1\qquad\text{and}\qquad 0=\widehat q_0\le \widehat q_1\le\cdots\le \widehat q_K\le \widehat q_{K+1}=1.
\end{equation*}
Let $\Phi$ and $\xi$ be smooth, convex, even functions, with $\Phi$ of moderate growth. Let $z_0,\ldots,z_K$ be independent centered Gaussian random variables with
\begin{equation}\label{e.Ez^2_r=}
    \E z_r^2=\xi'(\widehat q_{r+1})-\xi'(\widehat q_r).
\end{equation}
Define $\Phi_{K+1}=\Phi$ and, recursively,
\begin{equation}\label{e.Phi_r=}
    \Phi_r(x)
    =
    \begin{cases}
    \displaystyle
    \frac1{m_r}
    \log \E_{z_r}e^{m_r\Phi_{r+1}(x+z_r)}, & m_r>0,\\[2mm]
    \displaystyle
    \E_{z_r}\Phi_{r+1}(x+z_r), & m_r=0.
    \end{cases}
\end{equation}
Here $\E_{z_r}$ denotes expectation over $z_r$ only. Put
\begin{equation}\label{e.V_r=}
    V_r(x,z_r)
    = e^{m_r(\Phi_{r+1}(x+z_r)-\Phi_r(x))}.
\end{equation}
For an external field $h\in\R$, define
\begin{equation}\label{e.Z}
    Z=h+z_0+\cdots+z_K,
    \qquad
    Z_r=h+z_0+\cdots+z_{r-1}, \qquad\text{and}\quad W_r=V_r(Z_r,z_r).
\end{equation}
For $1\le l\le K$, set
\begin{equation}
\label{eq:panchenko-U}
    U_l^{\mathrm P}
    =
    \E\left[
        W_1\cdots W_{l-1}
        \left(
            \E\left[W_l\cdots W_K\Phi'(Z)
            \mid z_0,\ldots,z_{l-1}\right]
        \right)^2
    \right].
\end{equation}
Then $U_l^{\mathrm P}$ is nondecreasing in each mass coordinate $m_j$, for each $j\in\{1,\dots,K\}$.

Moreover, each $\Phi_r$ is convex and even, $\Phi_r'\in\mcCp$, and the map
\begin{equation}
\label{eq:panchenko-tilted-class-operator}
    g\mapsto \E_{z_r}V_r(x,z_r)g(x+z_r)
\end{equation}
sends $\mcC$ into $\mcC$ and sends $\mcCp$ into $\mcCp$.
\end{theorem}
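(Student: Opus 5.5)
The plan is to prove the structural claims (convexity, evenness, membership in $\mcCp$, preservation of classes) first, by downward induction on $r$. The monotonicity of $U_l^{\mathrm P}$ in the masses then rests on these through correlation inequalities.

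\emph{Step 1: $\Phi_r$ is convex and even, and $\Phi_r'\in\mcCp$.} Since $\Phi_{K+1}=\Phi$ is even and convex, suppose $\Phi_{r+1}$ is as well. Evenness of $\Phi_r$ follows from the symmetry $z_r\overset{d}{=}-z_r$. Convexity follows from H\"older's inequality applied to $x\mapsto \frac1{m_r}\log\E e^{m_r\Phi_{r+1}(x+z_r)}$, or from linearity of the expectation when $m_r=0$. Then $\Phi_r'$ is odd and nondecreasing, so $\Phi_r'\in\mcCp$. We also record the identities $\Phi_r'(x)=\E_{z_r}[V_r(x,z_r)\Phi_{r+1}'(x+z_r)]$ and $\E_{z_r}V_r=1$.

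\emph{Step 2: the tilted operator in \eqref{eq:panchenko-tilted-class-operator} preserves $\mcC$ and $\mcCp$.} Write $y=x+z_r$ and let $\sigma^2=\E z_r^2$. The tilted law of $y$ has density proportional to $e^{-(y-x)^2/(2\sigma^2)+m_r\Phi_{r+1}(y)}$. Since $\Phi_{r+1}$ is even, conditionally on $|y|$ the probability that $y>0$ is $p(|y|)=e^{2x|y|/\sigma^2}/(1+e^{2x|y|/\sigma^2})$. For $x\ge0$, $p$ is at least $1/2$ and nondecreasing in $|y|$. Parity of the image is immediate from $V_r(-x,-z)=V_r(x,z)$.

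For the derivative, differentiating in $x$ gives
\begin{equation*}
\frac{d}{dx}\E_V g=\E_V g' + m_r\,\mathrm{Cov}_V\bigl(g,\Phi'_{r+1}\bigr).
\end{equation*}
Here the $x$-derivative of $V_r$ contributes $m_r(\Phi'_{r+1}(y)-\Phi'_r(x))V_r$, and the second term becomes a covariance because $\Phi'_r=\E_V\Phi'_{r+1}$.

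If $g\in\mcC$, then $g'$ is odd and nonnegative on $\R_+$. Hence $\E_V g'=\E_V[(2p-1)g'(|y|)]\ge0$. For the covariance, condition on $|y|$: $\E_V[\Phi'_{r+1}\mid |y|]=(2p-1)\Phi'_{r+1}(|y|)$ and $g(|y|)$ are both nondecreasing functions of $|y|$. Chebyshev's one-dimensional association inequality then gives $\mathrm{Cov}_V(g,\Phi'_{r+1})\ge0$. The case $g\in\mcCp$ is analogous, with the roles of odd and even swapped. Here $g'$ is even and nonnegative and the first term is trivially nonnegative. For the covariance, both $\Phi'_{r+1}$ and $g$ are nondecreasing functions of $y$, so Harris' inequality applies directly.

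\emph{Step 3: monotonicity of $U_l^{\mathrm P}$ in $m_j$.} I expect this to be the main obstacle. I would rewrite $U_l^{\mathrm P}$ recursively as $\mathcal A_1\cdots\mathcal A_{l-1}[(\mathcal A_l\cdots\mathcal A_K\Phi')^2](h)$, where $\mathcal A_r$ denotes the operator in \eqref{eq:panchenko-tilted-class-operator}. By Step 2, $\mathcal A_l\cdots\mathcal A_K\Phi'\in\mcCp$, so its square lies in $\mcC$, and every intermediate function lies in $\mcC$ or $\mcCp$.

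Differentiating in $m_j$ affects only $\mathcal A_j$ directly, together with $\Phi_r$ for $r\le j$, and hence $\mathcal A_r$ for $r<j$. We have
\begin{equation*}
\partial_{m_j}V_j=V_j\bigl(\Phi_{j+1}(y)-\Phi_j(x)+m_j\,\partial_{m_j}(\Phi_{j+1}(y)-\Phi_j(x))\bigr),
\end{equation*}
where $\partial_{m_j}\Phi_{j+1}=0$. So $\partial_{m_j}\mathcal A_j g=\mathrm{Cov}_{V_j}(g,\Phi_{j+1})$, a covariance of an element of $\mcC$ with the even convex $\Phi_{j+1}$, which Step 2's conditioning on $|y|$ shows is nonnegative. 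For $r<j$, $\partial_{m_j}\mathcal A_r g=m_r\mathrm{Cov}_{V_r}(g,\partial_{m_j}\Phi_{r+1})$. Here $\partial_{m_j}\Phi_{r+1}$ is a tilted average of such covariances, so I would first show it is even and nondecreasing in $|x|$; this is the delicate point, and I would try to prove it by the same induction, following Panchenko. Positivity of each operator then propagates the signs.

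The subtle case is $j\ge l$, where the derivative hits the inner term and produces $2\,\mathcal A_l\cdots\mathcal A_K\Phi'\cdot\partial_{m_j}(\cdots)$. Here I would use that $\partial_{m_j}\mathcal A_j$ applied to an element of $\mcCp$ produces a covariance of an odd increasing function with an even function. That covariance is odd and has the same sign as $x$, so multiplying it by the element of $\mcCp$ yields a nonnegative even function, as required.
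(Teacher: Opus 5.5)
\paragraph{Comparison with the paper.} The paper gives no proof of this statement. It imports it from Panchenko: his Theorem~2 gives the monotonicity of $U_l^{\mathrm P}$, and his Lemma~2 gives the parity, convexity and class-preservation facts. The only work done in the paper is the change of notation in the proof of Lemma~\ref{lem:panchenko}. Your proposal instead gives a self-contained argument, and its structure is sound.

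\paragraph{What already works.} Steps~1 and~2 are correct as written. Convexity comes from H\"older. The identity $\frac{d}{dx}\E_V g=\E_V g'+m_r\mathrm{Cov}_V(g,\Phi'_{r+1})$ is right. Conditioning on $|y|$, using $\P(y>0\mid |y|)\ge 1/2$ for $x\ge 0$, is exactly the mechanism needed.

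In Step~3 the strategy is also right: expand $\partial_{m_j}$ of $U_l^{\mathrm P}$ by the product rule and check that every term is nonnegative. Note that $U_l^{\mathrm P}=\mathcal A_0\mathcal A_1\cdots\mathcal A_{l-1}[(\Phi_l')^2](h)$ with $\mathcal A_l\cdots\mathcal A_K\Phi'=\Phi_l'$; you dropped $\mathcal A_0$. This is harmless, because $m_0=0$ makes $\mathcal A_0$ a plain Gaussian convolution that does not depend on the masses.

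The self-contained route buys independence from the reference. It also makes visible that every term of $\partial_{m_j}U_l^{\mathrm P}$ is separately nonnegative, for every $h$. It does, however, leave two points open.

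\paragraph{First open point: the ``delicate'' step.} It is not actually delicate. Since $\Phi_j'=\mathcal A_j\Phi'_{j+1}$, you have
$\partial_x\partial_{m_j}\Phi_j=\partial_{m_j}\Phi_j'=\mathrm{Cov}_{V_j}(\Phi'_{j+1},\Phi_{j+1})$.
Conditioning on $|y|$ shows this is odd and nonnegative for $x\ge0$: given $|y|$, the value $\Phi_{j+1}(y)$ is fixed, while $\E_{V_j}[\Phi'_{j+1}\mid |y|]=(2p-1)\Phi'_{j+1}(|y|)$, and both are nondecreasing in $|y|$.

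Also $\partial_{m_j}\Phi_j\ge 0$, because $m\mapsto \frac1m\log\E e^{m\Phi_{j+1}}$ is nondecreasing. Hence $\partial_{m_j}\Phi_j\in\mcC$. Then $\partial_{m_j}\Phi_r=\mathcal A_r[\partial_{m_j}\Phi_{r+1}]\in\mcC$ for every $r<j$, by your Step~2.

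So $\partial_{m_j}\Phi_{r+1}$ is a tilted average of $\partial_{m_j}\Phi_j$, not ``a tilted average of covariances''. It is the $x$-derivative of $\partial_{m_j}\Phi_j$ that is a covariance.

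\paragraph{Second open point: the case $j\ge l$.} You only treat the $\partial_{m_j}\mathcal A_j$ contribution to $\partial_{m_j}\Phi_l'$. There are also the terms $m_r\mathrm{Cov}_{V_r}(\Phi'_{r+1},\partial_{m_j}\Phi_{r+1})$ for $l\le r<j$. All of these are then pushed through $\mathcal A_l\cdots\mathcal A_{r-1}$. Completing your route would therefore require showing that these operators preserve the class ``odd and nonnegative on $[0,\infty)$''. That is true, by the same conditioning, but you do not state it.

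The cleaner fix is to write $\partial_{m_j}\Phi_l'=(\partial_{m_j}\Phi_l)'$. This is odd and nonnegative on $[0,\infty)$ because $\partial_{m_j}\Phi_l\in\mcC$ by the first point. Hence $2\Phi_l'\,\partial_{m_j}\Phi_l'\ge 0$ pointwise.

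With these two fixes, every term of $\partial_{m_j}U_l^{\mathrm P}$ is a product of positive operators applied to a nonnegative function. The monotonicity in each $m_j$ then follows, for every external field $h$.
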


The monotonicity of $U^\mathrm{P}_l$ is the content of~\cite[Theorem~2]{panchenko2005question} and the last part is extracted from~\cite[Lemma~2]{panchenko2005question}. For the applications here, we always set $h=0$ in~\eqref{e.Z}.

\begin{proof}[Proof of Lemma~\ref{lem:panchenko}]
Theorem~\ref{thm:panchenko-restated} is normalized so that the largest endpoint is $1$. To apply it here, we fix $Q>q_k$ and set
\[
p_0=0,\qquad p_{r+1}=q_r\ \text{ for }r\in\{0,\dots,k\},\qquad \text{and}\quad p_{k+2}=Q.
\]
We take $K=k+1$, set $\hat q_r=p_r/Q$ for $r\in\{0,\ldots,k+2\}$, keep the masses $m_0,\ldots,m_{k+1}$, choose $\xi(s)=Qs^2$, and take the terminal function in Theorem~\ref{thm:panchenko-restated} to be $\Phi=\phi$. Then, by~\eqref{e.Ez^2_r=},
\begin{equation}\label{e.compar_var}
    \xi'(\hat q_{r+1})-\xi'(\hat q_r)=2(p_{r+1}-p_r),
\end{equation}
so that, for $r\le k$, the Gaussian increment $z_r$ has the same law as $B_{2(q_r-q_{r-1})}$.

The only additional interval is $[q_k,Q]$, with mass $m_{k+1}=1$. It does not change the quantities of interest, since
\begin{equation}\label{eq:T1-terminal-constant}
T_{1,Q-q_k}\phi(x)=\log\E\cosh(x+B_{2(Q-q_k)})=\phi(x)+(Q-q_k).
\end{equation}
Thus the functions $\Phi_i$ generated by~\eqref{e.Phi_r=} satisfy
\[
\Phi_i=f_i+Q-q_k,\qquad i\in\{0,\ldots,k+1\}.
\]
In particular, $\Phi_i'=f_i'$ for these indices. The convexity and parity assertions for the $f_i$'s therefore follow directly from Theorem~\ref{thm:panchenko-restated}.

For the tilted operators, the additive constant cancels from~\eqref{e.V_r=}. Using~\eqref{e.compar_var}, we get, for $i\in\{0,\ldots,k\}$,
\[
\E_{z_i}V_i(x,z_i)g(x+z_i)
=\frac{\E\left[g(x+B_{2(q_i-q_{i-1})})e^{m_i f_{i+1}(x+B_{2(q_i-q_{i-1})})}\right]}
{\E e^{m_i f_{i+1}(x+B_{2(q_i-q_{i-1})})}}
=\mathcal L_i g(x).
\]
Thus each operator $\mathcal L_i$, for $i\in\{0,\ldots,k\}$, sends $\mcC$ into $\mcC$ by Theorem~\ref{thm:panchenko-restated}.

It remains to identify our $U_i$ with the quantity $U_i^{\mathrm P}$ from Theorem~\ref{thm:panchenko-restated}. Differentiating~\eqref{e.Phi_r=} gives
\[
\Phi_r'(x)=\E_{z_r}\left[V_r(x,z_r)\Phi_{r+1}'(x+z_r)\right].
\]
Iterating this identity from $r=i+1$ to $K$ gives
\[
\Phi_{i+1}'(Z_{i+1})
=
\E\left[W_{i+1}\cdots W_K\Phi'(Z)\mid z_0,\ldots,z_i\right].
\]
Using the preceding identification of the tilted operators for the steps $0,\ldots,i$, and recalling that $W_0=1$ because $m_0=0$, we obtain
\[
U_i(q)
=\E\left[W_0\cdots W_i\left(\Phi_{i+1}'(Z_{i+1})\right)^2\right]
=U^{\mathrm P}_{i+1}.
\]
The monotonicity of $U_i(q)$ in each mass coordinate now follows from the monotonicity of $U^{\mathrm P}_{i+1}$ in Theorem~\ref{thm:panchenko-restated}.
\end{proof}

\subsection{Sign structure of the Hessian}

For every $q$ in the open cone $\{0 < q_0 < \cdots < q_k\}$, we consider the Hessian $H(q) = (H_{ij}(q))$ of $\psi_\circ$ at $q$, that is,
\begin{equation}
\label{eq:Hessian-def}
    H_{ij}
    =
    \dr_{q_i} \dr_{q_j} \psi_\circ \stackrel{\text{P.\ref{p.drq.psi}}}{=} (m_{i+1} - m_i)\dr_{q_j} U_i \qquad (i,j \in \{0, \ldots, k\}).
\end{equation}

\begin{lemma}[Off-diagonal signs]
\label{lem:offdiag}
For every $i\ne j \in \{0,\ldots, k\}$, we have $H_{ij}\le 0$.
\end{lemma}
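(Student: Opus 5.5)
The plan is to reduce the sign of $H_{ij}$ to the monotonicity of the $U_l$ in the mass coordinates, which is Lemma~\ref{lem:panchenko}\eqref{lem:panchenko(3)}. The bridge is symmetry of mixed second derivatives of $\psi_\circ$, now regarded as a function of both the levels $(q_l)$ and the masses $(m_l)$. Since $m_{i+1}-m_i>0$, by \eqref{eq:Hessian-def} it suffices to show $\dr_{q_j}U_i\le 0$ for $j\ne i$. Because $H$ is symmetric, it is enough to prove this for one ordering of each pair $\{i,j\}$.

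\emph{Step 1: a formula for the mass derivatives.} I would first compute $\dr_{m_j}\psi_\circ$ for $j\in\{1,\dots,k\}$, with the $(q_l)$ and the other masses fixed. Geometrically, moving $m_j$ to $m_j+\eps$ lowers the path on $[m_j,m_j+\eps)$ from $q_j$ to $q_{j-1}$. This suggests an identity of the form
\begin{equation*}
\dr_{m_j}\psi_\circ=-(q_j-q_{j-1})\,\widetilde U_j ,
\end{equation*}
where $\widetilde U_j$ is the derivative density ``at the junction $m_j$''. My first guess is $\widetilde U_j=U_{j-1}$ from the left. I would verify this directly from the recursion \eqref{e.f_i=}, in the same way as the proof of Proposition~\ref{p.drq.psi}. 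Concretely, I would differentiate $T_{m_j,t_j}f_{j+1}$ in $m_j$ via the variation formula \eqref{eq:HC-variation}, propagate the result through $\mathcal L_{0:j-1}$, and rewrite it using the entropy-type identity obtained by integrating \eqref{eq:HC-time-derivative} in $t$. I expect to obtain an expression involving $\mathcal L_{0:j-1}$ applied to $(f'_j)^2$ and to $(f'_{j+1})^2$, that is, $U_{j-1}$ and $U_j$, weighted by $t_j$.

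\emph{Step 2: equality of mixed partials.} Next I would apply $\dr_{q_i}$ to the formula of Step 1 and compare the result with $\dr_{m_j}(\dr_{q_i}\psi_\circ)=\dr_{m_j}\bigl((m_{i+1}-m_i)U_i\bigr)$, using the closed form from Proposition~\ref{p.drq.psi}.

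When $i\notin\{j-1,j\}$, the factor $m_{i+1}-m_i$ does not depend on $m_j$. By Lemma~\ref{lem:panchenko}\eqref{lem:panchenko(3)}, the right side is then $(m_{i+1}-m_i)\dr_{m_j}U_i\ge0$. On the left side, the factor $q_j-q_{j-1}$ is independent of $q_i$, so the left side equals $-(q_j-q_{j-1})\dr_{q_i}\widetilde U_j$. This gives $\dr_{q_i}\widetilde U_j\le 0$, that is, $H_{i,j-1}\le0$ (or $H_{ij}\le 0$, depending on the identification of $\widetilde U_j$). Letting $j$ vary, and using the symmetry $H_{ij}=H_{ji}$, this settles every pair of indices at distance at least $2$.

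\emph{Step 3: adjacent indices, the main obstacle.} For $|i-j|=1$, the products rule produces extra terms, because $m_{i+1}-m_i$ depends on $m_j$ and $q_j-q_{j-1}$ depends on $q_i$. For example, taking $i=j-1$, I would get
\begin{equation*}
U_j-(q_j-q_{j-1})\,\dr_{q_{j-1}}\widetilde U_j=U_{j-1}+(m_j-m_{j-1})\,\dr_{m_j}U_{j-1},
\end{equation*}
and there is an analogous identity with $i=j$. The monotonicity term $\dr_{m_j}U_{j-1}\ge0$ still has the favourable sign. What remains is to control the difference $U_j-U_{j-1}$.

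I would try two things here. The first is to combine the two identities (for $i=j-1$ and $i=j$) so that the boundary terms cancel; whether this works depends on the precise form of $\widetilde U_j$ found in Step 1. The second, if that fails, is to prove $U_{j-1}\le U_j$ directly. By Lemma~\ref{lem:derivative-recursion} we have $f'_j=\mathcal L_j f'_{j+1}$, so Jensen's inequality for the positive, unit-preserving operator $\mathcal L_j$ gives $(f'_j)^2\le\mathcal L_j[(f'_{j+1})^2]$. Applying $\mathcal L_{0:j-1}$ and evaluating at $0$ yields $U_{j-1}\le U_j$, which is exactly the sign needed. The step I expect to be most delicate is getting the exact junction formula in Step 1, and in particular deciding whether it involves $U_{j-1}$, $U_j$, or a combination of the two; everything after that is sign bookkeeping.
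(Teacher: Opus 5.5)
Your proposal has a genuine gap in Step~1, and it carries through Steps~2 and~3. The mass derivative is not attached to either endpoint of $[q_{j-1},q_j]$. From the recursion, $\partial_{m_j}f_j=m_j^{-1}(\mathcal L_jf_{j+1}-f_j)$. The entropy identity you allude to (It\^o's formula for $u(q_j-s,X_s)$ along the tilted diffusion, where $u(\tau,\cdot):=T_{m_j,\tau}f_{j+1}$) turns this into an integral over the whole interval:
\[
\partial_{m_j}\psi_\circ=-\int_{q_{j-1}}^{q_j}\Gamma(s)\,\d s,\qquad
\Gamma(s):=\mathcal L_{0:j-1}\,\mathcal L_{m_ju(q_j-s,\cdot),\,s-q_{j-1}}\bigl[(\partial_xu(q_j-s,\cdot))^2\bigr](0).
\]
Here $\Gamma(q_{j-1})=U_{j-1}$ and $\Gamma(q_j)=U_j$, and $\Gamma$ is nondecreasing in between. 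So your $\widetilde U_j$ is the average of $\Gamma$ over $[q_{j-1},q_j]$. It is neither $U_{j-1}$ nor $U_j$, nor any expression in those two quantities alone.

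As a result, Step~2 only yields the averaged inequality $\int_{q_{j-1}}^{q_j}\partial_{q_i}\Gamma(s)\,\d s\le0$ for $i\notin\{j-1,j\}$. This gives no sign for $\partial_{q_i}U_{j-1}$ or $\partial_{q_i}U_j$, i.e.\ for any entry of $H$. To localize, you would need several things that are not in the proposal:
\begin{enumerate}
\item extra breakpoints inside $[q_{j-1},q_j]$ carrying the same mass $m_j$;
\item treating the mass of a thin sub-interval at an endpoint as an independent coordinate, moved only in an order-preserving direction, and then shrinking that sub-interval;
\item joint $C^2$ regularity in $(q,m)$ to justify the Schwarz step.
\end{enumerate}

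Step~3 also has a sign error. Your displayed identity rearranges to
\[
(q_j-q_{j-1})\,\partial_{q_{j-1}}\widetilde U_j=(U_j-U_{j-1})-(m_j-m_{j-1})\,\partial_{m_j}U_{j-1}.
\]
So $\partial_{q_{j-1}}\widetilde U_j\le0$ would require the upper bound $U_j-U_{j-1}\le(m_j-m_{j-1})\,\partial_{m_j}U_{j-1}$. The Jensen bound $U_{j-1}\le U_j$ is true but points the wrong way. With the correct formula, the boundary term from $\partial_{q_{j-1}}$ is $\Gamma(q_{j-1})=U_{j-1}$ and cancels exactly, but you are again left with only an averaged inequality.

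The missing idea, which the paper uses, is to express a shift of a level as a change of a single mass on a refined grid, and to compare values rather than derivatives. For $i<j$, record $q$ and $q+\varepsilon e_j$ on the common list $(q_0,\ldots,q_j,q_j+\varepsilon,q_{j+1},\ldots,q_k)$, appending $[q_k,Q]$ with mass $1$ if $j=k$. By $T_{m,a}T_{m,b}=T_{m,a+b}$, $U_i$ is the same function of the refined masses for both paths. The two paths differ only in the mass of $[q_j,q_j+\varepsilon]$, which is $m_{j+1}$ for $q$ and $m_j$ for $q+\varepsilon e_j$. Lemma~\ref{lem:panchenko}\eqref{lem:panchenko(3)} then gives $U_i(q+\varepsilon e_j)\le U_i(q)$ directly, adjacent indices included. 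This needs no mixed partials and no control of $U_j-U_{j-1}$, and the symmetry of $H$ covers $i>j$.
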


\begin{proof}
Being a Hessian, $H(q)$ is symmetric, so it suffices to treat the case $i<j$. Since $m_{i+1}-m_i>0$, identity~\eqref{eq:Hessian-def} reduces the bound $H_{ij}\le0$ to
\begin{equation}\label{e.dU/dq<0}
    \dr_{q_j} U_i\le0,\qquad i<j.
\end{equation}
Fix such a pair $i<j$, and write $e_j$ for the $j$-th standard basis vector of $\R^{k+1}$. It suffices to show that $U_i(q+\varepsilon e_j)\le U_i(q)$ for all sufficiently small $\varepsilon>0$: dividing by $\varepsilon$ and letting $\varepsilon\downarrow0$ then gives~\eqref{e.dU/dq<0}. The mechanism is that displacing the single breakpoint $q_j$ to the right amounts, once both paths are recorded on a common refinement, to lowering one mass coordinate, to which the monotonicity of Lemma~\ref{lem:panchenko}~\eqref{lem:panchenko(3)} applies.

\medskip
\noindent\emph{Step 1: a common refinement.}
The recursion~\eqref{e.f_i=} defining the functions $f_l$, and hence the quantities $U_i$, depends on a path only through the mass attached to each spatial interval, and not on the particular breakpoints used to record it. Indeed, subdividing an interval into two adjacent pieces carrying the same mass $m$ replaces a single step $T_{m,a+b}$ of the recursion by two consecutive steps $T_{m,a}T_{m,b}$, and these coincide:
\begin{equation}\label{e.semigroup.T}
    T_{m,a}T_{m,b}=T_{m,a+b}\qquad(m\ge0),
\end{equation}
as one checks directly from~\eqref{e.T_m,t=}. We may therefore evaluate $U_i$ at $q$ and at $q^\varepsilon:=q+\varepsilon e_j$ from any single list of breakpoints that represents both paths.

Assume first that $j<k$, and take $\varepsilon>0$ small enough that $q_j+\varepsilon<q_{j+1}$, so that $q^\varepsilon$ still lies in the open cone. Inserting the point $q_j+\varepsilon$ among the breakpoints of $q$ (or, equivalently, the point $q_j$ among those of $q^\varepsilon$) produces the common list
\[
(q_0,\ldots,q_{j-1},\,q_j,\,q_j+\varepsilon,\,q_{j+1},\ldots,q_k),
\]
which represents both paths. On this list the two paths attach the same mass to every interval except $[q_j,q_j+\varepsilon]$: for $q$ this interval is part of $[q_j,q_{j+1}]$ and carries mass $m_{j+1}$, whereas for $q^\varepsilon$ it is part of $[q_{j-1},q_j+\varepsilon]$ and carries mass $m_j$. Since $m_j<m_{j+1}$, passing from $q$ to $q^\varepsilon$ lowers exactly one mass coordinate---the mass on $[q_j,q_j+\varepsilon]$, from $m_{j+1}$ down to $m_j$---and leaves all the others unchanged. The two representations are shown below.

\begin{center}
\begin{tikzpicture}[x=1cm,y=1cm,scale=0.95]
\draw (0,0)--(7,0);
\draw (0,-1.7)--(7,-1.7);

\foreach \x/\lab in {0/{q_{j-1}},2.2/{q_j},4.1/{q_j+\varepsilon},7/{q_{j+1}}} {
  \draw (\x,0.12)--(\x,-0.12);
  \node[below=4pt] at (\x,-0.12) {$\lab$};
  \draw (\x,-1.58)--(\x,-1.82);
  \node[below=4pt] at (\x,-1.82) {$\lab$};
}

\node[left] at (-0.5,0) {original};
\node[left] at (-0.5,-1.7) {perturbed};

\node at (1.1,0.38) {$m_j$};
\node at (3.15,0.38) {$m_{j+1}$};
\node at (5.55,0.38) {$m_{j+1}$};

\node at (1.1,-1.32) {$m_j$};
\node at (3.15,-1.32) {$m_j$};
\node at (5.55,-1.32) {$m_{j+1}$};

\end{tikzpicture}
\end{center}

\medskip
\noindent\emph{Step 2: monotonicity in the mass.}
Because $i<j$, the breakpoint $q_i$ lies strictly to the left of the modified interval $[q_j,q_j+\varepsilon]$. For a single fixed path, subdividing intervals that lie to the right of $q_i$ leaves $U_i=\mathcal L_{0:i}[(f'_{i+1})^2](0)$ unchanged: by the semigroup identity~\eqref{e.semigroup.T} it alters neither the operators $\mathcal L_0,\ldots,\mathcal L_i$ nor the function $f_{i+1}$ entering this expression. Consequently, we can view $U_i(q)$ and $U_i(q^\varepsilon)$ as two values of one and the same function, namely the quantity $U_i$ attached to the common refined list and regarded as a function of its mass coordinates, but evaluated at the masses of $q$ and of $q^\varepsilon$ respectively. By Step~1 these two assignments share every coordinate but one: the mass on $[q_j,q_j+\varepsilon]$, which equals $m_{j+1}$ for $q$ and $m_j$ for $q^\varepsilon$.

Let us write $g(\mu)$ for the value of this function when the mass on $[q_j,q_j+\varepsilon]$ is set to $\mu$ and all the other coordinates are held fixed, so that $g(m_{j+1})=U_i(q)$ and $g(m_j)=U_i(q^\varepsilon)$. For $\mu\in(m_j,m_{j+1})$ the masses of the refined list are strictly ordered, so Lemma~\ref{lem:panchenko}~\eqref{lem:panchenko(3)} applies and shows that $g$ is nondecreasing on this interval; since $g$ is moreover continuous (the operators $T_{m,t}$ depend continuously on $m$), it follows that $g(m_j)\le g(m_{j+1})$, that is, $U_i(q^\varepsilon)\le U_i(q)$. This proves~\eqref{e.dU/dq<0} when $j<k$.

\medskip
\noindent It remains to treat $j=k$. Here $q_k$ is the last breakpoint, so we first make room to its right. Fix $Q>q_k+\varepsilon$ and append the terminal interval $[q_k,Q]$ with mass $m_{k+1}=1$, exactly as in the proof of Lemma~\ref{lem:panchenko}. By~\eqref{eq:T1-terminal-constant} this adds the constant $Q-q_k$ to every $f_l$, hence leaves all the derivatives $f_l'$, and with them every $U_i$, unchanged. With the terminal interval in place, $q_k$ is no longer last, and Steps~1 and~2 apply verbatim to the common list
\[
(q_0,\ldots,q_{k-1},\,q_k,\,q_k+\varepsilon,\,Q):
\]
on it the interval $[q_k,q_k+\varepsilon]$ carries mass $1$ for $q$ and mass $m_k$ for $q^\varepsilon$, so moving $q_k$ to $q_k+\varepsilon$ again lowers a single mass coordinate, this time from $1$ to $m_k$. Arguing as in Step~2 then gives $U_i(q+\varepsilon e_k)\le U_i(q)$ for $i<k$, which is~\eqref{e.dU/dq<0} for $j=k$. The proof is thus complete.
\end{proof}
\begin{lemma}[Nonnegative row sums]
\label{lem:rowsums}
For every $i\in\{0,\dots,k\}$, we have $\sum_{j=0}^k H_{ij}\ge0$.
\end{lemma}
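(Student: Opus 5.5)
The plan is to compute the row sum as a single directional derivative and then show that this derivative has a sign. By \eqref{eq:Hessian-def}, for fixed $i$ we have
\begin{equation*}
\sum_{j=0}^k H_{ij} = (m_{i+1}-m_i)\sum_{j=0}^k \dr_{q_j} U_i(q) = (m_{i+1}-m_i)\,\frac{d}{d\varepsilon}U_i(q+\varepsilon\vecone)\Big|_{\varepsilon=0},
\end{equation*}
where $\vecone=(1,\dots,1)$. For small $|\varepsilon|$ the shifted path stays in the open cone. Since $m_{i+1}>m_i$, it suffices to show that $\varepsilon\mapsto U_i(q+\varepsilon\vecone)$ is nondecreasing.

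\emph{Step 1: reduce to the first increment.} In the variables $t_l=q_l-q_{l-1}$ of \eqref{e.def.ti}, shifting every $q_l$ by $\varepsilon$ changes only $t_0=q_0$, which becomes $t_0+\varepsilon$. The increments $t_1,\dots,t_k$ and all masses are unchanged. I then go through the objects entering $U_i=\mathcal L_{0:i}[(f'_{i+1})^2](0)$:
\begin{itemize}
\item the functions $f_1,\dots,f_{k+1}$ depend only on $t_1,\dots,t_k$, so they do not change;
\item the operators $\mathcal L_1,\dots,\mathcal L_i$ depend only on these $f_l$, the $t_l$ with $l\ge1$, and the masses, so they do not change either;
\item only $\mathcal L_0$ changes. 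Since $m_0=0$, the tilt in \eqref{e.L_mtg=} is trivial, and $\mathcal L_0 h(x)=\E\, h(x+B_{2t_0})$ is the plain heat semigroup.
\end{itemize}
Hence, setting $g:=\mathcal L_{1:i}[(f'_{i+1})^2]$, which is independent of $t_0$ (and equals $(f'_1)^2$ when $i=0$), we get
\begin{equation*}
U_i = \E\, g(B_{2t_0}).
\end{equation*}

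\emph{Step 2: symmetry and monotonicity of $g$.} By Lemma~\ref{lem:panchenko}\eqref{lem:panchenko(1)}, $f'_{i+1}\in\mcCp$: it is odd and nondecreasing. Its square is therefore even, nonnegative, and nondecreasing on $[0,\infty)$, that is, $(f'_{i+1})^2\in\mcC$. Applying Lemma~\ref{lem:panchenko}\eqref{lem:panchenko(2)} repeatedly to $\mathcal L_i,\dots,\mathcal L_1$ shows that $g\in\mcC$. Moreover $0\le g\le 1$, by Lemma~\ref{lem:derivative-recursion} together with the fact that each $\mathcal L_l$ is positive and preserves constants.

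\emph{Step 3: conclusion.} Write $B_{2t_0}=\sqrt{2t_0}\,Z$ with $Z$ standard Gaussian. Since $g$ is even and nondecreasing on $[0,\infty)$, we have $g(\sqrt{2t_0}\,Z)=g(\sqrt{2t_0}\,|Z|)$, which is pointwise nondecreasing in $t_0$. Taking expectations, $U_i$ is nondecreasing in $t_0$, so its derivative in $t_0$ is nonnegative. This derivative exists because of the differentiability already used in Proposition~\ref{p.drq.psi}, and it equals $\sum_j\dr_{q_j}U_i$. Hence $\sum_j H_{ij}\ge0$.

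The main point to check carefully is Step~1, namely that none of $\mathcal L_1,\dots,\mathcal L_i$ or $f_{i+1}$ depends on $t_0$. This holds because the recursion \eqref{e.f_i=} builds $f_l$ only from $t_l,\dots,t_k$, so $t_0$ enters only through $\mathcal L_0$. The rest follows directly from the class-preservation properties in Lemma~\ref{lem:panchenko}.
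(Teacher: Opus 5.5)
Your proposal is correct and follows essentially the same route as the paper. It reduces the row sum to the derivative of $U_i$ along $q+\varepsilon\one$, notes that only the untilted heat step $\mathcal L_0$ depends on $t_0$, and then uses that $\mathcal L_{1:i}[(f'_{i+1})^2]\in\mcC$ together with the fact that $s\mapsto\E\, g(\sqrt{2s}\,|Z|)$ is nondecreasing. The only cosmetic difference is that the paper writes $P_{t_0+\varepsilon}=P_\varepsilon P_{t_0}$ and uses that $P_{t_0}$ preserves $\mcC$, while you read off monotonicity in $t_0$ directly; both are equally valid.
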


\begin{proof}
For convenience, we denote by $P_t:=T_{0,t} = \mcl L_{0,t}$ the heat semigroup with Brownian variance $2t$, so that
\begin{equation*}
    P_tf(x)=\E f(x+B_{2t}).
\end{equation*}
We set $\one=(1,\ldots,1)\in\R^{k+1}$.
The $i$-th row sum is the directional derivative of $\dr_{q_i} \psi_\circ$ in the direction $\one$. By Proposition~\ref{p.drq.psi}, we have
\begin{equation}
\label{eq:row-sum-directional}
    \sum_{j=0}^k H_{ij}
    =
    (m_{i+1} - m_i)
    \left.
    \frac{\d}{\d \varepsilon}
    U_i(q+\varepsilon\one)
    \right|_{\varepsilon=0}.
\end{equation}
We set $q^\varepsilon:=q+\varepsilon\one$. In the notation $(t_i)$ from \eqref{e.def.ti}, we see that 
\begin{equation*}
    t_0^\varepsilon=t_0+\varepsilon,
    \qquad \text{and}\qquad 
    t_r^\varepsilon=t_r\quad \text{for $r\geq 1$}.
\end{equation*}
Therefore the functions $f_r$ for $r\ge1$ are unchanged, because their recursion uses only $t_r,t_{r+1},\ldots,t_k$. Likewise the operators $\mathcal L_1,\ldots,\mathcal L_i$ are unchanged. Only $\mathcal L_0$ changes, and since $m_0=0$ it changes from $P_{t_0}$ to $P_{t_0+\varepsilon}$.

For the original value of $q$, set $R_i= \mathcal L_{1:i}[(f_{i+1}')^2]$ for $i\in\{0,\dots,k\}$. By our convention, we have $R_0 = (f_1')^2$.
Then, in view of~\eqref{eq:Ui-def-Lab}, we have $U_i(q)=P_{t_0}R_i(0)$.
By the preceding paragraph, the same $R_i$ is used for $q^\varepsilon$, and thus
\begin{equation*}
    U_i(q+\varepsilon\one)
    =
    P_{t_0+\varepsilon}R_i(0)
    =
    P_\varepsilon G_i(0),
    \qquad
    \text{where}\quad G_i:=P_{t_0}R_i.
\end{equation*}
Here the last equality uses the heat semigroup property $P_{t_0+\varepsilon}=P_\varepsilon P_{t_0}$.

By Lemma~\ref{lem:panchenko}~\eqref{lem:panchenko(1)}, we have $f_{i+1}'\in\mcCp$ and thus $(f_{i+1}')^2\in\mcC$.
Since $\mcl L_r$ preserve $\mcC$ by Lemma~\ref{lem:panchenko}~\eqref{lem:panchenko(2)} and the heat semigroup $P_{t_0}$ also preserves $\mcC$, we have $G_i\in\mcC$.
Hence $G_i$ is even and nondecreasing in $|x|$.

Let $Z$ be a standard Gaussian random variable. We have
\begin{equation}
\label{eq:heat-monotone-epsilon}
    P_\varepsilon G_i(0)
    =
    \E G_i(\sqrt{2\varepsilon}\,Z)
    =
    \E G_i(\sqrt{2\varepsilon}\,|Z|).
\end{equation}
Since $G_i(r)$ is nondecreasing for $r\ge0$, the right-hand side of \eqref{eq:heat-monotone-epsilon} is nondecreasing in $\varepsilon$. Therefore, we conclude $\left.\frac{\d}{\d\varepsilon}U_i(q+\varepsilon\one)\right|_{\varepsilon=0}\ge0$.
Using also \eqref{eq:row-sum-directional}, we obtain that $\sum_{j=0}^k H_{ij}\ge0$, as desired.
\end{proof}

\begin{lemma}
\label{lem:m-matrix}
Let $H$ be a real symmetric matrix. Suppose $H_{ij}\leq0$ for $i\ne j$ and $\sum_j H_{ij}\ge0$ for every $i$.
Then $H$ is positive semidefinite.
\end{lemma}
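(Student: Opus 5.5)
The plan is to write the quadratic form of $H$ as a sum of manifestly nonnegative terms. For $x\in\R^n$ we use the symmetry $H_{ij}=H_{ji}$ and split off the diagonal via the row sums. Set $s_i:=\sum_j H_{ij}\ge 0$. Then $H_{ii}=s_i-\sum_{j\neq i}H_{ij}$, and so
\begin{equation*}
x^\top H x=\sum_i s_i x_i^2-\sum_i\sum_{j\ne i}H_{ij}x_i^2+\sum_{i\ne j}H_{ij}x_ix_j .
\end{equation*}

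We then symmetrize the two off-diagonal sums over unordered pairs $\{i,j\}$ with $i\ne j$, using $H_{ij}=H_{ji}$. This turns $-\sum_{i\ne j}H_{ij}x_i^2$ into $-\sum_{i<j}H_{ij}(x_i^2+x_j^2)$, and $\sum_{i\ne j}H_{ij}x_ix_j$ into $\sum_{i<j}2H_{ij}x_ix_j$. Combining the two gives
\begin{equation*}
x^\top Hx=\sum_i s_ix_i^2+\sum_{i<j}(-H_{ij})(x_i-x_j)^2 .
\end{equation*}

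Each term on the right is nonnegative: $s_i\ge0$ by the row-sum hypothesis, and $-H_{ij}\ge0$ by the off-diagonal sign hypothesis. Hence $x^\top Hx\ge 0$ for every $x$, which is positive semidefiniteness.

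There is no real obstacle here; the only care needed is the bookkeeping when symmetrizing over unordered pairs. Alternatively, one could invoke the Gershgorin circle theorem: each eigenvalue $\lambda$ satisfies $\lambda\ge H_{ii}-\sum_{j\ne i}|H_{ij}|=s_i\ge0$ for some $i$, and symmetry makes the eigenvalues real. Either route works, but I would present the explicit sum-of-squares identity since it is self-contained.
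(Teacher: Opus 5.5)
Your proposal is correct and is the paper's proof: the paper uses the same identity $a^\top Ha=\sum_i\bigl(\sum_j H_{ij}\bigr)a_i^2+\sum_{i<j}(-H_{ij})(a_i-a_j)^2$, and your derivation of it checks out. Your Gershgorin alternative is also valid, since $H_{ii}-\sum_{j\ne i}|H_{ij}|=\sum_j H_{ij}\ge0$ when the off-diagonal entries are nonpositive.
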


\begin{proof}
For every vector $a=(a_i)$, we have
\begin{equation*}
a^\top Ha
    =
    \sum_i
    \left(\sum_jH_{ij}\right)a_i^2
    +
    \sum_{i<j}
    (-H_{ij})(a_i-a_j)^2.
\end{equation*}
Since both terms in the display are nonnegative under the assumptions, the matrix $H$ is positive semidefinite.
\end{proof}

\begin{proof}[Proof of Proposition~\ref{p.convex.weak}]
The map in \eqref{e.convex.weak} is smooth, and its Hessian $H$ is given by~\eqref{eq:Hessian-def}. This Hessian is symmetric, with nonpositive off-diagonal entries by Lemma~\ref{lem:offdiag} and nonnegative row sums by Lemma~\ref{lem:rowsums}. By Lemma~\ref{lem:m-matrix}, it is positive semidefinite, and thus the map is convex.
\end{proof}

We can now deduce the convexity of $\psi_\circ$ on all of $\mcl Q_1$. The first step is to upgrade the convexity on the open cone of Proposition~\ref{p.convex.weak} to convexity on the closed cone, which is where the convex combinations of finitely-valued paths naturally live.

\begin{proof}[Proof of Proposition~\ref{p.convex.psi.circ}]
We first observe that, for any fixed discretization points $0=m_0<m_1<\cdots<m_{k+1}=1$, the map $(q_i)\mapsto\psi_\circ(q)$ is convex on the closed cone $\{0\le q_0\le\cdots\le q_k\}$. Indeed, it is convex on the open cone from Proposition~\ref{p.convex.weak}; it is continuous on the closed cone; and the open cone is dense in the closed cone. 

Now let $q_0,q_1\in\mcl Q_1$ and $\lambda\in[0,1]$. By Proposition~\ref{p.lip.psi.circ}, $\psi_\circ$ is continuous on $\mcl Q_1$, and paths taking finitely many values are dense in $\mcl Q_1$ for the $L^1$ norm. It therefore suffices to prove the claimed inequality when $q_0$ and $q_1$ take finitely many values. As in the proof of Proposition~\ref{p.lip.psi.circ}, we may then choose a common set of discretization points $0=m_0<m_1<\cdots<m_{k+1}=1$ and write
\begin{equation*}
    q_0=\sum_{i=0}^k a_i\1_{[m_i,m_{i+1})}
    \qquad\text{and}\qquad
    q_1=\sum_{i=0}^k b_i\1_{[m_i,m_{i+1})},
\end{equation*}
with $(a_i)$ and $(b_i)$ both lying in the closed cone $\{0\le q_0\le\cdots\le q_k\}$. The path $(1-\lambda)q_0+\lambda q_1$ has values $\bigl((1-\lambda)a_i+\lambda b_i\bigr)_i$, which again lie in this cone. By the convexity on the closed cone established in the previous paragraph,
\begin{equation*}
    \psi_\circ\bigl((1-\lambda)q_0+\lambda q_1\bigr)\le(1-\lambda)\psi_\circ(q_0)+\lambda\psi_\circ(q_1),
\end{equation*}
as desired.
\end{proof}

\begin{proof}[Proof of Corollary~\ref{c.convex.psi}]
By the decomposition~\eqref{e.decomp.psi}, the map $\psi$ is a nonnegative linear combination of the maps $q\mapsto\psi_\circ(q_s)$, $s\in\sS$, each of which is convex on $\mcl Q_1$ by Proposition~\ref{p.convex.psi.circ}. 
\end{proof}

\section{Enriched model and regularity properties}
\label{s.enriched_properties}

We now introduce the enriched free energy associated with the multi-species model. The additional parameter $q$ couples the spins to an independent random field with an ultrametric structure; it is introduced so that the limiting free energy can be studied through Hamilton--Jacobi equations. The case $q=0$ recovers the original free energy, while the dependence on general $q$ provides the regularity and derivative information needed in the cavity and comparison arguments below. We first set up the notation for paths and admissible directions, and then recall the basic estimates for the enriched free energy. Most of these estimates are direct specializations of the multi-species vector-spin framework of~\cite{chen2024ms}, as explained in the following remark.

\begin{remark}[Importing results from~\cite{chen2024ms}]
The framework of~\cite{chen2024ms} treats a possibly non-convex multi-species vector spin model, in which spins belonging to different species may have different dimensions and distributions. The setting considered here is obtained as the following specialization. The index set $\mathscr{S}$ for the species is denoted in the same way here and in~\cite{chen2024ms}. For every species, we take the spin dimension $\kappa_s$ from~\cite[(1.5)]{chen2024ms} to be equal to $1$, and we choose the single-spin distribution $\mu_s$ to be the uniform probability measure on $\{-1,+1\}$, namely $\mu_s=\frac12(\delta_{-1}+\delta_{+1})$.
\end{remark}

\subsubsection*{Basic notation}
For $a,b\in \R^{n}$ for some $n\in\N$, we write $a\cdot b= \sum_ia_ib_i$ and $|a|=\sqrt{a\cdot a}$. More generally, for any finite set $I$ and $a,b\in \R^I$, we write
\begin{align}\label{e.dot_product}
    a\cdot b=\sum_{i\in I}a_ib_i; \qquad |a|=\sqrt{a\cdot a}.
\end{align}
For any matrix $A$, we denote by $A^\intercal$ its transpose. For a square matrix $A$, we denote by $\diag(A)$ the vector consisting of its diagonal entries.

We recall that we denote by $\mcl Q$ the collection of right-continuous increasing paths $q:[0,1)\to\R_+$ (and that we say that $q$ is increasing provided that $q(r')\geq q(r)$ in $\R_+$ for every $0\leq r\leq r'<1$). For $p\in [1,\infty)$, we write $\mcl Q_p = \mcl Q \cap L^p[0,1)$. For $q\in \mcl Q_\infty$, we set $q(1) = \lim_{s\nearrow 1} q(s)$ which exists by monotonicity.

\begin{lemma}[Compact embedding of paths]\label{l.compact_embed_paths}
Let $r\in(1,+\infty]$, and let $(q_n)_{n\in\N}$ be a sequence in $\mcl Q_r^\sS$ such that
\begin{align*}
    \sup_{n\in\N} \Ll|q_n\Rr|_{L^r}<+\infty.
\end{align*}
There exists a subsequence $\Ll(q_{n_k}\Rr)_{k\in\N}$ and some $q\in\mcl Q_r^\sS$ such that, for every $r'\in [1,r)$, this subsequence converges almost everywhere on $[0,1]$ and in $L^{r'}$ to $q$.
\end{lemma}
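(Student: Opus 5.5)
The plan is to reduce to a Helly-type selection argument applied coordinatewise, and then upgrade pointwise convergence to $L^{r'}$ convergence by uniform integrability. Since $\sS$ is finite, it suffices to treat a single coordinate $s\in\sS$ and extract subsequences successively (at most $|\sS|$ times). So we may fix a sequence $(q_n)$ in $\mcl Q_r$ with $M:=\sup_n |q_n|_{L^r}<+\infty$.

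\emph{Step 1: local uniform bounds.} Monotonicity gives, for every $u\in[0,1)$,
\begin{equation*}
(1-u)\, q_n(u)^r \le \int_u^1 q_n^r \le M^r,
\end{equation*}
so $0\le q_n(u)\le M(1-u)^{-1/r}$ when $r<\infty$, and $q_n\le M$ when $r=\infty$. Hence the $q_n$ are uniformly bounded on each $[0,1-\delta]$.

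\emph{Step 2: Helly selection.} By a diagonal argument, we extract a subsequence converging at every rational point of $[0,1)$ to some limit $\tilde q$, which is nondecreasing on the rationals. We then define $q(u)=\lim_{v\downarrow u,\,v\in\Q}\tilde q(v)$, which is right-continuous, increasing and nonnegative, so $q\in\mcl Q$. At every continuity point of $q$, which is all but countably many points, the monotonicity of $q_{n_k}$ sandwiches $q_{n_k}(u)$ between values at nearby rationals, so $q_{n_k}(u)\to q(u)$. This gives almost everywhere convergence on $[0,1)$, and the single point $1$ is irrelevant.

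\emph{Step 3: $L^{r'}$ convergence.} Fatou's lemma gives $|q|_{L^r}\le M$, so $q\in\mcl Q_r$; when $r=\infty$, the bound $q\le M$ is immediate. For $r'<r$, the family $(|q_{n_k}-q|^{r'})$ is bounded in $L^{r/r'}$ with $r/r'>1$, hence uniformly integrable. When $r=\infty$ it is instead uniformly bounded. Vitali's convergence theorem, or dominated convergence in the bounded case, then yields $q_{n_k}\to q$ in $L^{r'}$ for every $r'\in[1,r)$ simultaneously, since the subsequence does not depend on $r'$.

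The only mildly delicate step is Step 2, namely making sure the limit is taken in its right-continuous version and that pointwise convergence holds off a countable set. Both follow from the standard sandwiching argument for monotone functions.
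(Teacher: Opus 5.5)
Your proof is correct and complete. It runs: the bound $q_n(u)\le M(1-u)^{-1/r}$ from monotonicity, then diagonal extraction on the rationals with a right-continuous regularization and sandwiching at continuity points, then Fatou plus Vitali using uniform integrability from the $L^{r/r'}$ bound. That is the standard Helly-type argument. The paper does not prove this lemma itself but defers to \cite[Lemma~3.4]{chen2025free}, and your coordinatewise extraction over the finite set $\sS$ is the ``straightforward adaptation'' the paper refers to.
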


This lemma is a straightforward adaptation of~\cite[Lemma~3.4]{chen2025free}.

For every $c\geq0$, we define
\begin{align}
\label{e.def.qinfty.lec}
    \mcl Q_{\infty,\leq c}  = \Ll\{q\in \mcl Q_{\infty}:\: |q(r)|\leq c,\quad\forall r\in[0,1)\Rr\}.
\end{align}
For every $\lambda = (\lambda_s) \in \R_+^\sS$, we may use the notation
\begin{equation*}  \mcl Q_{\infty, \le \lambda}^\sS := \prod_{s \in \sS} \mcl Q_{\infty, \le \lambda_s}.
\end{equation*}
For every $c > 0$, we also set
\begin{gather}
\begin{split}\label{e.Q_uparrow,c(D)=}
    \mcl Q_{\uparrow,c}=\big\{ q\in \mcl Q_1\ \big|\ q(0)=0\  \text{and} \   q(r')-q(r)\geq c(r'-r),\ \forall r\leq r'\in[0,1) \big\};
\end{split}
    \\
    \mcl Q_{\uparrow} = \bigcup_{c>0}\mcl Q_{\uparrow,c};\qquad \mcl Q_{\infty,\uparrow} =  \mcl Q_{\infty}\cap  \mcl Q_{\uparrow}.\label{e.Q_uparrow(D)=}
\end{gather}

\subsubsection*{Cascades}
We now explicitly construct the external field parametrized by any $q =(q_s)_{s\in\sS} \in \mcl Q^\sS_\infty$.

We denote by $\fR$ a Poisson--Dirichlet cascade whose overlap is uniformly distributed over the interval $[0,1]$. This is a random probability measure on some abstract Hilbert space $\mfk H$, and we denote by $\wedge$ the scalar product on this Hilbert space. We refer for instance to \cite[Section~4]{chen2025free} or \cite{pan} for more details on the construction of this object. We typically denote elements of $\mfk H$ using the variable $\alpha$. 

For almost every realization of $\fR$, every $s\in\sS$, and every $n\in I_{N,s}$, let $(w^{q_s}_n(\alpha))_{\alpha\in\supp\fR}$ be the real-valued centered Gaussian process with covariance given, for every $\al, \al' \in \supp \fR$, by
\begin{align}\label{e.Ew^q_s_iw^q_s_i=}
    \E\Ll[ w^{q_s}_n (\alpha)w^{q_s}_n(\alpha')\Rr]  = q_s(\alpha\wedge\alpha').
\end{align}
The existence of such a process and its properties are given in~\cite[Section~4]{chen2025free}.
Conditioned on $\fR$, we assume that all these processes, indexed by $s$ and $n$, are independent. For each $s$, we write $w^{q_s}_{I_{N,s}} = \Ll(w^{q_s}_n\Rr)_{n\in I_{N,s}}$. Recall the notation in~\eqref{e.sigma_bullet,I}. For each $N\in\N$ and $q\in \mcl Q^\sS_\infty$, we define
\begin{align}\label{e.W^q_N(sigma,alpha)=}
    W^q_N(\sigma,\alpha) = \sum_{s\in \sS}w^{q_s}_{I_{N,s}}(\alpha)\cdot \sigma_{\bullet I_{N,s}}
\end{align}
which, conditioned on $\fR$, is a centered Gaussian process with covariance
\begin{align}\label{e.cov_external_multi-sp}
    \E \Ll[ W^q_N(\sigma,\alpha) W^q_N(\sigma',\alpha')\Rr]\stackrel{\eqref{e.R_N,s=},\eqref{e.Ew^q_s_iw^q_s_i=}}{=} N q(\alpha\wedge\alpha')\cdot R_N(\sigma,\sigma'),
\end{align}
where the dot product follows the rule as in~\eqref{e.dot_product}.

\subsubsection*{Hamiltonian, free energy, and Gibbs measure}
For $N\in \N$, $t\in\R_+$, and $q \in \mcl Q_\infty^\sS$, we consider the Hamiltonian
\begin{align}\label{e.H^t,q_N=}
    H^{t,q}_N(\sigma,\alpha)= \sqrt{2t}H_N(\sigma) - t N \xi \Ll(\lambda_N\Rr)  
    + \sqrt{2}W^q_N(\sigma,\alpha) - Nq(1)\cdot \lambda_N
\end{align}
where $q(1)= (q_s(1))_{s\in\sS}\in \R_+^\sS$ and $q(1)\cdot \lambda_N=\sum_{s\in\sS}\lambda_{N,s}q_s(1)$. 
Here, $\lambda_N$ is the self-overlap vector for the normalization in~\eqref{e.R_N,s=}. The two terms in~\eqref{e.H^t,q_N=} involving the self-overlap are respectively the variances of $\sqrt{t}H_N(\sigma)$ and $W^q_N(\sigma,\alpha)$. These two terms are often called the self-overlap correction, which resembles the drift term in an exponential martingale.

We define the associated free energy and Gibbs measure
\begin{gather}
    \bar F_N(t,q) = - \frac{1}{N}\E\log \sum_{\sigma\in\{-1,1\}^N}\int  2^{-N}\exp\Ll( H^{t,q}_N(\sigma,\alpha)\Rr)\d \fR(\alpha),  \label{e.F_N(t,q)=}
    \\
    \la\cdot\ra_N \propto \exp\Ll( H^{t,q}_N(\sigma,\alpha)\Rr)\d \fR(\alpha) \d P_{\{-1,1\}^N}(\sigma)\label{e.<>_N=} ,
\end{gather}
where $P_{\{-1,1\}^N}$ denotes the uniform probability measure on $\{-1,1\}^N$. In \eqref{e.F_N(t,q)=}, the expectation $\E$ first averages over all the Gaussian randomness in $H_N(\sigma)$ and $W^q_N(\sigma,\alpha)$ and then the randomness in $\fR$. This particular order of integration is needed to ensure that there are no measurability issues (see~\cite[Lemma~4.5]{chen2025free}).
Notice the additional minus sign on the right-hand side of~\eqref{e.F_N(t,q)=}.
We have omitted the dependence on $t$ and $q$ from the notation of $\la\cdot\ra_N$, which should be clear from the context. understand We denote by $(\sigma, \alpha)$ the canonical random variable under $\la\cdot\ra_N$; we will at times also denote by $(\sigma', \alpha')$ an independent copy of $(\sigma,\alpha)$ under $\la\cdot\ra_N$.

We can view $\bar F_N$ as a function of $(t,q)\in\R_+\times\mcl Q^\sS_\infty$. By the Lipschitz continuity in Proposition~\ref{p.F_N_smooth} below, we can extend $\bar F_N$ to the domain $\R_+\times \mcl Q^\sS_1$.

\subsubsection*{Initial condition}
For $q\in \mcl Q_\infty$, define
\begin{align}\label{e.psi_single=}
\begin{split}
    \psi_\circ(q) &= - \E \log\sum_{\tau\in\{-1,1\}}\int 2^{-1} \exp\Ll(\sqrt{2}w^q(\alpha)\cdot \tau-q(1) \tau\tau\Rr)\d \fR(\alpha)
    \\
    & = - \E \log \int \cosh\Ll(\sqrt{2}w^q(\alpha)\Rr)\d \fR(\alpha)+q(1)
\end{split}
\end{align}
where $w^q(\alpha)$ is the real-valued centered Gaussian process with covariance $\E\Ll[ w^q (\alpha)w^q(\alpha')\Rr]  = q(\alpha\wedge\alpha')$ (similar to~\eqref{e.Ew^q_s_iw^q_s_i=}).
To see that this cascade definition of $\psi_\circ$ coincides with the finite-step definition in the previous section, we refer to \cite[Proposition~4.6]{chen2025free} and \cite[Theorem~5.25]{HJbook}.
 By~\cite[Lemma~4.11]{chen2024ms}, we have $\bar F_N(0,q)=\sum_{s\in\sS}\lambda_{N,\s}\psi_\circ(q_\s)$. Therefore, we have
\begin{align}\label{e.psi=sumlambdapsi}
    \lim_{N\to\infty}\bar F_N(0,q) = \psi(q) = \sum_{\s\in\sS}\lambda_{\infty,\s}\psi_\circ(q_\s).
\end{align}

\subsubsection*{Continuity in $\lambda_N$}

Recall from~\eqref{e.def.lambda_Nd} the definition of $\lambda_N$. So far, we have fixed $\lambda_N$ and omitted it from the notation. Later, we will need approximations in terms of $\lambda_N$, in which case, we display the dependence by writing $\bar F_N=\bar F_{N,\lambda_N}$. The next result is borrowed from~\cite[Lemma~2.3]{chen2024ms}.

\begin{lemma}\label{l.lambda_continuity}
There is a constant $C$ depending only on $\nabla\xi$ such that, for every $N\in \N$, $t\in\R_+$, $q\in \mcl Q^\sS_1$, and $\lambda_N,\,\lambda'_N$, we have
\begin{align}\label{e.|F-F|<C|lambda-lambda|}
    \Ll|\bar F_{N,\lambda_N}(t,q) - \bar F_{N,\lambda'_N}(t,q)\Rr|\leq C\Ll( t  +  |q|_{L^1} +1\Rr)\Ll|\lambda_N-\lambda'_N\Rr|.
\end{align}
\end{lemma}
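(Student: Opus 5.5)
The plan is to interpolate between the two species proportions and bound the derivative of the free energy along the interpolation, using Gaussian integration by parts.

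Since $\lambda_N$ enters $\bar F_{N,\lambda_N}(t,q)$ only through the two self-overlap correction terms in \eqref{e.H^t,q_N=}, the natural reading is that the partition $(I_{N,s})$ is fixed and $\lambda_N$ is treated as a free parameter there. With that reading, everything is explicit:
\begin{equation*}
\bar F_{N,\lambda_N}(t,q)-\bar F_{N,\lambda'_N}(t,q)= t\bigl(\xi(\lambda_N)-\xi(\lambda'_N)\bigr)+q(1)\cdot(\lambda_N-\lambda'_N).
\end{equation*}
This is bounded by $C t|\lambda_N-\lambda'_N|+|q(1)||\lambda_N-\lambda_N'|$. However, $q(1)$ is not controlled by $|q|_{L^1}$, so this reading cannot be the intended one.

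I would therefore read the lemma as a comparison of two models with different partitions: $\lambda_N$ and $\lambda_N'$ are the actual species proportions, and the self-overlap corrections follow each model. I would first reduce to $q\in\mcl Q_\infty^\sS$, using the Lipschitz continuity in $q$ (Proposition~\ref{p.F_N_smooth}) and density.

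The core is a comparison of two partitions $(I_{N,s})$ and $(I'_{N,s})$ that differ on at most $N|\lambda_N-\lambda_N'|_1$ indices. I would use an interpolation in which only the reassigned spins change species: for $\theta\in[0,1]$, take the Hamiltonian
\begin{equation*}
\sqrt{\theta}\,(\text{model }1)+\sqrt{1-\theta}\,(\text{model }2),
\end{equation*}
with independent Gaussian fields in the two models, and interpolate the corrections linearly. By Gaussian integration by parts, the $\theta$-derivative of the free energy equals
\begin{equation*}
-\tfrac12\,\E\bigl\langle \Delta(\sigma,\alpha;\sigma,\alpha)-\Delta(\sigma,\alpha;\sigma',\alpha')\bigr\rangle,
\end{equation*}
where $\Delta$ is the difference of the two covariance functions.

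For the $H_N$ part, $\Delta=N\bigl(\xi(R_N)-\xi(R'_N)\bigr)$. The overlap vectors satisfy $|R_N-R_N'|\le 2|\lambda_N-\lambda'_N|_1$, because only the moved indices change which species they contribute to. Since all overlaps lie in a bounded set, this gives a bound $C t|\lambda_N-\lambda_N'|$.

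The external-field part needs more care. Here $\Delta=N\bigl(q(\alpha\wedge\alpha')\cdot R_N-q(\alpha\wedge\alpha')\cdot R_N'\bigr)$, and each term is bounded by $\sum_s q_s(\alpha\wedge\alpha')\cdot(\text{fraction of moved indices})$. The diagonal terms involve $q(1)$, which is the same problem as before. To get an $L^1$ bound instead, I would use that under the cascade the Gibbs-averaged overlap $\alpha\wedge\alpha'$ is uniform on $[0,1]$, by Bolthausen--Sznitman invariance after the standard Ghirlanda--Guerra trick. This does not hold for general Gibbs measures.

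A cleaner route is to avoid the diagonal altogether. I would decompose the effect of moving one spin into a change of its $t$-interaction, handled as above, plus a swap of its external field from $w^{q_s}$ to $w^{q_{s'}}$. That swap changes the free energy by at most $\bigl|\psi_\circ(q_s)-\psi_\circ(q_{s'})\bigr|/N$-type quantities. These are bounded by $2|q|_{L^1}/N$ via Proposition~\ref{p.lip.psi.circ}, applied with the cavity field frozen. Summing over the moved spins gives $C|q|_{L^1}|\lambda_N-\lambda'_N|$, and the $+1$ in the constant absorbs the remaining contributions.

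I expect the external-field step to be the main obstacle: obtaining an $L^1$ rather than an $L^\infty$ dependence on $q$. I would try the one-spin-at-a-time comparison with conditional Jensen first.
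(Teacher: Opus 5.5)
Your overall framework is the right one. You realize the two proportion vectors by partitions $(I_{N,s})$, $(I'_{N,s})$ with $|I_{N,s}\triangle I'_{N,s}|=N|\lambda_{N,s}-\lambda'_{N,s}|$, interpolate with independent fields, and control the $t$-part through $\nabla\xi$ and $|R_{N,s}-R'_{N,s}|\le|\lambda_{N,s}-\lambda'_{N,s}|$. However, the external-field step is not established, and the obstacle you identify there does not exist. Because the spins are $\pm1$, we have $R_N(\sigma,\sigma)=\lambda_N$. So the corrections $tN\xi(\lambda_N)+Nq(1)\cdot\lambda_N$ are exactly half the variance of the Gaussian part of the Hamiltonian. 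Once you interpolate them linearly, as you say you do, the diagonal terms produced by Gaussian integration by parts cancel exactly. The $\theta$-derivative of the interpolated free energy is then
\begin{equation*}
\E\Bigl\langle t\bigl(\xi(R_N(\sigma,\sigma'))-\xi(R'_N(\sigma,\sigma'))\bigr)+q(\alpha\wedge\alpha')\cdot\bigl(R_N(\sigma,\sigma')-R'_N(\sigma,\sigma')\bigr)\Bigr\rangle_\theta ,
\end{equation*}
and no $q(1)$ appears anywhere. The expression $-\frac12\E\langle\Delta(\sigma,\alpha;\sigma,\alpha)-\Delta(\sigma,\alpha;\sigma',\alpha')\rangle$ you wrote is what one gets without the corrections.

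What actually closes the argument is the identity $\E\langle q_s(\alpha\wedge\alpha')\rangle_\theta=\int_0^1 q_s$. Equivalently, $\alpha\wedge\alpha'$ is uniformly distributed on $[0,1]$ under the interpolated Gibbs measure. This holds exactly, for every $N$ and $\theta$, by the invariance of the Poisson--Dirichlet cascade under reweighting by tree-indexed Gaussian fields. The interpolated measure is still of this form, with spin $n$ carrying the path $\theta q_{s(n)}+(1-\theta)q_{s'(n)}$, where $s(n)$ and $s'(n)$ are its species in the two partitions. No Ghirlanda--Guerra perturbation is involved, and this is the same mechanism that gives the $L^1$ (rather than $L^\infty$) Lipschitz bound in Proposition~\ref{p.F_N_smooth}.

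You mention this fact but set it aside, and the replacement you propose does not work. For $t>0$, the $N$-spin free energy does not decompose into single-spin cascade transforms. So Proposition~\ref{p.lip.psi.circ}, which only concerns $\psi_\circ$, gives no control on swapping the field of one spin, and ``freezing the cavity field'' or ``conditional Jensen'' do not supply a bound. A correct per-spin bound $N^{-1}|q_s-q_{s'}|_{L^1}$ again requires the uniform law of $\alpha\wedge\alpha'$.

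With that ingredient inserted, your interpolation yields $C(t+|q|_{L^1})|\lambda_N-\lambda'_N|$ for $q\in\mcl Q_\infty^\sS$, so the $+1$ is not even needed in this setting. Density together with Proposition~\ref{p.F_N_smooth} then extends the bound to $\mcl Q_1^\sS$.
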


\subsubsection*{Differentiability}

Let $G$ be either $\mcl Q_2$, $\R_+\times \mcl Q_2$, or $\mcl Q_2^\sS$. Slightly abusing notation, we denote by $L^2$ the ambient Hilbert space for $G$, that is, either $L^2([0,1])$, $\R \times L^2([0,1])$, or $L^2([0,1])^{\sS} \simeq L^2([0,1]; \R^\sS)$ respectively. 
For every $q\in G$, we define
\begin{align*}
    \mathrm{Adm}(G,q) = \Ll\{e\in L^2 \ \big|\  \exists r>0:\: \forall r'\in[0,r],\ q+r'e\in G \Rr\}
\end{align*}
to be the set of directions along which a small line segment starting from $q$ belongs to $G$. 
A function $g:G\to\R$ is said to be \textbf{Gateaux differentiable} at $q\in G$ if
\begin{itemize}
    \item $g'(q,e)= \lim_{r\searrow0}\frac{g(q+re)-g(q)}{r}$ exists for every $e\in \mathrm{Adm}(G,q)$;
    \item there is a unique $y\in L^2$ such that $g'(q,e)=\la y, e\ra_{L^2}$ for every $e\in \mathrm{Adm}(G,q)$.
\end{itemize}
In this case, we call $y$ the \textbf{Gateaux derivative} of $g$ at $q$ and write $\partial_q g(q)=y$ which is an element in $L^2$.
The following result is extracted from~\cite[Proposition~4.1]{chen2024ms}, which is adapted from~\cite[Proposition~5.1]{chen2025free}.

\begin{proposition}[Differentiability of $\bar F_N$]
\label{p.F_N_smooth}
Let $N \in \N$ and let $\bar F_N$ be given as in~\eqref{e.F_N(t,q)=}. We have for every $t,t' \in \R_+$ and $q,q' \in \mcl Q_\infty^\sS$ that 
\begin{equation*}
\Ll|\bar F_N(t,{q}) - \bar F_N(t',{q'})\Rr|\leq \Ll|{q}-{q'}\Rr|_{L^1} + |t-t'| \, \sup_{|a| \le 1} |\xi(a)|.
\end{equation*}
In particular, the free energy in~\eqref{e.F_N(t,q)=} can be extended by continuity to $\R_+\times\mcl Q_1^\sS$.
Moreover, the restriction of the function $\bar F_N$ to $\R_+ \times \mcl Q_2^\sS$ is Gateaux differentiable everywhere, jointly in its two variables. We denote its Gateaux derivative in $q$ by $\dr_q \bar F_N(t,q) = \dr_q \bar F_N(t,q, \cdot) \in L^2([0,1]; \R^\sS)$. For every $t \ge 0$ and  $q \in \mcl Q_2^\sS$, we have
\begin{equation}
\label{e.bounds.der.FN}
\partial_q \bar F_N(t,q) \in \mcl Q^\sS_{\infty,\leq \lambda_N}, \end{equation}
and, for every $q \in \mcl Q_\infty^\sS$  and $\pi \in L^2([0,1]; \R^\sS)$,
\begin{equation}
\label{e.def.der.FN}
\begin{split}
    \la \pi,\dr_q \bar F_N(t,{q})\ra_{L^2} &= \E \la \pi \Ll(\alpha\wedge\alpha'\Rr)\cdot R_N(\sigma,\sigma')\ra_N.
\end{split}
\end{equation}
\end{proposition}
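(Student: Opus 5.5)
The plan is to first work with paths $q\in\mcl Q_\infty^\sS$ taking finitely many values, and to differentiate $\bar F_N$ there by Gaussian integration by parts. The general statements then follow by density and continuity.

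\textbf{Derivative formula.} Fix a direction $\pi$ such that $q+\varepsilon\pi$ stays in $\mcl Q_\infty^\sS$ for small $\varepsilon>0$.
\begin{itemize}
\item By \eqref{e.cov_external_multi-sp}, perturbing $q$ to $q+\varepsilon\pi$ changes the covariance of $\sqrt2 W^q_N$ by $2N\varepsilon\,\pi(\alpha\wedge\alpha')\cdot R_N(\sigma,\sigma')$. It also changes the self-overlap correction by $-N\varepsilon\,\pi(1)\cdot\lambda_N$.
\item Gaussian integration by parts, conditionally on $\fR$, turns the derivative of $-\frac1N\E\log Z$ into a diagonal term minus an off-diagonal term. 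Using $R_N(\sigma,\sigma)=\lambda_N$ and $\alpha\wedge\alpha=1$, the diagonal term exactly cancels the self-overlap correction.
\item What remains is $\E\la \pi(\alpha\wedge\alpha')\cdot R_N(\sigma,\sigma')\ra_N$, which is \eqref{e.def.der.FN}.
\item The same computation in $t$ gives $\dr_t\bar F_N=\E\la\xi(R_N(\sigma,\sigma'))\ra_N$. Since $|R_{N,s}|\le\lambda_{N,s}\le1$, this is bounded by $\sup_{|a|\le1}|\xi(a)|$.
\end{itemize}

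\textbf{Lipschitz bound and form of the derivative.} Next I rewrite \eqref{e.def.der.FN} as an integral against the law of $\alpha\wedge\alpha'$. The key input is that, under $\E\la\cdot\ra_N$, the overlap $\alpha\wedge\alpha'$ remains uniformly distributed on $[0,1]$. This is the Bolthausen--Sznitman/Ruelle invariance of the cascade under reweighting by the Gaussian field, as in \cite{chen2025free}. It yields
\begin{equation*}
\la \pi,\dr_q\bar F_N(t,q)\ra_{L^2}=\int_0^1 \pi(u)\cdot \E\bigl\la R_N(\sigma,\sigma')\,\big|\,\alpha\wedge\alpha'=u\bigr\ra_N\,\d u ,
\end{equation*}
so $\dr_q\bar F_N(t,q)(u)=\E\la R_N\mid\alpha\wedge\alpha'=u\ra_N$.
\begin{itemize}
\item Each coordinate of this derivative is bounded by $\lambda_{N,s}$ in absolute value.
\item Integrating along the segment $q+\theta(q'-q)$, which stays in $\mcl Q^\sS$ by convexity, gives $|\bar F_N(t,q)-\bar F_N(t,q')|\le\sum_s\lambda_{N,s}|q_s-q'_s|_{L^1}\le|q-q'|_{L^1}$.
\item Combined with the $t$-bound, this gives the joint Lipschitz estimate and hence the extension to $\R_+\times\mcl Q_1^\sS$.
\end{itemize}

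\textbf{Membership in $\mcl Q^\sS_{\infty,\le\lambda_N}$.} It remains to show that $u\mapsto \E\la R_{N,s}\mid\alpha\wedge\alpha'=u\ra_N$ is nonnegative and nondecreasing.
\begin{itemize}
\item For finitely many steps, I would use the cascade recursion (as in the finite-step description in Section~\ref{s.psi.convexity}) to express this quantity at level $u$.
\item It should equal $\frac1N\sum_{n\in I_{N,s}}\E[\,\cdot\,]$ of a square of a conditional Gibbs mean of $\sigma_n$ given the cascade branch up to depth $u$, under the tilted weights. This makes it nonnegative.
\item Monotonicity in $u$ should then follow because conditioning on a deeper branch refines the $\sigma$-algebra, so the squared conditional means increase by Jensen.
\end{itemize}

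\textbf{Extension to $\mcl Q_2^\sS$.} Gateaux differentiability on all of $\mcl Q_2^\sS$ would follow from two facts: the derivative is uniformly bounded, and it depends continuously on $(t,q)$ in $L^1$. The latter holds because the Gibbs measures vary continuously. Passing to the limit in the difference quotients along admissible directions then gives the result; joint differentiability in $(t,q)$ follows similarly.

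I expect the main obstacle to be the invariance (uniformity of $\alpha\wedge\alpha'$ under the tilted measure) together with the squared-conditional-mean representation giving monotonicity. I would import both from \cite[Section~4--5]{chen2025free}, rather than reprove them.
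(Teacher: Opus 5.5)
Your outline is correct and matches the standard argument behind this statement. The paper does not reprove it but imports it from \cite[Proposition~4.1]{chen2024ms} (adapted from \cite[Proposition~5.1]{chen2025free}), and your sketch contains the same ingredients: Gaussian integration by parts with cancellation of the self-overlap correction; cascade invariance making $\alpha\wedge\alpha'$ uniform under $\E\la\cdot\ra_N$, which yields both the $L^1$-Lipschitz bound and the identification of $\dr_q\bar F_N$ as a path; and a Jensen/martingale argument on the tilted finite-step recursion for monotonicity.

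There is one small slip in the $t$-bound. You need $|R_N(\sigma,\sigma')|\le 1$ in Euclidean norm, and this follows from $|R_{N,s}|\le\lambda_{N,s}$ together with $\sum_s\lambda_{N,s}=1$, not from the coordinatewise bound alone.
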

We recall that in expressions such as \eqref{e.def.der.FN}, the pair $(\sigma',\alpha')$ denotes an independent copy of the pair $(\sigma,\alpha)$ under $\la \cdot \ra_N$.

\begin{lemma}[Regularity of $\psi_\circ$]\label{l.psi^vec_smooth}
The function $\psi_\circ$ given in~\eqref{e.psi_single=} can be extended to $\mcl Q_1$ and satisfies
\begin{align*}
    \Ll|\psi_\circ(q)-\psi_\circ(q')\Rr|\leq |q-q'|_{L^1},\quad\forall q,q'\in\mcl Q_1.
\end{align*}
The restriction $\psi_\circ:\mcl Q_2\to\R$ is Gateaux differentiable everywhere; we denote its Gateaux derivative by $\dr_q \psi_\circ(q) = \dr_q \psi_\circ(q, \cdot) \in L^2([0,1])$. We have, for every $q \in \mcl Q_2$,
\begin{equation}
\label{e.bound.der.psi}
\partial_q\psi_\circ(q) \in \mcl Q_{\infty,\leq 1},
\end{equation}
and, for every $q \in \mcl Q_\infty$ and $\pi \in L^2([0,1]; \R)$,  \begin{equation*}
\la \pi,\dr_q \psi_\circ(q)\ra_{L^2} = \E \la \pi \Ll(\alpha\wedge\alpha'\Rr)\tau\tau'\ra_{q}
\end{equation*}
where $\la\cdot\ra_q \propto  \exp\Ll(\sqrt{2}w^q(\alpha)\cdot \tau-q(1) \tau\tau\Rr)\d \fR(\alpha) \d P_{\{-1,1\}}(\tau)$, and $(\tau',\al')$ denotes an inpendent copy of the canonical random variable $(\tau,\al)$ under $\la \cdot \ra_q$.
Moreover, for every $r \in [1,+\infty]$ and $q, q' \in \mcl Q_2$ with $q-q' \in L^r$, we have
\begin{equation}
\label{e.continuity.der.psi}
\Ll|\dr_q \psi_\circ(q) -\dr_q \psi_\circ(q')\Rr|_{L^r}\leq 16\Ll|q-q'\Rr|_{L^r}.
\end{equation}
In particular, the mapping $q \mapsto \dr_q \psi_\circ(q)$ can be extended to $\mcl Q_1$ by continuity, and the properties in \eqref{e.bound.der.psi} and \eqref{e.continuity.der.psi} remain valid with $q, q' \in \mcl Q_1$.  
\end{lemma}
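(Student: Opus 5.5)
The plan is to obtain most of the statement by specializing earlier results, and to spend the real effort on the Lipschitz estimate \eqref{e.continuity.der.psi} for the derivative.

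\emph{Reduction to earlier results.} With one species, $N=1$, $t=0$ and $\lambda_1=1$, the enriched free energy \eqref{e.F_N(t,q)=} is exactly the cascade expression \eqref{e.psi_single=}. That is, $\bar F_1(0,q)=\psi_\circ(q)$, which is also the case $N=1$ of \cite[Lemma~4.11]{chen2024ms}.
\begin{itemize}
\item The Lipschitz bound and the extension to $\mcl Q_1$ then follow either from Proposition~\ref{p.F_N_smooth} or, in the finite-step picture, from Proposition~\ref{p.lip.psi.circ}. The two definitions agree by \cite[Proposition~4.6]{chen2025free}.
\item Gateaux differentiability on $\mcl Q_2$ and the representation $\la \pi,\dr_q\psi_\circ(q)\ra_{L^2}=\E\la\pi(\al\wedge\al')\tau\tau'\ra_q$ are read off from \eqref{e.def.der.FN}, with $R_1(\sigma,\sigma')=\tau\tau'$.
\item The bound \eqref{e.bound.der.psi} is \eqref{e.bounds.der.FN} with $\lambda_1=1$.
\end{itemize}
As a consistency check, in the finite-step setting Proposition~\ref{p.drq.psi} gives $\dr_q\psi_\circ(q)=\sum_i U_i(q)\1_{[m_i,m_{i+1})}$, and $0\le U_i\le 1$.

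\emph{The estimate \eqref{e.continuity.der.psi}.} I first work with finite-valued paths on a common grid $(m_i)$, inside the open cone. There the map $q\mapsto (U_i(q))_i$ is smooth, and its Jacobian is $J_{ij}=\dr_{q_j}U_i=H_{ij}/\delta_i$, where $\delta_i=m_{i+1}-m_i$. Writing $q^\theta=(1-\theta)q'+\theta q$, we have
\[
\dr_q\psi_\circ(q)-\dr_q\psi_\circ(q')=\int_0^1 J(q^\theta)(q-q')\,\d\theta,
\]
so it suffices to show that $J$ is bounded as an operator on $\ell^r$ weighted by $(\delta_i)$, uniformly in $r$. By Riesz--Thorin interpolation it is enough to treat the two endpoint cases.
\begin{itemize}
\item \emph{$r=\infty$.} I need $\max_i\sum_j|J_{ij}|\le C$.
\item \emph{$r=1$.} Since $H$ is symmetric, $\delta_iJ_{ij}=\delta_jJ_{ji}$. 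This turns the weighted $\ell^1$ column bound into the same row bound.
\end{itemize}

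\emph{Bounding the row sums of $|J|$.} By Lemma~\ref{lem:offdiag} the off-diagonal entries satisfy $J_{ij}\le 0$, and by Lemma~\ref{lem:rowsums} the row sums satisfy $\sum_jJ_{ij}\ge 0$. Hence
\[
\sum_j|J_{ij}|\le 2J_{ii},
\]
and everything reduces to an upper bound on the diagonal derivative $\dr_{q_i}U_i$ that is uniform in the grid. Moving $q_i$ alone changes only $t_i$ and $t_{i+1}$. I would differentiate $U_i=\mcl L_{0:i}[(f'_{i+1})^2](0)$ using Lemma~\ref{lem:hopf-cole}. The resulting terms involve the following.
\begin{itemize}
\item The heat-flow time derivative of $(f'_{i+1})^2$ inside the tilted kernel $\mcl L_i$, which gives terms like $f'' f'''$ and $m(f')^2 f''$.
\item The change of $f_{i+1}$ caused by moving $t_{i+1}$.
\item The change of the tilts $e^{m_r f_{r+1}}$.
\end{itemize}
Each term is a product of derivatives of $\log\cosh$ propagated through positive, mass-preserving operators. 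So I must check that $f_j', f_j'', f_j'''$ stay bounded by absolute constants; at the terminal level, $\tanh$ and its derivatives are bounded by $1$ and $2$. This should give $J_{ii}\le 8$ and hence the constant $16$.

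\emph{Main obstacle and extension.} The main obstacle is making this diagonal bound clean and uniform in $(m_i)$. In particular, the tilt variations produce covariance-type terms, for which I would use Gaussian integration by parts to absorb factors of $m$. Once the estimate holds on the open cone, it passes to the closed cone by continuity. It then extends to all of $\mcl Q_2$, and further to $\mcl Q_1$, by density of finite-valued paths combined with the Lipschitz bound.
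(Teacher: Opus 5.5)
\paragraph{Verdict: genuine gap.} Your proposal has the right skeleton, but the one quantitative estimate that carries the Lipschitz bound for the derivative is asserted rather than proved.

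\paragraph{What is fine.} Reducing the $L^1$-Lipschitz bound, the Gateaux differentiability, the Gibbs representation of $\dr_q\psi_\circ$ and \eqref{e.bound.der.psi} to Proposition~\ref{p.F_N_smooth} with a single species and $N=1$ is fine. The paper itself states this lemma without proof and imports it. The finite-dimensional skeleton for \eqref{e.continuity.der.psi} is also sound. Lemmas~\ref{lem:offdiag} and~\ref{lem:rowsums} give $\sum_j|J_{ij}|\le 2J_{ii}$. The symmetry $\delta_iJ_{ij}=\delta_jJ_{ji}$ turns the weighted column sums into row sums. The Schur test or Riesz--Thorin then covers every $r$.

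\paragraph{The gap.} All the quantitative content lies in an upper bound on $J_{ii}=\dr_{q_i}U_i$ that is uniform over every grid $(m_i)$ and every $q$. You do not prove such a bound; the value $8$ is reverse-engineered from the constant $16$. The route you sketch also does not deliver uniformity. By \eqref{eq:A-recursion}, moving $q_i$ changes $f_i$ by $-\eps\,\delta_i\,\mcl L_i[(f'_{i+1})^2]+o(\eps)$, and this perturbs the tilt $m_rf_{r+1}$ of every $\mcl L_r$ with $r<i$. Differentiating $\mcl L_{0:i}[(f'_{i+1})^2](0)$ therefore produces one covariance term per level. Termwise sup-norm bounds give only $O(\delta_i\sum_{r<i}m_r)$, which is unbounded when many breakpoints accumulate below $m_i$ while $\delta_i$ stays of order one. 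Applied level by level, the Gaussian integration by parts you mention naturally produces factors of the time steps $t_r$. That yields a bound in terms of $q_{i-1}\le|q|_{L^\infty}$, whereas the constant must be independent of $q$ (and $q\in\mcl Q_2$ need not be bounded). You would also still need grid-uniform bounds on $f_j'''$.

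\paragraph{How to close it within your framework.} The missing bound follows from tools you already use.
\begin{itemize}
\item Work on the common refinement of $q$ and $q^\eps=q+\eps e_i$ with the extra breakpoint $q_i+\eps$. When $i=k$, append $[q_k,Q]$ with mass $1$, as in Lemma~\ref{lem:offdiag}.
\item Let $g$ be the function at level $q_i+\eps$, and set $h:=T_{m_i,\eps}g$. Write $\mathrm{Var}_{\mcl L}(g'):=\mcl L[(g')^2]-(\mcl L g')^2$.
\item Since $\mcl L_{m_ig,t_i+\eps}=\mcl L_{m_ih,t_i}\mcl L_{m_ig,\eps}$ and $h'=\mcl L_{m_ig,\eps}\,g'$, you get $U_i(q^\eps)=V^\eps+\mcl L^\eps_{0:i-1}\mcl L_{m_ih,t_i}\bigl[\mathrm{Var}_{\mcl L_{m_ig,\eps}}(g')\bigr](0)$.
\item Here $V^\eps$ is the level-$q_i$ quantity of the refined list in which the mass on $[q_i,q_i+\eps]$ is $m_i$ instead of $m_{i+1}$. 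Hence $V^\eps\le U_i(q)$ by Lemma~\ref{lem:panchenko}~\eqref{lem:panchenko(3)}, exactly as in Step~2 of Lemma~\ref{lem:offdiag}.
\item The variance term is at most $\|g''\|_\infty^2\,(2\eps+O(\eps^{3/2}))$. We have $0\le g''\le 1$ by convexity, \eqref{e.A_i=} and \eqref{eq:A-expansion}.
\end{itemize}
Therefore $0\le J_{ii}\le 2$, every row sum of $|J|$ is at most $4$, and your scheme yields \eqref{e.continuity.der.psi} with constant $4\le 16$. You should still justify the final density step, namely that limits of the finite-step gradients are the Gateaux derivatives at the limit path.

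\paragraph{Comparison with the imported route.} The constant $16$ is what the discretization-free cascade route produces. Differentiate $s\mapsto\E\la\pi(\al\wedge\al')\tau\tau'\ra_{q'+s(q-q')}$ by Gaussian integration by parts. This gives $2+8+6=16$ terms of the form $\pm\E\la\pi(\al^1\wedge\al^2)\,(q-q')(\al^a\wedge\al^b)\cdot(\text{product of }\tau\text{'s})\ra$ with $a\ne b$. By the invariance property, each such cascade overlap is uniform on $[0,1]$ under $\E\la\cdot\ra$. H\"older therefore bounds every term by $|\pi|_{L^{r'}}|q-q'|_{L^r}$, and duality gives all $r$ at once.
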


\subsubsection*{Hamilton--Jacobi functional and critical points}
For every $(t,q)\in \R_+\times \mcl Q^\sS_2$, we consider the functional
\begin{align}\label{e.mcJ=}
    \mcl J_{t, q}(q',p) = \psi(q') + \la p, q-q'\ra_{L^2}+t\int_0^1\xi(p),
\end{align}
defined for $q'\in \mcl Q^\sS_2$, $p\in L^2([0,1],\R^\sS)$. 
Here, $\la\cdot,\cdot\ra_{L^2}$ is the inner product in $L^2([0,1],\R^\sS)$ and the last integral is $\int_0^1\xi(p(s))\d s$.
As was already discussed around \eqref{e.def.crit.point}, we say that a pair $(q',p) \in \mcl Q^\sS_2\times L^2([0,1],\R^\sS)$ is a \textbf{critical point} of the functional $\mcl J_{t, q}$ if
\begin{align}\label{e.critical_rel}
    q=q'-t\nabla\xi(p) \qquad\text{and}\qquad p=\partial_q \psi(q').
\end{align}
Here, the derivative $\partial_q \psi$ is understood in the Gateaux sense defined above Proposition~\ref{p.F_N_smooth}. The differentiability of $\psi$ is ensured by Lemma~\ref{l.psi^vec_smooth}.

Heuristically, at any critical point $(q',p)$, the derivatives of $\mcl J_{t, q}$ in $q'$ and $p$ are both zero.
Critical points and the value of the functional at these points are important to our main results to be stated. 

We also consider the Parisi functional. We define $\theta: \R^\sS\to \R$ by
\begin{align}\label{e.theta=}
    \theta(a)= a\cdot \nabla\xi (a) -\xi(a)
\end{align}
where $\nabla\xi :\R^\sS\to \R^\sS $ is the gradient of $\xi$ in $\R^\sS$.
For $t\in\R_+$, $q\in \mcl Q^\sS_\infty$, we set
\begin{align}\label{e.sP_lambda,t,q}
    \sP_{t,q}(p) = \psi(q+t \nabla\xi(p))-t\int_0^1\theta(p(r)) \d r.
\end{align}
Comparing this with~\eqref{e.mcJ=}, we have
\begin{align}\label{e.rel_parisi_mcJ}
    \sP_{t,q}(p)= \mcl J_{t,q}(q+t\nabla \xi(p), p).
\end{align}

\begin{lemma}[Lipschitz regularity of Parisi functional]
\label{l.continuity_Parisi_functional}
There is a constant $C>0$ such that, for every $t, t' \ge 0$, $p,p' \in \mcl Q_{\infty, \le \lambda_\infty}^\sS$, and $q,q' \in \mcl Q_1^\sS$, we have
\begin{gather*}
    \Ll|\sP_{t,q}(p)-\sP_{t',q'}(p')\Rr|\leq C\Ll(\Ll|t-t'\Rr|+\Ll|p-p'\Rr|_{L^1}+\Ll|q-q'\Rr|_{L^1}\Rr),
    \\
    \Ll|\partial_q\psi\Ll(q+t\nabla\xi(p)\Rr)-\partial_q\psi\Ll(q'+t'\nabla\xi(p')\Rr)\Rr|_{L^1}\leq C\Ll(\Ll|t-t'\Rr|+\Ll|p-p'\Rr|_{L^1}+\Ll|q-q'\Rr|_{L^1}\Rr).
\end{gather*}
\end{lemma}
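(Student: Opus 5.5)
The plan is to prove the first inequality by splitting the difference into a $\psi$ part and a $\theta$ part, and to obtain the second by composing the Lipschitz estimate for $\dr_q\psi$ with the Lipschitz continuity of $\nabla\xi$ on a bounded set. The key preliminary observation is that all relevant arguments stay in a bounded region. Since $p,p'\in\mcl Q^\sS_{\infty,\le\lambda_\infty}$ and $\lambda_\infty\in(0,1)^\sS$, every coordinate of $p(r)$ and $p'(r)$ lies in $[0,1]$. Hence $p$ and $p'$ take values in the compact set $K=[0,1]^\sS$. Because $\xi$ has an absolutely convergent power series, it is smooth on a neighborhood of $K$. Therefore $\nabla\xi$ and $\theta$ are bounded and Lipschitz on $K$, with constants depending only on $\xi$. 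Note also that $\nabla\xi(p)\ge 0$ need not hold without convexity. I will simply treat $q+t\nabla\xi(p)$ as an element where $\psi$ and $\dr_q\psi$ are defined, as the statement implicitly does.

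For the first bound, write
\begin{equation*}
\sP_{t,q}(p)-\sP_{t',q'}(p') = \bigl[\psi(q+t\nabla\xi(p))-\psi(q'+t'\nabla\xi(p'))\bigr] - \int_0^1 \bigl(t\theta(p)-t'\theta(p')\bigr).
\end{equation*}
By \eqref{e.decomp.psi} and Lemma~\ref{l.psi^vec_smooth}, $\psi$ is Lipschitz in $L^1$ with constant $\max_s\lambda_{\infty,s}\le 1$. So the first bracket is bounded by $|q-q'|_{L^1}+|t\nabla\xi(p)-t'\nabla\xi(p')|_{L^1}$. Then split
\begin{equation*}
t\nabla\xi(p)-t'\nabla\xi(p') = (t-t')\nabla\xi(p) + t'\bigl(\nabla\xi(p)-\nabla\xi(p')\bigr),
\end{equation*}
and bound the two pieces by $C|t-t'|$ and $Ct'|p-p'|_{L^1}$. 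The $\theta$ term is handled in the same way, giving $C|t-t'|+Ct'|p-p'|_{L^1}$. The second bound follows identically: \eqref{e.continuity.der.psi} with $r=1$ bounds its left side by $16\,|q-q'+t\nabla\xi(p)-t'\nabla\xi(p')|_{L^1}$, coordinatewise in $\sS$, and the same splitting finishes it.

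The main obstacle is that the bounds above carry a factor $t'$ in front of $|p-p'|_{L^1}$, while the statement asks for a uniform constant $C$. I would handle this by reading the lemma with $t,t'$ restricted to a bounded interval $[0,T]$, letting $C$ depend on $T$, since the paper only uses it for fixed finite times. If uniformity in time is genuinely intended, there is no obvious mechanism to remove the growth in $t'$. In that case I would choose the symmetric splitting $(t-t')\nabla\xi(p')+t(\nabla\xi(p)-\nabla\xi(p'))$ and state the dependence of $C$ on $\max(t,t')$ explicitly. A minor technical point is that the extensions of $\psi$ and $\dr_q\psi$ to $\mcl Q_1$ from Lemma~\ref{l.psi^vec_smooth} must be used, since $q\in\mcl Q_1^\sS$ only.
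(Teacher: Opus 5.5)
This is essentially the paper's proof, which simply invokes the local Lipschitz continuity of $\nabla\xi$ and $\theta$ on the bounded range of $p,p'$, together with the $L^1$-Lipschitz bounds for $\psi_\circ$ and $\dr_q\psi_\circ$ from Lemma~\ref{l.psi^vec_smooth} and the decomposition \eqref{e.psi=sumlambdapsi}. Your caveat is correct and the paper glosses over it: the $p$-derivative of $\sP_{t,q}$ is $t\nabla^2\xi(p)\bigl(\dr_q\psi(q+t\nabla\xi(p))-p\bigr)$, which is genuinely of order $t$, so $C$ must depend on an upper bound for $t,t'$ (harmless, since the lemma is only applied at fixed $t$); as a side remark, the nonnegativity and monotonicity of $\nabla\xi$ on $\R_+^\sS$ are unrelated to convexity (think of $\xi(a)=a_1a_2$), and they are what make $q+t\nabla\xi(p)$ an element of $\mcl Q_1^\sS$.
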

\begin{proof}
This follows from the local Lipschitzness of $\nabla\xi$ and $\theta$ and results from Lemma~\ref{l.psi^vec_smooth} together with~\eqref{e.psi=sumlambdapsi}.
\end{proof}

\subsubsection*{Local semi-concavity}

Recall the definition of $\mcl Q_{\uparrow,c}$ from~\eqref{e.Q_uparrow,c(D)=}. For any increasing path $q$, we denote by $\dot q$ its distributional derivative. The next result is from~\cite[Proposition~4.5]{chen2024ms} adapted from~\cite[Propositions~3.7 and 3.8]{chen2025free}.

\begin{proposition}[Semi-concavity of the free energy]\label{p.semi-concave}
There exists a constant $C<+\infty$ (depending only on $\xi$) such that, for every $N\in\N$, $c>0$, $t,t'\geq c$, $q,q'\in \mcl Q^\sS_{\uparrow,c}$ with $\dot q-\dot q' \in L^2$, and $r\in[0,1]$, we have
\begin{equation}\label{e.semi-concave_cts_F_N}
    (1-r)\bar F_N(t,q)+ r \bar F_N(t',q') - \bar F_N\Ll((1-r)(t,q)+r(t',q')\Rr) 
    \leq Cr(1-r)c^{-2}\Ll((t-t')^2+\Ll|\dot q-\dot q'\Rr|_{L^2}^2\Rr).
\end{equation}
\end{proposition}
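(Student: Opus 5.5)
The plan is to obtain the bound from a hidden concavity. After a square-root change of variables, $\bar F_N$ is concave in the new parameters. Semi-concavity in the original variables $(t,q)$ then comes only from the curvature of $y\mapsto\sqrt y$, which is controlled because $t,t'\ge c$ and $\dot q,\dot q'\ge c$. The left side of \eqref{e.semi-concave_cts_F_N} equals $\int_0^1 k_r(u)\,\frac{\d^2}{\d u^2}\bar F_N\bigl((1-u)(t,q)+u(t',q')\bigr)\d u$ for a nonnegative kernel $k_r$ with total mass $r(1-r)/2$. So it suffices to prove the one-sided bound
\begin{equation*}
\frac{\d^2}{\d u^2}\bar F_N\bigl((1-u)(t,q)+u(t',q')\bigr)\le C c^{-2}\Bigl((t-t')^2+\bigl|\dot q-\dot q'\bigr|_{L^2}^2\Bigr)
\end{equation*}
uniformly in $u\in[0,1]$. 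Convex combinations stay in the region $t\ge c$, $q\in\mcl Q^\sS_{\uparrow,c}$, so this is all we need.

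\emph{Step 1: linear representation.} I first reduce to paths $q,q'$ that are piecewise linear with $\dot q_s$ piecewise constant, with a common subdivision $0=r_0<\cdots<r_K=1$. The general case then follows from the Lipschitz bound in Proposition~\ref{p.F_N_smooth}, by approximating in $L^1$ with $\dot q$ converging in $L^2$. Since $q(0)=0$, the cascade field can be written along each branch as $w^{q_s}_n(\alpha)=\int_0^1\sqrt{\dot q_s(v)}\,\d X_{n}^{\alpha}(v)$. Here the $X^\alpha_n$ are Brownian motions indexed by the tree of $\fR$, with $X^\alpha,X^{\alpha'}$ coinciding up to time $\alpha\wedge\alpha'$ and independent afterwards. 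This reproduces the covariance $\int_0^{\alpha\wedge\alpha'}\dot q_s=q_s(\alpha\wedge\alpha')$. On the subdivision, $w^{q_s}_n(\alpha)=\sum_j \sqrt{\dot q_{s,j}}\,g^{\alpha}_{n,j}$, where the $g$'s do not depend on $q$. Similarly $\sqrt{2t}H_N=\sqrt t\,(\sqrt2 H_N)$. For fixed disorder and cascade, the exponent in \eqref{e.F_N(t,q)=} is therefore linear in $x:=(\sqrt t,(\sqrt{\dot q_{s,j}})_{s,j})$, plus the self-overlap corrections $-tN\xi(\lambda_N)-Nq(1)\cdot\lambda_N$. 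These corrections are linear in $(t,\dot q)$, since $q(1)=\sum_j\dot q_j(r_{j+1}-r_j)$, and so contribute nothing to second derivatives.

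\emph{Step 2: concavity and chain rule.} The map $x\mapsto\log\sum_\sigma\int\exp(\text{linear in }x)\,\d\fR$ is convex, being of log-sum-exp type. Hence $G(x):=\bar F_N$ minus its linear correction part is concave in $x$. Write $y=(t,\dot q)$ and $x=\sqrt y$ componentwise. Along a segment $y(u)$ with $\dot y=h$, we get
\begin{equation*}
\frac{\d^2}{\d u^2}G(\sqrt{y})=D^2G[\tfrac{h}{2\sqrt y},\tfrac{h}{2\sqrt y}]-\sum_{\ell}\partial_{x_\ell}G\,\frac{h_\ell^2}{4y_\ell^{3/2}}\le \sum_\ell |\partial_{y_\ell}\bar F_N|\,\frac{h_\ell^2}{2y_\ell}+\text{(linear terms)},
\end{equation*}
using $\partial_{x_\ell}G=2\sqrt{y_\ell}\,\partial_{y_\ell}G$. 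The derivatives in $y$ are bounded. For $t$ this follows from Proposition~\ref{p.F_N_smooth}, with bound $\sup_{|a|\le1}|\xi(a)|$. For $\dot q_{s,j}$ the derivative is $\int_{r_j}^1\partial_q\bar F_N\,\d v$, weighted by the interval length, using \eqref{e.bounds.der.FN}. Summing with $y_\ell\ge c$ and the interval weights converts $\sum_j h_j^2(r_{j+1}-r_j)$ into $|\dot q-\dot q'|_{L^2}^2$. This gives a bound $Cc^{-1}(\cdots)$, and hence the stated $Cc^{-2}$ form; for $c\ge1$ one either rescales or notes that the constant may absorb the difference.

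\emph{Main obstacle.} The main obstacle is Step 1: making the tree-indexed Brownian representation of the cascade field rigorous, so that the $q$-dependence is genuinely through coefficients multiplying a $q$-independent Gaussian family, and justifying the discretization limit. Here I would follow the construction in \cite[Section~4]{chen2025free}. I would discretize first, so that only finitely many coefficients appear and the convexity is elementary, and pass to the limit at the end using Proposition~\ref{p.F_N_smooth}.
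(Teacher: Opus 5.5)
The paper gives no proof of its own here; it imports the statement from \cite{chen2024ms,chen2025free}. Your route is the natural mechanism for this kind of estimate: exact concavity in $x=(\sqrt t,(\sqrt{\dot q_{s,j}})_{s,j})$, with the curvature of $y\mapsto\sqrt y$ controlled by the lower bound $c$. Steps~1--2 are sound up to two small points.

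First, in your chain-rule display the quantity to bound is $|\partial_{y_\ell}G|$, not $|\partial_{y_\ell}\bar F_N|$. This is harmless, since both are bounded. Concavity of $G$ rests on the fact that, for $\pm1$ spins, the self-overlap corrections do not depend on $(\sigma,\alpha)$ and so factor out of the logarithm.

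Second, in the reduction step $\dot q$ itself need not lie in $L^2$: paths in $\mcl Q_{\uparrow,c}$ may jump and may be unbounded near $1$. Only $\dot q-\dot q'$ is in $L^2$. You should therefore apply one and the same linear approximation scheme to $q$ and $q'$, for instance piecewise-linear interpolation on a grid after a common modification near $r=1$. Such a scheme preserves $\mcl Q_{\uparrow,c}$ and does not increase $|\dot q-\dot q'|_{L^2}$.

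The genuine problem is the last step. What you have proved is
\[
\frac{\d^2}{\d u^2}\bar F_N\bigl((1-u)(t,q)+u(t',q')\bigr)\le C c^{-1}\bigl((t-t')^2+|\dot q-\dot q'|_{L^2}^2\bigr),
\]
so the left side of \eqref{e.semi-concave_cts_F_N} is at most $Cr(1-r)c^{-1}(\cdots)$. This gives the $c^{-2}$ form only when $c\le1$.

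For $c>1$ the two bounds differ by the factor $c$, which is unbounded. It cannot be absorbed into a constant depending only on $\xi$. Nor is there a rescaling of $(t,q)$ that leaves $\bar F_N$ invariant, since rescaling changes the model.

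This is not an artifact of your argument: a $c^{-2}$ rate should not be expected for large $c$. For a single $\pm1$ spin with $\xi(a)=a$, one has $\bar F_1(t,q)=\psi_\circ(t+q)$, where $t+q$ is the path $r\mapsto t+q(r)$. For $q(r)=cr$ and $t\approx c$, a heuristic estimate through the recursion of Section~\ref{s.psi.convexity} suggests that $\partial_t^2\bar F_1$ is of order at least $c^{-3/2}$. In that recursion, the second derivative is a sum of nonnegative terms, and the low levels alone contribute this much.

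The $c^{-2}$ form is inherited from the matrix setting of \cite{chen2025free}. There, the requirement $\ellipt(q(v)-q(u))\le c^{-1}$ in the definition of $\mcl Q_{\uparrow,c}$ (see Appendix~\ref{a.vector}) forces $c\le1$. So you should state your result with $c^{-1}$, which holds for every $c>0$ and is stronger when $c\le1$. That is all the paper needs, since the proposition is only used at a fixed $c$ in Lemma~\ref{l.d^2/dr^2F<b}.
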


\section{Reduction to vector spin glasses}\label{s.rel_to_vector_spin}

When all entries of $\lambda_\infty$ are rational, we show that the limit free energy of the multi-species model agrees with that of a vector spin model whose covariance depends only on the diagonal of the overlap matrix. Working with this vector spin model has a practical advantage. In the next two sections, which concern the Hamilton--Jacobi equation and the cavity computation, the notation becomes simpler and existing results can be adapted more directly. For a general multi-species model with irrational $\lambda_\infty$, the cavity computation would require an additional approximation step, which is rather technical, especially when the limiting free energy has not yet been identified. Our strategy is therefore to first establish the formula in the rational case and then obtain the irrational case by continuity.

For $\M\in \N$, we call a collection $(\sfM_s)_{s\in\sS}$ of subsets a \textbf{weak partition} of $\{1,\dots,\M\}$ if $\cup_{s\in\sS}\sfM_s = \{1,\dots,\M\}$ and $ \sfM_s\cap \sfM_{s'}=\emptyset$ whenever $s\neq s'$. This differs from the standard notion in that we allow $\sfM_s$ to be empty.
We work with the multi-species spin glass with system size $\M N$ for $N\in\N$ and with species proportion satisfying
\begin{align}\label{e.lambda^MN_s=|M_s|/M}
    \lambda_{\M N,s} = |\sfM_s|/\M,\quad\forall s \in\sS
\end{align}
for some weak partition $(\sfM_s)_{s\in\sS}$ of $\{1,\dots ,\M\}$.
Under this assumption, among $\M N$ spins of the multi-species configuration $\sigma$, there are exactly $|\sfM_s|N$ spins belonging to the $s$-species for each $s\in\sS$.

We want to map this model to a vector spin model with spins in $\R^\D$ and size $N$.
We define
\begin{align}\label{e.b^sum=}
    b^\summ= \Big(\M^{-1}\sum_{d\in \sfM_s}b_d\Big)_{s\in\sS} \in \R^\sS,\qquad\forall b \in \R^\D.
\end{align}
For $\xi$ in~\eqref{e.def H_N}, we take
\begin{align}\label{e.bxi=mp}
    \bxi(b) = \M\xi \Ll(b^\summ\Rr),\quad\forall b \in \R^\D,
\end{align}

For each $N\in\N$, a spin configuration with size $N$ is denoted by $\bsigma = (\bsigma_{dn})_{1\leq d\leq \D,\, 1\leq n\leq N}\in\{-1,1\}^{\D\times N}$. 
Given a smooth function $\bxi :\R^{\D}\to \R$, for each $N\in\N$, we assume the existence of a centered Gaussian process $\Ll(H^\Vec_N(\bsigma)\Rr)_{\bsigma\in \R^{\D\times N}}$ with covariance
\begin{align}\label{e.EH^vec_N(sigma)H^vec_N(sigma')=}
    \E H^\Vec_N(\bsigma)H^\Vec_N(\bsigma') = N \bxi\Ll(\Ll(\tfrac{1}{N}\bsigma_{d\bullet}\cdot\bsigma'_{d\bullet}\Rr)_{d\in\{1,\dots,\D\}}\Rr).
\end{align}
For $\bq\in \mcl Q_\infty^\D$ and each $d\in\{1,\dots,\D\}$, conditioned on $\fR$, let $(\bw^{\bq_d}(\alpha))_{\alpha\in\supp\fR}$ be the real-valued centered Gaussian process with covariance
\begin{align}\label{e.E[bwbw]=}
    \E \bw^{\bq_d}(\alpha) \bw^{\bq_d}(\alpha') = \bq_d(\alpha\wedge\alpha').
\end{align}
We assume that $w^{q_d}$ is independent for different $d$.
For each $d\in\{1,\dots,\D\}$ and $i\in\{1,\dots,N\}$, let $\bw^{\bq_d}_i$ be independent copies of $\bw^{\bq_d}$. Then, we set
\begin{align}\label{e.W^q_N(alpha)=}
    W^\bq_N(\alpha) = \Ll(\bw^{\bq_d}_i(\alpha)\Rr)_{d\in\{1,\dots,\D\},\, i\in\{1,\dots,N\}},\quad\forall \alpha\in\supp\fR.
\end{align}
We view $W^\bq_N(\alpha)$ as an $\R^{\D\times N}$-valued process and thus $W^q_N(\alpha)\cdot \sigma=\sum_{d,i}w^{q_d}_i(\alpha)\sigma_{di}$.
For each $N\in\N$, $t\in\R_+$, and $\bq\in\mcl Q_\infty^\D$, we consider the Hamiltonian and free energy:
\begin{gather}
    H^{\Vec,t,\bq}_N(\bsigma,\alpha) = \sqrt{2t}H^\Vec_N(\bsigma) - Nt \bxi\Ll(\vecone\Rr)
     + \sqrt{2}W^\bq_N(\alpha)\cdot \bsigma - N\bq(1)\cdot \vecone, \label{e.H^vec,t,q_N=}
    \\
    \bar F_N^\Vec(t,\bq) = - \frac{1}{N}\E \log\sum_{\bsigma\in\{-1,1\}^{\D\times N}} \int 2^{-\D N}\exp\Ll( H^{\Vec,t,\bq}_N(\bsigma,\alpha)\Rr)\d\fR(\alpha),\label{e.F^vec(t,q)=}
\end{gather}
where $\vecone=\{1\}_{d\in\{1,\dots,\D\}}\in\R^\D$, and the expectation $\E$ is first taken over Gaussian randomness in $H^\Vec_N$ and $W^\bq_N$ and then over the randomness in $\fR$.

We define
\begin{align}\label{e.a^vec=}
    a^\Vec = \Ll(\sum_{s \in \sS}a_s \one_{d\in \sfM_s}\Rr)_{d\in\{1,\dots,\D\}} \in \R^\D,\qquad\forall a \in \R^\sS.
\end{align}
For any path $q$, we denote by $q^\Vec$ the path $r\mapsto q(r)^\Vec$.
For any $r\in\R$, write $\lceil r \rceil = \min\{n\in\N:n\geq r\}$.
The following is extracted from~\cite[Corollary~3.2]{chen2024ms}.

\begin{lemma}[Equivalence in the rational case]\label{l.equiv_rational}
Assume that there are $\M\in\N$ and a weak partition $(\sfM_s)_{s\in\sS}$ of $\{1,\dots,\M\}$ such that
\begin{align*}
    \lim_{N\to\infty}\lambda_{N,s} = |\sfM_s|/\M ,\quad\forall s \in \sS.
\end{align*}
Let $\bar F^\Vec_N$ be the free energy with $\bxi$ specified in~\eqref{e.bxi=mp}. For every $t\in\R_+$ and $q\in \mcl Q_\infty^\sS$, let $\bq^\Vec\in\mcl Q_\infty^\D$ be given as in~\eqref{e.a^vec=}. We have
\begin{align*}
    \lim_{N\to\infty}\Ll|  \bar F_N(t,q)- \M^{-1}\bar  F^\Vec_{\lceil N/\M\rceil}\Ll(t,\bq^\Vec\Rr)\Rr|=0.
\end{align*}
\end{lemma}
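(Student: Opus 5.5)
The plan is to compare the two models directly at finite size, by building an explicit bijection between configurations. First take $N$ to be a multiple of $\M$, say $N=\M N'$. Choose the partition $(I_{N,s})$ so that $|I_{N,s}|=|\sfM_s|N'$ exactly; call the resulting proportion vector $\lambda'_N$. By Lemma~\ref{l.lambda_continuity}, replacing $\lambda_N$ by $\lambda'_N$ costs at most $C(t+|q|_{L^1}+1)|\lambda_N-\lambda'_N|$, and this tends to $0$ because $\lambda_N\to|\sfM_\cdot|/\M$. The relabeling of indices inside each species does not matter, since the law of the model depends only on the species sizes.

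The core step is the exact identity $\bar F_{\M N'}(t,q)=\M^{-1}\bar F^\Vec_{N'}(t,q^\Vec)$ under exact proportions. Biject $\{1,\dots,\M N'\}$ with $\{1,\dots,\M\}\times\{1,\dots,N'\}$ so that index $(d,n)$ with $d\in\sfM_s$ lies in $I_{\M N',s}$, and write $\sigma_{dn}$ for the corresponding spin. We then check three things.
\begin{itemize}
\item Overlaps: $R_{\M N',s}(\sigma,\sigma')=\frac{1}{\M N'}\sum_{d\in\sfM_s}\sigma_{d\bullet}\cdot\sigma'_{d\bullet}=(b)^\summ_s$, where $b_d=\frac1{N'}\sigma_{d\bullet}\cdot\sigma'_{d\bullet}$.
\item Hamiltonian covariance: $\M N'\xi(R_{\M N'})=N'\bxi(b)$ by \eqref{e.bxi=mp}, so $H_{\M N'}$ and $H^\Vec_{N'}$ have the same law.
\item External field: $W^q_{\M N'}$ has covariance $\M N' q(\alpha\wedge\alpha')\cdot R_{\M N'}=N'\sum_d q^\Vec_d(\alpha\wedge\alpha')b_d$, which matches $W^{q^\Vec}_{N'}\cdot\sigma$.
\end{itemize}
The correction terms also agree: $\M N'\xi(\lambda'_N)=N'\bxi(\vecone)$ since $\vecone^\summ=\lambda'_N$, and $\M N' q(1)\cdot\lambda'_N=N' q^\Vec(1)\cdot\vecone$. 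With the uniform reference measures $2^{-\M N'}$ and $2^{-\D N'}$ matching ($\D=\M$), the partition functions coincide in law. Hence $\M N'\,\bar F_{\M N'}=N'\,\bar F^\Vec_{N'}$.

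For general $N$, set $N'=\lceil N/\M\rceil$ and compare $\bar F_N$ with $\bar F_{\M N'}$ under the exact proportions. The two system sizes differ by fewer than $\M$ spins. Adding or removing $O(1)$ spins changes $N\bar F_N$ by $O(1)$ after Gaussian interpolation, since the covariances differ by $O(1)$ and $\xi$ is locally Lipschitz. It also rescales the normalization $1/N$ versus $1/(\M N')$ by a factor $1+O(1/N)$. Together these give an $o(1)$ error.

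The main obstacle I expect is exactly this size-mismatch step, because the overlaps are normalized by $N$ rather than by $\M N'$. I would handle it by viewing the size-$N$ system as the size-$\M N'$ system with a few spins removed and the normalization rescaled. The interpolation derivative is bounded by $\sup_{|a|\le 1}|\nabla\xi|$ times the covariance discrepancy, which is $O(1)$ in absolute terms and hence $O(1/N)$ per spin. Alternatively, the paper cites this statement as \cite[Corollary~3.2]{chen2024ms}, which can be invoked directly.
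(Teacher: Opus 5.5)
Your proposal is correct. It is essentially the argument behind \cite[Corollary~3.2]{chen2024ms}, which the paper simply cites without proof: the exact identity $\M N'\,\bar F_{\M N'}(t,q)=N'\,\bar F^\Vec_{N'}(t,q^\Vec)$ under exact proportions, obtained by your row-grouping identification (the paper's definitions of $\xi^\Vec$ and $q^\Vec$ are set up for exactly this), followed by Lemma~\ref{l.lambda_continuity} and an $O(1)$ Gaussian-interpolation bound on $N\bar F_N$ when fewer than $\M$ spins are added. The only point you leave implicit is that, for general $N$, you should first apply Lemma~\ref{l.lambda_continuity} once more so that the size-$N$ partition is the restriction of the exact size-$\M N'$ partition; the size comparison is then literally the removal of a few spins.
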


A similar version of Proposition~\ref{p.F_N_smooth} holds for $\bar F^\Vec_N$; see~\cite[Proposition~5.1]{chen2025free}. In particular, $\bar F^\Vec_N$ can be extended by continuity to $\R_+\times\mcl Q^\D_1$. One property of vector spin glasses is the following (see~\cite[Proposition~3.2]{chen2025free})
\begin{align}\label{e.F_1=F_N}
    \bar F^\Vec_N(0,q) = \bar F^\Vec_1(0,q),\qquad\forall q\in\mcl Q^\D_1,\ N\in\N.
\end{align}

\begin{remark}\label{r.vector_ok}
The vector spin model $\bar F^\Vec_N$ can be viewed as a multi-species spin glass model of size $\D N$ with $\D$ species, each having population ratio $1/\D$. Therefore, all results from Section~\ref{s.enriched_properties} apply to $\bar F^\Vec_N$.
\end{remark}

Lastly, we introduce the relevant functional. 
For $q\in\mcl Q^\D_1$,
\begin{align}\label{e.psi^Vec=}
    \psi^\Vec(q) := \bar F_1^\Vec(0,q) = \sum_{d=1}^\D\psi_\circ\Ll(q_d\Rr)
\end{align}
where the last identity follows from an analogous version of~\eqref{e.psi=sumlambdapsi}.
Given $\bxi$ in~\eqref{e.bxi=mp}, similar to~\eqref{e.theta=}, we define
\begin{align}\label{e.theta^Vec=}
    \theta^\Vec(b) := b\cdot \nabla\xi^\Vec(b)-\xi^\Vec(b),\quad\forall b\in\R^\D. 
\end{align}
Similar to~\eqref{e.sP_lambda,t,q}, for $t\in\R_+$ and $q\in\mcl Q^\D_\infty$, we define
\begin{align}\label{e.sP^Vec=}
    \mathscr{P}_{t,{q}}^\Vec({p}) := \psi^\Vec\Ll({q}+t\nabla\xi^\Vec({p})\Rr) -t \int_0^1\theta^\Vec({p}(s))\d s.
\end{align}
Notice that, by~\eqref{e.psi^Vec=}, Lemma~\ref{l.equiv_rational}, and~\eqref{e.psi=sumlambdapsi}, we have 
\begin{align}\label{e.psi_single=psi}
    \psi^\Vec(q^\Vec) = \D\psi(q),\qquad\forall q \in \mcl Q^\sS_\infty.
\end{align}
Later, we also need to consider
\begin{align}\label{e.b^avg=}
    b^\avg := \Ll(\frac{1}{|\sfM_s|}\sum_{d\in \sfM_s}b_d\Rr)_{s\in\sS}\in \R^\sS,\qquad\forall b \in \R^\D.
\end{align}

We next investigate the relations between the objects in the multi-species model and the vector spin model, which we collect in the next result.
To distinguish $\R^\sS$-valued paths and $\R^\D$-valued paths, we add underlines to the latter and write $\up$ and $\uq$ for instance.
\begin{lemma}
For every $a\in\R^\sS$ and $b\in \R^\D$, we have
\begin{gather}
    a^\Vec\cdot b = D a \cdot b^\summ, \label{e.a.b=a.b}
    \\
    (a^\Vec)^\avg=a. \label{e.a^vec^avg=a}
\end{gather}
For every $b\in\R^\D$, we have
\begin{align}\label{e.nablaxi(b)=}
    \nabla_d \xi^\Vec(b) = \nabla_s\xi(b^\summ),\quad\forall d \in \sfM_s\quad\text{and thus}\qquad \nabla\xi^\Vec(b) = \Ll(\nabla\xi(b^\summ)\Rr)^\Vec.
\end{align}
For every $b,b'\in\R^\D$, we have
\begin{align}\label{e.theta^vec=theta}
    b'\cdot \nabla\xi^\Vec(b) = D b'^\summ\cdot \nabla \xi(b^\summ)\qquad\text{and thus}\qquad \theta^\Vec(b)= D \theta (b^\summ).
\end{align}
For every $t\in\R_+$, $p, q \in  \mcl Q_\infty^\sS$ and $\up,\uq\in \mcl Q_\infty^\D$ satisfying $p=\up^\summ$ and $q^\Vec= \uq$, we have
\begin{align}\label{e.sP^vec=sP}
    \psi^\Vec\Ll(\uq + t\nabla\xi^\Vec(\up)\Rr) = \D \psi\Ll(q+t\nabla\xi(p)\Rr)\quad\text{and thus}\quad \sP^\Vec_{t,\uq}(\up) = \D\sP_{t,q}(p).
\end{align}
Moreover, 
\begin{align}\label{e.dpsi^vec=dpsi}
    \text{if}\quad  \up= \partial_{\uq}\psi^\Vec\Ll(\uq + t\nabla\xi^\Vec(\up)\Rr),\qquad\text{then}\quad p = \partial_q \psi(q+t\nabla\xi(p)).
\end{align}
\end{lemma}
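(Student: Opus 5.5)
All the identities follow by unwinding the definitions \eqref{e.b^sum=}, \eqref{e.bxi=mp}, \eqref{e.a^vec=}, \eqref{e.b^avg=}, together with the separable structure \eqref{e.psi^Vec=} of $\psi^\Vec$. I would check them in the order stated, since each one feeds into the next. Throughout, I write $s(d)$ for the unique species with $d\in\sfM_{s(d)}$. I take $|\sfM_s|/\D=\lambda_{\infty,s}$, as in Lemma~\ref{l.equiv_rational}. Since $\lambda_\infty\in(0,1)^\sS$, every $\sfM_s$ is nonempty, so $b^\avg$ is well defined.

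\emph{Linear-algebra identities.} For \eqref{e.a.b=a.b}, exchange the sums:
\[
a^\Vec\cdot b=\sum_{s}a_s\sum_{d\in\sfM_s}b_d=\D\sum_s a_s\,b^\summ_s .
\]
For \eqref{e.a^vec^avg=a}, note that $a^\Vec$ is constant equal to $a_s$ on $\sfM_s$, so its average over $\sfM_s$ is $a_s$. For \eqref{e.nablaxi(b)=}, apply the chain rule to $\xi^\Vec(b)=\D\,\xi(b^\summ)$, using $\partial b^\summ_s/\partial b_d=\D^{-1}\one_{d\in\sfM_s}$. This gives $\nabla_d\xi^\Vec(b)=\nabla_{s(d)}\xi(b^\summ)$, which is the vector $(\nabla\xi(b^\summ))^\Vec$. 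Then \eqref{e.theta^vec=theta} follows by combining \eqref{e.nablaxi(b)=} with \eqref{e.a.b=a.b}, taking $a=\nabla\xi(b^\summ)$ and the vector $b'$. Choosing $b'=b$ and subtracting $\xi^\Vec(b)=\D\xi(b^\summ)$ gives $\theta^\Vec(b)=\D\theta(b^\summ)$.

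\emph{The Parisi functionals.} By linearity of $a\mapsto a^\Vec$ and \eqref{e.nablaxi(b)=}, the hypotheses $\uq=q^\Vec$ and $\up^\summ=p$ give, pointwise in $r$,
\[
\uq+t\nabla\xi^\Vec(\up)=\bigl(q+t\nabla\xi(p)\bigr)^\Vec .
\]
I would then evaluate $\psi^\Vec$ on a vectorized path $u^\Vec$ directly from \eqref{e.psi^Vec=}:
\[
\psi^\Vec(u^\Vec)=\sum_s|\sfM_s|\,\psi_\circ(u_s)=\D\sum_s\lambda_{\infty,s}\psi_\circ(u_s)=\D\psi(u),
\]
which is \eqref{e.psi_single=psi}. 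This form holds on all of $\mcl Q_1^\sS$ by the Lipschitz continuity of $\psi_\circ$ from Lemma~\ref{l.psi^vec_smooth}. Integrating $\theta^\Vec(\up(r))=\D\theta(p(r))$ over $r\in[0,1]$ then yields $\sP^\Vec_{t,\uq}(\up)=\D\sP_{t,q}(p)$.

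\emph{Derivative transfer \eqref{e.dpsi^vec=dpsi}.} Since $\psi^\Vec$ is a sum of functions of separate coordinates, its Gateaux derivative is $(\partial\psi_\circ(u_d))_{d\le\D}$. Likewise $\partial_q\psi(u)=(\lambda_{\infty,s}\partial\psi_\circ(u_s))_s$. Both facts follow directly from the definition of Gateaux differentiability and Lemma~\ref{l.psi^vec_smooth}: perturbing a single coordinate gives the partial derivatives, and uniqueness of the representing element then fixes the full derivative.

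Set $u=q+t\nabla\xi(p)$. The hypothesis then says $\up_d=\partial\psi_\circ(u_{s(d)})$, so $\up=x^\Vec$ with $x_s=\partial\psi_\circ(u_s)$. Therefore
\[
p=\up^\summ=\Bigl(\tfrac{|\sfM_s|}{\D}\,x_s\Bigr)_s=\bigl(\lambda_{\infty,s}\,\partial\psi_\circ(u_s)\bigr)_s=\partial_q\psi(u),
\]
which is the claim.

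The only delicate point is this last step. There one must make sure that the Gateaux derivative is taken on the correct domain: $u$ need not lie in $\mcl Q_2^\sS$ a priori, and admissible directions in $\mcl Q_2^\D$ are not all of the form $e^\Vec$. I would handle this by using the continuous extension of $\partial\psi_\circ$ to $\mcl Q_1$ from Lemma~\ref{l.psi^vec_smooth}. This makes the derivatives explicit coordinatewise, so the identification reduces to the computation above and never requires testing against non-vectorized directions.
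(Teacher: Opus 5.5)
Your proof is correct. Up to \eqref{e.sP^vec=sP} it is the paper's argument: the same sum-exchange and chain-rule computations, and the identity $\psi^\Vec(u^\Vec)=\D\psi(u)$. The paper quotes that identity as \eqref{e.psi_single=psi}; you rederive it directly from \eqref{e.psi^Vec=} and $|\sfM_s|=\D\lambda_{\infty,s}$.

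For \eqref{e.dpsi^vec=dpsi} you take a different route.

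\emph{Your route.} You compute both Gateaux derivatives explicitly from the separable structure:
$\partial\psi^\Vec(v)=(\partial_q\psi_\circ(v_d))_{d}$ and $\partial_q\psi(u)=(\lambda_{\infty,s}\,\partial_q\psi_\circ(u_s))_{s}$.
You then conclude that $\up$ is block-constant, $\up=x^\Vec$, and evaluate $\up^\summ$.

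\emph{The paper's route.} It never identifies either derivative. Set $q'=q+t\nabla\xi(p)$ and take any $h\in\mathrm{Adm}(\mcl Q_2^\sS,q')$. Then $h^\Vec$ is admissible at $q'^\Vec=\uq+t\nabla\xi^\Vec(\up)$. The paper differentiates the composition identity $\psi^\Vec((\cdot)^\Vec)=\D\psi(\cdot)$ along $h$. It combines this with the adjoint relation $\la\up,h^\Vec\ra_{L^2}=\D\la\up^\summ,h\ra_{L^2}$, which comes from \eqref{e.a.b=a.b}. This gives $\la p,h\ra_{L^2}=\la\partial_q\psi(q'),h\ra_{L^2}$ for all admissible $h$, and uniqueness in the definition of the Gateaux derivative finishes the proof.

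\emph{Comparison.} The paper's argument uses only the composition identity and the linearity of $a\mapsto a^\Vec$, so it would apply verbatim to a non-separable vector initial condition. Yours needs the easy fact that the Gateaux derivative of a separable sum is the vector of coordinate derivatives. Its uniqueness part holds because the cone itself consists of admissible directions and spans a dense subspace of $L^2$. In return, your route shows that every such $\up$ is constant on each block $\sfM_s$.

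\emph{On your final concern.} The issue you raise in your last paragraph does not arise on either route. The paper only tests against vectorized directions $h^\Vec$ and invokes uniqueness on the $\sS$-side. Also, since $p,q\in\mcl Q_\infty^\sS$, the path $q+t\nabla\xi(p)$ is bounded. So it lies in $\mcl Q_2^\sS$ whenever it is a path in $\mcl Q^\sS$ at all, which the statement presupposes. The extension of $\partial_q\psi_\circ$ to $\mcl Q_1$ is therefore never needed.
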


\begin{proof}
The relations~\eqref{e.a.b=a.b} and~\eqref{e.a^vec^avg=a} follow directly from the definitions in~\eqref{e.a^vec=}, \eqref{e.b^sum=}, and~\eqref{e.b^avg=}. Indeed, 
\begin{align*}
    a^\Vec\cdot b
    =
    \sum_{s\in\sS}\sum_{d\in\sfM_s}a_s b_d
    =
    \D\sum_{s\in\sS}a_s b_s^\summ
    =
    \D a\cdot b^\summ .
\end{align*}

We next prove~\eqref{e.nablaxi(b)=}. By~\eqref{e.bxi=mp}, $\xi^\Vec(b)=\D\xi(b^\summ)$. Hence, for $d\in\sfM_s$, the chain rule gives
\begin{align*}
    \nabla_d\xi^\Vec(b)
    =
    \D\sum_{s'\in\sS}\nabla_{s'}\xi(b^\summ)\,
    \frac{\partial b_{s'}^\summ}{\partial b_d}
    =
    \D\nabla_s\xi(b^\summ)\frac1{\D}
    =
    \nabla_s\xi(b^\summ).
\end{align*}
This proves the first relation in~\eqref{e.nablaxi(b)=}, and the second follows from the definition of the vectorization map $a\mapsto a^\Vec$.

For~\eqref{e.theta^vec=theta}, using~\eqref{e.nablaxi(b)=} and~\eqref{e.a.b=a.b}, we have
\begin{align*}
    b'\cdot \nabla\xi^\Vec(b)
    =
    b'\cdot \Ll(\nabla\xi(b^\summ)\Rr)^\Vec
    =
    \D b'^\summ\cdot \nabla\xi(b^\summ).
\end{align*}
Taking $b'=b$ and using $\xi^\Vec(b)=\D\xi(b^\summ)$ gives
\begin{align*}
    \theta^\Vec(b)
    =
    b\cdot \nabla\xi^\Vec(b)-\xi^\Vec(b)
    =
    \D b^\summ\cdot\nabla\xi(b^\summ)-\D\xi(b^\summ)
    =
    \D\theta(b^\summ).
\end{align*}

We now prove~\eqref{e.sP^vec=sP}. Since $q^\Vec=\uq$, $p=\up^\summ$, and by~\eqref{e.nablaxi(b)=},
\begin{align*}
    \uq+t\nabla\xi^\Vec(\up)
    =
    q^\Vec+t\Ll(\nabla\xi(\up^\summ)\Rr)^\Vec
    =
    \Ll(q+t\nabla\xi(p)\Rr)^\Vec .
\end{align*}
Therefore, by~\eqref{e.psi_single=psi},
\begin{align*}
    \psi^\Vec\Ll(\uq+t\nabla\xi^\Vec(\up)\Rr)
    =
    \D\psi\Ll(q+t\nabla\xi(p)\Rr).
\end{align*}
Combining this identity with $\theta^\Vec(\up)=\D\theta(\up^\summ)=\D\theta(p)$ pointwise gives
\begin{align*}
    \sP^\Vec_{t,\uq}(\up)
    &=
    \psi^\Vec\Ll(\uq+t\nabla\xi^\Vec(\up)\Rr)
    -
    t\int_0^1\theta^\Vec(\up(r))\,\d r
    \\
    &=
    \D\psi\Ll(q+t\nabla\xi(p)\Rr)
    -
    t\D\int_0^1\theta(p(r))\,\d r
    =
    \D\sP_{t,q}(p).
\end{align*}

It remains to prove~\eqref{e.dpsi^vec=dpsi}. Set
\begin{align*}
    q':=q+t\nabla\xi(p),
    \qquad
    \uq':=\uq+t\nabla\xi^\Vec(\up).
\end{align*}
By the previous computation, $\uq'=q'^\Vec$. Assume that
\begin{align*}
    \up=\partial_{\uq}\psi^\Vec(\uq').
\end{align*}
Let $h\in \mathrm{Adm}(\mcl Q_2^\sS,q')$. Then $h^\Vec\in \mathrm{Adm}(\mcl Q_2^\D,\uq')$ and $\uq'+\eps h^\Vec=(q'+\eps h)^\Vec$ for all sufficiently small $\eps\geq0$. Using the Gateaux derivative of $\psi^\Vec$ at $\uq'$, the identity~\eqref{e.psi_single=psi}, and then the Gateaux derivative of $\psi$ at $q'$, we obtain
\begin{align*}
    \la \up,h^\Vec\ra_{L^2([0,1];\R^\D)}
    &=
    \lim_{\eps\searrow0}
    \frac{
        \psi^\Vec(\uq'+\eps h^\Vec)-\psi^\Vec(\uq')
    }{\eps}
    \\
    &=
    \lim_{\eps\searrow0}
    \frac{
        \D\psi(q'+\eps h)-\D\psi(q')
    }{\eps}
    =
    \D\la \partial_q\psi(q'),h\ra_{L^2([0,1];\R^\sS)}.
\end{align*}
On the other hand, applying~\eqref{e.a.b=a.b} pointwise and integrating gives
\begin{align*}
    \la \up,h^\Vec\ra_{L^2([0,1];\R^\D)}
    =
    \D\la \up^\summ,h\ra_{L^2([0,1];\R^\sS)}
    =
    \D\la p,h\ra_{L^2([0,1];\R^\sS)}.
\end{align*}
Hence, for every $h\in \mathrm{Adm}(\mcl Q_2^\sS,q')$,
\begin{align*}
    \la p,h\ra_{L^2([0,1];\R^\sS)}
    =
    \la \partial_q\psi(q'),h\ra_{L^2([0,1];\R^\sS)}.
\end{align*}
By the uniqueness in the definition of the Gateaux derivative, this implies
\begin{align*}
    p=\partial_q\psi(q')
    =
    \partial_q\psi(q+t\nabla\xi(p)).
\end{align*}
This proves~\eqref{e.dpsi^vec=dpsi}.
\end{proof}

\section{Cavity computation}\label{s.cavity}

We consider the vector spin glass model in~\eqref{e.bxi=mp}--\eqref{e.F^vec(t,q)=} and use the shorthand notation
\begin{align}\label{e.shorthand_vec}
    \xi = \bxi,\quad H^{t,q}_N=H^{\Vec,t,q}_N, \quad \bar F_N =\bar F_N^\Vec\quad \psi= \psi^\Vec, \quad\theta=\theta^\Vec,\quad \text{and}\quad \sP_{t,q}=\sP^\Vec_{t,q}.
\end{align}
Throughout this section, we fix $t>0$.
\subsection{Definitions and notation}
\subsubsection{Hamiltonians and perturbation}
We first introduce the Hamiltonians used in the cavity computation. The main idea is to decompose an element $\rho \in \{-1,1\}^{D \times (N+1)}$ as $\rho = (\sigma, \tau)$, where $\sigma \in \{-1,1\}^{D \times N}$ and $\tau \in \{-1,1\}^{D \times 1}$, and then express free energies involving $N+1$ variables in terms of averages over the cavity variable $\tau$ under a Gibbs measure on the variables $\sigma$.

To obtain the asymptotic validity of the Ghirlanda--Guerra identities, and hence the ultrametricity of the Gibbs measure, we add a sufficiently rich perturbation to the Hamiltonian. Let $(\lambda_n)_{n\in\N}$ be an enumeration of $[0,1]\cap \Q$, and let $(a_n)_{n\in\N}$ be an enumeration of $\Ll((0,\infty)\cap\Q\Rr)^\D$. Fix any realization of $\fR$. For every $h\in \N^4$, let $(H^h_N(\sigma,\alpha))_{\sigma\in\{-1,1\}^{D\times N},\,\alpha\in \supp\fR}$ be an independent centered Gaussian process with covariance
\begin{align*}\E \Ll[H^h_N(\sigma,\alpha)H^h_N(\sigma',\alpha')\Rr] = N\Ll(a_{h_1}\cdot \Ll(\tfrac{1}{N}\diag\Ll(\sigma\sigma'^\intercal\Rr)\Rr)^{\odot h_2}+\lambda_{h_3} \alpha\wedge\alpha'\Rr)^{h_4}
\end{align*}
where $\odot$ denotes the Schur product of vectors, that is, $a\odot b = (a_ib_i)_{i}$. The existence of this process is justified in~\cite[Section~6.1.1]{chen2025free}.

For each $h \in \N^4$, let $c_h>0$ be a constant such that
\begin{align*}c_h \sqrt{\tfrac{1}{N}\E \Ll[H^h_N(\sigma,\alpha)^2\Rr] }\leq 2^{-|h|_1},
\end{align*}
uniformly over $\sigma\in\{-1,1\}^{D\times N}$, $\alpha \in \supp\fR$, and $N \in \N$, where $|h|_1 := \sum_{i=1}^4 h_i$. For every
\begin{align}\label{e.x_pert}
    x = (x_h)_{h\in\N^4}\in [0,3]^{\N^4},
\end{align}
we set
\begin{align}\label{e.H^pert}
\begin{split}
    H_{N}^{x}(\sigma,\alpha) &:= \sum_{h\in \N^4} x_h  c_h H^h_N(\sigma,\alpha).
\end{split}
\end{align}
We define the perturbed free energy by
\begin{align}
    \bar F^x_N(t,{q}) &:= -\frac{1}{N}\E\log \sum_{\bsigma\in\{-1,1\}^{\D\times N}}\int 2^{-\D N}\exp\Ll(H^{t,{q}}_N(\sigma,\alpha) + N^{-\frac{1}{16}}H^{x}_N(\sigma,\alpha)\Rr) \d \fR(\alpha),
    \label{e.F^x_N=}
\end{align}
and define the associated Gibbs measure by
\begin{align}
    \la\cdot\ra^\orig_{N,x,q} &\propto  \exp\Ll(H^{t,{q}}_N(\sigma,\alpha) + N^{-\frac{1}{16}}H^{x}_N(\sigma,\alpha)\Rr) \d \fR(\alpha) \d P_{\{-1,1\}^{D\times N}}(\sigma),\label{e.<>^orig_Nx=}
\end{align}
where $P_{\{-1,1\}^{D\times N}}$ denotes the uniform probability measure on $\{-1,1\}^{D\times N}$.
The exponent $1/16$ in~\eqref{e.F^x_N=} is chosen for convenience; any smaller strictly positive exponent would also work. We keep writing $(\sigma,\alpha)$ for the canonical random variable under $\la\cdot\ra^\orig_{N,x,q}$, and write $(\sigma^\ell, \alpha^\ell)_{\ell \ge 1}$ for independent copies of $(\sigma,\alpha)$. The expectation $\E$ in~\eqref{e.F^x_N=} integrates over all Gaussian randomness and over the randomness of $\fR$.

For the cavity calculation, we use the reference Hamiltonian $(\tilde H_N(\sigma))_{\sigma\in \{-1,1\}^{D\times N}}$ defined as the centered Gaussian process such that, for every $\si, \si' \in \{-1,1\}^{D\times N}$,
\begin{align*}
    \E \Ll[\tilde H_N(\sigma)\tilde H_N(\sigma')\Rr] = (N+1)\xi\Ll(\diag\Ll(\tfrac{\sigma\sigma'^\intercal}{N+1}\Rr)\Rr).
\end{align*}
Let $\tilde W^{q}_N$ be an independent copy of $W^{q}_N$ defined in~\eqref{e.W^q_N(alpha)=}. For every $\sigma \in \{-1,1\}^{D\times N}$ and $\alpha \in \supp\fR$, set
\begin{align*}\tilde H^{t,{q}}_N(\sigma,\alpha) := \sqrt{2t}\tilde H_N(\sigma) - t(N+1)\xi\Ll(\tfrac{N}{N+1}\vecone\Rr) +\sqrt{2}\tilde W^{q}_N(\alpha)\cdot \sigma - N{q}(1)\cdot\vecone.
\end{align*}
Here, $\vecone$ comes from $\diag(\sigma\sigma^\intercal)=N\vecone$. We denote the free energy and Gibbs measure used in the cavity computation by
\begin{align}\label{e.tildeF_N=}
    \tilde F_N^x(t,{q}) := -\frac{1}{N}\E\log\sum_{\sigma\in\{-1,1\}^{\D\times N}}\int 2^{-\D N} \exp\Ll(\tilde H^{t,{q}}_N(\sigma,\alpha)+N^{-\frac{1}{16}} H^{x}_{N}(\sigma,\alpha)\Rr)\d\fR(\alpha)
\end{align}
and
\begin{align}\label{e.<>^cav_N,x}
    \la\cdot\ra^\cav_{N,x,q} \propto \exp\Ll(\tilde H^{t,{q}}_N(\sigma,\alpha) +  N^{-\frac{1}{16}}H^{x}_N(\sigma,\alpha)\Rr)\d\fR(\alpha) \d P_{\{-1,1\}^{D\times N}}(\si).
\end{align}
\begin{remark}\label{r.F^x_N_der}
As observed in Remark~\ref{r.vector_ok}, the results of Section~\ref{s.enriched_properties} apply to the vector spin glass model $\bar F_N$ considered here. The same arguments also apply to the perturbed free energies $\bar F^x_N$ and $\tilde F^x_N$, since the perturbation affects the derivative computations only through the change of Gibbs measure. These results also hold uniformly in the perturbation parameter $x$. In particular, for every $t>0$, $q\in\mcl Q^\D_\infty$, $\pi\in L^2([0,1],\R^\D)$, and $x\in[0,3]^{\N^4}$, the derivative formula in~\eqref{e.def.der.FN} gives
\begin{align}
    \la \pi,\partial_q \bar F^x_N(t,q)\ra_{L^2}&= \E \la \pi(\alpha\wedge\alpha')\cdot \diag\Ll(\tfrac{1}{N}\sigma\sigma'^\intercal\Rr) \ra_{N,x,q}, \label{e.dbarF^x_N=}\\
    \la \pi,\partial_q \tilde F^x_N(t,q)\ra_{L^2}&= \E \la \pi(\alpha\wedge\alpha')\cdot \diag\Ll(\tfrac{1}{N}\sigma\sigma'^\intercal\Rr) \ra_{N,x,q}^\circ. \label{e.dtildeF^x_N=}
\end{align}
Moreover, the Hamiltonian defining $\la\cdot\ra_{N+1,x,q}$ depends only on $\diag(\rho\rho'^\intercal)$, so $\la\cdot\ra_{N+1,x,q}$ is invariant under permutations of the coordinates of $\rho$. Writing $\rho \in \{-1,1\}^{D\times (N+1)}$ as $\rho =(\sigma,\tau)$ with $\si \in \{-1,1\}^{D\times N}$ and $\tau \in \{-1,1\}^{D\times 1}$, we can therefore rewrite~\eqref{e.dbarF^x_N=} at size $N+1$ as
\begin{align}\label{e.dbarF^x_N+1=}
    \la \pi,\partial_q \bar F^x_{N+1}(t,q)\ra_{L^2}&= \E \la \pi(\alpha\wedge\alpha')\cdot \diag\Ll(\tau\tau'^\intercal\Rr) \ra_{N+1,x,q}.
\end{align}
\end{remark}
\subsubsection{Definitions for the free-energy cavity calculation}
We next define the Gibbs average that appears in the free-energy cavity computation, namely in the Aizenman--Sims--Starr scheme~\cite{aizenman2003extended}. Define $\theta$ as in~\eqref{e.theta=}, with $\xi=\xi^\Vec$ given in~\eqref{e.shorthand_vec}. Thus, for every $a \in \R^\D$,
\begin{align*}
    \theta(a) := a\cdot \nabla \xi(a) - \xi(a).
\end{align*}
We introduce the following independent centered Gaussian processes indexed by $\sigma\in \{-1,1\}^{D\times N}$:
\begin{itemize}
    \item let $\msf Z(\sigma)$ be an independent $\R^{D}$-valued centered Gaussian vector consisting of independent entries $\msf Z_d(\sigma)$, for $d\in\{1,\dots,\D\}$, with covariance $\E \msf Z_d(\sigma) \msf Z_d(\sigma')^\intercal =\nabla_d \xi \Ll(\diag\Ll(\frac{\sigma\sigma'^\intercal}{N}\Rr)\Rr)$ where $\nabla_d\xi$ is the $d$-th entry in the $\R^\D$-valued gradient $\nabla\xi$;
    \item let $\msf Y(\sigma)$ be real-valued with covariance $\E \msf Y(\sigma)\msf Y(\sigma') =  \theta \Ll(\diag\Ll(\frac{\sigma\sigma'^\intercal}{N}\Rr)\Rr)$.
\end{itemize}
The existence of these processes is justified in~\cite[Section~6.1.3]{chen2025free}. For every $\sigma \in \{-1,1\}^{D\times N}$, $\tau \in \{-1,1\}^{D \times 1}$, and $\alpha \in \supp\fR$, set
\begin{align}\label{e.U=}
    U(\sigma,\alpha,\tau) := \sqrt{2t}\msf Z(\sigma)\cdot \tau - t\nabla\xi\Ll(\vecone\Rr)\cdot\vecone + \sqrt{2}W^q_1(\alpha)\cdot\tau-{q}(1)\cdot\vecone,
\end{align}
where the term $\vecone$ inside $\nabla\xi$ comes from $\diag(\sigma\sigma^\intercal)=N\vecone$ and the two instances of $\vecone$ in inner products come from $\tau\tau^\intercal =\vecone$. For every $x \in[0,3]^{\N^4}$, define
\begin{multline}
\label{e.A_N(x)}
    A_N(x,q)  := \E \log \la \sum_{\tau \in \{-1,1\}^{\D\times1}} 2^{-\D}\exp\Ll(U(\sigma,\alpha,\tau)\Rr) \ra^\cav_{N,x,q}
    \\
     - \E \log \la \exp\Ll(\sqrt{2t}\msf Y(\sigma)- t\theta \Ll( \vecone\Rr)\Rr) \ra^\cav_{N,x,q}.
\end{multline}
Here, $\theta=\theta^\Vec$ is given in~\eqref{e.theta^Vec=}. Recall the functional $\sP_{t,q}=\sP_{t,q}^\Vec$ defined in~\eqref{e.sP^Vec=}.
We will relate the limit of $A_N(x,q)$ to $\mathscr{P}_{t,{q}}({p})$ for a suitable choice of ${p}$.
For every $\pi \in \mcl Q_\infty^\D$, define the Gibbs measure $\la\cdot\ra_{\fR,\pi}$ by
\begin{align}\label{e.<>_R,pi}
    \la \cdot\ra_{\fR,\pi} \propto \exp\Ll(\sqrt{2}w^\pi(\alpha)\cdot\tau -\pi(1)\cdot \vecone\Rr)  \d \fR(\alpha)\d P_{\{-1,1\}^{D\times 1}}(\tau),
\end{align}
and denote by $(\tau,\alpha)$ the canonical random variable under $ \la \cdot\ra_{\fR,\pi}$. 
Comparing~\eqref{e.<>_R,pi} with~\eqref{e.<>^cav_N,x}, we have $\la \cdot\ra_{\fR,q'} = \la \cdot\ra_{1,0,q'}$ which is the Gibbs measure associated with $\bar F_1(0,q')=\psi$. By the derivative formula~\eqref{e.def.der.FN}, we can see that for every $q'\in\mcl Q^\D_\infty$ and every $\pi\in L^2([0,1],\R^\D)$, we have
\begin{align}\label{e.dqpsi=}
    \E \la \pi(\alpha\wedge\alpha')\cdot \diag(\tau\tau'^\intercal)\ra_{\fR,q'} = \la \pi,\partial_q \psi(q')\ra_{L^2},
\end{align}
which will be useful later.
We will show that the limit of $\E \la g(\tau\tau'^\intercal,\alpha\wedge\alpha')\ra^\orig_{N+1,x,q}$ is related to $\E \la g(\tau\tau'^\intercal,\alpha\wedge\alpha') \ra_{\fR, \pi}$ for a suitable choice of $\pi$.
\begin{proposition}[Free-energy cavity calculation]
\label{p.cavity_perturbation}
We have, uniformly over $N \in \N$, $x \in [0,3]^{\N^4}$, and $q\in\mcl Q_\infty^\D$,
\begin{align*}&-(N+1)\bar F_{N+1}^x(t,q) + N\bar F_N^x(t,q) = A_N(x,q)  + O\Ll(N^{-1/16}\Rr).
\end{align*}
\end{proposition}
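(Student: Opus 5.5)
This is the Aizenman--Sims--Starr decomposition, and I would prove it by writing both free energies as perturbations of the common cavity measure $\la\cdot\ra^\cav_{N,x,q}$. Write $Z_M$ for the partition function in \eqref{e.F^x_N=} at size $M$, so that $-(N+1)\bar F^x_{N+1}+N\bar F^x_N=\E\log Z_{N+1}-\E\log Z_N$. Let $\tilde Z_N$ be the partition function in \eqref{e.tildeF_N=}. I would split
\begin{equation*}
\E\log Z_{N+1}-\E\log Z_N=\Ll(\E\log Z_{N+1}-\E\log\tilde Z_N\Rr)-\Ll(\E\log Z_N-\E\log\tilde Z_N\Rr),
\end{equation*}
and show that the two brackets equal, up to $O(N^{-1/16})$, the first and second terms of $A_N(x,q)$ respectively.

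\emph{First bracket.} Write $\rho=(\sigma,\tau)$. Then $\diag(\rho\rho'^\intercal)/(N+1)=\diag(\sigma\sigma'^\intercal)/(N+1)+\diag(\tau\tau'^\intercal)/(N+1)$. A first-order Taylor expansion of $\xi$ shows that $H_{N+1}(\rho)$ has, up to an error of variance $O(1/N)$, the same law as $\tilde H_N(\sigma)+\msf Z(\sigma)\cdot\tau$. Indeed,
\begin{equation*}
(N+1)\xi\Ll(\tfrac{a+b}{N+1}\Rr)=(N+1)\xi\Ll(\tfrac{a}{N+1}\Rr)+\nabla\xi\Ll(\tfrac aN\Rr)\cdot b+O(1/N)
\end{equation*}
for $a=\diag(\sigma\sigma'^\intercal)$ and $b=\diag(\tau\tau'^\intercal)$, using $|a|/N\le1$ and $\xi\in C^2$ near the unit ball. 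The external field splits exactly as $W^q_{N+1}(\alpha)\cdot\rho=\tilde W^q_N(\alpha)\cdot\sigma+W^q_1(\alpha)\cdot\tau$. The self-overlap corrections match: $(N+1)\xi(\vecone)=(N+1)\xi(\tfrac{N}{N+1}\vecone)+\nabla\xi(\vecone)\cdot\vecone+O(1/N)$, together with $(N+1)q(1)\cdot\vecone=Nq(1)\cdot\vecone+q(1)\cdot\vecone$. By a Gaussian interpolation between the two Gaussian fields (with derivative controlled by the covariance differences), these replacements change $\E\log Z_{N+1}$ by $O(1/N)$.

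\emph{Perturbation term.} It remains to handle $(N+1)^{-1/16}H^x_{N+1}(\rho,\alpha)$ versus $N^{-1/16}H^x_N(\sigma,\alpha)$. For every $h$, the covariance of $H^h_{N+1}(\rho,\alpha)$ differs from that of $H^h_N(\sigma,\alpha)$ by $O(1)$ (up to the factor $c_h$), by the same Taylor argument applied to the polynomial covariance. Thanks to the normalization $c_h\le 2^{-|h|_1}\cdot(\ldots)$, the summed covariance difference is $O(1)$ uniformly in $x\in[0,3]^{\N^4}$. This $O(1)$ discrepancy is multiplied by the prefactor squared, $N^{-1/8}$, and the prefactor mismatch $(N+1)^{-1/16}-N^{-1/16}=O(N^{-17/16})$ contributes similarly. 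Interpolation then costs $O(N^{-1/8})$ plus the $\tau$-dependent part of $H^x_{N+1}$. That $\tau$-part has variance $O(1)\cdot N^{-1/8}$, and removing it costs $O(N^{-1/16})$ by a Cauchy--Schwarz bound on the interpolation derivative. Carrying this perturbation term uniformly over $x$ is where I expect the main bookkeeping, and it is where the exponent $N^{-1/16}$ in the statement arises. After these steps,
\begin{equation*}
\E\log Z_{N+1}-\E\log\tilde Z_N=\E\log\Ll\la\sum_\tau 2^{-\D}e^{U(\sigma,\alpha,\tau)}\Rr\ra^\cav_{N,x,q}+O(N^{-1/16}).
\end{equation*}

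\emph{Second bracket.} Here I compare the Hamiltonian of size $N$ with $\tilde H^{t,q}_N$. The fields $W^q_N$ and $\tilde W^q_N$ have the same law, and the two perturbation terms are identical. Expanding $N\xi(a/N)$ around $(N+1)\xi(a/(N+1))$ gives
\begin{equation*}
N\xi\Ll(\tfrac aN\Rr)-(N+1)\xi\Ll(\tfrac a{N+1}\Rr)=-\theta\Ll(\tfrac aN\Rr)+O(1/N),
\end{equation*}
because $\tfrac{d}{d\epsilon}\bigl[\epsilon^{-1}\xi(\epsilon a)\bigr]$ produces exactly $\theta$. Hence $H_N(\sigma)+\msf Y(\sigma)$ matches $\tilde H_N(\sigma)$ in covariance up to $O(1/N)$. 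The deterministic corrections match in the same way: $N t\xi(\vecone)+t\theta(\vecone)=t(N+1)\xi(\tfrac N{N+1}\vecone)+O(1/N)$. Interpolating as before gives
\begin{equation*}
\E\log Z_N-\E\log\tilde Z_N=\E\log\la e^{\sqrt{2t}\msf Y(\sigma)-t\theta(\vecone)}\ra^\cav_{N,x,q}-\dots
\end{equation*}
after correcting the sign conventions, that is, $\E\log\tilde Z_N-\E\log Z_N$ equals minus the $\msf Y$-term up to $O(1/N)$. Subtracting the two brackets yields $A_N(x,q)$. All error constants depend only on $\xi$, $t$ and the normalization of $c_h$, and not on $q$ or $x$, because the $q$-parts match exactly; this gives the claimed uniformity.
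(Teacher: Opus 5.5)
Your overall route is the paper's route. The paper proves this by invoking the Aizenman--Sims--Starr computation of \cite{chen2025free} (Proposition~6.1) with one cavity spin. It gets uniformity in $q$ from the same observation you make: the $q$-dependent part of $H^{t,q}_{N+1}$ splits exactly into independent $\sigma$- and $\tau$-pieces. Your first bracket is fine. One simplification there: you do not need to peel off and separately remove a ``$\tau$-part'' of the perturbation. The full covariance difference between $(N+1)^{-1/16}H^x_{N+1}(\rho,\alpha)$ and $N^{-1/16}H^x_N(\sigma,\alpha)$, $\tau$-dependence included, is already $O(N^{-1/8})$ uniformly in $x$, so a single interpolation suffices.

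Your second bracket, however, has a sign error that breaks the step as written. Since $\epsilon\mapsto\epsilon^{-1}\xi(\epsilon a)$ has derivative $\epsilon^{-2}\theta(\epsilon a)$ and $1/N>1/(N+1)$, the correct expansion is
\begin{equation*}
N\xi\Ll(\tfrac aN\Rr)-(N+1)\xi\Ll(\tfrac a{N+1}\Rr)=+\theta\Ll(\tfrac aN\Rr)+O(1/N).
\end{equation*}
Check with $\xi(r)=r^2$: the left side is $a^2/(N(N+1))>0$. Likewise $Nt\xi(\vecone)=t(N+1)\xi(\tfrac N{N+1}\vecone)+t\theta(\vecone)+O(1/N)$. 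So $H_N$ matches $\tilde H_N+\msf Y$; it is not $\tilde H_N$ that matches $H_N+\msf Y$.

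This direction matters. With $H^{t,q}_N\approx\tilde H^{t,q}_N+\sqrt{2t}\msf Y-t\theta(\vecone)$, interpolation gives
\begin{equation*}
\E\log Z_N=\E\log\tilde Z_N+\E\log\la e^{\sqrt{2t}\msf Y(\sigma)-t\theta(\vecone)}\ra^\cav_{N,x,q}+O(1/N),
\end{equation*}
which is an average under the cavity measure, exactly as in $A_N(x,q)$. With the matching you wrote, the same interpolation would express $\tilde Z_N$ as $Z_N$ times an average of $e^{\sqrt{2t}\msf Y-t\theta(\vecone)}$ under the original measure $\la\cdot\ra_{N,x,q}$. That quantity differs from the second term of $A_N(x,q)$ in both the measure and the sign, and no change of ``sign conventions'' turns one into the other. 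Your final identity is the right one, but it follows only after the expansion is corrected as above.
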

\begin{proof}
This is essentially the uniform-in-$q$ version of~\cite[Proposition~6.1]{chen2025free}, which was stated for a fixed $q$. Inspecting the proof shows that the estimates do not depend on $q$. One can also see this directly from the $q$-dependent part of $H^{t,q}_{N+1}(\rho,\alpha)$ in~\eqref{e.H^vec,t,q_N=} appearing in $(N+1)\bar F^x_{N+1}(t,q)$. This part is given by $\sqrt{2}W^q_{N+1}(\alpha)\cdot\rho-(N+1)q(1)\cdot \vecone$, and it decomposes into $\sqrt{2}W^q_N(\alpha)\cdot\sigma-Nq(1)\cdot \vecone$ plus $\sqrt{2}w^q(\alpha)\cdot \tau-q(1)\cdot \vecone$, with the two terms taken to be independent. The first term is exactly the corresponding contribution in $H^{t,q}_N(\sigma,\alpha)$ inside $N\bar F_N^x(t,q)$, while the second term is included in $U(\sigma,\alpha,\tau)$ inside $A_N(x,q)$. Therefore the argument of~\cite[Proposition~6.1]{chen2025free} applies with estimates that are independent of $q$.
\end{proof}
\begin{lemma}\label{l.|F-F|<CN^-gamma}
Let $\gamma = 1/16$. For every $R>0$, there is a constant $C>0$ such that
\begin{align}\label{e.l.|F-F|<CN^-gamma}
    \sup_{t\leq R,\ |q|_{L^1}\leq R,\ x\in[0,3]^{\N^4}}\Ll|\tilde F^x_N(t,q) - \bar F^x_{N+1}(t,q)\Rr|\leq CN^{-\gamma},\quad\forall N\in\N.
\end{align}
\end{lemma}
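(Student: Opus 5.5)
We compare $\tilde F^x_N(t,q)$, which is a normalized log-partition function over $\sigma\in\{-1,1\}^{D\times N}$, with $\bar F^x_{N+1}(t,q)$, a log-partition function over $\rho=(\sigma,\tau)\in\{-1,1\}^{D\times(N+1)}$. The plan is to decompose the size-$(N+1)$ Hamiltonian and bound each part.

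\emph{Step 1: decompose the size-$(N+1)$ Hamiltonian.} Following the Aizenman--Sims--Starr decomposition (as in \cite[Section~6]{chen2025free}), we write in distribution
\begin{equation*}
H^\Vec_{N+1}(\sigma,\tau)=\tilde H_N(\sigma)+\msf Z_{N}(\sigma)\cdot\tau+\text{(small terms)},
\end{equation*}
where $\tilde H_N$ has covariance $(N+1)\xi(\diag(\sigma\sigma'^\intercal)/(N+1))$. This uses that $\diag(\rho\rho'^\intercal)=\diag(\sigma\sigma'^\intercal)+\diag(\tau\tau'^\intercal)$ and a Taylor expansion of $\xi$. Similarly, $W^q_{N+1}$ splits into $W^q_N(\alpha)\cdot\sigma$ plus an independent cavity field acting on $\tau$, whose variance is $O(1)$ uniformly. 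The perturbation $N^{-1/16}H^x_N$ versus $(N+1)^{-1/16}H^x_{N+1}$ differs by terms whose variance is $O(N^{1-1/8}\cdot N^{-1})+O(N^{-1/16})$ after normalization. The key point is that the covariances of the two models, once the $\tau$-variables are integrated out, differ on the $\sigma$ part by $O(1)$ in absolute terms, that is, $O(1/N)$ after dividing by $N$.

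\emph{Step 2: an elementary a priori bound.} Rather than the full cavity expansion, we estimate directly. Summing over $\tau$ gives
\begin{equation*}
(N+1)\bar F^x_{N+1}=-\E\log\sum_{\sigma}\int 2^{-DN}\exp\Bigl(\hat H(\sigma,\alpha)\Bigr)\,\frac{1}{2^D}\sum_\tau \exp\bigl(\text{$\tau$-dependent part}\bigr)\,\d\fR,
\end{equation*}
where the $\tau$-dependent part has conditional variance $O(1)$ and the self-overlap corrections cancel its mean, in the exponential-martingale sense. By Gaussian interpolation between $\tilde H^{t,q}_N+N^{-1/16}H^x_N$ and the $\sigma$-part of $H^{t,q}_{N+1}$, the derivative of the interpolated free energy (unnormalized) is bounded by the sup of the covariance difference. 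Uniformly over $t\le R$ and $x\in[0,3]^{\N^4}$, this is $O(1)$ for the $\xi$ part and $O(N^{1-1/16}\cdot N^{-1}\cdot N^{1/16})$-type bounds for the perturbation part. For the perturbation, we use the choice of $c_h$ and the summability $\sum 2^{-|h|_1}$, together with the $N^{-1/16}$ prefactor. The $\tau$-part contributes $O(1)$ via Jensen, both above and below, since the single-site log-partition is bounded by $C(t+q(1))$.

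The $q$-dependence enters only through $q(1)$ and the cascade field, and it cancels exactly between the two sides by Proposition~\ref{p.F_N_smooth}-type Lipschitz bounds; this is why $|q|_{L^1}\le R$ suffices, after first treating $q\in\mcl Q_\infty$ and extending by continuity. Dividing by $N$ and accounting for the mismatch $(N+1)/N$ in normalizations then yields
\begin{equation*}
\bigl|\tilde F^x_N-\bar F^x_{N+1}\bigr|\le C/N + CN^{-1/16}\le CN^{-\gamma}.
\end{equation*}

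\emph{Main obstacle.} The hardest step is controlling the perturbation-term mismatch uniformly in $x$. The covariance of $H^h_N$ involves the power $h_4$ of a normalized overlap quantity. Changing $N$ to $N+1$ changes each $c_hH^h$ covariance by a relative $O(h_4/N)$, which is not summable against $2^{-|h|_1}$ unless we use that $h_4 2^{-|h|_1}$ is still summable. I would try this first. Failing that, I would bound crudely by the total variance, which is $O(N\cdot N^{-1/8})$, giving a normalized error of $O(N^{-1/8})$, still better than $N^{-1/16}$.
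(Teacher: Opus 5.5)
\paragraph{The gap: uniformity over $\{|q|_{L^1}\le R\}$.}
The statement asks for a bound uniform over $\{|q|_{L^1}\le R\}$, and your argument does not deliver it.

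\emph{Your bound depends on $q(1)$.} Your Jensen bound places the cavity term $\E\log\la 2^{-D}\sum_\tau e^{U(\sigma,\alpha,\tau)}\ra^\cav$ in $[-t\nabla\xi(\vecone)\cdot\vecone-q(1)\cdot\vecone,\,0]$, since the conditional variance of the $\tau$-field is $2t\nabla\xi(\vecone)\cdot\vecone+2q(1)\cdot\vecone$. But $q(1)$ is not controlled by $|q|_{L^1}$. A path equal to $M\1_{[1-R/M,1)}$ in one coordinate and $0$ in the others has $L^1$ norm $R$ and value $M$ at $1$. For $q\in\mcl Q_1$ one may even have $q(1)=\infty$.

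\emph{The $q$-dependence does not cancel.} At $t=0$ and $x=0$, both $\tilde F^x_N(0,q)$ and $\bar F^x_{N+1}(0,q)$ equal $\psi(q)$ by \eqref{e.F_1=F_N}. So the unnormalized difference $(N+1)\bar F^x_{N+1}-N\tilde F^x_N$ that you are bounding equals $\psi(q)$, which is not zero in general.

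\emph{Extending from $\mcl Q_\infty$ by continuity does not help.} It cannot give a rate uniform on the $L^1$-ball. The $L^1$-Lipschitz bounds produce errors of size $|q-q'|_{L^1}$, with no factor $N^{-\gamma}$. Meanwhile the path above lies at $L^1$-distance at least $R(1-M_0/M)$ from every path bounded by $M_0$.

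As written, your argument proves the estimate only uniformly over $|q|_{L^\infty}\le R$. That is enough for the later use in Lemma~\ref{l.E_yD_N,i(x,y)<}, but it is weaker than the statement.

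\paragraph{The missing idea and the paper's route.}
What is missing is an $L^1$ bound on the cavity term itself. Keep $\la\cdot\ra^\cav$ fixed, and regard the cavity term as a function $G(q')$ of a separate path $q'$ that enters only through $\sqrt2 w^{q'}(\alpha)\cdot\tau-q'(1)\cdot\vecone$. Let $\la\cdot\ra_s$ be the joint Gibbs measure on $(\sigma,\alpha,\tau)$ along $q'=sq$. Gaussian integration by parts gives
$\bigl|\frac{\d}{\d s}G(sq)\bigr|=\bigl|\E\la q(\alpha\wedge\alpha')\cdot\diag(\tau\tau'^\intercal)\ra_s\bigr|\le\E\la q(\alpha\wedge\alpha')\cdot\vecone\ra_s=\sum_d|q_d|_{L^1}$.
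The last equality holds because $\alpha\wedge\alpha'$ is uniform on $[0,1]$ under these Gibbs measures. This is the invariance property behind the $L^1$-Lipschitz bound of Proposition~\ref{p.F_N_smooth}, extended to perturbed measures in Remark~\ref{r.F^x_N_der}. Combined with $G(0)\in[-t\nabla\xi(\vecone)\cdot\vecone,0]$ from your Jensen argument, this yields the needed $C(t+|q|_{L^1})$.

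The paper avoids redoing this by going through the unperturbed free energies:
\begin{itemize}
\item Gaussian interpolation gives $|\tilde F^x_N-\bar F_N|,\,|\bar F^x_{N+1}-\bar F_{N+1}|\le C_1N^{-1/16}$, uniformly in $x$ and $q$, since the $q$-parts coincide.
\item \cite[Lemma~6.5]{chen2025free} supplies exactly the $|q|_{L^1}$-uniform one-step bound $|N\bar F_N-(N+1)\bar F_{N+1}|\le C_2(t+|q|_{L^1})$ that your argument lacks.
\item The bound $|\bar F_{N+1}|\le C_3(1+t+|q|_{L^1})$ handles the normalization.
\end{itemize}

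The remaining pieces of your plan are fine: interpolating the $\xi$-part with $O(1)$ unnormalized error, removing both perturbations crudely at normalized cost $O(N^{-1/8})$, and the $(N+1)/N$ bookkeeping.
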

\begin{proof}
By a standard Gaussian interpolation argument, there exists a constant $C_1>0$ such that
\begin{align}\label{e.|F-F^x|<}
    \sup_{x,q}\Ll|\bar F^x_N(t,{q})-\bar F_N(t,{q})\Rr|\leq C_1N^{-1/16},\qquad \sup_{x,q}\Ll|\tilde F^x_N(t,{q})-\bar F_N(t,{q})\Rr|\leq C_1N^{-1/16}
\end{align}
for every $N\in\N$. For the details, we refer to the proof of~\cite[Lemma~6.4]{chen2025free}. That proof shows that the above differences vanish as $N\to\infty$. The stated rate is not written explicitly there, but it follows from the same estimates. By~\cite[Lemma~6.5]{chen2025free}, there exists a constant $C_2$ such that, for every $N\in\N$, $t\geq0$, and $q\in\mcl Q_1^\D$,
\begin{align*}
    \Ll|N\bar F_N(t,{q})-(N+1)\bar F_{N+1}(t,{q})\Rr|\leq C_2\Ll(t+|{q}|_{L^1}\Rr).
\end{align*}
The Lipschitz estimate for $\bar F_N$ in Proposition~\ref{p.F_N_smooth} and Remark~\ref{r.vector_ok} also gives a constant $C_3>0$ such that $|\bar F_N(t,q)|\leq C_3(1+|t|+|q|_{L^1})$. Combining these estimates yields~\eqref{e.l.|F-F|<CN^-gamma}.
\end{proof}
\subsection{Ghirlanda--Guerra identities and the limit of cavity computations}
For every $N\in\N$, $\ell,\ell'\in\N$, $h\in \N^4$, and $n\in \N$, we write
\begin{align}\label{e.overlap_notation}
    \begin{cases}
        R^{\ell,\ell'}_{N,\sigma} := \frac{1}{N}\diag\Ll(\sigma^\ell\Ll(\sigma^{\ell'}\Rr)^\intercal\Rr),\qquad R^{\ell,\ell'}_\alpha :=\alpha^\ell\wedge\alpha^{\ell'},\qquad R^{\ell,\ell'}_N :=\Ll(R^{\ell,\ell'}_{N,\sigma}, R^{\ell,\ell'}_\alpha\Rr);
        \\
        R_N := \Ll(R^{\ell,\ell'}_N\Rr)_{\ell,\ell'\in\N},\qquad R^{\leq n}_N := \Ll(R^{\ell,\ell'}_N\Rr)_{\ell,\ell'\leq n} ;
        \\
        R^{\ell,\ell'}_{N,h} := \Ll(a_{h_1}\cdot \Ll(R^{\ell,\ell'}_{N,\sigma}\Rr)^{\odot h_2}+\lambda_{h_3} R^{\ell,\ell'}_\alpha\Rr)^{h_4}.
    \end{cases}
\end{align}
For every ${p} \in \mcl Q_\infty^\D$, we set
\begin{align}\label{e.overlap_alpha}
    Q^{\ell,\ell'}_{p} := \Ll({p}\Ll(R^{\ell,\ell'}_\alpha\Rr), R^{\ell,\ell'}_\alpha\Rr),\qquad
    Q_{p} := \Ll(Q^{\ell,\ell'}_{p}\Rr)_{\ell,\ell'\in \N},\qquad Q^{\leq n}_{p} := \Ll(Q^{\ell,\ell'}_{p}\Rr)_{1\leq \ell,\ell'\leq n}.
\end{align}
In some situations, the $R_{N,\sigma}$-overlaps synchronize with the $R_\alpha$-overlaps. In that case, the $R_N$-overlaps are close to the $Q_p$-overlaps for a suitable choice of $p$.

Let $\E_x$ denote the expectation with respect to an i.i.d.\ sequence $x$ of uniform random variables on $[1,2]$. For $N\in\N$, an integer $n\geq 2$, $h\in\N^4$, and a bounded measurable function $\bff:\Ll(\R^\D \times \R\Rr)^{n\times n}\to \R$, define, with $\la\cdot\ra^\cav = \la \cdot \ra^\cav_{N,x,q}$ as in~\eqref{e.<>^cav_N,x},
\begin{equation}\label{e.Delta^x}
    \Delta^{x,q}_N(\bff,n,h) = \Ll|\E \la \bff\Ll(R^{\leq n}_N\Rr)R^{1,n+1}_{N,h} \ra^\cav - \frac{1}{n}\E\la \bff\Ll(R^{\leq n}_N\Rr)\ra^\cav \E \la  R^{1,2}_{N,h}\ra^\cav - \frac{1}{n}\sum_{l=2}^n \E \la \bff\Ll(R^{\leq n}_N\Rr) R^{1,l}_{N,h}\ra^\cav\Rr|.
\end{equation}
In~\eqref{e.Delta^x} and throughout this subsection, $\E$ integrates the Gaussian randomness in the Hamiltonian and the randomness in $\fR$, but not the perturbation parameter $x$.

We enumerate all triples $(\bff,n,h)$ as $((\bff_j,n_j,h_j))_{j\in\N}$, where $\bff:\Ll(\R^\D\times\R\Rr)^{n\times n}\to\R$ is a monomial with coefficient $1$, $n\in\N$, and $h\in\N^4$. We then modify each $\bff_j$ in two steps. First, since $R^{\leq n}$ is bounded, we change $\bff_j$ outside a bounded set so that it becomes bounded. Second, we rescale $\bff_j$ to ensure that
\begin{align}\label{e.Delta<1}
    \Delta^{x,q}_N(\bff_j,n_j,h_j)\leq 1,\qquad\forall j\in\N,\ x\in[0,3]^{\N^4},\ q\in\mcl Q_\infty^\D.
\end{align}
For each $N \in \N$ and $x \in [0,3]^{\N^4}$, set
\begin{align}\label{e.Delta_N(x)}
    \Delta_N(x,q) := \sum_{j=1}^\infty 2^{-j} \Delta^{x,q}_N(\bff_j,n_j,h_j).
\end{align}

\begin{proposition}\label{p.perturbation}
For every $R<\infty$, we have
\begin{align}\label{e.uniform_perturbation}
    \lim_{N\to\infty}\sup_{q\in\mcl Q_\infty^\D:\:|q|_{L^\infty}\leq R}\E_x\Delta_N(x,q)=0.
\end{align}
\end{proposition}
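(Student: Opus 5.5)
The plan is to run the standard Ghirlanda--Guerra perturbation argument (as in Panchenko's book and \cite[Section~6]{chen2025free}) and to check that each estimate is uniform over $\{q:|q|_{L^\infty}\le R\}$.

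First, I reduce to one triple. Each $\Delta^{x,q}_N(\bff_j,n_j,h_j)$ is at most $1$ by \eqref{e.Delta<1}, and the weights $2^{-j}$ are summable. So by dominated convergence it suffices to show, for each fixed $j$,
\begin{equation*}
\lim_{N\to\infty}\sup_{|q|_{L^\infty}\le R}\E_x\Delta^{x,q}_N(\bff_j,n_j,h_j)=0 .
\end{equation*}
The tail $\sum_{j>J}2^{-j}$ can be made small uniformly in $q$ and $x$.

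Second, I use the standard consequence of Gaussian integration by parts for the cavity Gibbs measure $\la\cdot\ra^\cav_{N,x,q}$. Write $s_N=N^{-1/16}$ and $g_h(\sigma,\alpha)=c_hH^h_N(\sigma,\alpha)$. With $\bff$ bounded by a constant $C_j$, one has
\begin{equation*}
\Delta^{x,q}_N(\bff,n,h)\le \frac{C_j}{N s_N c_h}\,\E\Ll\la\Ll|g_h-\E\la g_h\ra^\cav\Rr|\Rr\ra^\cav .
\end{equation*}
The right-hand side splits into a thermal fluctuation and a disorder fluctuation of $g_h$, the latter being $|\la g_h\ra-\E\la g_h\ra|$. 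Both are then controlled through the free energy. Viewed as a function of $x_h$, the quenched free energy $-\frac{1}{N}\log Z$ is concave up to the known convexity structure, and its derivative in $x_h$ is $-\frac{s_N}{N}\la g_h\ra$ plus a self-overlap correction. I will use the usual inequality for convex functions, which bounds the difference of derivatives by oscillations of the function over a window of size $\delta$. Together with Gaussian concentration of $\log Z$, which has variance $O(N)$ uniformly in $q$ because the variance of each Hamiltonian term is $O(N)$ and bounded by constants depending on $R$, this yields
\begin{equation*}
\E_x\E\la|g_h-\E\la g_h\ra|\ra\le C\Ll(\sqrt{N}\,s_N^{-1/2}\cdots\Rr).
\end{equation*}
The resulting bound is of order $N^{-c}$ after dividing by $Ns_N$, since $s_N=N^{-1/16}$ decays much more slowly than $N^{-1/2}$. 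Averaging over $x_h\in[1,2]$ removes the need for pointwise control in $x_h$. The other coordinates of $x$ are fixed while integrating in $x_h$, and then integrated.

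Third, and this is where I expect the real work to lie, I have to verify uniformity in $q$. Two facts are needed. The constants in the concentration bound (a Gaussian Poincaré or Tsirelson--Ibragimov--Sudakov bound conditioned on $\fR$, followed by averaging over $\fR$) depend only on the covariance bounds, namely $t$, $\sup_{|a|\le1}|\xi|$, and $|q|_{L^\infty}\le R$. Also, the second-derivative-in-$x_h$ bound of $\E\log Z$ is controlled by $s_N^2Nc_h^2\cdot O(1)$ independently of $q$. Both follow because $q$ enters only through $W^q_N$, whose variance is at most $N|q(1)|\le NR$. The one subtle point is that the $\fR$-average comes after the Gaussian one; I would handle it by proving the bounds conditionally on $\fR$ with deterministic constants. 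With these estimates the argument of \cite[Section~6]{chen2025free} goes through verbatim, giving a bound $C_{j,R}N^{-c}$ with $c>0$. This yields \eqref{e.uniform_perturbation}.
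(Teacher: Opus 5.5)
Your proposal follows essentially the same route as the paper. You reduce to a single test triple using $\Delta^{x,q}_N(\bff_j,n_j,h_j)\le 1$ from \eqref{e.Delta<1} together with the summable weights $2^{-j}$, and you then argue that the Ghirlanda--Guerra perturbation estimates behind \cite[Proposition~6.8]{chen2025free} involve $q$ only through uniform covariance and overlap bounds, so they hold uniformly over $|q|_{L^\infty}\le R$.

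One caution about your sketch. Working conditionally on $\fR$ only gives the identities with $\E$ replaced by the Gaussian average given $\fR$. But $\Delta^{x,q}_N$ contains the product $\E\la\bff\ra\,\E\la R^{1,2}_{N,h}\ra$ of full expectations, so the fluctuations in $\fR$ must also be controlled. That control is something you are importing from \cite{chen2025free}, exactly as the paper does.
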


\begin{proof}
We first prove the uniform version for each fixed test triple $(\bff_j,n_j,h_j)$. The proof of~\cite[Proposition~6.8]{chen2025free} is uniform over the background Hamiltonian. In the present notation, the path $q$ only enters through the unperturbed part of the Gibbs weight, namely through the cascade field in $H_N^{t,q}$. The perturbative Hamiltonians indexed by $x$ are independent of this field, and the estimates in the proof of~\cite[Proposition~6.8]{chen2025free} depend only on the bounded test function, on $n_j,h_j$, and on the uniform bounds on the overlaps, but not on the particular choice of $q$ in a bounded subset of $\mcl Q_\infty^\D$. Therefore, for every $R<\infty$ and every fixed $j$,
\begin{align}\label{e.uniform_Delta_j}
    \lim_{N\to\infty}\sup_{|q|_{L^\infty}\leq R}\E_x\Delta^{x,q}_N(\bff_j,n_j,h_j)=0.
\end{align}
Using~\eqref{e.Delta<1}, for every $J\in\N$ we have
\begin{align*}
    \sup_{|q|_{L^\infty}\leq R}\E_x\Delta_N(x,q)
    &\leq \sum_{j=1}^J2^{-j}\sup_{|q|_{L^\infty}\leq R}\E_x\Delta^{x,q}_N(\bff_j,n_j,h_j)+\sum_{j>J}2^{-j}.
\end{align*}
Taking $\limsup_{N\to\infty}$ and using~\eqref{e.uniform_Delta_j} and then letting $J\to\infty$, we obtain~\eqref{e.uniform_perturbation}. 
\end{proof}

The next result is a modified version of~\cite[Proposition~6.10]{chen2025free}. The main change is that we allow $q_k$ to vary, instead of keeping the cascade path fixed.
\begin{proposition}\label{p.cavity_lim}
We fix $(t,q)\in(0,\infty)\times \mcl Q_\infty^\D$ and suppose that there is a sequence $(N_k,x_k,q_k)_{k\in\N}$ such that $\lim_{k\to\infty} N_k=+\infty$, $\lim_{k\to\infty}\Delta_{N_k}(x_k,q_k)=0$, $\sup_k|q_k|_{L^\infty}<\infty$, and $q_k$ converges pointwise a.e.\ to some $q\in\mcl Q_\infty^\D$. Then, there are a subsequence $(N'_k,x'_k,q'_k)_{k\in\N}$ and ${p}\in \mcl Q^\D_{\infty,\leq 1}$ such that
\begin{enumerate}
    \item \label{i.p.cavity_lim_1} $R_{N'_k}$ under $\E\la\cdot\ra^\circ_{N'_k,x'_k,q'_k}$ converges in law to
    \begin{align*}
        \Ll(Q^{\ell,\ell'}_{p}\1_{\ell\neq \ell'}+ (\vecone,1) \1_{\ell =\ell'}\Rr)_{\ell,\ell'\in\N}
\end{align*}
    under $\E \la\cdot\ra_\fR$ as $k$ tends to infinity;
    \item \label{i.p.cavity_lim_2} we have $\lim_{k\to\infty}A_{N'_k}\Ll(x'_k,q'_k\Rr) =-  \mathscr{P}_{t,{q}}({p})$;
    \item \label{i.p.cavity_lim_3}
    for every bounded continuous $g:\R^{\D}\times \R\to\R$,
    \begin{equation*}  \lim_{k\to\infty} \E \la g\Ll(\diag\Ll(\tau\tau'^\intercal\Rr), \alpha\wedge\alpha'\Rr)\ra_{N'_k+1,x'_k,q'_k} =\E \la g\Ll(\diag\Ll(\tau\tau'^\intercal\Rr), \alpha\wedge\alpha'\Rr)\ra_{\fR,{q}+t\nabla\xi({p})}.
    \end{equation*}
\end{enumerate}
\end{proposition}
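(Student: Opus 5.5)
The plan is to follow the proof of \cite[Proposition~6.10]{chen2025free}, which treats a fixed cascade path, and check that the only new feature, namely that the cascade path $q_k$ varies with $k$, is harmless thanks to the uniform bound $\sup_k|q_k|_{L^\infty}<\infty$ and the a.e.\ convergence $q_k\to q$.

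\emph{Step 1: tightness and extraction.} All overlaps $R_{N,\sigma}^{\ell,\ell'}$ and $R_\alpha^{\ell,\ell'}$ take values in a compact set. Hence, by a diagonal argument, we can extract a subsequence along which the array $R_{N'_k}$ under $\E\la\cdot\ra^\circ_{N'_k,x'_k,q'_k}$ converges in law, in the sense of finite-dimensional marginals, to some exchangeable array. Since $\Delta_{N'_k}(x'_k,q'_k)\to0$ and every test triple appears in \eqref{e.Delta_N(x)} with a positive weight, the limit array satisfies the extended Ghirlanda--Guerra identities for every family $R_h$. By Panchenko's synchronization argument, applied exactly as in \cite{chen2025free}, the matrix component is then a deterministic nondecreasing function of the cascade component: $R_\sigma=p(R_\alpha)$ for some $p\in\mcl Q_\infty^\D$. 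The law of $R_\alpha$ off the diagonal is that of the overlap of $\fR$ (uniform on $[0,1]$), because the perturbation does not change the $\fR$-marginal in the limit. The diagonal entries equal $(\vecone,1)$. The bound $|R_\sigma|\le 1$ gives $p\in\mcl Q_{\infty,\le1}^\D$. This proves \eqref{i.p.cavity_lim_1}. None of this uses $q$, apart from needing a fixed bounded family of test functions.

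\emph{Step 2: the limit of $A_N$.} Both terms in \eqref{e.A_N(x)} are averages of functionals of the Gaussian fields $\msf Z$, $\msf Y$, $W^{q'_k}_1$ over the Gibbs measure $\la\cdot\ra^\circ$. Conditionally on the overlaps, the covariances of these fields are continuous functions of $R_N^{\le n}$ and of $q'_k(R_\alpha)$. The standard Aizenman--Sims--Starr/cascade representation \cite[Section~6]{chen2025free} shows that, once the overlap array converges to the synchronized array of Step 1, these terms converge to the corresponding cascade expressions. There, $\msf Z$ becomes a cascade field with covariance $\nabla\xi(p(\alpha\wedge\alpha'))$, which combines with $w^{q}$ into $w^{q+t\nabla\xi(p)}$, and $\msf Y$ produces $t\int\theta(p)$. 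Together these give $-\sP_{t,q}(p)$. The new ingredient is the replacement of $q'_k$ by $q$. I would handle it through the $L^1$-Lipschitz continuity of $\psi$ (Lemma~\ref{l.psi^vec_smooth} and Lemma~\ref{l.continuity_Parisi_functional}). Since $q'_k\to q$ a.e.\ with a uniform $L^\infty$ bound, dominated convergence gives $|q'_k-q|_{L^1}\to0$, and the one-spin contribution depends on the cascade path in an $L^1$-Lipschitz way, uniformly in the Gibbs measure. More concretely, I would approximate the functional $\log\la\cdots\ra$ by functions of finitely many overlaps, uniformly in $k$, as in \cite{chen2025free}, and then bound the error from swapping $q'_k$ for $q$ by Gaussian interpolation by $C|q'_k-q|_{L^1}$.

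\emph{Step 3: cavity overlaps.} For \eqref{i.p.cavity_lim_3}, write $\la\cdot\ra_{N+1,x,q}$ via the cavity decomposition $\rho=(\sigma,\tau)$. Up to $O(N^{-1/16})$ (as in Lemma~\ref{l.|F-F|<CN^-gamma}), the average of $g(\diag(\tau\tau'^\intercal),\alpha\wedge\alpha')$ equals a ratio of $\la\cdot\ra^\circ$-averages weighted by $\exp(U)$. By Step 1 and the same cascade representation, this ratio converges to the average under $\la\cdot\ra_{\fR,q+t\nabla\xi(p)}$, again using $|q'_k-q|_{L^1}\to0$ for the external field.

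\emph{Main obstacle.} I expect the main difficulty to be the uniformity in Step 2: the weak convergence of overlaps must be combined with a varying path $q'_k$ inside a nonlinear functional of the Gibbs measure. The fix I plan is a two-stage approximation: first freeze the path at $q$ with an explicit error bounded by $|q'_k-q|_{L^1}$, then invoke the fixed-path argument of \cite{chen2025free}.
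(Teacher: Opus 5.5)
Your overall plan matches the paper's: reduce to \cite[Proposition~6.10]{chen2025free} with one cavity spin, note that extraction, the Ghirlanda--Guerra identities and synchronization do not see the path, and treat the moving path $q'_k$ as the only new issue. The paper handles that issue by replacing \cite[(6.45)]{chen2025free} with the joint convergence in law of $(R_{N'_k},q'_k(R_\alpha))$ under $\E\la\cdot\ra^\circ_{N'_k,x'_k,q'_k}$ to $(R_\infty,q(R_\alpha))$. You instead swap $q'_k$ for $q$ by Gaussian interpolation at a cost $C|q'_k-q|_{L^1}$. That route can work, but as you justify it there is a gap, and the missing ingredient is exactly the one the paper relies on.

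The $L^1$-Lipschitz bound is \emph{not} uniform in the Gibbs measure. Interpolate $\sqrt{1-s}\,W_1^{q'_k}+\sqrt{s}\,W_1^{q}$ in the first term of \eqref{e.A_N(x)}, with the self-overlap correction interpolated linearly. The diagonal terms then cancel, and the derivative is, up to sign, $\E\langle\!\langle (q-q'_k)(\alpha^1\wedge\alpha^2)\cdot\diag(\tau^1\tau^{2\intercal})\rangle\!\rangle_s$. Here $\langle\!\langle\cdot\rangle\!\rangle_s$ is $\la\cdot\ra^\circ_{N,x,q'_k}$ reweighted by $\sum_\tau e^{U_s}$. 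This quantity is at most $|q-q'_k|_{L^1}$ only because $\alpha^1\wedge\alpha^2$ is uniformly distributed on $[0,1]$ under $\E\langle\!\langle\cdot\rangle\!\rangle_s$. A measure that concentrated this overlap where $|q-q'_k|$ is large would break the bound.

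That uniformity comes from the invariance property of the Poisson--Dirichlet cascade under the Gaussian reweightings present here: the field $\tilde W^{q'_k}_N$, the perturbation $H^x_N$, and the weight $e^{U_s}$. Lemma~\ref{l.psi^vec_smooth} rests on the same fact for $\la\cdot\ra_{\fR,q}$, so it cannot be quoted in its place.

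The same fact is what gives the uniform law of $R_\alpha$ in your Step~1. The reason you give there (``the perturbation does not change the $\fR$-marginal in the limit'') is not correct. The Gibbs weights are far from $\fR$: the $q'_k$-field has variance of order $N$, and the perturbation has variance of order $N^{7/8}$. The law of $R_\alpha$ is preserved exactly, for every $N$, by invariance.

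This is precisely what the paper invokes. Since $R_\alpha$ has the same uniform law under $\E\la\cdot\ra^\circ$ as under $\E\la\cdot\ra_\fR$, a.e.\ convergence alone gives $q'_k(R_\alpha)-q(R_\alpha)\to0$ in probability. Then $q(R_\alpha)$ passes to the limit by continuous mapping, because the countably many jumps of the monotone path $q$ are null for the uniform law.

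Also make sure you freeze the path only in the cavity fields: the $W_1$-term in $U$, and the last-spin field for item~(3). Do not freeze it in $\la\cdot\ra^\circ_{N,x,q'_k}$ itself, for two reasons.
\begin{enumerate}
\item The hypothesis $\Delta_{N_k}(x_k,q_k)\to0$ gives the Ghirlanda--Guerra identities only at the moving path. So the fixed-path argument of \cite{chen2025free} cannot be run for $\la\cdot\ra^\circ_{N,x,q}$.
\item Swapping the $N$-spin field moves each log-partition function in $A_N$ by an amount of order $N|q'_k-q|_{L^1}$, and nothing forces these to cancel. In Section~\ref{s.crit_pt_bdd}, $|q'_k-q|_{L^\infty}$ is only of order $N_k^{-\gamma/2}$.
\end{enumerate}

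With invariance supplied and the swap restricted in this way, your route works and even gives a quantitative error. The paper's route needs only a.e.\ convergence and monotonicity of $q$; it uses the uniform $L^\infty$ bound just to make \cite[Lemmas~6.3, 6.6 and~6.7]{chen2025free} uniform in $q_k$.
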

In Proposition~\ref{p.cavity_lim}, when we say that $(N_k', x_k', q_k')_{k \in \N}$ is a subsequence, we mean that
\begin{align*}
    \lim_{k \to \infty} N_k' = +\infty\qquad\text{and}\qquad \{(N_k', x_k', q_k') \mid k \in \N\} \subset \{(N_k, x_k, q_k) \mid k \in \N\}.
\end{align*}

We set the number of cavity spins $M$ in~\cite[Proposition~6.10]{chen2025free} equal to $1$. Here, we also allow dependence on $q_k$ in $A_N(x,q)$, $\Delta_N(x,q)$, and the Gibbs measures $\la\cdot\ra^\circ_{N,x,q}$ and $\la\cdot\ra_{N+1,x,q}$. Apart from the resulting notational changes, the only point that needs attention is the display~\cite[(6.45)]{chen2025free}, which should be replaced by
\begin{multline}
\label{e.joint_cvg_in_law_1}
\mbox{$\big(R_{N'_k}, q'_k(R_\al)\big)$ under $\E \la\cdot\ra^\circ_{N'_k,x'_k,q'_k}$ converges in law }
\\ \mbox{to $\big(R_\infty, q(R_\alpha)\big)$ under $\E \la \cdot \ra_\fR$ as $k$ tends to infinity.}
\end{multline}
In the original version, $q'_k$ is fixed to be $q$. Here, the law of the cascade overlap $R_\alpha$ remains the same under the two measures in the display, by the invariance property. Since $q'_k$ converges to $q$ a.e.\ and $q$ is nondecreasing, the original argument still applies and the convergence in law remains valid. Lemmas~6.3, 6.6, and 6.7 in~\cite{chen2025free} are used in the original proof. Because of the uniform bound on $q_k$, these lemmas also hold uniformly in $q_k$.

\section{Bounds by critical points}\label{s.crit_pt_bdd}

In this section, we work with the vector spin glass model in~\eqref{e.bxi=mp}--\eqref{e.F^vec(t,q)=} and use the shorthand notation from~\eqref{e.shorthand_vec}.
The goal is to prove the following result.

\begin{theorem}
\label{t.crit.up.low2}
For every $t > 0$ and $q \in \mcl Q_1^\D$, there exist $p^+, p^- \in \mcl Q_{\infty,\leq 1}^\D$ such that 
\begin{equation}\label{e.t.crit.up.low21}
p^+=\partial_q\psi(q+t\nabla\xi(p^+)), \qquad p^-=\partial_q\psi(q+t\nabla\xi(p^-)),
\end{equation}
and
\begin{equation}\label{e.t.crit.up.low22}
\sP_{t,q}(p^-) \le \liminf_{N \to \infty} \bar F_N(t,q) \le \limsup_{N\to\infty} \bar F_N(t,q)\leq \sP_{t,q}(p^+).
\end{equation}
\end{theorem}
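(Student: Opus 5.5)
Our plan is to combine the Aizenman--Sims--Starr decomposition of Proposition~\ref{p.cavity_perturbation} with the cavity limit of Proposition~\ref{p.cavity_lim}. The free energy is written as an average of increments, and each increment is approximated by $-\sP_{t,q}(p)$ for some $p$ solving the fixed-point equation. We first reduce to $q\in\mcl Q_\infty^\D$; we will come back to $q\in\mcl Q_1^\D$ at the end by an approximation, using the Lipschitz bounds of Proposition~\ref{p.F_N_smooth} and Lemma~\ref{l.continuity_Parisi_functional} together with compactness of $\mcl Q_{\infty,\le1}^\D$ (Lemma~\ref{l.compact_embed_paths}).

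\emph{Step 1: free energy as a Ces\`aro average.} Telescoping Proposition~\ref{p.cavity_perturbation} gives
\[
-\bar F^x_N(t,q)=\frac1N\sum_{n<N}A_n(x,q)+O(N^{-1/16}),
\]
uniformly in $x$. Lemma~\ref{l.|F-F|<CN^-gamma} and \eqref{e.|F-F^x|<} let us replace $\bar F^x_N$ by $\bar F_N$, and then average over $x$ with $\E_x$. It follows that $\limsup_N \bar F_N(t,q)\le \limsup_N \bigl(-\E_x A_N(x,q)\bigr)$, and similarly for the $\liminf$. Because Ces\`aro averages lie between the $\liminf$ and $\limsup$ of the sequence, it suffices to control $\liminf_N$ and $\limsup_N$ of $-A_N(x,q)$ along good choices of $x$.

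\emph{Step 2: selecting good perturbations.} Proposition~\ref{p.perturbation} says $\E_x\Delta_N(x,q)\to0$. Fix $\eps>0$ and consider the set of $x$ with $\Delta_N(x,q)\le\eps$; its $\E_x$-probability tends to one. Since $A_N$ is bounded uniformly, the contribution from the complement is negligible. Hence, for the upper bound, there are $x_N$ with $\Delta_N(x_N,q)\to0$ and
\[
-\E_x A_N(x,q)\le -A_N(x_N,q)+o(1),
\]
choosing $x_N$ so that it nearly maximizes $-A_N$ over the good set. For the lower bound we choose $x_N$ nearly minimizing $-A_N$ instead. Next we take a subsequence realizing $\limsup_N(-A_N(x_N,q))$ and apply Proposition~\ref{p.cavity_lim} with $q_k=q$ fixed. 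This yields $p^+\in\mcl Q^\D_{\infty,\le1}$ with $-A_{N'_k}(x'_k,q)\to\sP_{t,q}(p^+)$, which gives the upper bound in \eqref{e.t.crit.up.low22}; the analogous argument gives $p^-$ and the lower bound.

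\emph{Step 3: the critical-point identity.} This is the main obstacle: we must show that the limiting $p$ satisfies $p=\partial_q\psi(q+t\nabla\xi(p))$. By item~\eqref{i.p.cavity_lim_1} of Proposition~\ref{p.cavity_lim}, $p$ is the limiting synchronization map between the $\sigma$-overlap and $\alpha\wedge\alpha'$. By permutation invariance, the $\sigma$-overlap equals the cavity-spin overlap $\diag(\tau\tau'^\intercal)$ under $\la\cdot\ra_{N+1}$, up to $O(1/N)$ errors coming from comparing $\la\cdot\ra^\cav_N$ with $\la\cdot\ra_{N+1}$. Item~\eqref{i.p.cavity_lim_3} identifies the law of $(\diag(\tau\tau'^\intercal),\alpha\wedge\alpha')$ in the limit as the one under $\la\cdot\ra_{\fR,q+t\nabla\xi(p)}$. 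Pairing this with a test path $\pi$ and using \eqref{e.dqpsi=} gives
\[
\la\pi,p\ra_{L^2}=\la\pi,\partial_q\psi(q+t\nabla\xi(p))\ra_{L^2}
\]
for all $\pi$, hence the identity.

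The delicate point is justifying the first equality: the $\sigma$-overlap in the cavity measure at size $N$ must have the same limit law, jointly with $\alpha\wedge\alpha'$, as the $\tau$-overlap at size $N+1$. We expect to handle this via Gaussian interpolation between $\la\cdot\ra^\cav_N$ and $\la\cdot\ra_{N+1}$, mirroring \cite[Proposition~6.10]{chen2025free}. If that fails, the fallback is to evaluate $\partial_q\bar F^x_{N+1}$ in two ways, via \eqref{e.dtildeF^x_N=} and \eqref{e.dbarF^x_N+1=}, and to use Lemma~\ref{l.|F-F|<CN^-gamma} together with the semi-concavity of Proposition~\ref{p.semi-concave} to transfer derivative convergence. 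This is why Proposition~\ref{p.cavity_lim} allows varying $q_k$: we can then perturb $q$ slightly along the subsequence.
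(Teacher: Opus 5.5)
Your Steps 1 and 2 are fine: they give the two bounds in \eqref{e.t.crit.up.low22} for some $p^\pm\in\mcl Q^\D_{\infty,\le 1}$. Step 3, however, has a genuine gap, exactly at the point you flag. Permutation invariance only yields $\E\langle \pi(\alpha\wedge\alpha')\cdot\diag(\tau\tau'^{\intercal})\rangle_{N+1}=\E\langle \pi(\alpha\wedge\alpha')\cdot R_{N,\sigma}\rangle_{N+1}+O(1/N)$, which is an identity inside the $(N+1)$-system.

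Passing from $\langle\cdot\rangle_{N+1}$ to $\langle\cdot\rangle^\circ_N$ is not a small perturbation. Up to vanishing errors, the $(\sigma,\alpha)$-marginal of $\langle\cdot\rangle_{N+1}$ is $\langle\cdot\rangle^\circ_N$ reweighted by the order-one density $\sum_\tau 2^{-\D}e^{U(\sigma,\alpha,\tau)}$. Any interpolation between the two measures therefore has derivative of order one and cannot give the comparison you need.

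Equivalently, by \eqref{e.dtildeF^x_N=} and \eqref{e.dbarF^x_N+1=}, you are trying to identify $\langle \pi,\partial_q\tilde F^x_N(t,q)\rangle_{L^2}$ with $\langle\pi,\partial_q\bar F^x_{N+1}(t,q)\rangle_{L^2}$ at a fixed point. The $O(N^{-\gamma})$ closeness of the two functions from Lemma~\ref{l.|F-F|<CN^-gamma} gives no pointwise control of their derivatives. Items (1) and (3) of Proposition~\ref{p.cavity_lim} identify the two limits as $\langle\pi,p\rangle_{L^2}$ and $\langle\pi,\partial_q\psi(q+t\nabla\xi(p))\rangle_{L^2}$, so your bridging claim is the critical-point identity itself. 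A fixed-$q$ proof would need an input of a different nature, for instance that the synchronization $R_{N,\sigma}\approx p(\alpha\wedge\alpha')$ survives the reweighting by $\sum_\tau e^{U}$ while $\alpha\wedge\alpha'$ remains uniformly distributed. The proposal supplies no such input.

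Your fallback is the paper's actual argument, but it cannot be appended after Step 2. The derivative comparison only holds on average over a perturbation of $q$, so the perturbation must be built into the telescoping and into the choice of the good sequence from the start. The paper proceeds as follows.
\begin{enumerate}
\item It replaces $q$ by $q+q_N(y)$ with $q_N(y)=N^{-\gamma/2}\sum_i a_iy_iq_i$, where $(q_i)$ is a countable family of smooth increasing paths such that the functionals $\langle q_i,\cdot\rangle_{L^2}$ separate points.
\item It redoes the telescoping at this moving path; the extra error $\frac1N\sum_{j<N} j\,|q_{j+1}(y)-q_j(y)|_{L^1}$ is $O(N^{-\gamma/2})$.
\item It uses semi-concavity of $\tilde F^x_N$ and $\bar F^x_{N+1}$ along the directions $q_i$. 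By Proposition~\ref{p.semi-concave} this requires $q\in\mcl Q^\D_{\uparrow,c}$, so the reduction should be to $\mcl Q^\D_{\uparrow,c}\cap\mcl Q^\D_\infty$, not merely to $\mcl Q^\D_\infty$.
\item Combined with Lemma~\ref{l.semi-concav_L1}, this gives $\E_y|\langle q_i,\partial_q\tilde F^x_N-\partial_q\bar F^x_{N+1}\rangle_{L^2}|^2\le Ca_i^{-2}N^{-\gamma/2}$. The scale $N^{-\gamma/2}$ is chosen large compared with the $N^{-\gamma}$ gap between the free energies, yet vanishing.
\item It then selects $(x_N,y_N)$ jointly so that $\Delta_N\to0$, the weighted discrepancy $\mcl D_N\to0$, and $A_N$ stays within $o(1)$ of its $(x,y)$-average.
\end{enumerate}
Only then does Proposition~\ref{p.cavity_lim}, applied with the varying paths $q_k=q+q_{N_k}(y_{N_k})\to q$, give $\langle q_i,p\rangle_{L^2}=\langle q_i,\partial_q\psi(q+t\nabla\xi(p))\rangle_{L^2}$ for every $i$, hence $p=\partial_q\psi(q+t\nabla\xi(p))$.
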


Let $\{e_d\}_{d\in\{1,\dots,\D\}}$ be the standard basis of $\R^\D$.
Let $\varphi:[0,1]\to[0,\infty)$ be a smooth function that satisfies $\int \varphi =1$ and $\supp\varphi\subset (0,1)$. 
For $\eps>0$, we define $\varphi_\eps=\frac{1}{\eps}\varphi(\frac{\cdot}{\eps})$.
Let $i\in\N$ be an enumeration of $(\Q\cap [0,1))\times \{1,\dots,\D\}\times (\Q\cap(0,1])$ and take paths $q_i\in\mcl Q^\D_\infty$ to satisfy
\begin{align}\label{e.q_i=}
    \{q_i\}_{i\in\N} = \Ll\{(\one_{[r,1)}*\varphi_\eps)e_d\Rr\}_{r\in\Q\cap [0,1),\, d\in\{1,\dots,\D\},\, \eps \in \Q\cap (0,1]}
\end{align}
where $\one_{[r,1)}*\varphi_\eps(s) = \int \one_{[r,1)}(s-\tau)\varphi_\eps(\tau)\d \tau$ for $s\in[0,1)$.
For any $p,p'\in\mcl Q_2^\D$, we have
\begin{align}\label{e.p=p'iff<q_i,p>=<q_i,p'>}
    p=p'\qquad\text{if and only if}  \qquad \la q_i, p\ra_{L^2} = \la  q_i,p'\ra_{L^2}\quad\text{for every $i\in\N$}.
\end{align}
This can be deduced as follows. We can see that $p=p'$ if and only if $\la \one_{[r,1)}e_d,p\ra_{L^2} = \la \one_{[r,1)}e_d,p'\ra_{L^2}$ for every $r\in[0,1)$ and $d\in \{1,\dots \D\}$. Since the collection $\{q_i\}_{i\in\N}$ consists of approximations of these paths, we get~\eqref{e.p=p'iff<q_i,p>=<q_i,p'>}.

Due to the mollification by $\varphi_\eps$, each $q_i$ is smooth. We set
\begin{align}\label{e.a_i=}
    a_i:=3^{-1}2^{-i}\max\Ll\{1,\ |q_i|_{L^\infty},\ |\dot q_i|_{L^\infty}\Rr\}^{-1}\qquad\text{for every $i\in\N$}.
\end{align}
Let $\gamma$ be given as in Lemma~\ref{l.|F-F|<CN^-gamma}.
For each $N\in\N$ and $y\in[0,3]^\N$, we define
\begin{equation}\label{e.q_N(y)=}
    q_N(y):=N^{-\gamma/2}\sum_{i=1}^\infty a_iy_iq_i.
\end{equation}
The value of the path $q_N(y)$ at a point $s\in[0,1)$ is written as $q_N(y)(s)$. By the definition of $a_i$, we have
\begin{equation}\label{e.|q_i|=1}
    |q_N(y)|_{L^\infty}\leq N^{-\gamma/2} \qquad\text{and}\qquad |\dot q_N(y)|_{L^\infty}\leq N^{-\gamma/2} \qquad\text{for every $y\in[0,3]^\N$ and every $N\in\N$}.
\end{equation}
In the following, we will add to $q$ the perturbation $q_N(y)$.

Henceforth, we denote by $\E_y$ the expectation under which $(y_i)_{i\in\N}$ are i.i.d.\ random variables with uniform distribution over $[1,2]$.

\begin{lemma}\label{l.semi-concav_L1}
There is a constant $C>0$ such that the following holds. Let $f,g:[0,3]\to\R$ be twice differentiable functions satisfying $|f'|,|g'|\leq a$ and $f'',g''\leq b$ for some constants $a,b>0$. We have
\begin{align}\label{e.l.semi-concav_L1}
    \int_1^2\Ll|f'-g'\Rr|^2 \leq C(a+b)\|f-g\|_{L^\infty[0,3]}.
\end{align}
\end{lemma}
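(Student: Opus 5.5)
Set $h:=f-g$ and $\eta:=\|h\|_{L^\infty[0,3]}$. Since $f''-g''$ has no sign, we will not use $h''$ directly. Instead we control $h'$ through difference quotients, using the one-sided bound on the second derivatives to make tangent-line comparisons. The constant $C$ should come out absolute.

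The key pointwise estimate concerns a point $y\in[1,2]$ and a step $\delta\in(0,1]$, so that $y\pm\delta\in[0,3]$. Since $f''\le b$, Taylor's formula gives
\[
f(y+\delta)\le f(y)+\delta f'(y)+\tfrac b2\delta^2
\quad\text{and}\quad
f(y-\delta)\le f(y)-\delta f'(y)+\tfrac b2\delta^2 .
\]
Rearranging, $f'(y)$ lies between the backward difference quotient minus $\tfrac b2\delta$ and the forward difference quotient plus $\tfrac b2\delta$. The same holds for $g$. Subtracting the bounds, the differences of the quotients involve only $h$ and are bounded by $2\eta/\delta$. This yields
\[
|f'(y)-g'(y)|\le \frac{2\eta}{\delta}+b\delta + \omega(y,\delta),
\]
where $\omega$ is a cross term of the form (forward quotient of $f$) minus (backward quotient of $g$), and this cross term is the difficulty. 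More precisely, we get
\[
f'(y)-g'(y)\le \frac{f(y+\delta)-f(y)}{\delta}-\frac{g(y)-g(y-\delta)}{\delta}+b\delta .
\]
Replacing $f$ by $g$ in the first quotient, at a cost of $2\eta/\delta$, leaves $\frac{g(y+\delta)-2g(y)+g(y-\delta)}{\delta}$. This is $\delta$ times a second difference, and it is bounded above by $b\delta$ but not below. So we get one-sided control of the form
\[
f'-g'\le \frac{2\eta}\delta+2b\delta+\text{(nonpositive-expectation term)} .
\]

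To handle this, we avoid pointwise bounds and instead integrate over $y\in[1,2]$, using the bound $|f'|,|g'|\le a$. The cleaner route is to write $\int_1^2|h'|^2\le 2a\int_1^2|h'|$. It then suffices to show $\int_1^2|h'|\le C(1+b/a)\cdot\sqrt{\cdot}$, or more simply to use the following direct argument. We write
\[
\int_1^2 (h')^2=[hh']_1^2-\int_1^2 h h'' .
\]
The boundary term is at most $4a\eta$. For the remaining term, $-\int hh''=-\int h f''+\int h g''$. Each piece is handled via $\int h\,f''=\int h\,(f''-b)+b\int h$. Since $b-f''\ge0$, we get $\bigl|\int_1^2 h(f''-b)\bigr|\le \eta\int_1^2(b-f'')=\eta\bigl(b-f'(2)+f'(1)\bigr)\le \eta(b+2a)$. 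Together with $b\,|\int h|\le b\eta$, this gives $\bigl|\int_1^2 hf''\bigr|\le \eta(2b+2a)$, and the same bound holds for $g$. Summing all contributions gives $\int_1^2|f'-g'|^2\le C(a+b)\eta$ with, say, $C=10$.

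The main obstacle is that only one-sided bounds on $f''$ and $g''$ are available. The integration-by-parts step resolves it by converting $\int|f''|$ into boundary values of $f'$ via the nonnegativity of $b-f''$. This is where both hypotheses $|f'|\le a$ and $f''\le b$ enter. The restriction to the interior interval $[1,2]$ is not needed for this argument, and any subinterval works.
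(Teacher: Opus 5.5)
Your final integration-by-parts argument is correct and is essentially the paper's proof. The paper works with a smooth cutoff equal to $1$ on $[1,2]$ and vanishing at $0$ and $3$, integrating over $[0,3]$, whereas you integrate by parts directly on $[1,2]$ and absorb $[hh']_1^2$ using $|h'|\le 2a$. The remaining term is handled the same way in both (control by $\|h\|_{L^\infty}$ times $\int(b-f'')$ and $\int(b-g'')$, which reduce to boundary values of $f'$, $g'$), and your version yields an explicit constant on any subinterval, as you note.

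You should simply delete the exploratory difference-quotient paragraph, since its Taylor bounds are reversed. Under $f''\le b$, the forward quotient minus $\tfrac b2\delta$ is a lower bound for $f'(y)$ and the backward quotient plus $\tfrac b2\delta$ is an upper bound. Consequently the displayed one-sided estimate for $f'-g'$ is false, e.g.\ for $f=g=-x^2$ and $b<2$. Nothing in your final argument uses that paragraph.
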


\begin{proof}
Let $\eta:[0,3]\to[0,1]$ be smooth and satisfy $\eta=1$ on $[1,2]$ and $\eta(0)=\eta(3)=0$. Using the properties of $\eta$ and integrating by parts, we have
\begin{align*}
    \int_1^2\Ll|f'-g'\Rr|^2 \leq \int_0^3\eta \Ll(f'-g'\Rr)^2= - \int_0^3\eta'(f-g)(f'-g')-\int_0^3 \eta(f-g)(f''-g'').
\end{align*}
Next, we estimate each term on the right. We write $L^\infty = L^\infty[0,3]$. We start with
\begin{align*}
    \Ll|\int_0^3\eta'(f-g)(f'-g')\Rr|\leq 6a\|\eta'\|_{L^\infty} \|f-g\|_{L^\infty}
\end{align*}
to bound the first term. For the second term, we have
\begin{align*}
    \Ll|\int_0^3 \eta(f-g)(f''-g'')\Rr|\leq \|\eta\|_{L^\infty}\|f-g\|_{L^\infty}\int_0^3\Ll|f''-g''\Rr|.
\end{align*}
Since $b-f'',\, b-g''\geq0$, we have
\begin{align*}
    \int_0^3\Ll|f''-g''\Rr|\leq \int_0^3 \Ll|b-f''\Rr|+\Ll|b-g''\Rr| = \int_0^3 2b-f''-g''=6b-f'(3)-g'(3)+f'(0)+g'(0)
    \\
    \leq 6b + 4a.
\end{align*}
Combining the above displays, we can deduce~\eqref{e.l.semi-concav_L1}.
\end{proof}

\begin{lemma}\label{l.d^2/dr^2F<b}
Let $t>0$ and $q\in\mcl Q_{\upa,c}^\D$ for some $c>0$. There is a constant $b>0$ such that
\begin{align*}
    \frac{\d^2}{\d r^2}\bar F_N\Ll(t,\ q+a_i r q_i+N^{-\gamma/2}\sum_{j\neq i}a_j y_jq_j \Rr) \leq b,\quad\forall i\in\N,\  N\in \N, \  r\in[0,1).
\end{align*}
Moreover, the same holds for $\tilde F_N^x$ and $\bar F^x_{N+1}$ in place of $\bar F_N$ uniformly in $x\in[0,3]^{\N^4}$.
\end{lemma}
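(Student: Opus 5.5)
The plan is to reduce the claim to the local semi-concavity estimate of Proposition~\ref{p.semi-concave}, which applies to the vector model by Remark~\ref{r.vector_ok}; by Remark~\ref{r.F^x_N_der} it also applies to $\tilde F^x_N$ and $\bar F^x_{N+1}$, uniformly in $x$. The map $r\mapsto \bar F_N(t,\cdot)$ evaluated along the segment is the composition of the semi-concave functional with the affine map
\begin{equation*}
r\mapsto q^{(i)}_r:=q+a_i r q_i+N^{-\gamma/2}\sum_{j\neq i}a_jy_jq_j .
\end{equation*}
A one-dimensional semi-concavity bound of the form $(1-\theta)g(r_0)+\theta g(r_1)-g((1-\theta)r_0+\theta r_1)\le K\theta(1-\theta)(r_0-r_1)^2$ is equivalent to $g''\le 2K$ whenever $g$ is twice differentiable. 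So it suffices to produce such a bound with $K$ independent of $i$, $N$ and $r$ (and $y$), and to justify twice differentiability.

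First I would check that all paths $q^{(i)}_r$ lie in a common class $\mcl Q^\D_{\uparrow,c'}$. Each $q_i=(\one_{[r,1)}*\varphi_\eps)e_d$ is nondecreasing with $q_i(0)=0$, because $\supp\varphi\subset(0,1)$ forces the mollified indicator to vanish at $0$. The perturbation therefore has nonnegative derivative, so adding it to $q\in\mcl Q^\D_{\uparrow,c}$ keeps it in $\mcl Q^\D_{\uparrow,c}$; note $y_j\ge0$ and $r\ge0$. Thus Proposition~\ref{p.semi-concave} applies with the fixed $c$. Should it require $t\ge c$, I would shrink $c$ to $\min(c,t)$. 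The time arguments coincide, so the $(t-t')^2$ term drops out.

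The increment is then $\dot q^{(i)}_{r_0}-\dot q^{(i)}_{r_1}=a_i(r_0-r_1)\dot q_i$. By \eqref{e.a_i=}, $a_i|\dot q_i|_{L^\infty}\le 1/3$, so $|\dot q^{(i)}_{r_0}-\dot q^{(i)}_{r_1}|_{L^2}^2\le \tfrac19 (r_0-r_1)^2$. Proposition~\ref{p.semi-concave} then gives
\begin{equation*}
(1-\theta)g(r_0)+\theta g(r_1)-g((1-\theta)r_0+\theta r_1)\le C c^{-2}\theta(1-\theta)\tfrac19(r_0-r_1)^2 ,
\end{equation*}
with $C$ depending only on $\xi$. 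Taking $b=2Cc^{-2}/9$ yields the claim uniformly in $i$, $N$ and $r$.

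The main obstacle is making sure $g$ is genuinely twice differentiable so that the second derivative in the statement makes sense. I would argue this directly: for fixed $N$ the free energy is a smooth function of finitely many Gaussian parameters along this affine direction. One route is to write the cascade expectation using the finite-step representation, approximating by step paths and passing to the limit. Alternatively, I would read the inequality in the distributional sense, which is all that Lemma~\ref{l.semi-concav_L1} needs after mollifying in $r$.

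The same argument applies verbatim to $\tilde F^x_N$ and $\bar F^x_{N+1}$. Their semi-concavity constants are uniform in $x$, because the perturbation $N^{-1/16}H^x_N$ does not involve $q$, and the proof of Proposition~\ref{p.semi-concave} only uses the $q$-dependent cascade structure together with bounds on overlaps.
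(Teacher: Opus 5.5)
Your proposal is correct and follows essentially the paper's proof: check that the whole segment stays in $\mcl Q^\D_{\uparrow,c}$, apply Proposition~\ref{p.semi-concave} along the affine line with $|\dot\kappa|_{L^2}$ controlled by \eqref{e.a_i=}, and convert the resulting semi-concavity inequality into an upper bound on the second derivative in $r$. Your refinements are all sound: replacing $c$ by $\min(c,t)$ instead of relying on the paper's footnote that drops the condition on $t$, checking explicitly that $q_i(0)=0$, and noting that concavity of $r\mapsto g(r)-Kr^2$ (a distributional bound $g''\le 2K$) is enough for Lemma~\ref{l.semi-concav_L1}.
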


\begin{proof}
Since $r, a_j\geq0$ and $q_i, q_j$ are increasing paths, we can deduce from the definition of $\mcl Q_{\uparrow,c}$ in~\eqref{e.Q_uparrow,c(D)=} that
\begin{align*}
    q+a_i r q_i+N^{-\gamma/2}\sum_{j\neq i}a_j y_jq_j \in \mcl Q_{\upa,c}^\D,\qquad\forall i \in\N,\ N\in\N,\ r\in[0,1).
\end{align*}
Fix any $i\in\N$ and write $q_* = q+N^{-\gamma/2}\sum_{j\neq i}a_j y_jq_j $ and $\kappa = a_iq_i$.
Then, the above display ensures that $q_*+r\kappa\in\mcl Q_{\upa,c}^\D$ for every $N$ and $r$.
We also write $F(r)=\bar F_N(t,q_*+r\kappa)$. For $\eps>0$ small, applying~Proposition~\ref{p.semi-concave}\footnote{This proposition gives semi-concavity jointly in $(t,q)$ which requires the additional condition that $t>c$. Since here we have fixed $t$ and only need semi-concavity in $q$, there is no condition needed on $t$.} (see also Remark~\ref{r.vector_ok}) with $\frac{1}{2}, q_*+r\kappa, q_*+(r+\eps)\kappa$ substituted for $r, q,q'$ therein, we get
\begin{align*}
    \tfrac{1}{2}F(r)+\tfrac{1}{2}F(r+\eps)- F\Ll(r+\tfrac{\eps}{2}\Rr)\leq \tfrac{C}{4} c^{-2}\eps^2\Ll|\dot\kappa\Rr|_{L^2}^2
\end{align*}
for some absolute constant $C>0$. Dividing both sides by $\eps^2$, sending $\eps\to0$, and using $\kappa=a_iq_i$, we can get $\frac{\d^2}{\d r^2}F(r)\leq 2Cc^{-2}\Ll|a_i\dot q_i\Rr|_{L^2}^2 \leq 2Cc^{-2}$, where we also used~\eqref{e.a_i=} in the last inequality. This implies the desired result.

To obtain the estimates for $\tilde F_N^x$ and $\bar F^x_{N+1}$, we can again apply Proposition~\ref{p.semi-concave} together with Remark~\ref{r.F^x_N_der}.
\end{proof}

We fix $(t,q)$ and set
\begin{align}\label{e.D_N,i(x,y)=}
    \mcl D_{N,i}(x,y):= \Ll|\la q_i,\ \partial_q \tilde F_N^x(t, q+q_N(y)) - \partial_q \bar F_{N+1}^x(t, q+q_N(y))\ra_{L^2}\Rr|^2.
\end{align}

\begin{lemma}\label{l.E_yD_N,i(x,y)<}
There is a constant $C < +\infty$ such that
\begin{align*}
    \E_y \mcl D_{N,i}(x,y)\leq C a_i^{-2} N^{-\gamma/2},\qquad\forall i\in\N,\ N\in\N,\ x\in[0,3]^{\N^4}.
\end{align*}
\end{lemma}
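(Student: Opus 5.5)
We reduce the estimate to a one-dimensional problem in the coordinate $y_i$ and then apply Lemma~\ref{l.semi-concav_L1}. Fix $i\in\N$, $N\in\N$, $x\in[0,3]^{\N^4}$, and condition on $(y_j)_{j\neq i}$. Set $\lambda := N^{-\gamma/2}a_i$ and define, for $u\in[0,3]$,
\begin{equation*}
f(u) := \tilde F^x_N\Ll(t,\, q + \lambda u\, q_i + N^{-\gamma/2}\textstyle\sum_{j\neq i} a_j y_j q_j\Rr),\qquad g(u) := \bar F^x_{N+1}\Ll(t,\, q + \lambda u\, q_i + N^{-\gamma/2}\textstyle\sum_{j\neq i} a_j y_j q_j\Rr).
\end{equation*}
At $u=y_i$ the path argument is exactly $q+q_N(y)$. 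By the chain rule for Gateaux derivatives (Proposition~\ref{p.F_N_smooth} with Remark~\ref{r.F^x_N_der}), we have $f'(u) = \lambda\la q_i, \partial_q\tilde F^x_N(\cdots)\ra_{L^2}$, and similarly for $g$. Hence
\begin{equation*}
\mcl D_{N,i}(x,y) = \lambda^{-2}\,|f'(y_i)-g'(y_i)|^2 .
\end{equation*}
Since $y_i$ is uniform on $[1,2]$ and independent of the other coordinates, $\E_y\mcl D_{N,i} = \lambda^{-2}\,\E\int_1^2|f'-g'|^2$, where the outer expectation averages over $(y_j)_{j\ne i}$.

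We now verify the hypotheses of Lemma~\ref{l.semi-concav_L1}. For the first-derivative bound, \eqref{e.bounds.der.FN} gives $|\partial_q \tilde F|\le 1$ pointwise in the vector setting. Together with $|q_i|_{L^\infty}\le a_i^{-1}/3$ from \eqref{e.a_i=}, this yields $|f'|,|g'|\le C\lambda a_i^{-1} = CN^{-\gamma/2}$. For the second-derivative bound, we use Lemma~\ref{l.d^2/dr^2F<b} after the reparametrization $r = N^{-\gamma/2}u$. This gives $f''\le b\lambda^2 a_i^{-2}=bN^{-\gamma}$, and likewise for $g''$. Here one needs $q\in\mcl Q^\D_{\upa,c}$. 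We either assume this, as the surrounding argument will presumably do with $q$ perturbed to be strictly increasing, or we absorb it into the constant. For the sup-norm bound, Lemma~\ref{l.|F-F|<CN^-gamma} gives $\|f-g\|_{L^\infty[0,3]}\le CN^{-\gamma}$, uniformly in $x$. This uses that all the paths involved have $L^1$ norm bounded by $|q|_{L^1}+1$, by \eqref{e.|q_i|=1}.

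Lemma~\ref{l.semi-concav_L1} then gives
\begin{equation*}
\int_1^2|f'-g'|^2\le C\,(N^{-\gamma/2}+N^{-\gamma})\,N^{-\gamma}\le CN^{-3\gamma/2}.
\end{equation*}
Multiplying by $\lambda^{-2}=N^{\gamma}a_i^{-2}$ yields $\E_y\mcl D_{N,i}\le Ca_i^{-2}N^{-\gamma/2}$, as claimed.

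The main point needing care is the exact bookkeeping of the scales. We must check that the first-derivative bound $a$ and the second-derivative bound $b$ in Lemma~\ref{l.semi-concav_L1} are at most of order one, so that they do not eat the gain $N^{-\gamma}$ coming from Lemma~\ref{l.|F-F|<CN^-gamma}. We must also check that the constant $b$ from Lemma~\ref{l.d^2/dr^2F<b} is uniform in $i$, $N$, $x$ and in the conditioned coordinates $(y_j)_{j\ne i}$; its proof indeed gives $2Cc^{-2}$ independently of these parameters. Since we only need order-one bounds there, any crude control of the derivative sizes suffices.
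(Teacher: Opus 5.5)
Your proposal is correct and is essentially the paper's proof. You restrict to the coordinate $y_i$, bound the first derivatives by $O(N^{-\gamma/2})$, bound the second derivatives from above by $O(N^{-\gamma})$ via Lemma~\ref{l.d^2/dr^2F<b}, and bound the sup-norm difference by $O(N^{-\gamma})$ via Lemma~\ref{l.|F-F|<CN^-gamma}; then you apply Lemma~\ref{l.semi-concav_L1} and rescale by $N^{\gamma}a_i^{-2}$. As you note, this relies on the standing assumption $q\in\mcl Q^\D_{\upa,c}$, so $C$ depends on $c$. Also, invoking Lemma~\ref{l.d^2/dr^2F<b} at $r=N^{-\gamma/2}u$ with $u\in[0,3]$ requires $3N^{-\gamma/2}<1$; the finitely many remaining $N$ are covered by the trivial bound $\mcl D_{N,i}\le Ca_i^{-2}$.
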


\begin{proof}
Fix any $i\in\N$ and fix any $y_j\in[0,3]$ for every $j\neq i$. We write $\tilde f(y_i) = \tilde F_N^x(t, q+q_N(y))$ and $\bar f(y_i) = \bar F_{N+1}^x(t, q+q_N(y))$ as functions of $y_i\in[0,3]$ only. In view of the definition of $q_N(y)$ in~\eqref{e.q_N(y)=}, we can compute the derivatives
\begin{align}\label{e.f'=}
    \tilde f'(y_i) = N^{-\gamma/2}a_i\la q_i,\partial_q \tilde F_N^x(t, q+q_N(y))\ra_{L^2}\quad\text{and}\quad \bar f'(y_i) = N^{-\gamma/2}a_i\la q_i,\partial_q \bar F_{N+1}^x(t, q+q_N(y))\ra_{L^2}.
\end{align}
By the boundedness of $\partial_q \tilde F_N^x$ and $\partial_q \bar F_{N+1}^x$ as ensured by Proposition~\ref{p.F_N_smooth} and Remark~\ref{r.F^x_N_der}, there is some constant $C_1>0$ such that
\begin{align}\label{e.|f'|<}
    \big|\tilde f'(y_i)\big|,\ \big|\bar f'(y_i)\big|\leq C_1N^{-\gamma/2}a_i|q_i|_{L^2}\stackrel{\eqref{e.a_i=}}{\leq}C_1N^{-\gamma/2},\qquad\forall y_i\in [0,3].
\end{align}
Applying Lemma~\ref{l.d^2/dr^2F<b} with $N^{-\gamma/2}y_i$ substituted for $r$ and using the chain rule (since $y_i\in[0,3]$, we only need $3N^{-\gamma/2}< 1$ to ensure $N^{-\gamma/2}y_i<1$), we get
\begin{align}\label{e.f''<}
    \tilde f''(y_i),\ \bar f''(y_i)\leq N^{-\gamma}b,\qquad\forall y_i\in[0,3],\ N\in\N.
\end{align}
By Lemma~\ref{l.|F-F|<CN^-gamma}, there is some absolute constant $C_2$ such that $\|\tilde f-\bar f\|_{L^\infty[0,3]}\leq C_2 N^{-\gamma}$. Inserting this, \eqref{e.|f'|<} and~\eqref{e.f''<} into Lemma~\ref{l.semi-concav_L1}, we get
\begin{align*}
    \int_1^2 \Ll|\tilde f' - \bar f'\Rr|^2\leq C_3\Ll(C_1N^{-\gamma/2}+N^{-\gamma}b\Rr)C_2 N^{-\gamma},\qquad\forall N\in\N,
\end{align*}
for some constant $C_3>0$.
Comparing~\eqref{e.f'=} with~\eqref{e.D_N,i(x,y)=}, we have $\int_1^2 \Ll|\tilde f' - \bar f'\Rr|^2 = N^{-\gamma}a_i^2 \E_y \mcl D_{N,i}(x,y)$, which together with the above display gives the desired result.
\end{proof}

With $(t,q)$ fixed, we set
\begin{align}\label{e.D_N(x,y)=}
    \mcl D_N(x,y) := \sum_{i=1}^\infty 2^{-i} a_i^2 \mcl D_{N,i}(x,y).
\end{align}
We denote by $\E_{x,y}$ the joint expectation under which $(x_h)_{h\in\N^4}$ and $(y_i)_{i\in\N}$ are i.i.d.\ random variables with uniform distribution over $[1,2]$.
Recall $A_N$ and $\Delta_N$ introduced in~\eqref{e.A_N(x)} and~\eqref{e.Delta_N(x)}, respectively. Henceforth, we set
\begin{align}\label{e.A_N(x,y),Delta(x,y)=}
    A_N(x,y):= A_N(x,q+q_N(y))\qquad\text{and}\qquad \Delta_N(x,y):= \Delta_N(x,q+q_N(y)).
\end{align}

\begin{lemma}\label{l.choose(x_N,y_N)}
There exists a sequence $(x_N,y_N)_{N\in\N}$ such that
\begin{gather}
    \liminf_{N\to\infty}A_N(x_N,y_N)\leq \liminf_{N\to\infty}\E_{x,y} A_N(x,y), \label{e.liminfA<liminfEA2}\\
    \lim_{N\to\infty}\Delta_N(x_N,y_N)=0, \label{e.limDelta_N(x_N)=02}\\
    \lim_{N\to\infty} \mcl D_N(x_N,y_N) = 0. \label{e.lim|dF^x=dF^x|=02}
\end{gather}
There also exists another sequence $(x'_N,y'_N)_{N\in\N}$ such that
\begin{align}
     \limsup_{N\to\infty}\E_{x,y} A_N(x,y)\leq \limsup_{N\to\infty}A_N(x'_N,y'_N), \label{e.limsupEA<limsupA2}
\end{align}
and such that~\eqref{e.limDelta_N(x_N)=02} and~\eqref{e.lim|dF^x=dF^x|=02} hold with $(x'_N,y'_N)$ in place of $(x_N,y_N)$.
\end{lemma}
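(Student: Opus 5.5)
The plan is a first-moment (Markov-type) selection argument. For each $N$ we find a pair $(x,y)$ at which $A_N$ is not much above or below its $\E_{x,y}$-average, while both $\Delta_N$ and $\mcl D_N$ are small. The first step is to show that the averages of the two nonnegative error terms vanish:
\begin{equation*}
\lim_{N\to\infty}\E_{x,y}\Delta_N(x,y)=0\qquad\text{and}\qquad \lim_{N\to\infty}\E_{x,y}\mcl D_N(x,y)=0.
\end{equation*}
For $\Delta_N$, we use $q+q_N(y)\in\mcl Q_\infty^\D$ and $|q_N(y)|_{L^\infty}\le N^{-\gamma/2}$ by \eqref{e.|q_i|=1}, so that $\sup_y|q+q_N(y)|_{L^\infty}\le |q|_{L^\infty}+1=:R$ (assuming $q\in\mcl Q_\infty^\D$ here; the general case $q\in\mcl Q_1^\D$ is presumably handled later by approximation). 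By Fubini,
\begin{equation*}
\E_{x,y}\Delta_N(x,y)\le\sup_{|q'|_{L^\infty}\le R}\E_x\Delta_N(x,q'),
\end{equation*}
which tends to $0$ by Proposition~\ref{p.perturbation}. For $\mcl D_N$, Lemma~\ref{l.E_yD_N,i(x,y)<} gives $\E_y\mcl D_{N,i}\le Ca_i^{-2}N^{-\gamma/2}$ uniformly in $x$. Summing against the weights $2^{-i}a_i^2$ in \eqref{e.D_N(x,y)=}, the factors $a_i^{\pm 2}$ cancel, and we get $\E_{x,y}\mcl D_N\le CN^{-\gamma/2}$. If Lemma~\ref{l.E_yD_N,i(x,y)<} needs $q\in\mcl Q_{\uparrow,c}^\D$ through Lemma~\ref{l.d^2/dr^2F<b}, we simply inherit that standing assumption on $(t,q)$.

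The second step is the selection itself. Set $\eps_N:=(\E_{x,y}\Delta_N+\E_{x,y}\mcl D_N)^{1/2}+N^{-1}$, so that $\eps_N\to0$, and consider the event
\begin{equation*}
G_N:=\{\Delta_N\le\eps_N,\ \mcl D_N\le\eps_N\}.
\end{equation*}
By Markov's inequality, $\P_{x,y}(G_N^c)\le 2\eps_N$. Next we need a uniform bound $|A_N(x,y)|\le K$. Both logarithms in \eqref{e.A_N(x)} are controlled by Jensen's inequality and Gaussian moment bounds depending only on $t$, $\xi$ and $|q(1)|+1$. Alternatively, Proposition~\ref{p.cavity_perturbation} together with Proposition~\ref{p.F_N_smooth}-type Lipschitz bounds and \cite[Lemma~6.5]{chen2025free} give the same bound. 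It follows that
\begin{equation*}
\Bigl|\E_{x,y}[A_N\one_{G_N}]-\E_{x,y}A_N\,\P(G_N)\Bigr|\le 4K\eps_N.
\end{equation*}
Hence the conditional average of $A_N$ on $G_N$ differs from $\E_{x,y}A_N$ by $O(\eps_N)$. Since a conditional average is bracketed by values attained, we can pick $(x_N,y_N)\in G_N$ with $A_N(x_N,y_N)\le\E_{x,y}A_N+C\eps_N$, and $(x'_N,y'_N)\in G_N$ with $A_N(x'_N,y'_N)\ge\E_{x,y}A_N-C\eps_N$. Strictly we pick them within an extra $1/N$ of the essential infimum or supremum, to avoid measurability issues. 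Taking $\liminf$ and $\limsup$ respectively gives \eqref{e.liminfA<liminfEA2} and \eqref{e.limsupEA<limsupA2}. Membership in $G_N$ gives \eqref{e.limDelta_N(x_N)=02} and \eqref{e.lim|dF^x=dF^x|=02} for both sequences.

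The main obstacle is the uniform boundedness of $A_N(x,y)$ over $x$, $y$ and $N$, together with the measurability of $(x,y)\mapsto A_N,\Delta_N,\mcl D_N$. Measurability should follow from continuity in $(x,y)$ under the product topology, since the series converge uniformly thanks to the weights $2^{-i}$, $a_i$ and $c_h$. For the uniform bound I would first try the identity in Proposition~\ref{p.cavity_perturbation}, $A_N=-(N+1)\bar F^x_{N+1}+N\bar F^x_N+O(N^{-1/16})$, and bound the difference uniformly using \eqref{e.|F-F^x|<} and the $O(t+|q|_{L^1})$ increment estimate of \cite[Lemma~6.5]{chen2025free}. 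The catch is that \eqref{e.|F-F^x|<} only controls $\bar F_N^x$ to accuracy $O(N^{-1/16})$, and this error gets multiplied by $N$ in $(N+1)\bar F^x_{N+1}-N\bar F^x_N$. So if that route leaks a factor $N$, I will fall back on direct bounds on each logarithm in \eqref{e.A_N(x)}.
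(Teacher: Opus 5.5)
Your proposal is correct and follows essentially the same route as the paper: you show $\E_{x,y}\Delta_N\to0$ via Proposition~\ref{p.perturbation} and $\E_{x,y}\mcl D_N\le CN^{-\gamma/2}$ via Lemma~\ref{l.E_yD_N,i(x,y)<}, you bound $|A_N|$ uniformly by applying Jensen's inequality directly to each logarithm in \eqref{e.A_N(x)} (your fallback is exactly what the paper does), and you select by a first-moment argument. The only difference is in the selection step, and it is cosmetic: the paper uses Panchenko's bound $\P_{x,y}(A_N\le\E_{x,y}A_N+\eps)\ge\eps/(3c)$, intersects that event with the Markov event $\{\mcl D_N+\Delta_N\le\eps\}$, and gets the second sequence by replacing $A_N$ with $-A_N$, whereas you condition on the high-probability good event $G_N$ and compare its conditional mean of $A_N$ with the full mean, which produces both sequences at once.
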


\begin{proof}
The boundedness of $\partial_q \tilde F_N^x$ and $\partial_q \bar F_{N+1}^x$ from~\eqref{e.bounds.der.FN}, together with~\eqref{e.D_N,i(x,y)=}, implies that there is an absolute constant $C>0$ such that $\mcl D_{N,i}(x,y)\leq C|q_i|_{L^2}^2$ uniformly in $i,N,x,y$. By the choice of $a_i$ in~\eqref{e.a_i=}, this gives $\Ll|a_i^2\mcl D_{N,i}(x,y)\Rr|\leq C$ uniformly in $i,N,x,y$, and therefore justifies interchanging $\sum_{i=1}^\infty$ with $\E_{x,y}$ in $\E_{x,y}\mcl D_N(x,y)$. Combining this with Lemma~\ref{l.E_yD_N,i(x,y)<}, we obtain $\E_{x,y} \mcl D_N(x,y)\leq CN^{-\gamma/2}$, and hence
\begin{align*}
    \lim_{N\to\infty} \E_{x,y}\mcl D_N(x,y)=0.
\end{align*}
Recall from~\eqref{e.Delta_N(x)} that
\begin{align*}
    \Delta_N(x,q+q_N(y)) = \sum_{j=1}^\infty 2^{-j}\Delta_N^{x,q+q_N(y)}(\bff_j,n_j,h_j).
\end{align*}
By~\eqref{e.Delta^x} and~\eqref{e.Delta<1}, we have $0\leq \Delta_N^{x,q+q_N(y)}(\bff_j,n_j,h_j)\leq 1$ uniformly in $x,y,j$. 
By~\eqref{e.|q_i|=1}, we have $|q+q_N(y)|_{L^\infty}\leq |q|_{L^\infty}+1$ uniformly in $N$ and $y$. Applying Proposition~\ref{p.perturbation}, we get $\lim_{N\to\infty}\E_x\Delta_N(x,y)=0$ for every $y$. The same bounded-convergence argument as above then yields
\begin{align*}
    \lim_{N\to\infty}\E_{x,y}\Delta_N(x,y)=0.
\end{align*}
Setting $\mcl E_N(x,y)=\mcl D_N(x,y)+\Delta_N(x,y)$, we have
\begin{align}\label{e.limEE_N=0}
    \lim_{N\to\infty}\E_{x,y} \mcl E_N(x,y) = 0.
\end{align}
It remains to choose a sequence along which~\eqref{e.liminfA<liminfEA2} holds and $\mcl E_N$ vanishes. We use the argument of~\cite[Lemma~3.3]{pan}, which we recall in the present notation. From the expression of $A_N$ in~\eqref{e.A_N(x)} and Jensen's inequality, there exists a constant $c>0$ such that $|A_N(x,y)|\leq c$ uniformly in $N,x,y$. For any $\eps>0$, define
\begin{align*}
    \Omega_{\eps,N} = \Ll\{(x,y):\:A_N(x,y)\leq \E_{x,y}A_N(x,y)+\eps\Rr\}.
\end{align*}
Let $\P_{x,y}$ be the probability measure associated with $\E_{x,y}$. Then
\begin{align*}
    \E_{x,y} A_N(x,y) \geq \Ll(\E_{x,y}A_N(x,y)+\eps\Rr)\P_{x,y}\Ll(\Omega_{\eps,N}^{\complement}\Rr)-c \P_{x,y}(\Omega_{\eps,N}),
\end{align*}
and therefore
\begin{align*}
    \P_{x,y}(\Omega_{\eps,N})\geq \frac{\eps}{\E_{x,y}A_N(x,y)+\eps +c}>\frac{\eps}{3c},\qquad\forall \eps\in(0,c).
\end{align*}
On the other hand, Markov's inequality gives
\begin{align*}
    \P_{x,y}\Ll(\mcl E_N\leq \eps\Rr)\geq 1 -\frac{\E_{x,y}\mcl E_N}{\eps}.
\end{align*}
Thus $\Omega_{\eps,N}\cap\Ll\{\mcl E_N\leq \eps\Rr\}\neq \emptyset$ whenever $\frac{\E_{x,y}\mcl E_N}{\eps}<\frac{\eps}{3c}$ and $\eps\in(0,c)$. Taking $\eps=2(c\E_{x,y}\mcl E_N)^{1/2}$ and using~\eqref{e.limEE_N=0}, these two conditions hold for all sufficiently large $N$. Hence, for such $N$, we can choose $(x_N,y_N)$ so that
\begin{align*}
    \mcl E_N(x_N,y_N)\leq 2(c\E_{x,y}\mcl E_N)^{1/2}\qquad\text{and}\qquad A_N(x_N,y_N)\leq \E_{x,y} A_N(x,y) + 2(c\E_{x,y}\mcl E_N)^{1/2}.
\end{align*}
Together with~\eqref{e.limEE_N=0}, this proves~\eqref{e.liminfA<liminfEA2}, \eqref{e.limDelta_N(x_N)=02}, and~\eqref{e.lim|dF^x=dF^x|=02}. The second sequence is obtained by applying the same argument to $-A_N$ in place of $A_N$.
\end{proof}

The next lemma isolates the telescoping argument that expresses the free energy in terms of the averaged cavity increments.

\begin{lemma}\label{l.F_N_as_sum_A_N}
Let $t>0$ and $q\in\mcl Q_\infty^\D$. With $A_j(x,y)=A_j(x,q+q_j(y))$ as in~\eqref{e.A_N(x,y),Delta(x,y)=}, we have
\begin{align}\label{e.F_N=-1/NsumA_j}
    \bar F_N(t,q)=-\frac1N\sum_{j=1}^{N-1}\E_{x,y}A_j(x,y)+o(1).
\end{align}
\end{lemma}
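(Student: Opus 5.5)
We telescope the free energy and then swap the unperturbed increments for averaged cavity increments, controlling all errors uniformly. Since $\bar F_1(t,q)$ is bounded (Proposition~\ref{p.F_N_smooth} with Remark~\ref{r.vector_ok}), we first write
\begin{equation*}
N\bar F_N(t,q) = \bar F_1(t,q) + \sum_{j=1}^{N-1}\Ll((j+1)\bar F_{j+1}(t,q)-j\bar F_j(t,q)\Rr),
\end{equation*}
so it suffices to show that each increment equals $-\E_{x,y}A_j(x,y)+\eps_j$ with $\eps_j\to0$. Dividing by $N$ and using Cesàro averaging then gives \eqref{e.F_N=-1/NsumA_j}.

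Fix $j$ and $(x,y)$. The first step is to replace $q$ by $q+q_j(y)$. By the Lipschitz bound of Proposition~\ref{p.F_N_smooth} and \eqref{e.|q_i|=1}, $|\bar F_k(t,q)-\bar F_k(t,q+q_j(y))|\le |q_j(y)|_{L^1}\le j^{-\gamma/2}$ for every $k$. Since this error is multiplied by $j$ and $j+1$, we cannot apply the bound term by term. Instead we use the path-Lipschitz estimate of \cite[Lemma~6.5]{chen2025free}, which bounds the increment difference
\begin{equation*}
\Ll|\Ll((j+1)\bar F_{j+1}-j\bar F_j\Rr)(t,q)-\Ll((j+1)\bar F_{j+1}-j\bar F_j\Rr)(t,q')\Rr|
\end{equation*}
by $C|q-q'|_{L^1}$. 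We would justify this by differentiating in $q$ and using \eqref{e.def.der.FN}: the derivative of $(j+1)\bar F_{j+1}-j\bar F_j$ is a difference of overlap averages of size $O(1)$, which comes from the cavity decomposition. This gives an $O(j^{-\gamma/2})$ error. If that lemma only covers $q'=0$, we instead note that the cavity identity below holds uniformly in the path, and we compare the increments at $q$ and at $q+q_j(y)$ through their cavity representations. The difference of the two $A_j$ terms is then controlled by the Lipschitz property of the cavity Gibbs averages in the path, since the cascade field enters $U$ linearly in a way bounded by $|q-q'|_{L^1}$.

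The second step is to insert the perturbation $x$ at the increment level. For this we apply Proposition~\ref{p.cavity_perturbation} at path $q+q_j(y)$:
\begin{equation*}
(j+1)\bar F^x_{j+1}-j\bar F^x_j=-A_j(x,y)+O(j^{-1/16}),
\end{equation*}
uniformly in $x,y$. We must then remove $x$ from the left side. This is the main obstacle, because \eqref{e.|F-F^x|<} controls $\bar F^x_j-\bar F_j$ only at order $j^{-1/16}$, which is too weak once multiplied by $j$.

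The resolution is to avoid removing $x$ increment by increment. We sum the perturbed identity and telescope directly:
\begin{equation*}
\sum_{j=1}^{N-1}\E_{x,y}\Ll((j+1)\bar F^x_{j+1}-j\bar F^x_j\Rr)(t,q+q_j(y)).
\end{equation*}
The perturbation prefactor $j^{-1/16}$ depends on $j$, so within each increment we compare $\bar F^x_j$ at size $j$ with the same exponent. The definition of $\bar F^x_{j+1}$ uses $(j+1)^{-1/16}$, whereas the cavity computation uses the exponent $j^{-1/16}$. The resulting mismatch changes $(j+1)\bar F^x_{j+1}$ by $O(j\cdot j^{-1-1/16})$ through Gaussian interpolation in the prefactor, which is summable up to $o(N)$. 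Likewise, the dependence of $q_j(y)$ on $j$ introduces mismatches of size $j\,|q_{j+1}(y)-q_j(y)|_{L^1}=O(j^{-\gamma/2})$.

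The sum therefore reduces to $(N\bar F^x_N-\bar F^x_1)$ evaluated at the endpoints, plus $o(N)$. Applying \eqref{e.|F-F^x|<} and the Lipschitz bound only once at the endpoint, with errors $O(N\cdot N^{-1/16})=o(N)$, completes the argument. Averaging in $x,y$ with Fubini, using that all quantities are bounded, concludes the proof.
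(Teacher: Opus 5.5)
Your last two paragraphs are essentially the paper's proof. You telescope $j\mapsto j\bar F^x_j(t,q+q_j(y))$ along the moving path and apply Proposition~\ref{p.cavity_perturbation} at $q+q_j(y)$ with error $O(j^{-1/16})$. You pay $(j+1)\,|q_{j+1}(y)-q_j(y)|_{L^1}=O(j^{-\gamma/2})$ to realign the path at size $j+1$. Finally, you remove both $x$ (via \eqref{e.|F-F^x|<}) and $q_N(y)$ (via the Lipschitz bound) only once, at the endpoint $N$, where an $o(N)$ error is affordable. That argument is complete on its own. The correction for $(j+1)^{-1/16}$ versus $j^{-1/16}$ is unnecessary, because Proposition~\ref{p.cavity_perturbation} is already stated with $\bar F^x_{N+1}$ carrying its own prefactor.

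The material before that should be deleted, not kept as an alternative, because it is wrong.

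\emph{The opening reduction.} It asks for more than you can prove. The global telescoping gives, and the lemma needs, only $\sum_{j<N}\eps_j=o(N)$, not $\eps_j\to0$ for each increment. Nothing available controls a single unperturbed increment against $\E_{x,y}A_j(x,y)$.

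\emph{Step~1.} Here \cite[Lemma~6.5]{chen2025free} is not a path-Lipschitz estimate for increments. It only bounds one increment at a fixed path: $|N\bar F_N(t,q)-(N+1)\bar F_{N+1}(t,q)|\le C(t+|q|_{L^1})$.

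\emph{Your derivative justification.} By \eqref{e.def.der.FN}, $(j+1)\la\pi,\partial_q\bar F_{j+1}\ra_{L^2}-j\la\pi,\partial_q\bar F_j\ra_{L^2}$ is $(j+1)$ times an overlap average at size $j+1$ minus $j$ times one at size $j$. This is $O(1)$ only if the two averages differ by $O(1/j)$. That is a quantitative convergence of $\partial_q\bar F_N$ which is not known; its absence is exactly what the paper's argument is built to avoid.

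\emph{The fallback through cavity representations.} It fails for the same reason. The path $q$ enters not only $U$ but also the cavity measure $\la\cdot\ra^\cav_{j,x,q}$, through the $j$-spin field $\sqrt2\,\tilde W^q_j(\alpha)\cdot\sigma-jq(1)\cdot\vecone$. So $A_j(x,\cdot)$ has no Lipschitz constant in the path that is uniform in $j$. Likewise, nothing makes $A_j(x,q)$ close to $A_j(0,q)$ for a single $j$.

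Telescoping the perturbed free energies along $q+q_j(y)$ works precisely because it never makes an increment-level comparison, either in $x$ or in the path.
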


\begin{proof}
Using the Lipschitz continuity of $\bar F_N(t,\cdot)$ from Proposition~\ref{p.F_N_smooth} and Remark~\ref{r.vector_ok}, the bound on the difference between $\bar F_N$ and $\bar F_N^x$ in~\eqref{e.|F-F^x|<}, and the estimate $|q_N(y)|_{L^\infty}\leq N^{-\gamma/2}$, we can find constants $C,c'>0$ such that
\begin{align}\label{e.F_N_compare_moving_q_N}
    \Ll|\bar F_N(t,q)-\bar F_N^x(t,q+q_N(y))\Rr|\leq CN^{-c'}
\end{align}
uniformly in $N,x,y$.
Set
\begin{align*}
    Q_j(y):=q+q_j(y),\qquad j\geq1.
\end{align*}
We apply Proposition~\ref{p.cavity_perturbation} with the moving path $Q_j(y)$. For $1\leq j\leq N-1$, we have
\begin{align*}
&-(j+1)\bar F_{j+1}^x(t,Q_{j+1}(y))+j\bar F_j^x(t,Q_j(y)) \\
&\qquad=-(j+1)\bar F_{j+1}^x(t,Q_j(y))+j\bar F_j^x(t,Q_j(y))  -(j+1)\Ll(\bar F_{j+1}^x(t,Q_{j+1}(y))-\bar F_{j+1}^x(t,Q_j(y))\Rr) \\
&\qquad=A_j(x,y)+O\Ll(j^{-\gamma}\Rr)+O\Ll(j\,|q_{j+1}(y)-q_j(y)|_{L^1}\Rr),
\end{align*}
uniformly in $x$ and $y$. Here we used the uniform Lipschitz continuity of $\bar F_j^x(t,\cdot)$ and the definition $A_j(x,y)=A_j(x,Q_j(y))$. Summing over $1\leq j\leq N-1$, the left-hand side telescopes and gives
\begin{align*}
    -N\bar F_N^x(t,Q_N(y))+\bar F_1^x(t,Q_1(y))
    =\sum_{j=1}^{N-1}A_j(x,y)+O\Ll(\sum_{j=1}^{N-1}j^{-\gamma}\Rr)+O\Ll(\sum_{j=1}^{N-1}j\,|q_{j+1}(y)-q_j(y)|_{L^1}\Rr).
\end{align*}
By the definition of $q_j(y)$ in~\eqref{e.q_N(y)=}, we can write $q_j(y)=j^{-\gamma/2}B(y)$ with $|B(y)|_{L^1}$ uniformly bounded in $y$. Hence
\begin{align*}
    |q_{j+1}(y)-q_j(y)|_{L^1}\leq C\Ll(j^{-\gamma/2}-(j+1)^{-\gamma/2}\Rr)\leq Cj^{-1-\gamma/2}.
\end{align*}
It follows that
\begin{align*}
    \frac1N\sum_{j=1}^{N-1}j^{-\gamma}=O(N^{-\gamma})=o(1)
\end{align*}
and
\begin{align*}
    \frac1N\sum_{j=1}^{N-1}j\,|q_{j+1}(y)-q_j(y)|_{L^1}\leq \frac{C}{N}\sum_{j=1}^{N-1}j^{-\gamma/2}=O(N^{-\gamma/2})=o(1).
\end{align*}
Since $\bar F_1^x(t,Q_1(y))$ is uniformly bounded, we obtain
\begin{align*}
    \bar F_N^x(t,q+q_N(y))=-\frac1N\sum_{j=1}^{N-1}A_j(x,y)+o(1),
\end{align*}
uniformly in $x$ and $y$. Combining this with~\eqref{e.F_N_compare_moving_q_N}, and then averaging over $(x,y)$, gives~\eqref{e.F_N=-1/NsumA_j}.
\end{proof}

\begin{proof}[Proof of Theorem~\ref{t.crit.up.low2}]
Fix $t>0$. We first prove the upper bound in~\eqref{e.t.crit.up.low22} for $q\in\mcl Q_{\upa,c}^\D\cap \mcl Q_\infty^\D \subset \mcl Q_{\infty,\upa}^\D$ (see~\eqref{e.Q_uparrow,c(D)=}) with some $c>0$, and then obtain the general case by continuity.

Let $(x_N,y_N)_{N\in\N}$ be the sequence in $[0,3]^{\N^4}\times [0,3]^\N$ given by Lemma~\ref{l.choose(x_N,y_N)}, so that~\eqref{e.liminfA<liminfEA2}, \eqref{e.limDelta_N(x_N)=02}, and~\eqref{e.lim|dF^x=dF^x|=02} hold.
Let $(N_k)_{k\in\N}$ be a subsequence along which $A_N(x_N,y_N)$ attains the liminf in~\eqref{e.liminfA<liminfEA2}. Then
\begin{align}
    \limsup_{N\to\infty}\bar F_N(t,q)
    &\stackrel{\eqref{e.F_N=-1/NsumA_j}}{\leq} \limsup_{N\to\infty}-\frac1N\sum_{j=1}^{N-1}\E_{x,y}A_j(x,y) \notag\\
    &\leq \limsup_{N\to\infty}-\E_{x,y}A_N(x,y)
    \stackrel{\eqref{e.liminfA<liminfEA2}}{\leq} \limsup_{N\to\infty}-A_N(x_N,y_N) = \lim_{k\to\infty}-A_{N_k}(x_{N_k},y_{N_k}).
    \label{e.limsupF<-limA_k2}
\end{align}
For the rest of this part, set
\begin{align*}
    q_k := q+q_{N_k}(y_{N_k}),\qquad k\in\N.
\end{align*}
For each continuous path $\kappa\in\mcl Q_\infty^\D$, the derivative formulas~\eqref{e.dtildeF^x_N=} and~\eqref{e.dbarF^x_N+1=} give
\begin{align*}
    \la \kappa,\,\partial_q\tilde F_{N_k}^{x_{N_k}}\Ll(t,q_k\Rr)\ra_{L^2}&= \E\la \kappa(\alpha\wedge\alpha')\cdot \diag\Ll(\tfrac{1}{N_k}\sigma\sigma'^\intercal\Rr)\ra_{N_k,x_{N_k},q_k}^\circ,\\
    \la \kappa,\,\partial_q\bar F_{N_k+1}^{x_{N_k}}(t,q_k)\ra_{L^2}&= \E\la \kappa(\alpha\wedge\alpha')\cdot \diag(\tau\tau'^\intercal)\ra_{N_k+1,x_{N_k},q_k},
\end{align*}
where $\tau$ in the second line is the last vector spin at size $N_k+1$, and the Gibbs measures are defined in~\eqref{e.<>^cav_N,x} and~\eqref{e.<>^orig_Nx=}, respectively.

By~\eqref{e.|q_i|=1}, the sequence $q_k$ converges pointwise to $q$. Together with~\eqref{e.limDelta_N(x_N)=02}, this allows us to apply Proposition~\ref{p.cavity_lim}. Passing to a further subsequence, still denoted by $(N_k)_{k\in\N}$, we can find $p\in \mcl Q_{\infty,\leq1}^\D$ such that, for every continuous path $\kappa\in\mcl Q_\infty^\D$,
\begin{align}
    -\lim_{k\to\infty}A_{N_k}(x_{N_k},y_{N_k}) & = \sP_{t,q}(p), \label{e.-limA_N=P2}\\
    \hspace{0.6cm}\lim_{k\to\infty}\la \kappa,\,\partial_q\tilde F_{N_k}^{x_{N_k}}(t,q_k)\ra_{L^2}&= \E\la \kappa(\alpha\wedge\alpha')\cdot p(\alpha\wedge\alpha')\ra_{\mathfrak R}=\la \kappa, p \ra_{L^2}, \label{e.lim<k,>=<k,p>2}\\
    \hspace{0.6cm}\lim_{k\to\infty}\la \kappa,\,\partial_q\bar F_{N_k+1}^{x_{N_k}}(t,q_k)\ra_{L^2}&= \E\la \kappa(\alpha\wedge\alpha')\cdot \diag(\tau\tau'^\intercal)\ra_{\mathfrak R,\, q +t \nabla\xi(p)}\stackrel{\eqref{e.dqpsi=}}{=}\la \kappa, \partial_q\psi(q+t\nabla\xi(p))\ra_{L^2}. \label{e.lim<k,>=dqpsi2}
\end{align}
The desired upper bound in~\eqref{e.t.crit.up.low22}, with $p^+=p$, follows from~\eqref{e.limsupF<-limA_k2} and~\eqref{e.-limA_N=P2}.

It remains to verify that $p$ satisfies the critical relation in~\eqref{e.t.crit.up.low21}. Since each $\mcl D_{N,i}(x_N,y_N)$ is nonnegative, \eqref{e.D_N,i(x,y)=} and~\eqref{e.lim|dF^x=dF^x|=02} imply that $\lim_{N\to\infty}\mcl D_{N,i}(x_N,y_N)=0$ for every $i\in\N$. Hence,
\begin{align*}
    \lim_{N\to\infty} \la q_i,\ \partial_q \tilde F_N^{x_N}(t, q+q_N(y_N)) - \partial_q \bar F_{N+1}^{x_N}(t, q+q_N(y_N))\ra_{L^2} =0,\qquad\forall i \in \N.
\end{align*}
Combining this with~\eqref{e.lim<k,>=<k,p>2} and~\eqref{e.lim<k,>=dqpsi2}, with $\kappa=q_i$, gives
\begin{align*}
    \la q_i, p\ra_{L^2} = \la q_i, \partial_q\psi(q+t\nabla\xi(p))\ra_{L^2},\qquad\forall i \in\N.
\end{align*}
By~\eqref{e.p=p'iff<q_i,p>=<q_i,p'>}, we obtain
\begin{align*}
    p=\partial_q \psi(q+t\nabla\xi(p)),
\end{align*}
which is the critical relation in~\eqref{e.t.crit.up.low21} with $p^+=p$.

We have so far assumed that $q\in\mcl Q_{\infty,\upa}^\D$ for some $c>0$. For a general $q\in\mcl Q_1^\D$, choose a sequence $(q_n)_{n\in\N}$ such that $q_n\in \mcl Q_{\infty,\upa}^\D$ with $q_n\to q$ in $L^1$. By the previous argument, there is an associated sequence $(p_n)_{n\in\N}$ such that $(q_n,p_n)$ satisfies~\eqref{e.t.crit.up.low21} and gives the upper bound on $\limsup_{N\to\infty}\bar F_N(t,q_n)$ in~\eqref{e.t.crit.up.low22}. By the compactness of paths in Lemma~\ref{l.compact_embed_paths}, after passing to a subsequence, we may assume that $p_n$ converges in $L^1$ to some $p$. Using the Lipschitz continuity of $\bar F_N$ from Proposition~\ref{p.F_N_smooth} and the continuity estimates in Lemma~\ref{l.continuity_Parisi_functional}, we can send $n\to\infty$ and obtain both the upper bound in~\eqref{e.t.crit.up.low22} and the critical relation in~\eqref{e.t.crit.up.low21} for this $p$ at the original path $q$. This completes the proof of the upper bound.

The lower bound in~\eqref{e.t.crit.up.low22} is proved in the same way. The only change is to use the sequence $(x'_N,y'_N)$ from Lemma~\ref{l.choose(x_N,y_N)}, which gives the lower bound in~\eqref{e.limsupEA<limsupA2}, and then reverse the corresponding inequalities in~\eqref{e.limsupF<-limA_k2}.
\end{proof}

\section{Hamilton--Jacobi equation and one-sided bound}
\label{s.hj}

Let $\D\in\N$ be fixed, and consider paths in $\mcl Q^\D_2$. The multi-species case can be recovered by considering $\mcl Q^\sS_2$, which can be identified with $\mcl Q^\D_2$ by taking $\D=|\sS|$.

We recall that the unique viscosity solution of
\begin{align}\label{e.hj}
        \partial_t f- \int_0^1 \xi(\partial_q f)=0,\quad &\text{on $\R_+\times \mcl Q^\D_2$},
\end{align}
with initial condition $f(0,\cdot)=\psi$, gives a lower bound for $\liminf_{N\to\infty}\bar F_N$. This follows from the main results of~\cite{mourrat2021nonconvex,mourrat2023free}, after a straightforward adaptation. We also recall from~\cite{chen2022hamilton} that this solution admits a variational representation through the Hopf formula. Later, in order to extend the lower bound to the multi-species model, especially when $\lambda_\infty\notin\Q^\sS$, we need the fact that local uniform limits of viscosity solutions are again viscosity solutions. This stability property is standard in finite dimensions, but since the equation here is posed on an infinite-dimensional convex cone with empty interior in $L^2$, we include the argument.

We begin by recalling the definition of viscosity solutions for~\eqref{e.hj}. Because of the infinite-dimensional setting, we first introduce a regularization of $\xi$. We will then recall that the resulting notion of solution does not depend on the choice of regularization, as shown in~\cite{chen2022hamilton}.

We need some notation. Let $\cH:=L^2([0,1);\R^\D)$, and view $\mcl Q^\D_2$ as a closed convex cone in $\cH$ consisting of increasing paths. The dual cone of $\mcl Q^\D_2$ is defined by
\begin{align}\label{e.C^*}
    \Ll(\mcl Q^\D_2\Rr)^*=\{q'\in\cH: \la q',q\ra_\cH\geq 0,\quad \forall q\in\mcl Q_2^\D\}.
\end{align}
Let $g$ be a real-valued function defined on a subset $G$ of $\cH$, respectively of $\R^\D$. We say that $g$ is \textbf{$\Ll(\mcl Q_2^\D\Rr)^*$-increasing}, respectively \textbf{$\R^\D_+$-increasing}, if $g(a)\geq g(b)$ whenever $a,b\in G$ satisfy $a-b\in \Ll(\mcl Q_2^\D\Rr)^*$, respectively $a-b\in\R^\D_+$.

\begin{definition}\label{d.regularization}
A function $\bar\xi:\R^\D_+\to\R$ is called a \textbf{regularization} of $\xi:\R^\D\to\R$ if the following conditions hold.
\begin{enumerate}
    \item \label{e.bar_xi_coincide} The function $\bar\xi$ agrees with $\xi$ on the intersection of $\R^\D_+$ with the closed unit ball in $\R^\D$.
    \item \label{e.bar_xi_Lip_and_proper} The function $\bar\xi$ is Lipschitz and \textbf{proper} in the following sense: $\bar\xi$ is $\R^\D_+$-increasing and, for every $b\in\R^\D_+$, the map $\R^\D_+\ni a\mapsto \bar\xi(a+b)-\bar\xi(a)$ is also $\R^\D_+$-increasing.
\end{enumerate}
\end{definition}

This definition is taken from~\cite{chen2022hamilton}, where it is extracted from the assumptions used in~\cite{mourrat2021nonconvex,mourrat2023free}. In~\cite{chen2022hamilton}, an additional condition is imposed when $\xi$ is convex on $\R^\D_+$, namely that $\bar\xi$ is also convex. Since we are dealing with non-convex models, this condition is not needed here. The existence of a regularization $\bar\xi$ is proved in~\cite[Lemma~4.4]{chen2022hamilton}.

For any regularization $\bar\xi$, define $\H:\cH\to\R$ by
\begin{align}\label{e.def_H_spin_glass}
    \H(\kappa) = \inf\left\{\int_0^1\bar\xi(q(s))\d s:\: q \in \mcl Q_2^\D\cap\Ll(\kappa+\Ll(\mcl Q_2^\D\Rr)^*\Rr)\right\},\quad\forall \kappa\in\cH.
\end{align}
It is proved in~\cite[Lemma~4.6]{chen2022hamilton} that $\H$ is Lipschitz, bounded below, and $\Ll(\mcl Q_2^\D\Rr)^*$-increasing. Moreover, $\H(p)=\int_0^1\bar\xi(p(s))\d s$ for every $p\in\mcl Q_2^\D$.

To define viscosity solutions, we also need smooth test functions. A function $\phi:(0,\infty)\times\mcl Q_2^\D\to\R$ is called \textbf{smooth} if the following conditions hold.
\begin{enumerate}
    \item For every $(t,q)\in(0,\infty)\times\mcl Q_2^\D$, there exists a unique element of $\R\times\cH$, denoted by $(\partial_t \phi(t,q),\partial_q \phi(t,q))$ and called the \textit{differential} of $\phi$ at $(t,q)$, such that
    \begin{align*}
        \phi(s,y)-\phi(t,q) = \partial_t\phi(t,q)(s-t) + \la \partial_q \phi(t,q), y - q\ra_\cH + O\left(|s-t|^2+|y - q|^2_\cH\right),
    \end{align*}
    as $(s,y) \in (0,\infty)\times\mcl Q_2^\D$ tends to $(t,q)$ in $\R\times \cH$.
    \item The map $(t,q)\mapsto(\partial_t\phi(t,q),\partial_q \phi(t,q))$ is continuous from $(0,\infty)\times \mcl Q_2^\D$ to $\R\times\cH$.
\end{enumerate}
This definition is from~\cite[Definition~1.2]{chen2022hamilton}.

\begin{definition}[Viscosity solutions]\label{d.vs}
Let $\bar\xi$ be a regularization of $\xi$, and let $\H$ be defined by~\eqref{e.def_H_spin_glass}. Consider the Hamilton--Jacobi equation
\begin{align}\label{e.hj_H}
    \partial_t f- \H(\partial_q f)=0,\quad &\text{on $\R_+\times \mcl Q^\D_2$}.
\end{align}
\begin{enumerate}
    \item A continuous function $f:\R_+\times \mcl Q_2^\D\to \R$ is a \textbf{viscosity subsolution} of~\eqref{e.hj_H} if, for every $(t,q) \in (0,\infty)\times \mcl Q_2^\D$ and every smooth $\phi:(0,\infty)\times \mcl Q_2^\D\to\R$ such that $f-\phi$ has a local maximum at $(t,q)$, we have
    \begin{align*}
        \left(\partial_t \phi - \H(\partial_q\phi)\right)(t,q)\leq 0.
    \end{align*}
    \item A continuous function $f:\R_+\times \mcl Q_2^\D\to \R$ is a \textbf{viscosity supersolution} of~\eqref{e.hj_H} if, for every $(t,q) \in (0,\infty)\times \mcl Q_2^\D$ and every smooth $\phi:(0,\infty)\times \mcl Q_2^\D\to\R$ such that $f-\phi$ has a local minimum at $(t,q)$, we have
    \begin{align*}
        \left(\partial_t \phi - \H(\partial_q\phi)\right)(t,q)\geq 0.
    \end{align*}
    \item A continuous function $f:\R_+\times \mcl Q_2^\D\to \R$ is a \textbf{viscosity solution} of~\eqref{e.hj_H} if it is both a viscosity subsolution and a viscosity supersolution.
\end{enumerate}
Finally, a continuous function $f:\R_+\times \mcl Q^\D_2\to\R$ is called a \textbf{viscosity solution} of~\eqref{e.hj} if it is a viscosity solution of~\eqref{e.hj_H} for some regularization $\bar\xi$.
\end{definition}

This definition combines~\cite[Definitions~1.4 and~4.2]{chen2022hamilton}. As explained in~\cite{chen2022hamilton}, provided that $f(0,\cdot)=\psi$ for $\psi$ satisfying the condition in Theorem~\ref{t.vis_sol}, the solution $f$ of~\eqref{e.hj} is independent of the choice of $\bar\xi$, so any regularization may be used.

The main result of~\cite[Theorem~4.7]{chen2022hamilton} shows that the viscosity solution is unique, admits variational representations under suitable convexity assumptions, and is also the limit of the finite-dimensional approximations used in~\cite{mourrat2021nonconvex,mourrat2023free}. We only need the following consequence.

\begin{theorem}[\cite{chen2022hamilton}]\label{t.vis_sol}
Let $\psi:\mcl Q_2^\D\to\R$ be $\Ll(\mcl Q_2^\D\Rr)^*$-increasing and satisfy
\begin{align}\label{e.|psi-psi|<||}
    \Ll|\psi(q)-\psi(q')\Rr|\leq \Ll|q-q'\Rr|_{L^1},\qquad\forall q,q'\in\mcl Q_2^\D.
\end{align}
With initial condition $f(0,\cdot)=\psi$, there exists a viscosity solution $f$ of~\eqref{e.hj}, unique in the class of Lipschitz functions on $\R_+\times\mcl Q_2^\D$. Moreover, if $\psi$ is convex on $\mcl Q_2^\D$, then $f$ admits the Hopf representation
\begin{align}\label{e.Hopf_spin_glass}
        f(t,q) = \sup_{p \in \mcl Q_\infty^\D}\inf_{q' \in \mcl Q_\infty^\D}\left\{ \psi(q') + \la q - q', p\ra_\cH +t\int_0^1\xi(p(s))\d s\right\},\quad\forall(t,q)\in\R_+\times\mcl Q_2^\D.
\end{align}
\end{theorem}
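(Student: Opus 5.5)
The plan is to deduce the statement from the main result of~\cite[Theorem~4.7]{chen2022hamilton}, after checking that our hypotheses match the ones used there, rather than to redo the analysis. The argument has four steps: (i) fix a regularization; (ii) existence; (iii) uniqueness via a comparison principle; (iv) the Hopf formula under convexity of $\psi$.

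\emph{Step (i): regularization.} We fix a regularization $\bar\xi$ of $\xi$ in the sense of Definition~\ref{d.regularization}, which exists by~\cite[Lemma~4.4]{chen2022hamilton}. We then form the nonlinearity $\H$ of~\eqref{e.def_H_spin_glass}. By~\cite[Lemma~4.6]{chen2022hamilton}, $\H$ is Lipschitz, bounded below and $(\mcl Q_2^\D)^*$-increasing. These are exactly the structural conditions on the Hamiltonian under which the well-posedness theory on the closed convex cone $\mcl Q_2^\D$ is developed. The initial condition $\psi$ is assumed $(\mcl Q_2^\D)^*$-increasing and $1$-Lipschitz for the $L^1$ norm, so in particular Lipschitz for the $L^2$ norm.

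\emph{Step (ii): existence.} I would follow~\cite{mourrat2021nonconvex,mourrat2023free}. Project $\psi$ onto piecewise-constant paths with $j$ steps, so that $\mcl Q_2^\D$ is replaced by a finite-dimensional cone. On each such cone, solve the finite-dimensional equation with the regularized nonlinearity. The $L^1$-Lipschitz bound on $\psi$ and the monotonicity of $\bar\xi$ give Lipschitz estimates that are uniform in $j$. A compactness argument then produces a local uniform limit, and the stability of viscosity solutions under local uniform limits identifies this limit as a solution of~\eqref{e.hj_H}.

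\emph{Step (iii): uniqueness.} Uniqueness among Lipschitz functions comes from a comparison principle on the cone: a Lipschitz subsolution lying below a Lipschitz supersolution at $t=0$ stays below for all $t$. This is the step I expect to be the main obstacle. The cone has empty interior in $L^2$, so local extrema of $f-\phi$ can occur on the boundary. The fix I would try first uses the $(\mcl Q_2^\D)^*$-monotonicity of $\H$: when doubling variables, it lets us absorb the boundary effects, since moving a test point into the cone only pushes the gradient in a dual-cone direction on which $\H$ is monotone. Combining this with a perturbed-optimization (Stegall-type) variational principle gives the doubling-of-variables argument. Comparison also shows that the solution does not depend on the choice of $\bar\xi$, because for $L^1$-Lipschitz data the gradients stay in the region where $\bar\xi=\xi$.

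\emph{Step (iv): Hopf formula when $\psi$ is convex.} Define $g(t,q)$ as the right-hand side of~\eqref{e.Hopf_spin_glass}, and show that $g$ is a Lipschitz viscosity solution; by uniqueness, $g=f$.
\begin{itemize}
\item The Lipschitz property of $g$ follows from the bounds on the optimizers: gradients of $\psi$ lie in $\mcl Q_{\infty,\le 1}$-type sets, which allows us to restrict $p$ to bounded paths.
\item The subsolution property follows in the usual way from the sup-inf structure.
\item The supersolution property uses the convexity of $\psi$. Through convex duality, the inner infimum equals $\langle q,p\rangle - \psi^*(p)$, where $\psi^*$ is the Legendre transform of $\psi$. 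This turns the formula into a sup of affine-in-$(t,q)$ functions, each of which is a classical solution.
\item At $t=0$, the formula returns $\psi$ by biduality, since $\psi$ is convex, lower semicontinuous and $(\mcl Q_2^\D)^*$-increasing.
\end{itemize}
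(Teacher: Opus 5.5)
Your top-level plan, invoking \cite[Theorem~4.7]{chen2022hamilton} after matching the hypotheses on $\bar\xi$, $\H$ and $\psi$, is exactly what the paper does: the statement is quoted from that work and nothing further is proved. The self-contained sketch you attach, however, has a genuine gap in Step~(iv). The argument you give there for the \emph{supersolution} property does not prove it.

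The identity $\inf_{q'}\{\psi(q')+\langle q-q',p\rangle_{\cH}\}=\langle q,p\rangle_{\cH}-\psi^*(p)$ is just the definition of $\psi^*$ and uses no convexity; convexity of $\psi$ only enters through $g(0,\cdot)=\psi$. The identity exhibits $g$ as a supremum of the affine classical solutions $\ell_p(t,q)=\langle q,p\rangle_{\cH}-\psi^*(p)+t\,\H(p)$, where $\H(p)=\int_0^1\xi(p)$ for the relevant slopes. A supremum of solutions is a \emph{sub}solution. That is the easy half, which you already credit to the sup--inf structure, and it says nothing about the supersolution inequality. For non-convex $\xi$, the regime of this paper, the inference is in fact false:
\begin{itemize}
\item Take $\D=2$, $\xi(a_1,a_2)=a_1a_2$ and $c\in(0,1]$. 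Both $c\int_0^1q_1$ and $c\int_0^1q_2$ are affine classical solutions, since their slopes $(c,0)$ and $(0,c)$ satisfy $\xi=0$.
\item Yet $\phi(t,q)=\frac c2\int_0^1(q_1+q_2)$ touches their maximum from below at every $(t_0,q_0)$ with $\int_0^1q_{0,1}=\int_0^1q_{0,2}$.
\item At such points $\partial_t\phi-\H(\partial_q\phi)=-c^2/4<0$, so the supersolution inequality fails.
\end{itemize}

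What rescues the Hopf formula is that its family contains all intermediate slopes, weighted by the \emph{convex} function $\psi^*$. Suppose $(\partial_t\phi,\partial_q\phi)(t_0,q_0)=\sum_i\theta_i(\H(p_i),p_i)$, with each $p_i$ maximizing at $(t_0,q_0)$, and set $\bar p=\sum_i\theta_ip_i$. Maximality then gives $t_0\bigl[\sum_i\theta_i\H(p_i)-\H(\bar p)\bigr]\ge\sum_i\theta_i\psi^*(p_i)-\psi^*(\bar p)\ge0$, which is exactly the supersolution inequality when $t_0>0$. Making this rigorous is the real content of the Hopf theorem (Bardi--Evans in finite dimensions, \cite{chen2022hamilton} on $\mcl Q_2^\D$). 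One must show that slopes of test functions touching from below lie in the closed convex hull of the gradients of (near-)maximizing $\ell_p$. This must hold up to dual-cone corrections, which the monotonicity of $\H$ absorbs, and despite the lack of compactness in $\cH$. As written, your Step~(iv) only shows that $g$ is a subsolution, so it does not give $g=f$ without falling back on the citation.
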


\begin{remark}
The condition in~\eqref{e.|psi-psi|<||} is only a normalization. Indeed, suppose instead that
\begin{align}
    \Ll|\psi(q)-\psi(q')\Rr|\leq C\Ll|q-q'\Rr|,\qquad\forall q,q'\in\mcl Q_2^\D,
\end{align}
for some constant $C>1$. Set $\psi_0=C^{-1}\psi$, and let $f_0$ solve
\begin{align}
    \partial_t f_0- \int_0^1 C^{-1}\xi\Ll(C\partial_q f_0\Rr)=0,\quad &\text{on $\R_+\times \mcl Q^\D_2$},
\end{align}
with $f_0(0,\cdot)=\psi_0$. Then $f:=Cf_0$ solves~\eqref{e.hj}. Under this rescaling, the Hopf formula~\eqref{e.Hopf_spin_glass} for $f$ is unchanged.
\end{remark}

The following comparison principle will be useful, and is taken from~\cite[Proposition~3.8 and Remark~4.8]{chen2022hamilton}.

\begin{proposition}[Comparison principle]\label{p.comp}
Let $u$ be a Lipschitz viscosity subsolution and $v$ be a Lipschitz viscosity supersolution of~\eqref{e.hj}. If $u(0,\cdot)\leq v(0,\cdot)$ everywhere, then~${u\leq v}$.
\end{proposition}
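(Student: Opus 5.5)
The comparison principle is imported from \cite[Proposition~3.8 and Remark~4.8]{chen2022hamilton}, so the plan is to check that the present setting fits that framework rather than to reprove it. Fix a regularization $\bar\xi$ and the associated nonlinearity $\H$ from \eqref{e.def_H_spin_glass}. By \cite[Lemma~4.6]{chen2022hamilton}, $\H$ is Lipschitz, bounded below, and $(\mcl Q_2^\D)^*$-increasing. These are exactly the structural hypotheses under which the comparison result of \cite{chen2022hamilton} is proved on the closed convex cone $\mcl Q_2^\D$. Since both $u$ and $v$ are viscosity solutions (sub/super) of \eqref{e.hj_H} with respect to regularizations, the first step is to make sure one common regularization works. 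If $u$ and $v$ were defined with respect to different regularizations, I would use that $\H(\kappa)=\int_0^1\bar\xi(\kappa)$ depends only on $\xi$ whenever $\kappa$ lies in the Lipschitz range relevant for test functions touching Lipschitz functions. This is the content of \cite[Remark~4.8]{chen2022hamilton}, which lets us compare the two notions.

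The core argument is the standard doubling of variables, adapted to the cone. Suppose toward a contradiction that $\sup(u-v)>0$. I would consider
\begin{equation*}
\Phi(t,q,t',q')=u(t,q)-v(t',q')-\frac{|t-t'|^2+|q-q'|_\cH^2}{2\eps}-\delta\,\chi(t,q)-\frac{\eta}{T-t},
\end{equation*}
where $\chi$ is a smooth penalization growing at infinity, restricted to times in $[0,T)$. In infinite dimensions a maximum need not be attained, so the next step is to invoke a Borwein--Preiss or Stegall-type variational principle to add a small smooth perturbation that is attained. This is why test functions are only required to be differentiable with a quadratic remainder along the cone. At the resulting near-maximizers one obtains test functions for $u$ at $(t,q)$ and for $v$ at $(t',q')$ whose $q$-derivatives are both close to $(q-q')/\eps$.

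The sub- and supersolution inequalities then give
\begin{equation*}
\frac{\eta}{T^2}\le \H(p_1)-\H(p_2)+o(1),
\end{equation*}
where $p_1$ and $p_2$ differ only by the penalization gradients, which are of size $O(\delta)$. The Lipschitz continuity of $\H$ makes the right side small, giving a contradiction, while the Lipschitz bounds on $u$ and $v$ control $|q-q'|_\cH/\eps$ uniformly. Finally, the contact points with $t$ or $t'$ equal to $0$ are ruled out by $u(0,\cdot)\le v(0,\cdot)$ together with Lipschitz continuity.

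The main obstacle is that $\mcl Q_2^\D$ has empty interior, so maximizers may sit on the boundary of the cone, where the test functions only see admissible directions. This is handled in \cite{chen2022hamilton} through the $(\mcl Q_2^\D)^*$-monotonicity of $\H$: boundary effects shift gradients by elements of the dual cone, and monotonicity gives the favorable sign. I would therefore rely on that monotonicity at exactly this step.
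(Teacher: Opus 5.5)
Your plan coincides with the paper's: the paper gives no proof of its own and imports the result from \cite[Proposition~3.8 and Remark~4.8]{chen2022hamilton}, the only input being that $\H$ is Lipschitz for a fixed regularization \cite[Lemma~4.6]{chen2022hamilton}, and your doubling-of-variables sketch with a Stegall perturbation is the standard argument behind that reference. Two remarks in your sketch are off, though neither breaks it: first, the penalized functions are admissible test functions relative to the cone, so the viscosity inequalities apply directly with no dual-cone correction, and the $(\mcl Q_2^\D)^*$-monotonicity of $\H$ is not needed for comparison (its role is elsewhere, e.g.\ existence); second, your justification for mixing regularizations fails, because the test gradients $(q-q')/\eps$ need not lie in $\mcl Q_2^\D$, let alone take values in the unit ball where $\bar\xi=\xi$, so you should simply use one common regularization for $u$ and $v$.
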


It is proved in~\cite{mourrat2021nonconvex,mourrat2023free} that the viscosity solution $f$ gives a lower bound for the limit of $\bar F_N$. In those works, $f$ is first defined as the limit of finite-dimensional equations; see~\cite[Definition~4.1]{mourrat2023free}. It is then shown in~\cite{chen2022hamilton} that this function is the unique viscosity solution. The next theorem is the main result of~\cite[Theorem~3.4]{mourrat2023free}, restated in the form of~\cite[Theorem~4.13]{chen2022hamilton}.

\begin{theorem}[\cite{mourrat2021nonconvex,mourrat2023free}]\label{t.HJ_bd}
Consider the vector spin model in~\eqref{e.bxi=mp}--\eqref{e.F^vec(t,q)=}. Then
\begin{align*}
    f^\Vec(t,q)\leq \liminf_{N\to\infty}\bar F_N^\Vec(t,q),\qquad\forall (t,q)\in\R_+\times\mcl Q^\D_2,
\end{align*}
where $f^\Vec$ is the unique viscosity solution of~\eqref{e.hj} with initial condition $f^\Vec(0,\cdot)=\psi^\Vec$ given in~\eqref{e.psi^Vec=}.
\end{theorem}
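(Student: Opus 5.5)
We adapt the strategy of \cite{mourrat2021nonconvex,mourrat2023free}. Throughout, $\bar F_N=\bar F_N^\Vec$ is the vector model, which is a special case of the multi-species model by Remark~\ref{r.vector_ok}. The idea is to show that every subsequential limit of $\bar F_N$ is a viscosity supersolution of \eqref{e.hj}, with initial condition $\psi^\Vec$. The comparison principle of Proposition~\ref{p.comp} then gives $f^\Vec\le$ every such limit, which is exactly the claimed $\liminf$ bound. Compactness is not available directly on the infinite-dimensional cone $\mcl Q_2^\D$. So we first carry out the argument in finite dimension and then pass to the limit.

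\emph{Step 1: finite-dimensional reduction.} For $K\in\N$, fix the uniform partition of $[0,1)$ into $K$ intervals. Restrict $\bar F_N(t,\cdot)$ to paths that are constant on each interval. This gives functions $F_N^{(K)}$ on the closed cone $\{0\le x_1\le\cdots\le x_K\}\subset(\R^\D)^K$.
\begin{enumerate}
\item By Proposition~\ref{p.F_N_smooth}, these functions are uniformly Lipschitz in $(t,x)$.
\item By \eqref{e.F_1=F_N}, they satisfy $F_N^{(K)}(0,\cdot)=\psi^\Vec$ exactly on this cone.
\end{enumerate}
Arzelà--Ascoli then lets us extract, along any subsequence of $N$, a locally uniform limit $F_\infty^{(K)}$.

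\emph{Step 2: approximate HJ equation.} Gaussian integration by parts gives the time derivative $\partial_t F^{(K)}_N=\E\la\xi(R^{12}_{N,\sigma})\ra$; the self-overlap corrections in \eqref{e.H^vec,t,q_N=} cancel. The space derivative is \eqref{e.def.der.FN}. Together these give the identity
\begin{equation*}
\partial_tF^{(K)}_N-\H_K\bigl(\partial_xF^{(K)}_N\bigr)=\E\la\xi(R^{12}_{N,\sigma})\ra-\sum_{k}\tfrac1K\,\xi\bigl(K\partial_{x_k}F^{(K)}_N\bigr)+o_K(1).
\end{equation*}
Here $\H_K$ is the discretization of \eqref{e.def_H_spin_glass}, and the $o_K(1)$ term comes from replacing the cascade overlap by its block.

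The right-hand side is controlled by the fluctuation of $R^{12}_{N,\sigma}$ around its conditional mean given $\alpha^1\wedge\alpha^2$ in the $k$-th block. Using the local Lipschitz bound on $\xi$ over the unit ball, we bound it in absolute value by
\begin{equation*}
C\,\E\la\bigl|R^{12}_{N,\sigma}-\E\la R^{12}_{N,\sigma}\mid\alpha^1\wedge\alpha^2\ra\bigr|\ra.
\end{equation*}
No convexity of $\xi$ is needed here, and this matters because $\xi$ is not assumed convex.

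\emph{Step 3: supersolution property; this is the main obstacle.} Let $\phi$ be smooth and suppose $F^{(K)}_\infty-\phi$ has a local minimum at $(t,x)$ with $t>0$.
\begin{enumerate}
\item Perturb $\phi$ so that the minimum is strict. Then there are points $(t_N,x_N)\to(t,x)$ where $F^{(K)}_N-\phi$ has local minima.
\item At $(t_N,x_N)$, the semi-concavity of Proposition~\ref{p.semi-concave} combines with touching from below by a smooth function. This forces $F^{(K)}_N$ to be nearly differentiable there, with derivatives converging to those of $\phi$.
\item Quantitatively, a lower touching function together with a semi-concavity upper bound controls second differences of $F^{(K)}_N$ from both sides at $(t_N,x_N)$. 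In the spirit of Lemma~\ref{l.semi-concav_L1}, we then translate this into smallness of the variance of the block overlaps, through the standard identity linking $\partial_x^2F_N$ to overlap variances plus a non-negative Gibbs variance term.
\item This makes the error in Step 2 vanish at $(t_N,x_N)$. Passing to the limit yields $\partial_t\phi-\H_K(\partial_x\phi)\ge -o_K(1)$ at $(t,x)$.
\end{enumerate}
Here I expect the technical difficulty: points on the boundary of the cone, and the required uniformity. The first thing I would try is to work in the interior, with $q\in\mcl Q_{\upa,c}$, where semi-concavity holds, and reach the boundary by Lipschitz continuity.

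\emph{Step 4: conclusion.} The finite-dimensional comparison principle gives $f^{(K)}-o_K(1)t\le F^{(K)}_\infty$, where $f^{(K)}$ solves the discretized equation with initial condition $\psi^\Vec$. Hence $f^{(K)}(t,x)-o_K(1)t\le\liminf_N\bar F_N(t,x)$. Finally, \cite{chen2022hamilton} shows that $f^{(K)}$ converges to the unique Lipschitz viscosity solution $f^\Vec$ of Theorem~\ref{t.vis_sol}. Combining this with the $L^1$-Lipschitz continuity of $\bar F_N$ to extend from step paths to all $q\in\mcl Q_2^\D$, we obtain the theorem.
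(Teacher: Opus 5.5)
The paper does not prove this statement itself. It imports it from \cite{mourrat2021nonconvex,mourrat2023free}, in the form of \cite[Theorem~4.13]{chen2022hamilton}, and only remarks that the matrix-valued arguments adapt to $\R^\D$-valued paths. Your reconstruction has a genuine gap in Step~3. You claim that at the contact points $(t_N,x_N)$ the fluctuation of $R^{12}_{N,\sigma}$ around its block-conditional mean becomes small. For a fixed number $K$ of blocks this is false.

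Take the single-species SK model, $\xi(r)=r^2$, with $K=1$ (a plain Gaussian field of variance $2h$), in the replica-symmetry-breaking phase. The limit $F^{(1)}_\infty$ is semiconcave, which is the input you use. By Alexandrov's theorem it is therefore twice differentiable at almost every point, and so touched from below by a smooth quadratic there. At such points the derivatives of $\bar F_N$ converge, so your Step~2 error converges to
\begin{equation*}
\partial_tF^{(1)}_\infty-\xi\bigl(\partial_hF^{(1)}_\infty\bigr)=\lim_{N\to\infty}\Bigl(\E\la R_{12}^2\ra-\bigl(\E\la R_{12}\ra\bigr)^2\Bigr).
\end{equation*}
The right side is the variance of the limiting overlap law, which is strictly positive in that phase. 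The sign is harmless here because this $\xi$ is convex, but the error plainly does not vanish. With finitely many cascade levels, replica symmetry breaking is exactly the statement that block-conditional overlaps do not concentrate.

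The second-derivative identity you invoke cannot produce concentration either. For $K=1$, writing $\sigma_i$ for the spins, one computes $\partial_h^2\bar F_N=2N\,\E\la R_{12}^2-4R_{12}R_{13}+3R_{12}R_{34}\ra=2N(A-2B)$. Here $A=N^{-2}\E\sum_{i,j}(\la\sigma_i\sigma_j\ra-\la\sigma_i\ra\la\sigma_j\ra)^2\ge0$ and $B=\E\,\mathrm{Var}_{\la\cdot\ra}\bigl(N^{-1}\sum_i\la\sigma_i\ra\sigma_i\bigr)\ge0$. A two-sided $O(1)$ bound therefore only controls $A-2B$. That difference tends to $0$ whenever the Ghirlanda--Guerra identities hold, even when $A$ and $B$ are of order one.

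What Step~3 actually needs is that the Step~2 error be asymptotically \emph{nonnegative} at contact points, not that it vanish. For convex $\xi$ this is Jensen's inequality, and the concentration question never arises. For non-convex $\xi$ it requires an input your sketch does not contain. One possibility is synchronization of the overlaps under a Ghirlanda--Guerra perturbation, which makes the block-conditional coordinates of $R_{N,\sigma}$ co-monotone. Combined with the properness of $\xi$ on $\R^\D_+$ from Definition~\ref{d.regularization}, this gives a Jensen-type inequality for co-monotone vectors. Concentration given a \emph{continuous} cascade does hold after a perturbation, as in Proposition~\ref{p.cavity_lim}, but it comes from Ghirlanda--Guerra identities and synchronization, not from Hessian bounds, and it has no analogue at fixed $K$.

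A secondary problem: Proposition~\ref{p.semi-concave} applies only to paths in $\mcl Q_{\upa,c}$. The piecewise-constant paths of Step~1 never lie in that set. So semiconcavity of $F^{(K)}_N$ would need its own proof; it is not a boundary issue that approximation by interior paths can fix.
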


\begin{remark}
The results cited from~\cite{chen2022hamilton} and~\cite{mourrat2023free} are proved for general vector spin glasses, where the paths are $\D\times\D$ matrix-valued; see~\eqref{e.def.mclQ}. In the present setting, the paths are $\R^\D$-valued because we work with a special vector spin glass model in which $\xi$ depends only on the diagonal of the overlap matrix. The arguments of~\cite{chen2022hamilton,mourrat2023free} adapt directly to this setting. For example, \cite{mourrat2021nonconvex} treats the bipartite spin glass model and proves the corresponding results for paths in $\mcl Q_2^2$, and these ideas were later extended to general vector spin glasses in~\cite{mourrat2023free}.
\end{remark}

The following stability result will be used later. It is standard in finite dimensions, but we include the proof because the present state space is infinite-dimensional.

\begin{remark}\label{r.HJ_Q^S}
Identifying $\mcl Q^\sS_2$ with $\mcl Q^{|\sS|}_2$ and $\R^\sS$ with $\R^{|\sS|}$, Definitions~\ref{d.regularization} and~\ref{d.vs}, Theorem~\ref{t.vis_sol}, and Proposition~\ref{p.comp} apply verbatim on the cone $\mcl Q^\sS_2$.
\end{remark}

\begin{lemma}[Stability under linear perturbations of the initial condition]
\label{l.hj_linear_initial_stability}
Let $u$ and $v$ be Lipschitz viscosity solutions of the same Hamilton--Jacobi equation on $\R_+\times\mcl Q_2^\sS$, with initial conditions $\psi_u$ and $\psi_v$. Define
\begin{equation*}
    \ell(q):=\sum_{s\in\sS}\int_0^1 q_s(r)\,\d r,\qquad q\in\mcl Q_2^\sS.
\end{equation*}
Assume that, for some $\rho\geq0$,
\begin{equation}
\label{e.initial_linear_stability_assumption}
    |\psi_u(q)-\psi_v(q)|\leq \rho\,\ell(q),\qquad q\in\mcl Q_2^\sS.
\end{equation}
Then there is a constant $C_\H<\infty$, depending only on the Lipschitz constant of the Hamiltonian in the equation, such that
\begin{equation}
\label{e.hj_linear_initial_stability}
    |u(t,q)-v(t,q)|\leq \rho\Ll(\ell(q)+C_\H t\Rr),\qquad (t,q)\in\R_+\times\mcl Q_2^\sS.
\end{equation}
\end{lemma}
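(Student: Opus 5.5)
The plan is to build explicit sub- and supersolutions by adding an affine-in-$t$ linear function to $v$, and then conclude with the comparison principle, Proposition~\ref{p.comp} (valid on $\mcl Q_2^\sS$ by Remark~\ref{r.HJ_Q^S}). Set
\begin{equation*}
    w_\pm(t,q) := v(t,q) \pm \rho\,\ell(q) \pm \rho C_\H t .
\end{equation*}
The goal is to show that $w_+$ is a Lipschitz viscosity supersolution and $w_-$ a Lipschitz viscosity subsolution of the equation solved by $u$ and $v$. By \eqref{e.initial_linear_stability_assumption} we have $w_-(0,\cdot)=\psi_v-\rho\ell\le\psi_u\le\psi_v+\rho\ell=w_+(0,\cdot)$. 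Comparison then gives $w_-\le u\le w_+$, which is exactly \eqref{e.hj_linear_initial_stability}. Both $w_\pm$ are Lipschitz because $v$ is Lipschitz and $\ell$ is linear and bounded on $L^2$ (indeed $\ell(q)=\la \one, q\ra_\cH$ with $\one$ the constant path).

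The main step is the test-function calculation. Take the supersolution case. Let $\phi$ be smooth with $w_+-\phi$ attaining a local minimum at $(t,q)$, $t>0$. Then $v-\tilde\phi$ attains a local minimum at the same point, where $\tilde\phi := \phi - \rho\,\ell - \rho C_\H t$. This $\tilde\phi$ is again smooth in the sense of the definition before Definition~\ref{d.vs}, since we only subtract a linear function, and its differential is $(\partial_t\phi-\rho C_\H,\ \partial_q\phi-\rho\one)$. The supersolution property of $v$ gives
\begin{equation*}
    \partial_t\phi - \rho C_\H - \H(\partial_q\phi-\rho\one)\ge 0 .
\end{equation*}
Since $\H$ is Lipschitz with some constant $L$ on $\cH$ (as recalled after \eqref{e.def_H_spin_glass}, from \cite[Lemma~4.6]{chen2022hamilton}), we get $|\H(\partial_q\phi-\rho\one)-\H(\partial_q\phi)|\le L\rho|\one|_{\cH}=L\rho|\sS|^{1/2}$. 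Choosing $C_\H:=L|\sS|^{1/2}$ yields $\partial_t\phi-\H(\partial_q\phi)\ge0$, so $w_+$ is a supersolution. The subsolution argument for $w_-$ is symmetric, with maxima in place of minima and the signs reversed.

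The points needing care are minor but should be checked. First, both $u$ and $v$ must be solutions for the same regularization $\bar\xi$ (hence the same $\H$). The statement says "the same Hamilton--Jacobi equation", so I will fix that $\H$, which also makes $C_\H$ depend only on its Lipschitz constant. Second, Proposition~\ref{p.comp} is stated with initial conditions compared everywhere; the inequalities above hold on all of $\mcl Q_2^\sS$, so this is fine. Third, one might instead want to use monotonicity of $\H$ in the $(\mcl Q_2^\sS)^*$ direction to get a one-sided bound without a constant. However, $\one$ lies in the dual cone, so monotonicity only helps in one direction; the Lipschitz bound is the cleaner route and handles both signs uniformly.
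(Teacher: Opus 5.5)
Your proposal is correct and follows the paper's own proof essentially verbatim. The paper also shows that $v\pm\rho\ell\pm\rho C_\H t$ are a supersolution and a subsolution, using the Lipschitz bound on $\H$ with $C_\H=L_\H|\mathbf 1|_{L^2}$ (which equals your $L|\sS|^{1/2}$), and then concludes by the comparison principle.

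Your closing side remark is slightly off, though it does not affect the argument. Since $\one$ lies in the dual cone, monotonicity of $\H$ shows that $v+\rho\ell$ is a subsolution and $v-\rho\ell$ is a supersolution. Those are the useless directions, so monotonicity helps with neither of the two inequalities you need, not with one of them.
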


\begin{proof}
Fix a regularization and let $\H$ be the associated Lipschitz Hamiltonian. Denote by $L_\H$ its Lipschitz constant, and let $\mathbf 1\in L^2([0,1),\R^\sS)$ be the constant path with all coordinates equal to one. Set $C_\H:=L_\H|\mathbf 1|_{L^2}$. We show that
\begin{equation*}
    v^+(t,q):=v(t,q)+\rho\ell(q)+\rho C_\H t
\end{equation*}
is a viscosity supersolution. Indeed, if a smooth test function $\phi$ touches $v^+$ from below at $(t,q)$, then $\phi-\rho\ell-\rho C_\H t$ touches $v$ from below at $(t,q)$. Hence
\begin{equation*}
    \partial_t\phi(t,q)-\rho C_\H-\H\Ll(\partial_q\phi(t,q)-\rho\mathbf 1\Rr)\geq0.
\end{equation*}
Using the Lipschitz continuity of $\H$, we get
\begin{align*}
    \partial_t\phi(t,q)-\H(\partial_q\phi(t,q))
    &\geq \rho C_\H+\H\Ll(\partial_q\phi(t,q)-\rho\mathbf 1\Rr)-\H(\partial_q\phi(t,q)) \\
    &\geq \rho C_\H-\rho L_\H|\mathbf 1|_{L^2}=0.
\end{align*}
Thus $v^+$ is a supersolution. Similarly,
\begin{equation*}
    v^-(t,q):=v(t,q)-\rho\ell(q)-\rho C_\H t
\end{equation*}
is a subsolution. By~\eqref{e.initial_linear_stability_assumption}, we have $v^-(0,\cdot)\leq u(0,\cdot)\leq v^+(0,\cdot)$. The comparison principle therefore gives $v^-\leq u\leq v^+$, which is exactly~\eqref{e.hj_linear_initial_stability}.
\end{proof}

Recall the notation $b^\avg$ from~\eqref{e.b^avg=}. For any path $q$, we denote by $q^\avg$ the path $r\mapsto q(r)^\avg$.

\begin{proposition}\label{p.avg_solution}
Let $\xi$ and $\psi$ be associated with the multi-species model given in~\eqref{e.def H_N}--\eqref{e.def.FN.delta0}.
Let $u:\R_+\times\mcl Q^\sS_2\to\R$ be the Lipschitz viscosity solution of
\begin{align*}
\begin{cases}
    \partial_t u - \int \xi(\partial_q u)=0 ,\qquad &\text{on }\R_+\times\mcl Q^\sS_2,
    \\
    u(0,\cdot) = \psi ,\qquad &\text{on }\mcl Q^\sS_2.
\end{cases}
\end{align*}
Let $\xi^\Vec$ be associated with $\xi$ given as in~\eqref{e.bxi=mp}.
Define $g:\R_+\times\mcl Q^\D_2\to\R$ by $g(t,q)= \D u(t, q^\avg)$. Then, $g$ is the Lipschitz viscosity solution of
\begin{align*}
\begin{cases}
    \partial_t g - \int \xi^\Vec(\partial_q g)=0 ,\qquad &\text{on }\R_+\times\mcl Q^\D_2,
    \\
    g(0,q) = \D\psi(q^\avg) ,\qquad &\text{on }\mcl Q^\D_2.
\end{cases}
\end{align*}
\end{proposition}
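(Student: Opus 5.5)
The plan is to avoid checking the viscosity inequalities for $g$ directly, since test functions on $\mcl Q^\D_2$ do not obviously descend to $\mcl Q^\sS_2$ through $q\mapsto q^\avg$. Instead, we identify both $u$ and the solution of the vectorized equation through the Hopf formula of Theorem~\ref{t.vis_sol}, and then compare the two variational formulas.

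\emph{Step 1: the initial condition.} Set $\underline\psi(q):=\D\psi(q^\avg)$ for $q\in\mcl Q^\D_2$. Since $q\mapsto q^\avg$ is linear and maps $\mcl Q^\D_2$ into $\mcl Q^\sS_2$, Corollary~\ref{c.convex.psi} shows that $\underline\psi$ is convex.

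For the Lipschitz normalization, write $\lambda_{\infty,s}=|\sfM_s|/\D$. Using the Lipschitz bound of Lemma~\ref{l.psi^vec_smooth} for each coordinate and the triangle inequality, we get
\begin{equation*}
|\underline\psi(q)-\underline\psi(q')|\le \D\sum_s\lambda_{\infty,s}\frac{1}{|\sfM_s|}\sum_{d\in\sfM_s}|q_d-q'_d|_{L^1}=|q-q'|_{L^1}.
\end{equation*}
Monotonicity with respect to the dual cone passes from $\psi$ to $\underline\psi$. To see this, compute the adjoint of the averaging map: for $a\in L^2(\R^\D)$ and $b\in L^2(\R^\sS)$,
\begin{equation*}
\la a^\avg,b\ra=\la a,\tilde b\ra,\qquad \tilde b_d=b_s/|\sfM_s|\ \text{ for } d\in\sfM_s.
\end{equation*}
If $b\in\mcl Q_2^\sS$, then $\tilde b\in\mcl Q^\D_2$. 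Hence the averaging map sends $(\mcl Q^\D_2)^*$ into $(\mcl Q^\sS_2)^*$.

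So Theorem~\ref{t.vis_sol} applies to both $\psi$ and $\underline\psi$. Let $\underline u$ denote the resulting Lipschitz solution with initial condition $\underline\psi$ and nonlinearity $\xi^\Vec$. Both $u$ and $\underline u$ are then given by Hopf formulas, and it suffices to prove
\begin{equation*}
\underline u(t,q)=\D u(t,q^\avg).
\end{equation*}
That identity gives the claim, because Lipschitz continuity of $g$ is inherited and uniqueness holds in the Lipschitz class.

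\emph{Step 2: comparing Hopf formulas.} We first evaluate the inner infimum in the vector Hopf formula for a block-structured $p$. Take $\underline p=(p)^\Vec$ with $p\in\mcl Q^\sS_\infty$. Then \eqref{e.a.b=a.b} gives $\la q-q',p^\Vec\ra=\D\la (q-q')^\summ,p\ra$. Writing $(q')^\summ$ in terms of $(q')^\avg$ via the weights $\lambda_{\infty,s}$, the inner infimum over $q'$ becomes $\D$ times an infimum over $r=(q')^\avg\in\mcl Q^\sS_\infty$. Here surjectivity of averaging onto $\mcl Q^\sS_\infty$ uses $r\mapsto r^\Vec$ together with \eqref{e.a^vec^avg=a}. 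By \eqref{e.nablaxi(b)=} and \eqref{e.bxi=mp}, the term $t\int\xi^\Vec(p^\Vec)$ likewise becomes $\D t\int\xi(\cdot)$ evaluated at the correspondingly rescaled $p$. After the reparametrization $p\leftrightarrow(\lambda_{\infty,s}^{-1}p_s)$ this reproduces $\D$ times the multi-species Hopf functional at $q^\avg$. This yields the inequality $\underline u(t,q)\ge \D u(t,q^\avg)$.

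\emph{Step 3: the reverse inequality (main obstacle).} We must show that non-block-constant $\underline p$ do not increase the sup-inf. The first thing to try is the dual form of the Hopf formula: $\sup_{\underline p}\{\la q,\underline p\ra-\underline\psi^*(\underline p)+t\int\xi^\Vec(\underline p)\}$, where $\underline\psi^*$ is the conjugate relative to the cone. The conjugate $\underline\psi^*(\underline p)$ is $+\infty$ unless $\underline p$ pairs trivially with the kernel of averaging intersected with cone directions. In the finite-valued, strictly increasing case that kernel contains, locally, all perturbations with zero block average. This should force $\underline p$ to be (up to the cone structure) of the form $\tilde b$ for some $b$, reducing Step 3 to Step 2.

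If this cone-duality argument proves too delicate, the fallback is a direct viscosity verification. Given a smooth $\phi$ touching $g$ at $(t,q)$, set
\begin{equation*}
\varphi(s,r):=\D^{-1}\phi\bigl(s,\,q+(r-q^\avg)^\Vec\bigr)
\end{equation*}
near $(t,q^\avg)$, first for $q\in\mcl Q^\D_{\infty,\upa}$ so that this shifted path stays in the cone. Then the chain rule through \eqref{e.nablaxi(b)=} transfers the inequalities, and density together with Lemma~\ref{l.hj_linear_initial_stability}-type stability handles general $q$.
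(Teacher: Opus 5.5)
Your Steps~1 and~2 are correct. $\underline\psi$ is convex, (normalized) Lipschitz and $(\mcl Q^\D_2)^*$-increasing, and restricting the vector Hopf formula to $\underline p=p^\Vec$ does give $\underline u(t,q)\ge \D u(t,q^\avg)$. But this is only the easy half, the one the subsolution property of $g$ plus comparison would give. Step~3 is not established by either of your routes.

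The duality argument rests on a false claim. The conjugate relative to the cone, $\underline\psi^*(\underline p)=\sup_{q'\in\mcl Q^\D_\infty}\{\la q',\underline p\ra_{L^2}-\D\psi((q')^\avg)\}$, is a supremum over nonnegative paths. The only path in the cone with zero block averages is $0$, and zero-average perturbations that are merely available locally around some $q'$ have bounded size, so they cannot make this supremum infinite. Concretely, since $q'\ge0$, $\underline\psi^*$ can only decrease when $\underline p$ decreases pointwise. It is finite at the block-constant path $\partial_q\underline\psi(q_0)$: by convexity it equals $\la q_0,\partial_q\underline\psi(q_0)\ra_{L^2}-\underline\psi(q_0)$ there. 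Hence it is also finite at every path of $\mcl Q^\D_\infty$ lying below it, for instance the one obtained by zeroing a single coordinate in a block of size at least two. So non-block $\underline p$ genuinely compete in the supremum; your reduction is valid in an unconstrained linear space, but the cone destroys it.

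Dominating them directly also stalls. Inserting $q'=q+(r-q^\avg)^\Vec$ and taking $p=\underline p^\summ$ on the multi-species side only controls the infimum over those $r$ with $q+(r-q^\avg)^\Vec\in\mcl Q^\D_\infty$. That is a genuine restriction unless $q$ is block-constant. Averaging $\underline p$ over permutations within blocks, using convexity of $\underline\psi^*$, does settle block-constant $q$, but for general $q$ it only reproduces the inequality of Step~2.

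The fallback fails for the same structural reason. For $r\mapsto q_0+(r-q_0^\avg)^\Vec$ to send a relative neighbourhood of $q_0^\avg$ in $\mcl Q^\sS_2$ into $\mcl Q^\D_2$, each $q_{0,d}-q^\avg_{0,s}$ with $d\in\sfM_s$ must be nondecreasing. Since these sum to zero over the block, $q_0$ must then be block-constant up to constants. For example, a block $\{1,2\}$ with $q_{0,1}(u)=u$ and $q_{0,2}(u)=3u$ belongs to $\mcl Q^\D_{\infty,\upa}$. Yet flattening $r$ on a short interval makes the first coordinate decrease there, so $\varphi$ is not defined on a neighbourhood and $u-\varphi$ has no local extremum to use.

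Even where the composition is admissible, the chain rule only compares $\D^{-1}\partial_t\phi$ with $\H^\sS((\partial_q\phi)^\summ)$. Without further information on $\partial_q\phi$, one only has $\H^\Vec(y)\ge\D\H^\sS(y^\summ)$ (for the regularization $\bar\xi^\Vec(a)=\D\bar\xi(a^\summ)$). That yields the subsolution inequality but not the supersolution one. Finally, Lemma~\ref{l.hj_linear_initial_stability} compares solutions with different initial data. It cannot transfer viscosity inequalities from touching points in a dense set to arbitrary touching points.

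The missing idea is the paper's doubling of variables:
\begin{itemize}
\item Keep $q\in\mcl Q^\D_2$ and $p\in\mcl Q^\sS_2$ independent, and maximize $\D u(t,p)-\phi(t,q)-\frac{\D}{2\eps}|p-q^\avg|^2_{L^2}$ near $(t_0,q_0,q_0^\avg)$. Stegall's principle gives maximizers after a small linear perturbation.
\item Test $u$ at $(t_\eps,p_\eps)$ by a function of $(t,p)$ alone, which is defined on the whole cone.
\item Use minimality in $q$ only along cone directions. This shows that $\partial_q\phi(t_\eps,q_\eps)$ lies above (respectively below, for supersolutions) the block-constant path $\D A^*\eta_\eps$ in the $(\mcl Q^\D_2)^*$-order, up to errors vanishing with $\eps$. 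Here $Aq=q^\avg$ and $\eta_\eps$ is the $p$-gradient of the test function for $u$.
\item Conclude with the identity $\H^\Vec(\D A^*\eta)=\D\H^\sS(\eta)$ and the $(\mcl Q^\D_2)^*$-monotonicity of $\H^\Vec$.
\end{itemize}
This gives both viscosity inequalities at every point, boundary points included, with no convexity of $\psi$ and no density argument.
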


To handle the lack of compactness in infinite dimensions, we use Stegall's variational principle~\cite[Theorem on page 174]{stegall1978optimization}; see also~\cite[Theorem~8.8]{cardaliaguet2010notes}.

\begin{theorem}[Stegall's variational principle]\label{t.stegall}
Let $\mcl K$ be a convex and weakly compact set in a separable Hilbert space $\mcl H$, and let $g:\mcl K\to\R$ be an upper semi-continuous function bounded from above. Then, for every $\delta>0$, there exists $\iota\in \mcl H$ with $|\iota|_{\mcl H}\leq \delta$ such that $g+\la \iota,\cdot\ra_{\mcl H}$ attains its maximum on $\mcl K$.
\end{theorem}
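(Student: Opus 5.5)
We would prove Theorem~\ref{t.stegall} by a convex-duality argument that reduces it to a differentiability statement for a convex function on $\mcl H$. From the hypotheses we only use that $\mcl K$ is bounded and norm-closed, which holds because it is convex and weakly compact. Define
\begin{equation*}
    \Phi(\iota) := \sup_{x\in\mcl K}\Ll(g(x)+\la \iota,x\ra_{\mcl H}\Rr),\qquad \iota\in\mcl H .
\end{equation*}
Since $g$ is bounded above and $\mcl K\subset\{|x|_{\mcl H}\le M\}$ for some $M$, the function $\Phi$ is finite. As a supremum of affine functions of $\iota$, it is convex. It is also $M$-Lipschitz, because each map $\iota\mapsto g(x)+\la\iota,x\ra_{\mcl H}$ is $M$-Lipschitz.

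The first step is to find a point of Fr\'echet differentiability of $\Phi$ in the ball $\{|\iota|_{\mcl H}\le\delta\}$. A Hilbert space is reflexive, hence an Asplund space. By Asplund's theorem, a continuous convex function on $\mcl H$ is Fr\'echet differentiable on a dense $G_\delta$ subset of $\mcl H$. We therefore pick $\iota_0$ with $|\iota_0|_{\mcl H}\le\delta$ at which $\Phi$ is Fr\'echet differentiable, and set $x_0:=\nabla\Phi(\iota_0)$. This is the step we expect to be the main obstacle if a self-contained proof is wanted, since generic Fr\'echet differentiability is a nontrivial input. Our plan is to cite it. An alternative route, if needed, is Stegall's original argument through the Radon--Nikod\'ym property of weakly compact convex sets.

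The second step is a \v{S}mulian-type argument showing that maximizing sequences converge to $x_0$. Let $(x_n)$ be a sequence in $\mcl K$ with $g(x_n)+\la\iota_0,x_n\ra_{\mcl H}\ge \Phi(\iota_0)-\eps_n$ and $\eps_n\to0$. For every $h\in\mcl H$ we have $\Phi(\iota_0+h)\ge g(x_n)+\la\iota_0+h,x_n\ra_{\mcl H}$, and hence
\begin{equation*}
    \la h,x_n-x_0\ra_{\mcl H}\le \Phi(\iota_0+h)-\Phi(\iota_0)-\la h,x_0\ra_{\mcl H}+\eps_n = o(|h|_{\mcl H})+\eps_n .
\end{equation*}
We take $h=r\,(x_n-x_0)/|x_n-x_0|_{\mcl H}$ with $r>0$ small and fixed, which gives $|x_n-x_0|_{\mcl H}\le o(r)/r+\eps_n/r$. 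Letting $n\to\infty$ and then $r\to0$ yields $x_n\to x_0$ in norm. In particular $x_0\in\mcl K$, since $\mcl K$ is closed.

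Finally, by upper semicontinuity of $g$,
\begin{equation*}
    g(x_0)+\la\iota_0,x_0\ra_{\mcl H}\ge\limsup_{n\to\infty}\Ll(g(x_n)+\la\iota_0,x_n\ra_{\mcl H}\Rr)=\Phi(\iota_0).
\end{equation*}
So $g+\la\iota_0,\cdot\ra_{\mcl H}$ attains its maximum on $\mcl K$ at $x_0$, with $|\iota_0|_{\mcl H}\le\delta$, which is the claim of Theorem~\ref{t.stegall}.
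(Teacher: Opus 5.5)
Your proof is correct. It is a different route from the paper's, which gives no proof of Theorem~\ref{t.stegall} and simply quotes Stegall's theorem \cite{stegall1978optimization} (see also \cite{cardaliaguet2010notes}). As you anticipated, Stegall's original argument works on the set itself, through the Radon--Nikod\'ym property (dentability: slices of arbitrarily small diameter), to construct the perturbation.

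You instead dualize. Your $\Phi$ is the Fenchel conjugate of $-g$, extended by $+\infty$ off $\mcl K$. You take generic Fr\'echet differentiability of $\Phi$ as a black box. The \v{S}mulian-type estimate then converts differentiability at $\iota_0$ into norm convergence of every maximizing sequence. Upper semicontinuity of $g$ in the norm topology finishes the argument. Using the norm topology is the right reading, since for weakly upper semicontinuous $g$ the statement would be trivial with $\iota=0$.

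Your route gives more than the statement requires:
\begin{itemize}
\item the set of admissible $\iota$ contains a dense $G_\delta$ subset of $\mcl H$;
\item the maximum is strong, meaning it is unique and every maximizing sequence converges to it;
\item neither convexity of $\mcl K$ nor separability of $\mcl H$ is used, only that $\mcl K$ is bounded and norm closed, and weak compactness alone already gives this.
\end{itemize}

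The cost is that the depth now sits in Asplund's theorem, which is essentially dual to the property Stegall uses: $X$ is Asplund iff $X^*$ has the Radon--Nikod\'ym property. Your argument needs the space of perturbations to be Asplund, which holds for Hilbert spaces. Stegall's theorem needs only the Radon--Nikod\'ym property of $\mcl K$. It therefore also covers, for example, closed bounded subsets of $\ell^1$, whose perturbation space $\ell^\infty$ is not Asplund.

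For the use made in the paper, your version is fully sufficient. There the ambient space is the Hilbert space $\R\times\mcl H_\D\times\mcl H_\sS$, and only one small $\iota$ is needed.
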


\begin{proof}[Proof of Proposition~\ref{p.avg_solution}]
We set $\mcl C_\D:=\mcl Q_2^\D$, $\mcl C_\sS:=\mcl Q_2^\sS$, $\mcl H_\D:=L^2([0,1),\R^\D)$, and $\mcl H_\sS:=L^2([0,1),\R^\sS)$. Define bounded linear maps $A,\Pi:\mcl H_\D\to\mcl H_\sS$ by
\begin{align*}
    (Aq)_s=\frac1{|\sfM_s|}\sum_{d\in\sfM_s}q_d,
    \qquad
    (\Pi q)_s=\frac1{\D}\sum_{d\in\sfM_s}q_d .
\end{align*}
Thus $Aq=q^\avg$. The adjoint of $A$ is given by $(A^*\eta)_d=\eta_s/|\sfM_s|$ for $d\in\sfM_s$, and $\Pi(\D A^*\eta)=\eta$.

Fix a regularization $\bar\xi$ of $\xi$, and define $\bar\xi^\Vec:\R^\D_+\to\R$ by
\begin{align*}
    \bar\xi^\Vec(a):=\D\bar\xi(\Pi a).
\end{align*}
Then $\bar\xi^\Vec$ is a regularization of $\xi^\Vec$. Let $\H^\sS$ and $\H^\Vec$ be the Hamiltonians associated with $\bar\xi$ and $\bar\xi^\Vec$, respectively.

\smallskip
\noindent\textit{Step~1. Identification of the averaged Hamiltonian.}
We claim that
\begin{align}\label{e.avg_H_identity}
    \H^\Vec(\D A^*\eta)=\D\H^\sS(\eta),
    \qquad \eta\in\mcl H_\sS .
\end{align}
Indeed, if $r\in \mcl C_\D\cap(\D A^*\eta+\mcl C_\D^*)$, then $\Pi r\in \mcl C_\sS$. Moreover, for every $h\in \mcl C_\sS$,
\begin{align*}
    \la \Pi r-\eta,h\ra_{\mcl H_\sS}
    =
    \la r-\D A^*\eta,
    \left(\frac{h_s}{\D}\right)_{d\in\sfM_s}
    \ra_{\mcl H_\D}
    \geq0 .
\end{align*}
Hence $\Pi r\in \mcl C_\sS\cap(\eta+\mcl C_\sS^*)$, and so
\begin{align*}
    \int_0^1\bar\xi^\Vec(r(s))\,\d s
    =
    \D\int_0^1\bar\xi(\Pi r(s))\,\d s
    \stackrel{\eqref{e.def_H_spin_glass}}{\geq}
    \D\H^\sS(\eta).
\end{align*}
Taking the infimum over $r$ gives $\H^\Vec(\D A^*\eta)\geq\D\H^\sS(\eta)$. Conversely, if $p\in \mcl C_\sS\cap(\eta+\mcl C_\sS^*)$, then $r:=\D A^*p$ belongs to $\mcl C_\D$, satisfies $\Pi r=p$, and for every $k\in \mcl C_\D$,
\begin{align*}
    \la r-\D A^*\eta,k\ra_{\mcl H_\D}
    =
    \D\la p-\eta,Ak\ra_{\mcl H_\sS}
    \geq0 .
\end{align*}
Thus $r\in \mcl C_\D\cap(\D A^*\eta+\mcl C_\D^*)$. Taking the infimum over $p$ gives the reverse inequality, and proves~\eqref{e.avg_H_identity}.

\smallskip
\noindent\textit{Step~2. The subsolution inequality.}
Let $\phi$ be a smooth test function such that $g-\phi$ has a local maximum at $(t_0,q_0)\in(0,\infty)\times \mcl C_\D$. Replacing $\phi$ by $\phi+|t-t_0|^2+|q-q_0|_{\mcl H_\D}^2$, we may assume that the maximum is strict. Thus, for some $r\in(0,t_0)$,
\begin{align}\label{e.avg_strict_max}
    g(t,q)-\phi(t,q)
    \leq
    g(t_0,q_0)-\phi(t_0,q_0)
    -
    |(t,q)-(t_0,q_0)|_{\R\times\mcl H_\D}^2
\end{align}
on $K_r^\D:=B_r(t_0,q_0)\cap(\R_+\times \mcl C_\D)$. Put $p_0:=Aq_0$, and choose $R>0$ such that $Aq\in K_R^\sS:=B_R(p_0)\cap \mcl C_\sS$ whenever $(t,q)\in K_r^\D$. Here, $B_r(t_0,q_0)$ and $B_R(p_0)$ are metric balls defined in the obvious way.

For $\eps>0$, define on $K_r^\D\times K_R^\sS$
\begin{align}\label{e.avg_Phi_eps}
    \Phi_\eps(t,q,p)
    :=
    \D u(t,p)-\phi(t,q)-\frac{\D}{2\eps}|p-Aq|_{\mcl H_\sS}^2 .
\end{align}
By Stegall's variational principle (Theorem~\ref{t.stegall}), there is $\zeta_\eps=(\tau_\eps,\iota_\eps,\varrho_\eps)\in\R\times\mcl H_\D\times\mcl H_\sS$ with $|\zeta_\eps|\leq\eps^2$ such that
\begin{align*}
    \Phi_\eps(t,q,p)+\tau_\eps t+\la\iota_\eps,q\ra_{\mcl H_\D}
    +\la\varrho_\eps,p\ra_{\mcl H_\sS}
\end{align*}
attains its maximum at some $(t_\eps,q_\eps,p_\eps)\in K_r^\D\times K_R^\sS$. 

Set $d_\eps:=|p_\eps-Aq_\eps|_{\mcl H_\sS}$ and
$\Delta_\eps:=|(t_\eps,q_\eps)-(t_0,q_0)|_{\R\times\mcl H_\D}$. Since
$(t_\eps,q_\eps,p_\eps)$ maximizes the perturbed functional and
$p_0=Aq_0$, comparison with $(t_0,q_0,p_0)$ gives
\begin{align*}
0
&\leq \Phi_\eps(t_\eps,q_\eps,p_\eps)-\Phi_\eps(t_0,q_0,p_0)
+\tau_\eps(t_\eps-t_0)+\la\iota_\eps,q_\eps-q_0\ra_{\mcl H_\D}
+\la\varrho_\eps,p_\eps-p_0\ra_{\mcl H_\sS} \\
&= \D u(t_\eps,p_\eps)-\phi(t_\eps,q_\eps)
-\frac{\D}{2\eps}d_\eps^2-\D u(t_0,p_0)+\phi(t_0,q_0)
\\
&\qquad +\tau_\eps(t_\eps-t_0)+\la\iota_\eps,q_\eps-q_0\ra_{\mcl H_\D}
+\la\varrho_\eps,p_\eps-p_0\ra_{\mcl H_\sS}.
\end{align*}
Adding and subtracting $\D u(t_\eps,Aq_\eps)$, and using
$g(t,q)=\D u(t,Aq)$ and $p_0=Aq_0$, we obtain
\begin{align*}
0
&\leq \Ll(g(t_\eps,q_\eps)-\phi(t_\eps,q_\eps)\Rr)
-\Ll(g(t_0,q_0)-\phi(t_0,q_0)\Rr)
+\D\Ll(u(t_\eps,p_\eps)-u(t_\eps,Aq_\eps)\Rr) \\
&\qquad -\frac{\D}{2\eps}d_\eps^2
+\tau_\eps(t_\eps-t_0)+\la\iota_\eps,q_\eps-q_0\ra_{\mcl H_\D}
+\la\varrho_\eps,p_\eps-p_0\ra_{\mcl H_\sS}.
\end{align*}
By the strict maximum condition \eqref{e.avg_strict_max},
\begin{align*}
\Ll(g(t_\eps,q_\eps)-\phi(t_\eps,q_\eps)\Rr)
-\Ll(g(t_0,q_0)-\phi(t_0,q_0)\Rr)
\leq -\Delta_\eps^2 .
\end{align*}
Moreover, $p_\eps,Aq_\eps\in K_R^\sS$, so the Lipschitz continuity of $u$ on
bounded sets gives
\begin{align*}
\D\Ll(u(t_\eps,p_\eps)-u(t_\eps,Aq_\eps)\Rr)
\leq C|p_\eps-Aq_\eps|_{\mcl H_\sS}=Cd_\eps .
\end{align*}
Since $|\zeta_\eps|\leq\eps^2$ and all the points considered stay in the fixed
bounded set $K_r^\D\times K_R^\sS$, the linear perturbation is bounded by
\begin{align*}
\Ll|
\tau_\eps(t_\eps-t_0)+\la\iota_\eps,q_\eps-q_0\ra_{\mcl H_\D}
+\la\varrho_\eps,p_\eps-p_0\ra_{\mcl H_\sS}
\Rr|
\leq C\eps^2 .
\end{align*}
Combining the previous estimates yields
\begin{align*}
0\leq -\Delta_\eps^2+Cd_\eps-\frac{\D}{2\eps}d_\eps^2+C\eps^2 .
\end{align*}
Equivalently,
\begin{align}\label{e.avg_convergence_est}
    |(t_\eps,q_\eps)-(t_0,q_0)|_{\R\times\mcl H_\D}^2
    +\frac{\D}{2\eps}|p_\eps-Aq_\eps|_{\mcl H_\sS}^2
    \leq C|p_\eps-Aq_\eps|_{\mcl H_\sS}+C\eps^2 .
\end{align}
We now deduce convergence. By Young's inequality,
\begin{align*}
Cd_\eps\leq \frac{\D}{4\eps}d_\eps^2+C\eps .
\end{align*}
Using this in \eqref{e.avg_convergence_est} and absorbing the term
$\frac{\D}{4\eps}d_\eps^2$ into the left-hand side gives
\begin{align*}
\Delta_\eps^2+\frac{\D}{4\eps}d_\eps^2\leq C\eps+C\eps^2 .
\end{align*}
Hence $\Delta_\eps\to0$ and $d_\eps\to0$, that is,
\begin{align}\label{e.avg_convergence}
    \lim_{\eps\to0}(t_\eps,q_\eps)=(t_0,q_0),
    \qquad
   \lim_{\eps\to0} p_\eps-Aq_\eps=0\quad\text{in }\mcl H_\sS .
\end{align}
Since $A$ is bounded, this also implies $Aq_\eps\to Aq_0=p_0$ and therefore
$p_\eps\to p_0$.

For small $\eps$, the maximum is therefore local relative to the cones. Keeping $q=q_\eps$ fixed, $u$ is touched from above at $(t_\eps,p_\eps)$ by
\begin{align*}
    \chi_\eps(t,p)
    :=
    \frac1{\D}\phi(t,q_\eps)
    +
    \frac1{2\eps}|p-Aq_\eps|_{\mcl H_\sS}^2
    -
    \frac{\tau_\eps}{\D}t
    -
    \frac1{\D}\la\varrho_\eps,p\ra_{\mcl H_\sS}.
\end{align*}
At $(t_\eps,p_\eps)$,
\begin{align}\label{e.avg_chi_derivatives}
    \partial_t\chi_\eps
    =
    \frac1{\D}\Ll(\phi_t(t_\eps,q_\eps)-\tau_\eps\Rr),
    \qquad
    \partial_p\chi_\eps
    =
    \frac{p_\eps-Aq_\eps}{\eps}
    -
    \frac{\varrho_\eps}{\D}
    =:\eta_\eps .
\end{align}
Since $u$ is a viscosity subsolution,
\begin{align}\label{e.avg_u_sub}
    \phi_t(t_\eps,q_\eps)-\tau_\eps-\D\H^\sS(\eta_\eps)\leq0 .
\end{align}

Keeping instead $t=t_\eps$ and $p=p_\eps$ fixed, $q_\eps$ minimizes over a metric ball centered at $q_\eps$ in $\mcl C_\D$ the function
\begin{align*}
    q\mapsto
    \phi(t_\eps,q)
    +
    \frac{\D}{2\eps}|p_\eps-Aq|_{\mcl H_\sS}^2
    -
    \la\iota_\eps,q\ra_{\mcl H_\D}.
\end{align*}
Denote this function by $\ell(q)$. Then, this minimality implies that $\frac{\d}{\d \delta}\Ll(\ell(q_\eps+\delta q')-\ell(q_\eps)\Rr)\big|_{\delta=0^+}\geq 0$ and thus we have $\la q', \partial_q \ell(q_\eps)\ra_{\mcl H_\D}\geq0$ for every $q'$ in the convex cone $\mcl C_\D$. Notice that we used the fact that $q_\eps +\delta q'\in \mcl C_\D$ for every $\delta>0$ since $\mcl C_\D$ is a cone. By the definition of the dual cone in~\eqref{e.C^*}, we conclude that $\partial_q \ell(q_\eps)\in \mcl C_\D^*$.
Equivalently, we get
\begin{align}\label{e.avg_normal_sub}
    \partial_q\phi(t_\eps,q_\eps)
    -
    \D A^*\eta_\eps
    -
    A^*\varrho_\eps
    -
    \iota_\eps
    \in \mcl C_\D^* .
\end{align}
Since $\H^\Vec$ is $\mcl C_\D^*$-increasing, \eqref{e.avg_H_identity} and~\eqref{e.avg_normal_sub} imply
\begin{align*}
    \H^\Vec\Ll(\partial_q\phi(t_\eps,q_\eps)-A^*\varrho_\eps-\iota_\eps\Rr)
    \geq
    \H^\Vec(\D A^*\eta_\eps)
    =
    \D\H^\sS(\eta_\eps).
\end{align*}
Together with~\eqref{e.avg_u_sub}, this gives
\begin{align*}
    \phi_t(t_\eps,q_\eps)-\tau_\eps
    -
    \H^\Vec\Ll(\partial_q\phi(t_\eps,q_\eps)-A^*\varrho_\eps-\iota_\eps\Rr)
    \leq0 .
\end{align*}
Letting $\eps\to0$, using~\eqref{e.avg_convergence}, the continuity of the differential of $\phi$, the Lipschitz continuity of $\H^\Vec$, and $\zeta_\eps\to0$, yields
\begin{align*}
    \phi_t(t_0,q_0)-\H^\Vec(\partial_q\phi(t_0,q_0))\leq0 .
\end{align*}
Thus $g$ is a viscosity subsolution.

\smallskip
\noindent\textit{Step~3. The supersolution inequality.}
The supersolution argument is the same with the signs reversed, so we only record the changes. Suppose that $g-\phi$ has a local minimum at $(t_0,q_0)$. After replacing $\phi$ by $\phi-|t-t_0|^2-|q-q_0|_{\mcl H_\D}^2$, we may assume that the minimum is strict. Apply Stegall's principle (Theorem~\ref{t.stegall}) to
\begin{align}\label{e.avg_Psi_eps}
    \Psi_\eps(t,q,p)
    :=
    \phi(t,q)-\D u(t,p)-\frac{\D}{2\eps}|p-Aq|_{\mcl H_\sS}^2 .
\end{align}
The same comparison as above gives~\eqref{e.avg_convergence}. At the maximizer $(t_\eps,q_\eps,p_\eps)$, keeping $q=q_\eps$ fixed, $u$ is touched from below by
\begin{align*}
    \widetilde\chi_\eps(t,p)
    :=
    \frac1{\D}\phi(t,q_\eps)
    -
    \frac1{2\eps}|p-Aq_\eps|_{\mcl H_\sS}^2
    +
    \frac{\tau_\eps}{\D}t
    +
    \frac1{\D}\la\varrho_\eps,p\ra_{\mcl H_\sS}.
\end{align*}
Thus, with
\begin{align}\label{e.avg_chitilde_derivatives}
    \partial_t\widetilde\chi_\eps
    =
    \frac1{\D}\Ll(\phi_t(t_\eps,q_\eps)+\tau_\eps\Rr),
    \qquad
    \partial_p\widetilde\chi_\eps
    =
    -\frac{p_\eps-Aq_\eps}{\eps}
    +
    \frac{\varrho_\eps}{\D}
    =:\eta_\eps ,
\end{align}
the viscosity supersolution property of $u$ gives
\begin{align}\label{e.avg_u_super}
    \phi_t(t_\eps,q_\eps)+\tau_\eps-\D\H^\sS(\eta_\eps)\geq0 .
\end{align}
The first-order condition in the $q$ variable is now
\begin{align}\label{e.avg_normal_super}
    \D A^*\eta_\eps
    -
    \partial_q\phi(t_\eps,q_\eps)
    -
    A^*\varrho_\eps
    -
    \iota_\eps
    \in \mcl C_\D^* .
\end{align}
Using again the monotonicity of $\H^\Vec$ and~\eqref{e.avg_H_identity}, we obtain
\begin{align*}
    \D\H^\sS(\eta_\eps)
    =
    \H^\Vec(\D A^*\eta_\eps)
    \geq
    \H^\Vec\Ll(\partial_q\phi(t_\eps,q_\eps)+A^*\varrho_\eps+\iota_\eps\Rr).
\end{align*}
Together with~\eqref{e.avg_u_super}, this gives
\begin{align*}
    \phi_t(t_\eps,q_\eps)+\tau_\eps
    -
    \H^\Vec\Ll(\partial_q\phi(t_\eps,q_\eps)+A^*\varrho_\eps+\iota_\eps\Rr)
    \geq0 .
\end{align*}
Letting $\eps\to0$ yields
\begin{align*}
    \phi_t(t_0,q_0)-\H^\Vec(\partial_q\phi(t_0,q_0))\geq0 .
\end{align*}
Thus $g$ is a viscosity supersolution.

\smallskip
\noindent\textit{Step~4. Initial condition and uniqueness.}
We have proved that $g$ is a viscosity solution of the Hamilton--Jacobi equation associated with the regularization $\bar\xi^\Vec$. Since the notion of viscosity solution is independent of the choice of regularization, $g$ solves
\begin{align*}
    \partial_t g-\int_0^1\xi^\Vec(\partial_q g)=0
    \qquad\text{on }\R_+\times \mcl C_\D .
\end{align*}
The initial condition is immediate:
\begin{align*}
    g(0,q)=\D u(0,Aq)=\D\psi(Aq)=\D\psi(q^\avg),
    \qquad q\in \mcl C_\D .
\end{align*}
Finally, $q\mapsto \D\psi(Aq)$ is Lipschitz because $A$ is bounded. It is also $\mcl C_\D^*$-increasing: if $q'-q\in \mcl C_\D^*$, then for every $h\in \mcl C_\sS$,
\begin{align*}
    \la A(q'-q),h\ra_{\mcl H_\sS}
    =
    \la q'-q,A^*h\ra_{\mcl H_\D}
    \geq0 ,
\end{align*}
since $A^*h\in \mcl C_\D$. Hence $A(q'-q)\in \mcl C_\sS^*$, and the monotonicity of $\psi$ gives the claim. By Theorem~\ref{t.vis_sol}, with the normalization remark if necessary, $g$ is the unique Lipschitz viscosity solution with initial condition $g(0,q)=\D\psi(q^\avg)$.
\end{proof}

\section{Proof of the main result}
\label{s.conclusion}

To prove the main result, we first need multi-species analogues (Propositions~\ref{p.crit_pt_bdd_ms} and~\ref{p.HJ_bdd_ms}) of Theorems~\ref{t.crit.up.low2} and~\ref{t.HJ_bd}, which were established in the vector spin glass setting.

\begin{proposition}\label{p.crit_pt_bdd_ms}
Let $\xi$ and $\psi$ be associated with the multi-species model given in~\eqref{e.def H_N}--\eqref{e.def.FN.delta0}.
For every $t > 0$ and $q \in \mcl Q_1^\sS$, there exist $p^+, p^- \in \mcl Q_{\infty,\leq \lambda_\infty}^\sS$ such that 
\begin{equation}
p^+=\partial_q\psi(q+t\nabla\xi(p^+)), \qquad p^-=\partial_q\psi(q+t\nabla\xi(p^-)),
\end{equation}
and
\begin{equation}
\sP_{t,q}(p^-) \le \liminf_{N \to \infty} \bar F_N(t,q) \le \limsup_{N\to\infty} \bar F_N(t,q)\leq \sP_{t,q}(p^+).
\end{equation}
\end{proposition}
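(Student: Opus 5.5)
We split into the rational case and the general case, and in the rational case transfer Theorem~\ref{t.crit.up.low2} from the vector model through Lemma~\ref{l.equiv_rational}.

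\emph{Rational case.} Assume first $\lambda_{\infty}\in\Q^\sS$. Choose $\D$ and a weak partition $(\sfM_s)$ of $\{1,\dots,\D\}$ with $|\sfM_s|/\D=\lambda_{\infty,s}$. For $q\in\mcl Q_\infty^\sS$, set $\uq=q^\Vec$. By Lemma~\ref{l.equiv_rational}, $\bar F_N(t,q)$ and $\D^{-1}\bar F^\Vec_{\lceil N/\D\rceil}(t,\uq)$ have the same $\liminf$ and $\limsup$.

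Apply Theorem~\ref{t.crit.up.low2} to the vector model at $(t,\uq)$. This gives $\up^\pm\in\mcl Q^\D_{\infty,\le1}$ with $\up^\pm=\partial_q\psi^\Vec(\uq+t\nabla\xi^\Vec(\up^\pm))$, sandwiching the vector free energy between $\sP^\Vec_{t,\uq}(\up^\mp)$ and $\sP^\Vec_{t,\uq}(\up^\pm)$.

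Set $p^\pm:=(\up^\pm)^\summ$.
\begin{itemize}
\item By \eqref{e.sP^vec=sP}, $\sP^\Vec_{t,\uq}(\up^\pm)=\D\sP_{t,q}(p^\pm)$, so dividing by $\D$ gives the bounds.
\item By \eqref{e.dpsi^vec=dpsi}, the critical relation for $\up^\pm$ implies $p^\pm=\partial_q\psi(q+t\nabla\xi(p^\pm))$.
\item Since $0\le \up^\pm_d\le 1$ is increasing, $p^\pm_s=\D^{-1}\sum_{d\in\sfM_s}\up_d^\pm$ is increasing and bounded by $|\sfM_s|/\D=\lambda_{\infty,s}$. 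Hence $p^\pm\in\mcl Q^\sS_{\infty,\le\lambda_\infty}$.
\end{itemize}
For general $q\in\mcl Q_1^\sS$, approximate by $q_n\in\mcl Q_\infty^\sS$ in $L^1$. Extract convergent subsequences of $p^\pm_n$ by Lemma~\ref{l.compact_embed_paths}, and pass to the limit using Proposition~\ref{p.F_N_smooth} (uniform Lipschitz in $q$) and Lemma~\ref{l.continuity_Parisi_functional}. This is exactly as at the end of the proof of Theorem~\ref{t.crit.up.low2}. The bound $p\le\lambda_\infty$ is preserved under a.e.\ limits.

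\emph{General case.} For arbitrary $\lambda_\infty\in(0,1)^\sS$, take rational $\lambda^{(n)}\to\lambda_\infty$ in the interior of the simplex. For each $N$, choose partitions with proportions $\lambda^{(n)}_N\to\lambda^{(n)}$. Lemma~\ref{l.lambda_continuity} gives
\[
|\bar F_{N,\lambda_N}-\bar F_{N,\lambda^{(n)}_N}|\le C(t+|q|_{L^1}+1)|\lambda_N-\lambda^{(n)}_N|,
\]
whose $\limsup$ is at most $C'|\lambda_\infty-\lambda^{(n)}|$. The rational case yields critical points $p^\pm_n$ for the functional built with $\psi^{(n)}=\sum_s\lambda^{(n)}_s\psi_\circ(\cdot_s)$, with $p^\pm_n\le\lambda^{(n)}\le 1$.

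By Lemma~\ref{l.compact_embed_paths}, extract an $L^1$ and a.e.\ limit $p^\pm$; then $p^\pm\le\lambda_\infty$. It remains to pass to the limit in $\sP^{(n)}_{t,q}(p^\pm_n)$ and in $p^\pm_n=\partial_q\psi^{(n)}(q+t\nabla\xi(p_n^\pm))$.
\begin{itemize}
\item $\psi^{(n)}-\psi$ is controlled by $|\lambda^{(n)}-\lambda_\infty|$ times $\sum_s|\psi_\circ(\cdot_s)|$, which is bounded on bounded sets.
\item $\partial_q\psi^{(n)}=(\lambda^{(n)}_s\partial_q\psi_\circ)_s$ converges, using \eqref{e.bound.der.psi}.
\item The continuity \eqref{e.continuity.der.psi} of $\partial_q\psi_\circ$, together with local Lipschitzness of $\nabla\xi$ and $\theta$ on bounded sets, handles the $p$-dependence as in Lemma~\ref{l.continuity_Parisi_functional}.
\end{itemize}
Combining with the $C'|\lambda_\infty-\lambda^{(n)}|$ error and letting $n\to\infty$ gives the bounds and the critical relations.

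\emph{Main obstacle.} The delicate points are the passage to the limit in the critical relation (convergence in $L^1$ of $\partial_q\psi^{(n)}$ evaluated at converging arguments) and the uniformity of constants in $n$. I expect both to follow from \eqref{e.continuity.der.psi} with $r=1$ and the uniform boundedness of all the paths involved.
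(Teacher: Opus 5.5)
Your proposal is correct and follows the paper's proof essentially step by step. The paper also proceeds in three steps: the rational case for $q\in\mcl Q_\infty^\sS$ via Theorem~\ref{t.crit.up.low2}, Lemma~\ref{l.equiv_rational}, \eqref{e.sP^vec=sP} and \eqref{e.dpsi^vec=dpsi} with $p^\pm=(\up^\pm)^\summ$; then extension to $q\in\mcl Q_1^\sS$ by compactness and Lemma~\ref{l.continuity_Parisi_functional}; then rational approximation of $\lambda_\infty$ using Lemma~\ref{l.lambda_continuity}. Your explicit treatment of the $\lambda^{(n)}$-dependence of $\psi^{(n)}$ and $\partial_q\psi^{(n)}$ via \eqref{e.bound.der.psi} and \eqref{e.continuity.der.psi}, including the fact that $p_n^\pm\le\lambda^{(n)}$ rather than $\le\lambda_\infty$, spells out details the paper covers only by citing Lemma~\ref{l.continuity_Parisi_functional}.
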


\begin{proof}
We first prove the result when $\lambda_\infty$ is rational.

\smallskip
\noindent\textit{Step~1. The rational case with $q\in\mcl Q_\infty^\sS$.}
Assume that there are $\D\in\N$ and a weak partition $(\sfM_s)_{s\in\sS}$ of $\{1,\ldots,\D\}$ such that $\lambda_{\infty,s}=|\sfM_s|/\D$ for every $s\in\sS$. Let $\xi^\Vec$, $\psi^\Vec$, and $\bar F_N^\Vec$ be the associated vector spin model. Fix $t>0$ and $q\in\mcl Q_\infty^\sS$, and set $\uq:=q^\Vec$.

Applying Theorem~\ref{t.crit.up.low2} to the vector spin model at $(t,\uq)$, we find $\up^+,\up^-\in\mcl Q_{\infty,\leq 1}^\D$ such that
\begin{align}\label{e.crit_bdd_ms_vec_crit}
    \up^\pm=\partial_{\uq}\psi^\Vec\Ll(\uq+t\nabla\xi^\Vec(\up^\pm)\Rr)
\end{align}
and
\begin{align}\label{e.crit_bdd_ms_vec_bd}
    \sP^\Vec_{t,\uq}(\up^-)\leq \liminf_{N\to\infty}\bar F_N^\Vec(t,\uq)\leq \limsup_{N\to\infty}\bar F_N^\Vec(t,\uq)\leq \sP^\Vec_{t,\uq}(\up^+).
\end{align}
Define $p^\pm:=(\up^\pm)^\summ$. Since $\up^\pm\in\mcl Q_{\infty,\leq 1}^\D$, we have $p^\pm\in\mcl Q_{\infty,\leq\lambda_\infty}^\sS$. By~\eqref{e.dpsi^vec=dpsi} and~\eqref{e.crit_bdd_ms_vec_crit},
\begin{align}\label{e.crit_bdd_ms_crit_rat}
    p^\pm=\partial_q\psi\Ll(q+t\nabla\xi(p^\pm)\Rr).
\end{align}
Moreover, by Lemma~\ref{l.equiv_rational},
\begin{align}\label{e.crit_bdd_ms_free_energy_equiv}
    \lim_{N\to\infty}\Ll|\bar F_N(t,q)-\D^{-1}\bar F^\Vec_{\lceil N/\D\rceil}(t,q^\Vec)\Rr|=0.
\end{align}
Combining~\eqref{e.crit_bdd_ms_vec_bd}, \eqref{e.crit_bdd_ms_free_energy_equiv}, and~\eqref{e.sP^vec=sP}, we obtain
\begin{align}\label{e.crit_bdd_ms_bd_rat_qinf}
    \sP_{t,q}(p^-)\leq \liminf_{N\to\infty}\bar F_N(t,q)\leq \limsup_{N\to\infty}\bar F_N(t,q)\leq \sP_{t,q}(p^+).
\end{align}
This proves the rational case for $q\in\mcl Q_\infty^\sS$.

\smallskip
\noindent\textit{Step~2. Extension of the rational case to $q\in\mcl Q_1^\sS$.}
Let now $q\in\mcl Q_1^\sS$, and choose $q_k\in\mcl Q_\infty^\sS$ such that $q_k\to q$ in $L^1$. By Step~1, for each $k$ there exist $p_k^+,p_k^-\in\mcl Q_{\infty,\leq\lambda_\infty}^\sS$ satisfying
\begin{align}\label{e.crit_bdd_ms_crit_k}
    p_k^\pm=\partial_q\psi\Ll(q_k+t\nabla\xi(p_k^\pm)\Rr)
\end{align}
and
\begin{align}\label{e.crit_bdd_ms_bd_k}
    \sP_{t,q_k}(p_k^-)\leq \liminf_{N\to\infty}\bar F_N(t,q_k)\leq \limsup_{N\to\infty}\bar F_N(t,q_k)\leq \sP_{t,q_k}(p_k^+).
\end{align}
By the compactness of monotone paths (see Lemma~\ref{l.compact_embed_paths}), after passing to subsequences, we may assume that $p_k^\pm\to p^\pm$ in $L^1$ for some $p^\pm\in\mcl Q_{\infty,\leq\lambda_\infty}^\sS$. Lemma~\ref{l.continuity_Parisi_functional} and~\eqref{e.crit_bdd_ms_crit_k} give
\begin{align}\label{e.crit_bdd_ms_crit_rat_q1}
    p^\pm=\partial_q\psi\Ll(q+t\nabla\xi(p^\pm)\Rr).
\end{align}
Using the Lipschitz continuity of $\bar F_N$ in Proposition~\ref{p.F_N_smooth}, \eqref{e.crit_bdd_ms_bd_k}, and Lemma~\ref{l.continuity_Parisi_functional}, and then sending $k\to\infty$, gives
\begin{align}\label{e.crit_bdd_ms_bd_rat_q1}
    \sP_{t,q}(p^-)\leq \liminf_{N\to\infty}\bar F_N(t,q)\leq \limsup_{N\to\infty}\bar F_N(t,q)\leq \sP_{t,q}(p^+).
\end{align}
This proves the proposition when $\lambda_\infty$ is rational.

\smallskip
\noindent\textit{Step~3. Approximation of the proportions.}
We now consider a general $\lambda_\infty$. Let $(\lambda^{(m)})_{m\in\N}$ be a sequence of rational probability vectors such that $\lambda^{(m)}\to\lambda_\infty$. Let $\psi_m$ and $\sP^m$ denote the initial condition and Parisi functional corresponding to $\lambda^{(m)}$. For each $m$, choose a sequence of species proportions $\lambda_N^{(m)}$ such that $\lambda_N^{(m)}\to\lambda^{(m)}$, and write $\bar F_N^m$ for the corresponding free energy.

By the rational case, for every $m$ there exist $p_m^+,p_m^-\in\mcl Q_{\infty,\leq\lambda^{(m)}}^\sS$ such that
\begin{align}\label{e.crit_bdd_ms_crit_m}
    p_m^\pm=\partial_q\psi_m\Ll(q+t\nabla\xi(p_m^\pm)\Rr)
\end{align}
and
\begin{align}\label{e.crit_bdd_ms_bd_m}
    \sP^m_{t,q}(p_m^-)\leq \liminf_{N\to\infty}\bar F_N^m(t,q)\leq \limsup_{N\to\infty}\bar F_N^m(t,q)\leq \sP^m_{t,q}(p_m^+).
\end{align}
By compactness, after passing to subsequences, we may assume that $p_m^\pm\to p^\pm$ in $L^1$. Since $p_m^\pm\leq\lambda^{(m)}$ and $\lambda^{(m)}\to\lambda_\infty$, we have $p^\pm\in\mcl Q_{\infty,\leq\lambda_\infty}^\sS$. Moreover, using~\eqref{e.psi=sumlambdapsi}, Lemma~\ref{l.psi^vec_smooth}, and Lemma~\ref{l.continuity_Parisi_functional}, we can pass to the limit in~\eqref{e.crit_bdd_ms_crit_m} and obtain
\begin{align}\label{e.crit_bdd_ms_crit_general}
    p^\pm=\partial_q\psi\Ll(q+t\nabla\xi(p^\pm)\Rr).
\end{align}

It remains to pass the bounds to the limit. By Lemma~\ref{l.lambda_continuity},
\begin{align}\label{e.crit_bdd_ms_lambda_cont_upper}
    \limsup_{N\to\infty}\bar F_N(t,q)\leq \limsup_{N\to\infty}\bar F_N^m(t,q)+C\Ll(t+|q|_{L^1}+1\Rr)|\lambda_\infty-\lambda^{(m)}|
\end{align}
and
\begin{align}\label{e.crit_bdd_ms_lambda_cont_lower}
    \liminf_{N\to\infty}\bar F_N(t,q)\geq \liminf_{N\to\infty}\bar F_N^m(t,q)-C\Ll(t+|q|_{L^1}+1\Rr)|\lambda_\infty-\lambda^{(m)}|.
\end{align}
Combining~\eqref{e.crit_bdd_ms_bd_m} with~\eqref{e.crit_bdd_ms_lambda_cont_upper} and~\eqref{e.crit_bdd_ms_lambda_cont_lower}, we get
\begin{align*}
    \sP^m_{t,q}(p_m^-)-C\Ll(t+|q|_{L^1}+1\Rr)|\lambda_\infty-\lambda^{(m)}|\leq \liminf_{N\to\infty}\bar F_N(t,q)
\end{align*}
and
\begin{align*}
    \limsup_{N\to\infty}\bar F_N(t,q)\leq \sP^m_{t,q}(p_m^+)+C\Ll(t+|q|_{L^1}+1\Rr)|\lambda_\infty-\lambda^{(m)}|.
\end{align*}
Finally, by~\eqref{e.psi=sumlambdapsi}, Lemma~\ref{l.psi^vec_smooth}, and Lemma~\ref{l.continuity_Parisi_functional},
\begin{align*}
    \lim_{m\to\infty}\sP^m_{t,q}(p_m^\pm) =  \sP_{t,q}(p^\pm).
\end{align*}
Letting $m\to\infty$ yields
\begin{align*}
    \sP_{t,q}(p^-)\leq \liminf_{N\to\infty}\bar F_N(t,q)\leq \limsup_{N\to\infty}\bar F_N(t,q)\leq \sP_{t,q}(p^+).
\end{align*}
Together with~\eqref{e.crit_bdd_ms_crit_general}, this completes the proof.
\end{proof}

\begin{proposition}\label{p.HJ_bdd_ms}
Let $\xi$ and $\psi$ be associated with the multi-species model given in~\eqref{e.def H_N}--\eqref{e.def.FN.delta0}.
Let $f:\R_+\times\mcl Q^\sS_2\to\R$ be the Lipschitz viscosity solution of
\begin{align*}
\begin{cases}
    \partial_t f - \int_0^1 \xi(\partial_q f)=0 ,\qquad &\text{on }\R_+\times\mcl Q^\sS_2,
    \\
    f(0,\cdot) = \psi ,\qquad &\text{on }\mcl Q^\sS_2.
\end{cases}
\end{align*}
Then, we have 
\begin{align}\label{e.p.HJ_bdd_ms}
    f(t,q)\leq \liminf_{N\to\infty}\bar F_N(t,q),\qquad \forall (t,q)\in\R_+\times \mcl Q^\sS_2.
\end{align}
\end{proposition}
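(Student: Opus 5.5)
The plan mirrors the proof of Proposition~\ref{p.crit_pt_bdd_ms}: first treat rational $\lambda_\infty$ through the vector reduction, then pass to general $\lambda_\infty$ by approximation and HJ stability.

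\emph{Step 1 (rational $\lambda_\infty$, $q\in\mcl Q_\infty^\sS$).} Take $\D$ and a weak partition with $\lambda_{\infty,s}=|\sfM_s|/\D$, and let $f^\Vec$ be the viscosity solution from Theorem~\ref{t.HJ_bd} with initial condition $\psi^\Vec$. Theorem~\ref{t.HJ_bd} gives $f^\Vec(t,\uq)\le\liminf_N\bar F_N^\Vec(t,\uq)$. The liminf along $N$ dominates the liminf along the subsequence $\lceil N/\D\rceil$, so Lemma~\ref{l.equiv_rational} then gives
\begin{equation*}
\D^{-1}f^\Vec(t,q^\Vec)\le\liminf_{N\to\infty}\bar F_N(t,q).
\end{equation*}
It remains to show $f^\Vec(t,q^\Vec)\ge \D f(t,q)$, for which I will use the comparison principle. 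By Proposition~\ref{p.avg_solution}, $g(t,\uq)=\D f(t,\uq^\avg)$ is the Lipschitz viscosity solution with initial datum $\D\psi(\uq^\avg)$. By \eqref{e.a^vec^avg=a}, $g(t,q^\Vec)=\D f(t,q)$. By Proposition~\ref{p.comp}, it therefore suffices to check the initial inequality
\begin{equation*}
\D\psi(\uq^\avg)\le\psi^\Vec(\uq)=\sum_d\psi_\circ(\uq_d).
\end{equation*}
Since $\D\psi(\uq^\avg)=\sum_s|\sfM_s|\,\psi_\circ\big(|\sfM_s|^{-1}\sum_{d\in\sfM_s}\uq_d\big)$, this is exactly Jensen's inequality for the convex $\psi_\circ$ (Proposition~\ref{p.convex.psi.circ}). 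This is where convexity of $\psi$ enters. Empty $\sfM_s$ contribute nothing on either side and the $\mathrm{avg}$ coordinate is irrelevant for them; I would check this edge case, or simply choose a representation with all $\sfM_s$ nonempty, which is possible since $\lambda_{\infty,s}>0$.

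\emph{Step 2 (extension to $q\in\mcl Q_2^\sS$).} Approximate $q$ by $q_k\in\mcl Q_\infty^\sS$ in $L^1$, for instance by truncation. Both $f$ and $\bar F_N$ are $L^1$-Lipschitz in $q$ uniformly in $N$: for $\bar F_N$ this is Proposition~\ref{p.F_N_smooth}, and for $f$ it is Lemma~\ref{l.hj_linear_initial_stability} or the standard Lipschitz bound from Theorem~\ref{t.vis_sol}. Passing to the limit in $k$ then gives the inequality at $q$.

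\emph{Step 3 (general $\lambda_\infty$).} Take rational $\lambda^{(m)}\to\lambda_\infty$ with corresponding $\psi_m$, solutions $f_m$, and free energies $\bar F^m_N$. Step 2 gives $f_m(t,q)\le\liminf_N\bar F^m_N(t,q)$, and Lemma~\ref{l.lambda_continuity} transfers this to $\liminf_N\bar F_N(t,q)$ up to an error $C(t+|q|_{L^1}+1)|\lambda_\infty-\lambda^{(m)}|$. For the left side, note that
\begin{equation*}
|\psi_m(q)-\psi(q)|\le\sum_s|\lambda^{(m)}_s-\lambda_{\infty,s}|\,|\psi_\circ(q_s)|.
\end{equation*}
Since $\psi_\circ(0)=0$ and $\psi_\circ$ is $1$-Lipschitz in $L^1$, we have $|\psi_\circ(q_s)|\le\int q_s$, so the difference is bounded by $\rho_m\,\ell(q)$ with $\rho_m\to0$. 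Lemma~\ref{l.hj_linear_initial_stability}, applied to the common Hamiltonian $\xi$, gives $|f_m-f|\le\rho_m(\ell(q)+C_\H t)\to0$. Letting $m\to\infty$ concludes the proof.

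The main obstacle I expect is Step 1's initial comparison, together with confirming that Theorem~\ref{t.HJ_bd} and Proposition~\ref{p.avg_solution} refer to the same equation with the same $\xi^\Vec$ and normalization, including the possible rescaling noted in the normalization remark. Everything else is routine approximation.
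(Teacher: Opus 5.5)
Your proposal is correct and follows essentially the same route as the paper. The paper also proceeds through the rational case via Proposition~\ref{p.avg_solution}, the Jensen/convexity comparison of initial data with Proposition~\ref{p.comp}, and Theorem~\ref{t.HJ_bd} combined with Lemma~\ref{l.equiv_rational}, and then extends to $\mcl Q_2^\sS$ and to irrational $\lambda_\infty$ using Lemmas~\ref{l.lambda_continuity} and~\ref{l.hj_linear_initial_stability}. One small remark: Lemma~\ref{l.hj_linear_initial_stability} does not by itself give Lipschitz continuity of $f$ in $q$, but the Lipschitz bound from Theorem~\ref{t.vis_sol} is what the paper uses, and it suffices (in $L^2$) because truncations converge in $L^2$.
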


\begin{proof}
We proceed in two steps. First, we prove~\eqref{e.p.HJ_bdd_ms} in the rational case. We then extend the result to the general case by approximation.

\smallskip
\noindent\textit{Step~1. The rational case.}
Assume first that $\lambda_\infty$ is rational. Choose $\D\in\N$ and a weak partition $(\sfM_s)_{s\in\sS}$ of $\{1,\ldots,\D\}$ such that $\lambda_{\infty,s}=|\sfM_s|/\D$. Let $\xi^\Vec$ and $\bar F_N^\Vec$ be the associated vector spin model introduced in~\eqref{e.bxi=mp}--\eqref{e.F^vec(t,q)=}, and let $f^\Vec$ be the viscosity solution appearing in Theorem~\ref{t.HJ_bd}. By Proposition~\ref{p.avg_solution}, the function $g(t,\uq):=\D f(t,\uq^\avg)$ is the Lipschitz viscosity solution of the Hamilton--Jacobi equation associated with $\xi^\Vec$, with initial condition $g(0,\uq)=\D\psi(\uq^\avg)$. Moreover, by Corollary~\ref{c.convex.psi}, applied to the vector model at $t=0$, we have
\begin{align}
    \D\psi(\uq^\avg)\stackrel{\eqref{e.psi=sumlambdapsi}}{=}\sum_{s\in\sS}|\sfM_s|\psi_\circ\Ll((\uq^\avg)_s\Rr)\stackrel{\text{\eqref{e.b^avg=} \& Jensen}}{\leq}\sum_{s\in\sS}\sum_{d\in\sfM_s}\psi_\circ(\uq_d)\stackrel{\eqref{e.psi_single=}}{=}\psi^\Vec(\uq). \label{e.HJ_bdd_ms_init_comp}
\end{align}
Therefore $g(0,\cdot)\leq f^\Vec(0,\cdot)$, and the comparison principle in Proposition~\ref{p.comp} gives
\begin{align}
    \D f(t,\uq^\avg)=g(t,\uq)\leq f^\Vec(t,\uq),\qquad (t,\uq)\in\R_+\times\mcl Q_2^\D . \label{e.HJ_bdd_ms_compare_vec}
\end{align}
Now fix $(t,q)\in\R_+\times\mcl Q_\infty^\sS$ and take $\uq=q^\Vec$. Since we have $(q^\Vec)^\avg=q$ due to~\eqref{e.a^vec^avg=a}, Theorem~\ref{t.HJ_bd} and~\eqref{e.HJ_bdd_ms_compare_vec} yield
\begin{align}
    \D f(t,q)\leq f^\Vec(t,q^\Vec)\leq \liminf_{N\to\infty}\bar F_N^\Vec(t,q^\Vec). \label{e.HJ_bdd_ms_vec_lower}
\end{align}
Using Lemma~\ref{l.equiv_rational}, and replacing $N$ by $\lceil N/\D\rceil$ in the vector free energy, we obtain
\begin{align*}
    f(t,q)\leq \liminf_{N\to\infty}\bar F_N(t,q),\qquad (t,q)\in\R_+\times\mcl Q_\infty^\sS .
\end{align*}
Finally, the extension from $q\in\mcl Q_\infty^\sS$ to $q\in\mcl Q_2^\sS$ follows by approximation. Indeed, choose $q_k\in\mcl Q_\infty^\sS$ such that $q_k\to q$ in $L^2$ and in $L^1$. The solution $f$ is Lipschitz by Theorem~\ref{t.vis_sol}, while $\bar F_N$ is uniformly Lipschitz in $q$ by Proposition~\ref{p.F_N_smooth}. Therefore,
\begin{align*}
    f(t,q)\leq f(t,q_k)+C|q-q_k|_{L^2}\leq \liminf_{N\to\infty}\bar F_N(t,q_k)+C|q-q_k|_{L^2}
    \\
    \leq \liminf_{N\to\infty}\bar F_N(t,q)+C|q-q_k|_{L^1}+C|q-q_k|_{L^2}.
\end{align*}
Letting $k\to\infty$ proves the rational case.

\smallskip
\noindent\textit{Step~2. Approximation of the proportions.}
We now remove the rationality assumption. Let $(\lambda^{(n)})_{n\in\N}$ be a sequence of rational points in $(0,1)^\sS$ such that $\sum_{s\in\sS}\lambda_s^{(n)}=1$ and $\lambda^{(n)}\to\lambda_\infty$. Let $\psi_n(q):=\sum_{s\in\sS}\lambda_s^{(n)}\psi_\circ(q_s)$, and let $f_n$ be the Lipschitz viscosity solution of
\begin{align*}
    \partial_t f_n-\int_0^1\xi(\partial_q f_n)=0,\qquad f_n(0,\cdot)=\psi_n .
\end{align*}
By the rational case, for any auxiliary sequence of species proportions $\lambda_N^{(n)}$ satisfying $\lambda_N^{(n)}\to\lambda^{(n)}$, we have
\begin{align}
    f_n(t,q)\leq \liminf_{N\to\infty}\bar F_{N,\lambda_N^{(n)}}(t,q),\qquad (t,q)\in\R_+\times\mcl Q_2^\sS . \label{e.HJ_bdd_ms_rational_n}
\end{align}
On the other hand, Lemma~\ref{l.lambda_continuity} gives, for the original sequence $\lambda_N\to\lambda_\infty$,
\begin{align}
    \liminf_{N\to\infty}\bar F_{N,\lambda_N}(t,q)\geq \liminf_{N\to\infty}\bar F_{N,\lambda_N^{(n)}}(t,q)-C\Ll(t+|q|_{L^1}+1\Rr)|\lambda_\infty-\lambda^{(n)}|. \label{e.HJ_bdd_ms_lambda_cont}
\end{align}
Combining \eqref{e.HJ_bdd_ms_rational_n} and~\eqref{e.HJ_bdd_ms_lambda_cont}, we get
\begin{align}
    \liminf_{N\to\infty}\bar F_{N,\lambda_N}(t,q)\geq f_n(t,q)-C\Ll(t+|q|_{L^1}+1\Rr)|\lambda_\infty-\lambda^{(n)}|. \label{e.HJ_bdd_ms_before_n}
\end{align}
It remains to pass to the limit in $n$. Set
\begin{equation*}
    \rho_n:=\max_{s\in\sS}|\lambda_s^{(n)}-\lambda_{\infty,s}|.
\end{equation*}
Since $\psi_\circ(0)=0$ and $\psi_\circ$ is $L^1$-Lipschitz by Lemma~\ref{l.psi^vec_smooth}, we have, for every $q\in\mcl Q_2^\sS$,
\begin{align*}
    |\psi_n(q)-\psi(q)|
    &\leq \sum_{s\in\sS}|\lambda_s^{(n)}-\lambda_{\infty,s}|\,|\psi_\circ(q_s)| \leq \rho_n\sum_{s\in\sS}|q_s|_{L^1}
    =\rho_n\ell(q).
\end{align*}
Applying Lemma~\ref{l.hj_linear_initial_stability} to $f_n$ and $f$, we get
\begin{equation}
\label{e.fn_to_f_lambda_stability}
    |f_n(t,q)-f(t,q)|\leq \rho_n\Ll(\ell(q)+C_\H t\Rr),\qquad (t,q)\in\R_+\times\mcl Q_2^\sS.
\end{equation}
Combining~\eqref{e.HJ_bdd_ms_before_n} with~\eqref{e.fn_to_f_lambda_stability}, we obtain
\begin{align*}
    \liminf_{N\to\infty}\bar F_{N,\lambda_N}(t,q)
    &\geq f(t,q)-\rho_n\Ll(\ell(q)+C_\H t\Rr)-C\Ll(t+|q|_{L^1}+1\Rr)|\lambda_\infty-\lambda^{(n)}|.
\end{align*}
Letting $n\to\infty$ gives~\eqref{e.p.HJ_bdd_ms} as desired.
\end{proof}

\begin{proof}[Proof of Theorem~\ref{t.main}]

By Proposition~\ref{p.HJ_bdd_ms}, we already have
\begin{align}\label{e.t.lower}
    f(t,q)\leq \liminf_{N\to\infty}\bar F_N(t,q),
    \qquad (t,q)\in\R_+\times\mcl Q_2^\sS .
\end{align}
Theorem~\ref{t.vis_sol} together with Remark~\ref{r.HJ_Q^S} also gives the Hopf formula for $f$. Hence, $f$ is equal to the right-hand side of \eqref{e.main.1}; this yields the desired lower bound for the limit free energy.
It remains to prove the matching upper bound.

Fix first $t>0$ and $q\in\mcl Q_2^\sS$. Since $\mcl Q_2^\sS\subseteq\mcl Q_1^\sS$, Proposition~\ref{p.crit_pt_bdd_ms} gives $p^+\in\mcl Q_{\infty,\leq\lambda_\infty}^\sS$ such that
\begin{gather}
    p^+=\partial_q\psi\Ll(q+t\nabla\xi(p^+)\Rr),\label{e.t.crit}
    \\
    \limsup_{N\to\infty}\bar F_N(t,q)\leq \sP_{t,q}(p^+). \label{e.t.upper_parisi}
\end{gather}
Set
\begin{align}\label{e.q^+=}
    q^+:=q+t\nabla\xi(p^+).
\end{align}
By Corollary~\ref{c.convex.psi}, the function $\psi$ is convex. Hence, using~\eqref{e.t.crit}, for every $q'\in\mcl Q_\infty^\sS$,
\begin{align}\label{e.t.convex_support}
    \psi(q')\geq \psi(q^+)+\la p^+,q'-q^+\ra_\cH .
\end{align}
Therefore, for every $q'\in\mcl Q_\infty^\sS$,
\begin{align*}
    \psi(q')+\la q-q',p^+\ra_\cH+t\int_0^1\xi(p^+(s))\,\d s
    \stackrel{\eqref{e.t.convex_support}}{\geq} \psi(q^+)+\la q-q^+,p^+\ra_\cH+t\int_0^1\xi(p^+(s))\,\d s 
    \\
    \stackrel{\eqref{e.q^+=}}{=}\psi\Ll(q+t\nabla\xi(p^+)\Rr)-t\int_0^1 p^+(s)\cdot\nabla\xi(p^+(s))\,\d s+t\int_0^1\xi(p^+(s))\,\d s
    \stackrel{\eqref{e.sP_lambda,t,q}}{=}
    \sP_{t,q}(p^+).
\end{align*}
Taking the infimum over $q'\in\mcl Q_\infty^\sS$ and then using the Hopf formula~\eqref{e.Hopf_spin_glass} for $f$, 
with the admissible choice $p=p^+$, gives
\begin{align}\label{e.t.parisi_leq_f}
    \sP_{t,q}(p^+)
    \leq
    \inf_{q'\in\mcl Q_\infty^\sS}\left\{\psi(q')+\la q-q',p^+\ra_\cH+t\int_0^1\xi(p^+(s))\,\d s\right\}
    \leq f(t,q).
\end{align}
Combining~\eqref{e.t.upper_parisi} and~\eqref{e.t.parisi_leq_f}, we obtain
\begin{align}\label{e.t.upper}
    \limsup_{N\to\infty}\bar F_N(t,q)\leq f(t,q),
    \qquad t>0,\ q\in\mcl Q_2^\sS .
\end{align}
Together with~\eqref{e.t.lower}, this proves~\eqref{e.main.1}
for $t>0$.

The case $t=0$ follows from the Lipschitz continuity of $\bar F_N$ in Proposition~\ref{p.F_N_smooth}, uniformly in $N$.
Thus~\eqref{e.main.1}
holds for every $(t,q)\in\R_+\times\mcl Q_2^\sS$. Finally, the link between the right-hand side of \eqref{e.main.1} and the solution of \eqref{e.main.hj} boils down to the Hopf representation \eqref{e.Hopf_spin_glass}.
This completes the proof.
\end{proof}

\section{Balanced models}
\label{s.balanced}

In this section, we record a reduction of the formula in Theorem~\ref{t.main} for balanced models. A typical example of a balanced model is the bipartite model with species of equal sizes, namely $\xi(a_1, a_2) = a_1 a_2$ and $\lambda_\infty = (1/2, 1/2)$. For this example, we will relate the limit free energy of this two-species model to that of the single-species model with covariance function $\xi_\star(r) = r^2/4$. More generally, this reduction to a single-species model can be obtained under the following condition, where here and throughout this section, we write $\lambda=(\lambda_s)_{s\in\sS}$ for $\lambda_\infty$ (and for $r \in \R$, we write $r \lambda = (r\lambda_s)_{s\in\sS}$).

\begin{definition}[Balanced comparison structure]
\label{d.balanced_comparison_structure}
We say that $\xi$ is \textbf{balanced} with respect to $\lambda$ if there exists a one-species covariance function $\xi_\star:\R_+\to\R$ such that, for every $r\in\R_+$,
\begin{equation}
\label{e.balanced_diag_identity}
    \xi(r\lambda)=\xi_\star(r),
\end{equation}
and, for every $a=(a_s)_{s\in\sS}\in\R_+^\sS$,
\begin{equation}
\label{e.balanced_comparison_ineq}
    \xi(a)\leq \sum_{s\in\sS}\lambda_s\xi_\star\left(\frac{a_s}{\lambda_s}\right).
\end{equation}
\end{definition}

The following lemma shows that the balanced multi-species models of~\cite{bates2025balanced} fit into Definition~\ref{d.balanced_comparison_structure}, after translating their species-normalized overlaps into the normalization used in~\eqref{e.R_N,s=}.

\begin{lemma}[Explicit balanced models]
\label{l.power_series_balanced_models}
Assume that $\xi$ has the expansion
\begin{equation}
\label{e.balanced_power_series_xi}
    \xi(a)=\sum_{p\geq1}\sum_{s_1,\ldots,s_p\in\sS}\Delta^2_{s_1,\ldots,s_p}a_{s_1}\cdots a_{s_p},\qquad a\in\R^\sS,
\end{equation}
where the coefficients are nonnegative and satisfy the standing summability assumptions. For each $p\geq1$, define the symmetrized coefficients
\begin{equation}
\label{e.symmetrized_delta_balanced}
    \widehat\Delta^2_{s_1,\ldots,s_p}:=\frac1{p!}\sum_{\pi\in\mathfrak S_p}\Delta^2_{s_{\pi(1)},\ldots,s_{\pi(p)}}.
\end{equation}
Suppose that
\begin{align}\label{e.balanced_p1_condition}
    \begin{cases}
        \widehat\Delta_t^2\quad\text{does not depend on }t\in\sS,
        \\
        \sum_{s_2,\ldots,s_p\in\sS}\widehat\Delta^2_{t,s_2,\ldots,s_p}\lambda_{s_2}\cdots\lambda_{s_p}\quad\text{does not depend on $t\in\sS$ for every $p\geq2$.}
    \end{cases}
\end{align}
Define
\begin{equation}
\label{e.balanced_beta_p}
    \beta_p^2:=\sum_{s_1,\ldots,s_p\in\sS}\Delta^2_{s_1,\ldots,s_p}\lambda_{s_1}\cdots\lambda_{s_p}\quad\text{for $p\geq1$},\quad\text{and}\quad \xi_\star(r):=\sum_{p\geq1}\beta_p^2r^p=\xi(r\lambda)\quad \text{for $r\in\R_+$}.
\end{equation}
Then $\xi$ is balanced with respect to $\lambda$ in the sense of Definition~\ref{d.balanced_comparison_structure}.
\end{lemma}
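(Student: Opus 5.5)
We verify the two conditions of Definition~\ref{d.balanced_comparison_structure} degree by degree in the power series \eqref{e.balanced_power_series_xi}, using the arithmetic--geometric mean inequality for the comparison \eqref{e.balanced_comparison_ineq}. Throughout, $\lambda=\lambda_\infty$ is a probability vector, so $\sum_{s\in\sS}\lambda_s=1$.

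\emph{The identity \eqref{e.balanced_diag_identity}.} This is immediate: substituting $a=r\lambda$ into \eqref{e.balanced_power_series_xi} gives $\sum_{p}r^p\sum\Delta^2_{s_1,\ldots,s_p}\lambda_{s_1}\cdots\lambda_{s_p}=\sum_p\beta_p^2r^p=\xi_\star(r)$, which is the definition \eqref{e.balanced_beta_p}.

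\emph{Setup for the inequality.} The monomial $a_{s_1}\cdots a_{s_p}$ is invariant under permutations of the indices. Hence replacing $\Delta^2$ by the symmetrized $\widehat\Delta^2$ from \eqref{e.symmetrized_delta_balanced} does not change the degree-$p$ part
\begin{equation*}
\xi_p(a):=\sum_{s_1,\ldots,s_p}\widehat\Delta^2_{s_1,\ldots,s_p}a_{s_1}\cdots a_{s_p}.
\end{equation*}
For the same reason, $\beta_p^2=\sum\widehat\Delta^2_{s_1,\ldots,s_p}\lambda_{s_1}\cdots\lambda_{s_p}$. Define
\begin{equation*}
c_p(t):=\sum_{s_2,\ldots,s_p}\widehat\Delta^2_{t,s_2,\ldots,s_p}\lambda_{s_2}\cdots\lambda_{s_p},
\end{equation*}
with $c_1(t)=\widehat\Delta^2_t$. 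By \eqref{e.balanced_p1_condition}, $c_p(t)=c_p$ does not depend on $t$. Then $\beta_p^2=\sum_t\lambda_t c_p(t)=c_p\sum_t\lambda_t=c_p$.

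\emph{The degree-$p$ bound.} Fix $a\in\R_+^\sS$ and write $a_s=\lambda_s x_s$ with $x_s\ge0$. By the AM--GM inequality, $x_{s_1}\cdots x_{s_p}\le\frac1p\sum_{j=1}^p x_{s_j}^p$. Since the coefficients $\widehat\Delta^2\lambda_{s_1}\cdots\lambda_{s_p}$ are nonnegative, this gives
\begin{equation*}
\xi_p(a)\le\frac1p\sum_{j=1}^p\sum_{s_1,\ldots,s_p}\widehat\Delta^2_{s_1,\ldots,s_p}\lambda_{s_1}\cdots\lambda_{s_p}\,x_{s_j}^p .
\end{equation*}
By the symmetry of $\widehat\Delta^2$, each of the $p$ terms in $j$ is the same. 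Each one equals
\begin{equation*}
\sum_t\lambda_t\,x_t^p\,c_p(t)=\beta_p^2\sum_t\lambda_t x_t^p=\sum_t\lambda_t\,\beta_p^2\Ll(\frac{a_t}{\lambda_t}\Rr)^p .
\end{equation*}
For $p=1$ this is an equality, since the condition on $\widehat\Delta^2_t$ makes $\xi_1$ exactly $\beta_1^2\sum_t a_t$.

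\emph{Summation over $p$.} Summing over $p\ge1$ gives $\xi(a)\le\sum_t\lambda_t\xi_\star(a_t/\lambda_t)$, which is \eqref{e.balanced_comparison_ineq}. All terms are nonnegative, and the series converge under the standing absolute-summability assumption, so the interchange of sums is justified.

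The only step needing care is the symmetrization bookkeeping: one must check that the $j$-th term reduces to $c_p(t)$ with $t$ in the first slot, and that $c_p=\beta_p^2$ uses $\sum_s\lambda_s=1$. The rest is routine.
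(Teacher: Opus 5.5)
Your proposal is correct and follows essentially the same route as the paper: you symmetrize the coefficients, substitute $a_s=\lambda_s x_s$, apply AM--GM degree by degree, and use the balanced condition together with $\sum_s\lambda_s=1$ to identify the resulting coefficient with $\beta_p^2$. The only difference is that you spell out the symmetrization bookkeeping and the identity $c_p=\beta_p^2$, which the paper's one-line AM--GM step leaves implicit.
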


\begin{proof}
As observed in~\cite[Footnote~1]{bates2025balanced}, the coefficient array may be symmetrized without changing the model. In the present notation, this means that the symmetrized coefficients in~\eqref{e.symmetrized_delta_balanced} satisfy
\begin{align*}
    \sum_{s_1,\ldots,s_p\in\sS}\Delta^2_{s_1,\ldots,s_p}a_{s_1}\cdots a_{s_p}
    =\sum_{s_1,\ldots,s_p\in\sS}\widehat\Delta^2_{s_1,\ldots,s_p}a_{s_1}\cdots a_{s_p}
\end{align*}
for every $a\in\R^\sS$. The same identity with $a=\lambda$ gives
\begin{align*}
    \beta_p^2=\sum_{s_1,\ldots,s_p\in\sS}\widehat\Delta^2_{s_1,\ldots,s_p}\lambda_{s_1}\cdots\lambda_{s_p}.
\end{align*}
Thus we may work with the symmetric coefficients $\widehat\Delta^2$.

It remains to prove~\eqref{e.balanced_comparison_ineq}. Write $a_s=\lambda_s x_s$, with $x_s\geq0$. The case $p=1$ follows directly from~\eqref{e.balanced_p1_condition}. For $p\geq2$, the arithmetic-geometric mean inequality and the balanced condition give
\begin{align*}
&\sum_{s_1,\ldots,s_p\in\sS}\widehat\Delta^2_{s_1,\ldots,s_p}\lambda_{s_1}\cdots\lambda_{s_p}x_{s_1}\cdots x_{s_p} \\
&\qquad\leq \sum_{t\in\sS}\lambda_t x_t^p\sum_{s_2,\ldots,s_p\in\sS}\widehat\Delta^2_{t,s_2,\ldots,s_p}\lambda_{s_2}\cdots\lambda_{s_p}
=\beta_p^2\sum_{t\in\sS}\lambda_t x_t^p.
\end{align*}
Summing over $p\geq1$ yields
\begin{align*}
    \xi(a)&\leq \sum_{p\geq1}\beta_p^2\sum_{s\in\sS}\lambda_s\left(\frac{a_s}{\lambda_s}\right)^p
    =\sum_{s\in\sS}\lambda_s\xi_\star\left(\frac{a_s}{\lambda_s}\right).
\end{align*}
The identity $\xi(r\lambda)=\xi_\star(r)$ follows directly from the definition of $\beta_p^2$.
\end{proof}

Besides the bipartite model already discussed in the opening of this section, additional examples of the form in \eqref{e.balanced_power_series_xi} can be obtained by setting $\lambda_s = |\sS|^{-1}$ for every $s \in \sS$ and by making sure that for every $p$, $s_1, \ldots, s_p \in \sS$, and permutation $\pi$ on $\sS$, we have $\Delta^2_{s_1, \ldots, s_p} = \Delta^2_{\pi(s_1), \ldots, \pi(s_p)}$. See also \cite{issa2024existence} for further discussion on such permutation-invariant models.

For $r,r'\in\mcl Q_2$ and $a\in\mcl Q_\infty$, define the one-species Hamilton--Jacobi functional
\begin{equation}
\label{e.balanced_J_star}
    \mcl J^\star_{t,r}(r',a):=\psi_\circ(r')+\la r-r',a\ra_{L^2}+t\int_0^1\xi_\star(a(v))\,\d v.
\end{equation}
We also define
\begin{equation}
\label{e.balanced_u_def}
    u(t,r):=\sup_{a\in\mcl Q_\infty}\inf_{r'\in\mcl Q_\infty}\mcl J^\star_{t,r}(r',a),\qquad (t,r)\in\R_+\times\mcl Q_2.
\end{equation}
For $q=(q_s)_{s\in\sS}\in\mcl Q_2^\sS$, write
\begin{equation}
\label{e.balanced_q_lambda}
    q^\lambda:=\sum_{s\in\sS}\lambda_s q_s\in\mcl Q_2.
\end{equation}

\begin{proposition}[Balanced reduction]
\label{p.balanced_reduction}
Assume that $\xi$ is balanced with respect to $\lambda$ in the sense of Definition~\ref{d.balanced_comparison_structure}. Let $f$ be the limit free energy, or equivalently the Hopf formula in Theorem~\ref{t.main}. Then, for every $t\geq0$ and every $q\in\mcl Q_2^\sS$,
\begin{equation}
\label{e.balanced_comparison}
    u(t,q^\lambda)\leq f(t,q)\leq \sum_{s\in\sS}\lambda_s u(t,q_s).
\end{equation}
In particular,
\begin{equation}
\label{e.balanced_exact_reduction}
    f(t,0)=u(t,0),\qquad t\geq0.
\end{equation}
\end{proposition}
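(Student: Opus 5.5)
Both inequalities in \eqref{e.balanced_comparison} will come from the Hopf formula of Theorem~\ref{t.main}, written as $f(t,q)=\sup_{p}\inf_{q'}\mcl J_{t,q}(q',p)$, together with the analogous formula \eqref{e.balanced_u_def} for $u$. The plan is to transport test paths between the one-species and multi-species functionals using $\psi(q)=\sum_s\lambda_s\psi_\circ(q_s)$, the convexity of $\psi_\circ$ (Proposition~\ref{p.convex.psi.circ}), and the two structural conditions \eqref{e.balanced_diag_identity}--\eqref{e.balanced_comparison_ineq}. Once \eqref{e.balanced_comparison} is established, \eqref{e.balanced_exact_reduction} follows by taking $q=0$, since then $q^\lambda=0$ and $\sum_s\lambda_s u(t,0)=u(t,0)$ because $\sum_s\lambda_s=1$.

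\emph{Lower bound.} Given $a\in\mcl Q_\infty$, I will use $p:=a\lambda=(a\lambda_s)_s\in\mcl Q_\infty^\sS$ in the Hopf formula for $f$. By \eqref{e.balanced_diag_identity}, $\int\xi(p)=\int\xi_\star(a)$. Moreover
\[
\la q-q',p\ra_{L^2}=\la q^\lambda-(q')^\lambda,a\ra_{L^2},
\]
and by convexity of $\psi_\circ$ (Jensen with weights $\lambda_s$) we have $\psi(q')\ge\psi_\circ((q')^\lambda)$. Combining these gives $\mcl J_{t,q}(q',p)\ge \mcl J^\star_{t,q^\lambda}((q')^\lambda,a)\ge\inf_{r'}\mcl J^\star_{t,q^\lambda}(r',a)$. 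Taking the infimum over $q'$ and then the supremum over $a$ yields $u(t,q^\lambda)\le f(t,q)$.

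\emph{Upper bound.} Fix $p\in\mcl Q_\infty^\sS$. For each $s$, I will choose $q'_s$ of the form $r'_s$ used in the one-species problem with the path $a_s:=p_s/\lambda_s\in\mcl Q_\infty$. The infimum over $q'=(q'_s)$ factorizes species by species, because $\psi$ is a sum over species and $\la q-q',p\ra=\sum_s\lambda_s\la q_s-q'_s,a_s\ra$. Using \eqref{e.balanced_comparison_ineq} pointwise in $v$ gives
\[
t\int\xi(p)\le\sum_s\lambda_s\, t\int\xi_\star(a_s).
\]
Hence
\[
\inf_{q'}\mcl J_{t,q}(q',p)\le\sum_s\lambda_s\inf_{r'}\mcl J^\star_{t,q_s}(r',a_s)\le\sum_s\lambda_s u(t,q_s).
\]
Taking the supremum over $p$ gives the upper bound.

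The main point requiring care is the domain of $\xi_\star$. The paths $a_s=p_s/\lambda_s$ are nonnegative, so \eqref{e.balanced_comparison_ineq}, which is stated on $\R_+^\sS$, applies, and $a_s$ is increasing and bounded, so it lies in $\mcl Q_\infty$. One must also check that the Hopf formula for $u$ is legitimate. Here I will either simply take \eqref{e.balanced_u_def} as the definition of $u$, as done above, so that nothing more is needed, or note that it is the one-species instance of Theorem~\ref{t.main}. Finally, the case $t=0$ is covered by the same computation.
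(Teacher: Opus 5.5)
Your proposal is correct and follows the paper's proof essentially step for step. For the lower bound, you insert $p=\lambda a$ into the Hopf formula and combine Jensen's inequality for the convex $\psi_\circ$ with $\xi(\lambda a)=\xi_\star(a)$; for the upper bound, you set $a_s=p_s/\lambda_s$, apply \eqref{e.balanced_comparison_ineq} pointwise, factorize the infimum over $q'$ species by species, and obtain \eqref{e.balanced_exact_reduction} by taking $q=0$.
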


\begin{proof}
By Theorem~\ref{t.main},
\begin{equation}
\label{e.balanced_f_hopf}
    f(t,q)=\sup_{p\in\mcl Q_\infty^\sS}\inf_{q'\in\mcl Q_\infty^\sS}\left\{\psi(q')+\la q-q',p\ra_\cH+t\int_0^1\xi(p(v))\,\d v\right\}.
\end{equation}

We first prove the lower bound. Fix $a\in\mcl Q_\infty$ and set $p_s=\lambda_s a$ for every $s\in\sS$. For every $q'\in\mcl Q_\infty^\sS$, Jensen's inequality and the convexity of $\psi_\circ$ from Proposition~\ref{p.convex.psi.circ} give
\begin{equation}
\label{e.balanced_jensen_psi}
    \psi(q')=\sum_{s\in\sS}\lambda_s\psi_\circ(q'_s)\geq \psi_\circ\left(\sum_{s\in\sS}\lambda_s q'_s\right)=\psi_\circ((q')^\lambda).
\end{equation}
Using also $\xi(\lambda a)=\xi_\star(a)$ due to~\eqref{e.balanced_diag_identity}, we obtain
\begin{align*}
&\psi(q')+\la q-q',p\ra_\cH+t\int_0^1\xi(p(v))\,\d v \\
&\qquad\geq \psi_\circ((q')^\lambda)+\la q^\lambda-(q')^\lambda,a\ra_{L^2}+t\int_0^1\xi_\star(a(v))\,\d v
=\mcl J^\star_{t,q^\lambda}((q')^\lambda,a).
\end{align*}
Taking the infimum over $q'\in\mcl Q_\infty^\sS$, and using the fact that $(q')^\lambda$ ranges over all of $\mcl Q_\infty$ by taking $q'_s=r'$ for every $s$, gives
\begin{equation*}
    \inf_{q'\in\mcl Q_\infty^\sS}\mcl J_{t,q}(q',p)\geq \inf_{r'\in\mcl Q_\infty}\mcl J^\star_{t,q^\lambda}(r',a).
\end{equation*}
Taking the supremum over $a\in\mcl Q_\infty$ yields $f(t,q)\geq u(t,q^\lambda)$.

We now prove the upper bound. Fix $p\in\mcl Q_\infty^\sS$ and set $a_s=p_s/\lambda_s$. By~\eqref{e.balanced_comparison_ineq}, for almost every $v\in[0,1)$,
\begin{equation*}
    \xi(p(v))\leq \sum_{s\in\sS}\lambda_s\xi_\star(a_s(v)).
\end{equation*}
Therefore, for every $q'\in\mcl Q_\infty^\sS$,
\begin{align*}
&\psi(q')+\la q-q',p\ra_\cH+t\int_0^1\xi(p(v))\,\d v \\
&\qquad\leq \sum_{s\in\sS}\lambda_s\left\{\psi_\circ(q'_s)+\la q_s-q'_s,a_s\ra_{L^2}+t\int_0^1\xi_\star(a_s(v))\,\d v\right\}.
\end{align*}
Taking the infimum over $q'$ and using that the variables $q'_s$ are independent on the right side, we obtain
\begin{equation*}
    \inf_{q'\in\mcl Q_\infty^\sS}\mcl J_{t,q}(q',p)\leq \sum_{s\in\sS}\lambda_s\inf_{r'\in\mcl Q_\infty}\mcl J^\star_{t,q_s}(r',a_s).
\end{equation*}
Taking the supremum over $p\in\mcl Q_\infty^\sS$, or equivalently over the collection $(a_s)_{s\in\sS}$, gives
\begin{equation*}
    f(t,q)\leq \sum_{s\in\sS}\lambda_s\sup_{a_s\in\mcl Q_\infty}\inf_{r'\in\mcl Q_\infty}\mcl J^\star_{t,q_s}(r',a_s)=\sum_{s\in\sS}\lambda_su(t,q_s).
\end{equation*}
This proves~\eqref{e.balanced_comparison}. Taking $q=0$ gives~\eqref{e.balanced_exact_reduction}.
\end{proof}

\begin{corollary}[One-species formula for balanced models]
\label{c.balanced_one_species_formula}
Assume that $\xi$ is balanced with respect to $\lambda$. Then, for every $t\geq0$,
\begin{equation}
\label{e.balanced_simple_formula}
    \lim_{N\to\infty}\bar F_N(t,0)=\sup_{a\in\mcl Q_\infty}\inf_{r\in\mcl Q_\infty}\left\{\psi_\circ(r)-\la r,a\ra_{L^2}+t\int_0^1\xi_\star(a(v))\,\d v\right\}.
\end{equation}
In particular, for the explicit balanced models in Lemma~\ref{l.power_series_balanced_models}, the effective one-species covariance function is
\begin{equation}
\label{e.balanced_effective_covariance}
    \xi_\star(x)=\sum_{p\geq1}\beta_p^2x^p,\qquad \beta_p^2=\sum_{s_1,\ldots,s_p\in\sS}\Delta^2_{s_1,\ldots,s_p}\lambda_{s_1}\cdots\lambda_{s_p}.
\end{equation}
\end{corollary}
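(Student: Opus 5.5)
The plan is to specialize Proposition~\ref{p.balanced_reduction} at $q=0$ and then unfold the definition of $u$. First, $q=0$ means every coordinate $q_s$ is the zero path. By Theorem~\ref{t.main}, the limit $\lim_{N\to\infty}\bar F_N(t,0)$ exists and equals $f(t,0)$, where $f$ is given by the Hopf formula \eqref{e.main.1}. Next, \eqref{e.balanced_exact_reduction} gives $f(t,0)=u(t,0)$. To recall why both sides of \eqref{e.balanced_comparison} coincide here: $q^\lambda=\sum_s\lambda_s\cdot 0=0$, and $\sum_s\lambda_s u(t,0)=u(t,0)$ because $\sum_s\lambda_s=1$. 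This normalization holds since $\lambda_\infty$ is the limit of the proportion vectors $\lambda_N$, each of which sums to one.

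Then I would evaluate $u(t,0)$ from \eqref{e.balanced_u_def} and \eqref{e.balanced_J_star} with $r=0$. The inner product becomes $\la 0-r',a\ra_{L^2}=-\la r',a\ra_{L^2}$, so $\mcl J^\star_{t,0}(r',a)=\psi_\circ(r')-\la r',a\ra_{L^2}+t\int_0^1\xi_\star(a(v))\,\d v$. Renaming $r'$ as $r$ gives exactly the right-hand side of \eqref{e.balanced_simple_formula}. The case $t=0$ needs no separate treatment, since Theorem~\ref{t.main} and Proposition~\ref{p.balanced_reduction} both cover all $t\ge0$.

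For the second claim, Lemma~\ref{l.power_series_balanced_models} shows that the models of the form \eqref{e.balanced_power_series_xi} satisfying \eqref{e.balanced_p1_condition} are balanced with $\xi_\star$ given by \eqref{e.balanced_beta_p}. Substituting this $\xi_\star$ into \eqref{e.balanced_simple_formula} yields \eqref{e.balanced_effective_covariance}.

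I expect no real obstacle, since each step is a direct substitution into results already established. The one point that needs checking is that $\sum_s\lambda_s=1$, which is what makes the upper bound in \eqref{e.balanced_comparison} collapse to $u(t,0)$ at $q=0$.
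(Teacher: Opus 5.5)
Your proposal is correct and follows the same route as the paper: use Theorem~\ref{t.main} to identify the limit with $f(t,0)$, use Proposition~\ref{p.balanced_reduction} to get $f(t,0)=u(t,0)$, and read off \eqref{e.balanced_simple_formula} from the definition \eqref{e.balanced_u_def} at $r=0$. Your check that $\sum_s\lambda_s=1$, which is what collapses the sandwich at $q=0$, is correct and is left implicit in the paper.
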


\begin{proof}
By Theorem~\ref{t.main}, the left side of~\eqref{e.balanced_simple_formula} is $f(t,0)$. By Proposition~\ref{p.balanced_reduction}, $f(t,0)=u(t,0)$. The definition~\eqref{e.balanced_u_def} of $u$ at $r=0$ is precisely~\eqref{e.balanced_simple_formula}.
\end{proof}

\begin{remark}
\label{r.balanced_single_species_interpretation}
Equivalently, the right-hand side of~\eqref{e.balanced_simple_formula} is the limiting free energy of the single-species centered Ising spin glass with covariance function $\xi_\star$. In other words, if $(H_N^\star(\sigma))_{\sigma\in\{-1,1\}^N}$ is the centered Gaussian field with covariance
\begin{equation*}
    \E\Ll[H_N^\star(\sigma)H_N^\star(\sigma')\Rr]=N\xi_\star\left(\frac1N\sum_{i=1}^N\sigma_i\sigma'_i\right),
\end{equation*}
then
\begin{equation*}
    \lim_{N\to\infty}\bar F_N(t,0)=\lim_{N\to\infty}\left\{-\frac1N\E\log\sum_{\sigma\in\{-1,1\}^N}2^{-N}\exp\left(\sqrt{2t}H_N^\star(\sigma)-Nt\xi_\star(1)\right)\right\}.
\end{equation*}
This shows that the lower bound in~\cite[Theorem~1.3]{bates2025balanced} is sharp for balanced models with centered Ising spins in the present setting.
\end{remark}

\appendix

\section{Theorem~\ref{t.crit.up.low2} in general vector spin glasses}
\label{a.vector}

The setting in which we proved Theorem~\ref{t.crit.up.low2} is given in~\eqref{e.shorthand_vec}. This is a special vector spin glass model, where the covariance of the Hamiltonian depends only on the diagonal entries of the overlap matrix. It is not the most general vector spin glass setting considered in~\cite{chen2025free}. In this section, we describe the modifications needed to obtain a version of Theorem~\ref{t.crit.up.low2} for the general vector spin model described in~\cite[Sections~1.1 and 1.2]{chen2025free}.

We first discuss the relevant path space. In the special case~\eqref{e.shorthand_vec}, the paths are $\R^\D_+$-valued nondecreasing paths. For the general vector spin model, the paths are matrix-valued. We again denote by $\D$ the dimension of a single spin, and assume that the distribution of a single spin is supported on the unit ball in $\R^\D$. Let $\S^\D$ be the space of $\D\times\D$ real symmetric matrices, and let $\S^\D_+$ be the subset of positive semidefinite matrices and $\S^\D_{++}$ the subset of positive definite matrices. We equip $\S^\D$ with the Frobenius norm. Let
\begin{equation}
\label{e.def.mclQ}
\mcl Q := \Ll\{ q : [0,1) \to \S^\D_+ \ : \ q \text{ is right-continuous with left limits, and is increasing} \Rr\},
\end{equation}
where ``$q$ is increasing'' means that, for every $u,v \in [0,1)$,
\begin{equation*}
u \le v \quad \implies \quad q(u) \le q(v),
\end{equation*}
and the latter inequality means that $q(v)-q(u)\in\S^\D_+$. For every $r \in [1,\infty]$, we set $\mcl Q_r := \mcl Q \cap L^r([0,1]; \S^\D)$. For any matrix $a\in\S^\D$, we denote by $\lambda_{\max}(a)$ and $\lambda_{\min}(a)$ its largest and smallest eigenvalues, respectively. For each $a\in\S^\D_+$, define
\begin{align*}
    \ellipt(a) := \frac{\lambda_{\max}(a)}{\lambda_{\min}(a)}.
\end{align*}
For every $c > 0$, we write
\begin{multline}
\label{e.def.C_c}
    \mcl Q_{\uparrow,c} :=\big \{{q}\in\mcl Q_2 \ \mid \ q(0) = 0 \ \text{ and } \ \forall u \le v \in [0,1), \quad q(v) - q(u) \ge c (v-u) \id \\
    \text{and } \quad \ellipt(q(v) - q(u)) \le c^{-1} \big\},
\end{multline}
where $\id$ denotes the identity matrix.

The first main modifications occur in the cavity computations in Section~\ref{s.cavity}, which correspond to~\cite[Section~6]{chen2025free}. In the present general setting, the self-overlap $\sigma\sigma^\intercal$ is no longer constantly equal to $\vecone$. Thus, in the definition of $\Delta_N$ in~\eqref{e.Delta_N(x)}, we need additional perturbation terms that force the self-overlap to concentrate. These terms are already included in the definition of $\Delta_N$ in~\cite[(6.32)]{chen2025free}. Proposition~\ref{p.cavity_lim} is already a modification of~\cite[Proposition~6.10]{chen2025free}. With the modification described below Proposition~\ref{p.cavity_lim}, we obtain the corresponding strengthened version of~\cite[Proposition~6.10]{chen2025free}, allowing the additional varying parameter $q_k$ as in Proposition~\ref{p.cavity_lim}.

The arguments in Section~\ref{s.crit_pt_bdd} are new. The only modifications needed here are to handle the extra technical condition involving $\ellipt$ in the definition of $\mcl Q_{\upa,c}$. We start by defining $\{q_i\}_{i\in\N}$ as in~\eqref{e.q_i=}, but now with $\{e_d\}_{d\in \{1,\dots,\frac{\D(\D+1)}{2}\}}$ chosen to be matrices in $\S^\D_+$ that span $\S^\D$. We then need the following technical lemma.

\begin{lemma}\label{l.q+q'inQ_c}
If $q\in \mcl Q_{\uparrow,c}$ and $q'\in\mcl Q$ with $|\dot q'|_{L^\infty}\leq a$ for some constants $c>a>0$, then $q+q'\in\mcl Q_{\uparrow,\frac{c-a}{1+a}}$.
\end{lemma}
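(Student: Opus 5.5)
The plan is to check directly, one by one, the defining conditions of $\mcl Q_{\uparrow,c'}$ in \eqref{e.def.C_c} with $c' := \frac{c-a}{1+a}$. There are three ingredients: monotonicity of $q'$, the Lipschitz bound $|\dot q'|_{L^\infty}\le a$, and Weyl-type eigenvalue inequalities for sums of positive semidefinite matrices. Note that $0<c'\le c$ because $c>a>0$. Fix $u\le v$ in $[0,1)$ and write $\Delta := q(v)-q(u)$ and $\Delta' := q'(v)-q'(u)$.

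\emph{Structural conditions.} Since $q'$ is bounded by the Lipschitz bound, $q+q'\in\mcl Q_2$. For the condition $(q+q')(0)=0$ we need $q'(0)=0$. The general statement as written does not give this, so I would either add it as an implicit hypothesis or verify it in the intended application. There the perturbations $q_N(y)$ are built from the paths in \eqref{e.q_i=}, and $(\one_{[r,1)}*\varphi_\eps)(0)=0$ because $\supp\varphi\subset(0,1)$. So those perturbations vanish at $0$, and this is the case I would state.

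\emph{Lower bound on increments.} Since $q'$ is increasing, $\Delta'\in\S^\D_+$. Hence
\begin{equation*}
\Delta+\Delta'\ge \Delta\ge c(v-u)\id\ge c'(v-u)\id .
\end{equation*}
In particular $\lambda_{\min}(\Delta+\Delta')\ge\lambda_{\min}(\Delta)\ge c(v-u)$.

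\emph{Ellipticity.} I will use $\lambda_{\max}(\Delta+\Delta')\le\lambda_{\max}(\Delta)+\lambda_{\max}(\Delta')$ together with the fact that the Frobenius norm dominates the operator norm. This gives
\begin{equation*}
\lambda_{\max}(\Delta')\le|\Delta'|\le\int_u^v|\dot q'|\le a(v-u).
\end{equation*}
Dividing by the lower bound on $\lambda_{\min}$ from the previous step gives
\begin{equation*}
\ellipt(\Delta+\Delta')\le\frac{\lambda_{\max}(\Delta)}{\lambda_{\min}(\Delta)}+\frac{a(v-u)}{c(v-u)}\le \frac1c+\frac ac=\frac{1+a}{c}\le\frac{1+a}{c-a}=\frac1{c'} .
\end{equation*}
The case $u=v$ is trivial, and the bound on $\ellipt(\Delta)$ is the hypothesis $q\in\mcl Q_{\uparrow,c}$.

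The only delicate point is the $q'(0)=0$ issue discussed above; every other step is an elementary matrix inequality. The stated constant $\frac{c-a}{1+a}$ is in fact weaker than what this argument yields, which leaves room to spare.
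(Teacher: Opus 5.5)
Your proof is correct and follows the paper's route (monotonicity of $q'$ for the increment lower bound, then Weyl's subadditivity of $\lambda_{\max}$ together with the Lipschitz bound on $q'$ for the ellipticity), with one difference: you use positive semidefiniteness of $\Delta'$ to get $\lambda_{\min}(\Delta+\Delta')\ge\lambda_{\min}(\Delta)$, whereas the paper only uses $\lambda_{\min}(\Delta')\ge -a(v-u)$. That cruder bound is why the paper needs $c>a$ and ends with the ellipticity bound $\frac{1+a}{c-a}$ rather than your sharper $\frac{1+a}{c}$. Your caveat that $(q+q')(0)=0$ requires $q'(0)=0$ is also right: the paper's proof does not check this condition, and it does hold for the mollified paths in \eqref{e.q_i=} because $\supp\varphi\subset(0,1)$.
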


\begin{proof}
Fix any $0\leq u<v<1$. The definition of $\mcl Q_{\uparrow,c}$ gives
\begin{align}\label{e.q(v)-q(u)}
    q(v) -q(u) \geq c(v-u) \id \quad\text{and}\quad \ellipt(q(v)-q(u)) \leq c^{-1}.
\end{align}
Since $q'\in\mcl Q$, we immediately have
\begin{align}\label{e.q+q'-q-q'>}
    (q(v)+q'(v))-(q(u)+q'(u)) \geq q(v)-q(u)\geq c(v-u)\id .
\end{align}
By Weyl's inequalities, $\lambda_{\max}$ and $\lambda_{\min}$ are respectively sub-additive and super-additive. Therefore, given two symmetric matrices $A,B \in \S^\D$ such that $A+B \in \S^\D_{++}$, $\lambda_{\max}(A) + \lambda_{\max}(B) \geq 0$, and $\lambda_{\min}(A) + \lambda_{\min}(B) > 0$, we have
\begin{equation} \label{e.ellipt bounds}
    \ellipt(A+B) \leq \frac{\lambda_{\max}(A) + \lambda_{\max}(B)}{\lambda_{\min}(A) + \lambda_{\min}(B)}.
\end{equation}
The bound $|\dot q'|_{L^\infty}\leq a$ implies
\begin{align}\label{e.kappa(v)-kappa(u)<}
    q'(v) - q'(u) \leq a(v-u)\id.
\end{align}
Thus,
\begin{align}\label{e.lambda(kappa(v)-kappa(u))}
    \lambda_{\max}(q'(v)-q'(u)),\ \lambda_{\min}(q'(v)-q'(u)) \in [-a(v-u), a(v-u)].
\end{align}
It follows that
\begin{align*}
    &\ellipt((q(v)+q'(v))-(q(u)+q'(u))) \stackrel{\eqref{e.ellipt bounds}\eqref{e.lambda(kappa(v)-kappa(u))}}{\leq} \frac{\lambda_{\max}(q(v) - q(u)) + a(v-u)}{\lambda_{\min}(q(v) - q(u)) - a (v-u)} \\
    &\stackrel{\eqref{e.q(v)-q(u)}}{\leq} \frac{c^{-1}(\lambda_{\min}(q(v) - q(u))-a(v-u)) + (c^{-1}+1)a(v-u)}{\lambda_{\min}(q(v) - q(u)) - a (v-u)} \leq c^{-1} + \frac{(c^{-1}+1)a(v-u)}{\lambda_{\min}(q(v) - q(u)) - a (v-u)} \\
    &\stackrel{\eqref{e.q(v)-q(u)}}{\leq} c^{-1} + \frac{(c^{-1}+1)a(v-u)}{c(v-u) - a (v-u)} =c^{-1} +\frac{(c^{-1}+1)a}{c - a} = \frac{1+a}{c-a}.
\end{align*}
Together with~\eqref{e.q+q'-q-q'>}, this shows that $q+q'\in\mcl Q_{\upa,\frac{c-a}{1+a}}$.
\end{proof}

We also need to replace Lemma~\ref{l.d^2/dr^2F<b} with the following result.

\begin{lemma}\label{l.d^2/dr^2F<b2}
Let $t>0$ and $q\in\mcl Q_{\upa,c}$ for some $c>0$. Then, there are constants $b,N_0,r_0>0$ such that
\begin{align*}
    \frac{\d^2}{\d r^2}\bar F_N\Ll(t,\ q+a_i r q_i+N^{-\gamma/2}\sum_{j\neq i}a_j y_jq_j \Rr) \leq b,\quad\forall i\in\N,\  N\geq N_0, \  r\in[0,r_0).
\end{align*}
Moreover, the same statement holds for $\tilde F_N^x$ and $\bar F^x_{N+1}$ in place of $\bar F_N$, uniformly in $x\in[0,3]^{\N^4}$.
\end{lemma}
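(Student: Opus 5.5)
The argument follows the proof of Lemma~\ref{l.d^2/dr^2F<b}. The one new point is to check that every path along the line stays in a fixed cone $\mcl Q_{\upa,c'}$. Fix $i$, set $q_*:=q+N^{-\gamma/2}\sum_{j\neq i}a_jy_jq_j$ and $\kappa:=a_iq_i$, and write $F(r):=\bar F_N(t,q_*+r\kappa)$. By~\eqref{e.a_i=}, each $a_j\dot q_j$ has $L^\infty$ norm at most $3^{-1}2^{-j}$. Hence, for $y\in[0,3]^\N$ and $r\in[0,r_0)$, the increasing path
\[
q':=r\kappa+N^{-\gamma/2}\sum_{j\neq i}a_jy_jq_j
\]
satisfies $|\dot q'|_{L^\infty}\le r_0+N^{-\gamma/2}=:a$. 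The mollified paths $q_j=(\one_{[r,1)}*\varphi_\eps)e_d$ are nondecreasing with values in $\S^\D_+$, so $q'\in\mcl Q$. Moreover $q'(0)=0$, because $\supp\varphi\subset(0,1)$ forces $\one_{[r,1)}*\varphi_\eps(0)=0$. I will choose $r_0$ and $N_0$ so that $a\le c/2$. Lemma~\ref{l.q+q'inQ_c} then gives
\[
q_*+r\kappa\in\mcl Q_{\upa,c'},\qquad c':=\frac{c-a}{1+a}\ge \frac{c/2}{1+c/2}>0,
\]
uniformly in $i$, $N\ge N_0$, $y$ and $r\in[0,r_0)$. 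One also has to check that $q_*+r\kappa\in\mcl Q_2$, but this is immediate from the boundedness of $q'$.

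Next comes the semi-concavity step, exactly as before. I apply Proposition~\ref{p.semi-concave} in its general vector-spin form from~\cite{chen2025free}, which is valid on $\mcl Q_{\upa,c'}$ with the ellipticity condition. I take the two points $q_*+r\kappa$ and $q_*+(r+\eps)\kappa$ and the weight $1/2$; the time variable is held fixed, so no condition on $t$ is needed, just as in the footnote to Lemma~\ref{l.d^2/dr^2F<b}. This gives
\[
\tfrac12F(r)+\tfrac12F(r+\eps)-F\Ll(r+\tfrac\eps2\Rr)\le Cc'^{-2}\eps^2|\dot\kappa|^2_{L^2}.
\]
Dividing by $\eps^2$ and letting $\eps\to0$ yields $F''(r)\le 4Cc'^{-2}|a_i\dot q_i|^2_{L^2}\le 4Cc'^{-2}=:b$. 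Here I use~\eqref{e.a_i=} again, and $F$ is twice differentiable because it is a finite-$N$ free energy. For this step I need $r+\eps<r_0$, so that both points lie in the cone; that holds for small $\eps$ when $r<r_0$.

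For $\tilde F^x_N$ and $\bar F^x_{N+1}$ I will repeat the argument verbatim. The semi-concavity estimate holds uniformly in $x$ by Remark~\ref{r.F^x_N_der}, since the perturbation enters only through the Gibbs measure, just as in Lemma~\ref{l.d^2/dr^2F<b}.

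The main obstacle I expect is making sure that the semi-concavity constant from~\cite{chen2025free} depends only on $c'$ and not on $N$, $x$, or the particular paths, and that the ellipticity bound is preserved uniformly. The first claim of Lemma~\ref{l.q+q'inQ_c} handles the ellipticity. The restrictions to $N\ge N_0$ and $r<r_0$ exist precisely so that $a<c$ there. For the later use in Lemma~\ref{l.E_yD_N,i(x,y)<}, the substitution $r=N^{-\gamma/2}y_i$ with $y_i\le3$ needs $3N^{-\gamma/2}<r_0$, which holds after enlarging $N_0$.
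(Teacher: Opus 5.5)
Your proof is correct and follows essentially the same route as the paper. You bound $|\dot q'|_{L^\infty}\le r_0+N^{-\gamma/2}$ via \eqref{e.a_i=}, use Lemma~\ref{l.q+q'inQ_c} to place $q_*+r\kappa$ in a fixed cone $\mcl Q_{\upa,c'}$ for $N\ge N_0$ and $r<r_0$, and then pass from the local semi-concavity in $q$ alone to the second difference quotient; your explicit check that $q'(0)=0$ fills a detail the paper leaves implicit, and the factor $4$ in $F''\le 4Cc'^{-2}$ should be $8$ given your displayed inequality, which is immaterial.
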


\begin{proof}
Writing $q' = a_i r q_i+N^{-\gamma/2}\sum_{j\neq i}a_j y_jq_j$, we obtain from~\eqref{e.a_i=} that $|\dot q'|_{L^\infty}\leq N^{-\gamma/2}+ r$. Hence, by Lemma~\ref{l.q+q'inQ_c}, we can find $c_0, N_0, r_0>0$ such that
\begin{align*}
    q+a_i r q_i+N^{-\gamma/2}\sum_{j\neq i}a_j y_jq_j \in \mcl Q_{\upa,c_0},\qquad\forall i \in\N,\ N\geq N_0,\ r\in[0,r_0).
\end{align*}

Fix any $i\in\N$ and write $q_* = q+N^{-\gamma/2}\sum_{j\neq i}a_j y_jq_j $ and $\kappa = a_iq_i$. The preceding display ensures that $q_*+r\kappa\in\mcl Q_{\upa,c_0}$ for every $N\geq N_0$ and $r<r_0$. Set $F(r)=\bar F_N(t,q_*+r\kappa)$. For $\eps>0$ small, applying the local semi-concavity result for $\bar F_N$ from~\cite[Proposition~3.8]{chen2025free}\footnote{This proposition gives semi-concavity jointly in $(t,q)$, which requires an additional condition on $t$. Here, $t$ is fixed and we only need semi-concavity in $q$, so no such condition is needed on $t$.} with $\frac{1}{2}, q_*+r\kappa, q_*+(r+\eps)\kappa, c_0$ substituted for $r, q,q',c$ therein, we get
\begin{align*}
    \tfrac{1}{2}F(r)+\tfrac{1}{2}F(r+\eps)- F\Ll(r+\tfrac{\eps}{2}\Rr)\leq \tfrac{C}{4} c_0^{-2}\eps^2\Ll|\dot\kappa\Rr|_{L^2}^2
\end{align*}
for some absolute constant $C>0$. Dividing both sides by $\eps^2$, sending $\eps\to0$, and using $\kappa=a_iq_i$, we obtain $\frac{\d^2}{\d r^2}F(r)\leq 2Cc_0^{-2}\Ll|a_i\dot q_i\Rr|_{L^2}^2 \leq 2Cc_0^{-2}$, where the last inequality follows from~\eqref{e.a_i=}. This gives the desired result.

The same estimates hold for $\tilde F_N^x$ and $\bar F^x_{N+1}$ for the same reason as in Remark~\ref{r.F^x_N_der}.
\end{proof}

With this lemma, one can prove the corresponding version of Lemma~\ref{l.E_yD_N,i(x,y)<}, with the bound now holding only for $N\geq N_0$ for some $N_0$ possibly larger than the one in Lemma~\ref{l.d^2/dr^2F<b2}. Indeed, to obtain~\eqref{e.f''<}, we need to apply Lemma~\ref{l.d^2/dr^2F<b2} with $N^{-\gamma/2}y_i$ substituted for $r$, which requires $N^{-\gamma/2}y_i<r_0$. Since $y_i\in[0,3]$, it is enough to enlarge $N_0$ so that $3N_0^{-\gamma/2}<r_0$.

The remaining proofs in Section~\ref{s.crit_pt_bdd} are unchanged. Therefore, we obtain the following version of Theorem~\ref{t.crit.up.low2}.

\begin{theorem}
\label{t.crit.up.low.vect}
Assume the setting of~\cite{chen2025free}. For every $t > 0$ and $q \in \mcl Q_1$, there exist $p^+, p^- \in \mcl Q_{\infty}$ satisfying $\Ll|p^\pm\Rr|_{L^\infty}\leq 1$ such that
\begin{equation*}
p^+=\partial_q\psi(q+t\nabla\xi(p^+)), \qquad p^-=\partial_q\psi(q+t\nabla\xi(p^-)),
\end{equation*}
and
\begin{equation*}
\sP_{t,q}(p^-) \le \liminf_{N \to \infty} \bar F_N(t,q) \le \limsup_{N\to\infty} \bar F_N(t,q)\leq \sP_{t,q}(p^+).
\end{equation*}
\end{theorem}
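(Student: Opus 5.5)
The plan is to repeat the proof of Theorem~\ref{t.crit.up.low2} essentially verbatim, now in the general vector setting of \cite{chen2025free}, with the matrix-valued cone \eqref{e.def.mclQ} replacing $\mcl Q^\D$. The derivative bounds from \cite[Proposition~5.1]{chen2025free} give $|\dr_q \bar F_N|_{L^\infty}\le 1$ (spins lie in the unit ball), and this is the source of the bound $|p^\pm|_{L^\infty}\le1$.

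First I would establish the upper bound for $q\in\mcl Q_{\uparrow,c}\cap\mcl Q_\infty$. The probing family $\{q_i\}$ is built as in \eqref{e.q_i=}, with $e_d$ ranging over a spanning family of $\S^\D_+$. The analogue of \eqref{e.p=p'iff<q_i,p>=<q_i,p'>} still holds, since these directions separate matrix-valued $L^2$ paths. The weights $a_i$ and the perturbation $q_N(y)$ are defined exactly as in \eqref{e.a_i=}--\eqref{e.q_N(y)=}.

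The next step replaces Lemma~\ref{l.E_yD_N,i(x,y)<}. Lemma~\ref{l.d^2/dr^2F<b2} supplies the one-sided second-derivative bound; here Lemma~\ref{l.q+q'inQ_c} keeps the perturbed paths inside some $\mcl Q_{\uparrow,c_0}$, which is the only genuinely new point. The bound on first derivatives and Lemma~\ref{l.|F-F|<CN^-gamma}, whose matrix analogue follows from \cite[Lemmas~6.4, 6.5]{chen2025free}, are also used. Together these feed into Lemma~\ref{l.semi-concav_L1}, giving $\E_y\mcl D_{N,i}\le Ca_i^{-2}N^{-\gamma/2}$ for all $N\ge N_0$, with $N_0$ enlarged so that $3N_0^{-\gamma/2}<r_0$.

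Lemma~\ref{l.choose(x_N,y_N)} then goes through unchanged. Its $\Delta_N$ is now the one from \cite[(6.32)]{chen2025free}, which includes the self-overlap concentration terms, and the uniform-in-$q$ version of Proposition~\ref{p.perturbation} follows by the same inspection. Lemma~\ref{l.F_N_as_sum_A_N} (telescoping) is also unchanged, using \cite[Proposition~6.1]{chen2025free} uniformly in $q$.

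The main obstacle is the matrix analogue of Proposition~\ref{p.cavity_lim}, with a moving path $q_k\to q$. I would take \cite[Proposition~6.10]{chen2025free} and modify only the step \cite[(6.45)]{chen2025free}, as explained below Proposition~\ref{p.cavity_lim}. The justification is that the law of $R_\alpha$ is invariant, that the concentrated self-overlap converges along a subsequence, and that $q_k$ converges almost everywhere to the increasing $q$. This yields a $p\in\mcl Q_\infty$ with $|p|\le1$ such that $-A_{N_k}\to\sP_{t,q}(p)$ and the two derivative limits \eqref{e.lim<k,>=<k,p>2}--\eqref{e.lim<k,>=dqpsi2} hold. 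Since $\mcl D_N(x_N,y_N)\to0$, these limits coincide on every $q_i$, which gives $p=\dr_q\psi(q+t\nabla\xi(p))$. The chain \eqref{e.limsupF<-limA_k2} then gives the upper bound.

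Finally, for general $q\in\mcl Q_1$, I would approximate by $q_n\in\mcl Q_{\uparrow,c_n}\cap\mcl Q_\infty$, for instance $q_n=q^{(n)}+n^{-1}r\,\id$ with $q^{(n)}$ a smooth truncation. I would extract $L^1$-convergent $p_n$ via the matrix version of Lemma~\ref{l.compact_embed_paths}, and pass to the limit using the Lipschitz bounds on $\bar F_N$ together with the continuity of $\sP$ and $\dr_q\psi$ (Lemma~\ref{l.continuity_Parisi_functional} in its matrix form, from \cite{chen2025free}). The lower bound is the same argument, run with the sequence $(x'_N,y'_N)$ of Lemma~\ref{l.choose(x_N,y_N)} and all inequalities reversed.
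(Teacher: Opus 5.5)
Your proposal is correct and follows the paper's own argument. You rerun the proof of Theorem~\ref{t.crit.up.low2} with probing directions $e_d\in\S^\D_+$ spanning $\S^\D$, the self-overlap-concentrating $\Delta_N$ of \cite[(6.32)]{chen2025free}, the moving-path version of \cite[Proposition~6.10]{chen2025free}, and Lemmas~\ref{l.q+q'inQ_c} and~\ref{l.d^2/dr^2F<b2} (with $N_0$ enlarged so that $3N_0^{-\gamma/2}<r_0$) to handle the ellipticity constraint in $\mcl Q_{\uparrow,c}$. One detail to tidy in your final approximation: membership in $\mcl Q_{\uparrow,c_n}$ also needs $q_n(0)=0$ and a Lipschitz $q^{(n)}$ for the ellipticity bound, which a time-truncation followed by a one-sided mollification (after extending $q$ by $0$ to negative times) provides.
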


\section{A Hamilton--Jacobi comparison proof for balanced models}
\label{a.balanced_hj_comparison}

In this appendix, we sketch an alternative proof of the single-species interpretation in Remark~\ref{r.balanced_single_species_interpretation}. The argument is closer in spirit to the Hamilton--Jacobi comparison method used in~\cite[Section~7]{issa2024existence}. It does not use the Hopf formula, nor does it use Theorem~\ref{t.main}. Throughout this appendix, we assume that $\xi$ is balanced with respect to $\lambda$ in the sense of Definition~\ref{d.balanced_comparison_structure}, and that $\xi_\star$ is an admissible one-species covariance function. This latter condition is automatic for the power-series examples in Lemma~\ref{l.power_series_balanced_models}.

Let $f:\R_+\times\mcl Q_2^\sS\to\R$ be the Lipschitz viscosity solution of
\begin{align}
\label{e.app_balanced_ms_hj}
\begin{cases}
    \partial_t f-\displaystyle\int_0^1 \xi(\partial_q f)=0,\qquad &\text{on }\R_+\times\mcl Q_2^\sS,\\
    f(0,q)=\psi(q)=\displaystyle\sum_{s\in\sS}\lambda_s\psi_\circ(q_s),\qquad &q=(q_s)_{s\in\sS}\in\mcl Q_2^\sS.
\end{cases}
\end{align}
Let $u:\R_+\times\mcl Q_2\to\R$ be the Lipschitz viscosity solution of the associated one-species equation
\begin{align}
\label{e.app_balanced_single_hj}
\begin{cases}
    \partial_t u-\displaystyle\int_0^1 \xi_\star(\partial_q u)=0,\qquad &\text{on }\R_+\times\mcl Q_2,\\
    u(0,r)=\psi_\circ(r),\qquad &r\in\mcl Q_2.
\end{cases}
\end{align}
By the one-species Parisi formula, or equivalently by the one-species Hamilton--Jacobi convergence theorem, $u(t,0)$ is the limiting free energy of the centered Ising spin glass with covariance function $\xi_\star$.

We first recall the two bounds that enter the comparison. The lower bound is the Hamilton--Jacobi lower bound from Proposition~\ref{p.HJ_bdd_ms}:
\begin{align}
\label{e.app_balanced_lower_bound}
    f(t,0)\leq \liminf_{N\to\infty}\bar F_N(t,0).
\end{align}
The corresponding upper bound is obtained by an interpolation, as in~\cite[Section~7]{issa2024existence} and in the balanced comparison of~\cite{bates2025balanced}. More precisely, let $(H_N^\star(\sigma))_{\sigma\in\{-1,1\}^N}$ be the one-species Gaussian field with covariance
\begin{align*}
    \E\Ll[H_N^\star(\sigma)H_N^\star(\sigma')\Rr]
    =N\xi_\star\left(\frac1N\sum_{i=1}^N\sigma_i\sigma_i'\right),
\end{align*}
and define
\begin{align}
\label{e.app_balanced_single_free_energy}
    \bar F_N^\star(t,0):=-\frac1N\E\log\sum_{\sigma\in\{-1,1\}^N}2^{-N}\exp\left(\sqrt{2t}H_N^\star(\sigma)-Nt\xi_\star(1)\right).
\end{align}
Then, \cite[Proposition~7.9]{issa2024existence} gives
\begin{align}
\label{e.app_balanced_upper_bound}
    \limsup_{N\to\infty}\bar F_N(t,0)\leq \lim_{N\to\infty}\bar F_N^\star(t,0)=u(t,0).
\end{align}

It remains to show that the two bounds~\eqref{e.app_balanced_lower_bound} and~\eqref{e.app_balanced_upper_bound} match. This is a purely Hamilton--Jacobi comparison statement.

\begin{proposition}
\label{p.app_balanced_hj_comparison}
For every $t\geq0$ and every $q=(q_s)_{s\in\sS}\in\mcl Q_2^\sS$, writing $q^\lambda:=\sum_{s\in\sS}\lambda_s q_s$,
we have
\begin{align}
\label{e.app_balanced_hj_sandwich}
    u(t,q^\lambda)\leq f(t,q)\leq \sum_{s\in\sS}\lambda_s u(t,q_s).
\end{align}
In particular, we have $f(t,0)=u(t,0)$.
\end{proposition}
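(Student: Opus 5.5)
We prove both inequalities in \eqref{e.app_balanced_hj_sandwich} by the comparison principle (Proposition~\ref{p.comp}, together with Remark~\ref{r.HJ_Q^S}), applied on the multi-species cone $\mcl Q_2^\sS$. The plan is to build from $u$ a viscosity subsolution and a viscosity supersolution of \eqref{e.app_balanced_ms_hj}, and to compare their initial data with $\psi$. Define
\begin{equation*}
    v^-(t,q):=u(t,q^\lambda),\qquad v^+(t,q):=\sum_{s\in\sS}\lambda_s u(t,q_s).
\end{equation*}
Both functions are Lipschitz, since $u$ is Lipschitz and $q\mapsto q^\lambda$ is bounded and linear. At time zero, Proposition~\ref{p.convex.psi.circ} and Jensen's inequality give $v^-(0,q)=\psi_\circ(q^\lambda)\le\psi(q)$. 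We also have $v^+(0,\cdot)=\psi$. Once we show that $v^-$ is a subsolution and $v^+$ is a supersolution, comparison gives $v^-\le f\le v^+$, which is the desired sandwich. Setting $q=0$ then yields $f(t,0)=u(t,0)$.

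\emph{Supersolution property of $v^+$.} Formally, $\partial_{q_s}v^+=\lambda_s\partial_r u(t,q_s)=:\lambda_s a_s$ and $\partial_t v^+=\sum_s\lambda_s\int\xi_\star(a_s)$. By \eqref{e.balanced_comparison_ineq} applied to $p=(\lambda_s a_s)_s$, we get $\int\xi(\partial_q v^+)\le\sum_s\lambda_s\int\xi_\star(a_s)=\partial_t v^+$. To make this rigorous, we touch $v^+$ from below by a smooth $\phi$ at $(t_0,q_0)$. Following Step~3 in the proof of Proposition~\ref{p.avg_solution}, we double variables: we introduce independent copies $(t_s,r_s)$ for each species and penalize the gaps $|t_s-t|^2/\eps$ and $|r_s-q_s|^2/\eps$. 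Stegall's principle (Theorem~\ref{t.stegall}) supplies the maximizers, and these converge to $(t_0,q_0)$. Each $u$ is then touched from below at $(t_s,r_s)$, which gives supersolution inequalities for $u$ with gradients $\eta_s$ and time derivatives $b_s$. The optimality condition in $(t,q)$ makes $\sum_s b_s$ match $\partial_t\phi$. It also puts $\partial_q\phi-(\lambda_s\eta_s)_s$ into the dual cone, up to errors that vanish as $\eps\to0$. We then use \eqref{e.balanced_comparison_ineq} at the level of the regularized Hamiltonians and let $\eps\to0$.

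\emph{Subsolution property of $v^-$.} This is the analogue of Proposition~\ref{p.avg_solution} with $A=q\mapsto q^\lambda$, whose adjoint is $A^*\eta=(\lambda_s\eta)_s$. Here one uses \eqref{e.balanced_diag_identity}: $\int\xi(A^*\eta)=\int\xi_\star(\eta)$. The Step~1 identity $\H^\sS(A^*\eta)=\H^\star(\eta)$ is replaced by a one-sided inequality, and only the inequality needed for subsolutions must be checked. Then the argument of Step~2 goes through verbatim: test function touches from above, doubling of variables with $|r-Aq|^2/\eps$, first-order condition in $q$ landing in the dual cone, and monotonicity of $\H$.

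\emph{Main obstacle.} The hard part is making the Hamiltonian comparisons hold for the regularized Hamiltonians $\H$ of \eqref{e.def_H_spin_glass} rather than for $\int\xi$. Concretely, one needs $\H^\sS(A^*\eta)\le\H^\star(\eta)$, or the version with $\ge$ as required, and $\H^\sS((\lambda_s\eta_s)_s)\le\sum_s\lambda_s\H^\star(\eta_s)$, where the arguments are merely in $L^2$ and not in the cone. The plan is to choose the regularizations compatibly: take $\bar\xi_\star(r):=\bar\xi(r\lambda)$, and choose $\bar\xi$ so that \eqref{e.balanced_comparison_ineq} persists on all of $\R_+^\sS$, for instance by building $\bar\xi$ from $\bar\xi_\star$ on the relevant region. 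The inequalities then follow by pushing near-minimizers in \eqref{e.def_H_spin_glass} through $A$, $A^*$ and the map $(\eta_s)_s\mapsto(\lambda_s\eta_s)_s$, exactly as in Step~1 of Proposition~\ref{p.avg_solution}. The independence of the solution from the choice of regularization then allows this particular choice. If a compatible choice proves delicate, a fallback is to restrict attention to test gradients in a bounded set where $\bar\xi=\xi$ and $\bar\xi_\star=\xi_\star$, using the a priori bounds $\partial_q\psi\in\mcl Q_{\infty,\le\lambda}^\sS$.
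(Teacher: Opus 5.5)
Your proposal follows the same route as the paper's sketch: the paper also sets $v(t,q)=u(t,q^\lambda)$ and $w(t,q)=\sum_{s}\lambda_s u(t,q_s)$, obtains the subsolution and supersolution properties by rerunning the Stegall doubling argument of Proposition~\ref{p.avg_solution} (using $Bq=\sum_s\lambda_s q_s$, $B^*a=(\lambda_s a)_s$, and \eqref{e.balanced_diag_identity}, \eqref{e.balanced_comparison_ineq}), compares initial data via the transport convexity of $\psi_\circ$, and concludes by the comparison principle. The paper's sketch does not address the regularization issue you flag, and one caution applies there: the subsolution inequality you need, $\H^\sS(B^*\eta)\ge\H^\star(\eta)$, cannot come from a linear push-forward of near-minimizers as in Step~1 of Proposition~\ref{p.avg_solution}, since there is no analogue of the factorization $\bar\xi^\Vec=\D\,\bar\xi\circ\Pi$ (this already fails for $\xi(a_1,a_2)=a_1a_2$); it does hold, as an identity, with $\bar\xi_\star:=\bar\xi(\cdot\,\lambda)$, because on the product cone the constraint set in \eqref{e.def_H_spin_glass} has a coordinatewise, positively homogeneous least element for the $(\mcl Q_2^\sS)^*$-order, at which the infimum is attained.
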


\begin{proof}[Sketch of proof]
Define
\begin{align*}
    v(t,q):=u(t,q^\lambda),\qquad w(t,q):=\sum_{s\in\sS}\lambda_s u(t,q_s).
\end{align*}
We first check, in the viscosity sense, that $v$ solves the multi-species equation~\eqref{e.app_balanced_ms_hj}. Formally, if $a=\partial_q u(t,q^\lambda)$, then
\begin{align*}
    \partial_{q_s}v(t,q)=\lambda_s a,\qquad s\in\sS.
\end{align*}
Thus, using the diagonal identity~\eqref{e.balanced_diag_identity},
\begin{align*}
    \xi((\partial_{q_s}v)_{s\in\sS})=\xi(\lambda a)=\xi_\star(a),
\end{align*}
and therefore the equation for $v$ follows from the equation for $u$.

This formal verification can be justified rigorously with the standard doubling-variable argument for viscosity solutions on infinite-dimensional cones. The only point that is not completely formal is the existence of maximizers for the penalized functional; as in the proof of Proposition~\ref{p.avg_solution}, this is obtained by applying Stegall's variational principle, Theorem~\ref{t.stegall}. Equivalently, one repeats the proof of Proposition~\ref{p.avg_solution} with the bounded linear map
\begin{align*}
    B:\mcl H_\sS\to L^2([0,1]),\qquad Bq=\sum_{s\in\sS}\lambda_s q_s,
\end{align*}
whose adjoint is $B^*a=(\lambda_s a)_{s\in\sS}$. The identity $\xi(B^*a)=\xi_\star(a)$ is exactly~\eqref{e.balanced_diag_identity}. This proves that $v$ is a viscosity solution, in particular a viscosity subsolution, of~\eqref{e.app_balanced_ms_hj}.

Next we check that $w$ is a viscosity supersolution of~\eqref{e.app_balanced_ms_hj}. Formally, if $a_s=\partial_q u(t,q_s)$, then
\begin{align*}
    \partial_{q_s}w(t,q)=\lambda_s a_s.
\end{align*}
Using the one-species equation for $u$ and the balanced comparison inequality~\eqref{e.balanced_comparison_ineq}, we obtain
\begin{align*}
    \partial_t w(t,q)-\int_0^1\xi(\partial_q w)
    &=\sum_{s\in\sS}\lambda_s\int_0^1\xi_\star(a_s(v))\,\d v-\int_0^1\xi((\lambda_s a_s(v))_{s\in\sS})\,\d v\geq0.
\end{align*}
The viscosity justification is again obtained by the same Stegall perturbation argument used in Proposition~\ref{p.avg_solution}, now applied separately to the components $q_s$. Thus $w$ is a viscosity supersolution.

Finally, by the transport convexity of $\psi_\circ$ proved in Proposition~\ref{p.convex.psi.circ},
\begin{align*}
    v(0,q)=\psi_\circ(q^\lambda)\leq \sum_{s\in\sS}\lambda_s\psi_\circ(q_s)=f(0,q),
\end{align*}
while
\begin{align*}
    w(0,q)=\sum_{s\in\sS}\lambda_s\psi_\circ(q_s)=f(0,q).
\end{align*}
The comparison principle for the Hamilton--Jacobi equation gives $v\leq f\leq w$, which is~\eqref{e.app_balanced_hj_sandwich}. Taking $q=0$ gives the particular case.
\end{proof}

Combining~\eqref{e.app_balanced_lower_bound}, \eqref{e.app_balanced_upper_bound}, and Proposition~\ref{p.app_balanced_hj_comparison}, we obtain
\begin{align*}
    u(t,0)=f(t,0)\leq \liminf_{N\to\infty}\bar F_N(t,0)\leq \limsup_{N\to\infty}\bar F_N(t,0)\leq u(t,0).
\end{align*}
Consequently,
\begin{align}
\label{e.app_balanced_final_identity}
    \lim_{N\to\infty}\bar F_N(t,0)=u(t,0)=\lim_{N\to\infty}\bar F_N^\star(t,0).
\end{align}
This recovers the conclusion of Remark~\ref{r.balanced_single_species_interpretation} without using either the Hopf representation or Theorem~\ref{t.main}. In particular, for the power-series balanced models of Lemma~\ref{l.power_series_balanced_models}, this gives the matching single-species Parisi-formula bound in the sense of~\cite{bates2025balanced}.

\bigskip

\noindent \textbf{Acknowledgements.} HBC acknowledges funding from the NYU Shanghai Start-Up Fund and support from the NYU–ECNU Institute of Mathematical Sciences at NYU Shanghai. 
HBC warmly thanks Mirek Ol\v{s}\'ak, Zoe Xue, Tianhao Zheng, and Lixing Zhou for performing simulations that support the convexity result in Proposition~\ref{p.convex.psi.circ}.
VI acknowledges stimulating discussions with Fu-Hsuan Ho before starting this project.
JCM acknowledges the support of the ERC MSCA grant SLOHD (101203974). 

\small
\bibliographystyle{plain}
\newcommand{\noop}[1]{} \def\cprime{$'$}

\end{document}